\documentclass[12pt, reqno, a4paper]{amsart}
\usepackage[all]{xy}

\usepackage{ amssymb, amsmath, enumerate, amsfonts, amsthm, mathrsfs, url, bm, mathtools, comment}

\numberwithin{equation}{section}

\usepackage{xcolor}  	
\usepackage[backref=page]{hyperref}
\hypersetup{
	colorlinks,
    linkcolor={blue!60!black},
    citecolor={blue!60!black},
    urlcolor={red!60!black}
}

\usepackage{color}

\usepackage[margin=1.25in]{geometry}

\RequirePackage{doi}

\usepackage[square,sort,comma,numbers]{natbib}
\makeatletter
\@namedef{subjclassname@2020}{%
  \textup{2020} Mathematics Subject Classification}
\makeatother

\newtheorem{theorem}{Theorem}[section]
\newtheorem{question}[theorem]{Question}

\newtheorem{corollary}[theorem]{Corollary}

\newtheorem{proposition}[theorem]{Proposition}
\newtheorem{lemma}[theorem]{Lemma}
\newtheorem{example}[theorem]{Example}

\theoremstyle{definition}
\newtheorem{definition}[theorem]{Definition}
\newtheorem*{remark}{Remark}

\numberwithin{theorem}{section}

\author{Manuel Saavedra}
\address{Departamento de Matematica, Universidade Estadual Paulista, Sao José
	do Rio Preto, SP, Brasil}
\email{manuel.saavmath@gmail.com}
\thanks{The author was partially supported by Fapesp Grant 2026/08439-1}

\begin{document}

\title{Observation Schemes and Irregularity in Linear Dynamics}

\dedicatory{A la memoria de mi mamá Elva y mi hermanita Leyla}

\begin{abstract}	We develop a structural framework for irregularity in linear dynamics centered on the space $\Upsilon$ of observation schemes. This approach separates the underlying dynamical behavior from the observation mechanism and provides a unified setting in which classical notions such as Li--Yorke chaos, mean Li--Yorke chaos, and distributional chaos arise as particular cases corresponding to Dirac measures and Cesàro averages. We establish two abstract criteria ensuring the existence of large linear structures, leading to dense-lineability and spaceability results for both absolutely $(\mu_m)$-irregular and distributionally $(\mu_m)$-irregular vectors. Furthermore, under a natural density assumption, we obtain a trichotomy describing the global behavior of irregularity across $\Upsilon$, together with rigidity phenomena for the classes of observation schemes generating each type of chaotic behavior.
\end{abstract}

\maketitle

\section{Introduction}

Li--Yorke chaos was introduced in \cite{LiYor75} for interval maps and was subsequently adapted to the linear setting. A fundamental result of Bermúdez et al.~\cite{BeBoMaPe} asserts that Li--Yorke chaos for a bounded operator on a Banach space is equivalent to the existence of an \emph{irregular vector}, a notion previously introduced by Beauzamy~\cite{Beauzamy88}. Recall that a vector $x\in X$ is said to be \emph{irregular} for an operator $T\in\mathcal{L}(X)$ if
\[
\liminf_{n\to\infty}\Vert{T^n x\Vert}=0
\qquad\text{and}\qquad
\limsup_{n\to\infty}\Vert{T^n x\Vert}=\infty.
\]

Bernardes et al.~\cite{BeBoPe20} later studied mean Li--Yorke chaos for bounded operators on Banach spaces and proved that it is equivalent to the existence of a \emph{mean irregular vector}, namely, a vector $x\in X$ satisfying
\[
\liminf_{n\to\infty}
\frac{1}{n}\sum_{k=1}^{n}\Vert{T^k x\Vert}=0
\qquad\text{and}\qquad
\limsup_{n\to\infty}
\frac{1}{n}\sum_{k=1}^{n}\Vert{T^k x\Vert}=\infty.
\]

Distributional chaos was introduced by Schweizer and Smital in \cite{ScSm}. In the setting of linear dynamics, Bernardes et al.~\cite{BeBoMuPe13} proved that a continuous linear operator on a Fréchet space is distributionally chaotic if and only if it admits a \emph{distributionally irregular vector}. More precisely, a vector $x\in X$ is distributionally irregular for $T\in\mathcal{L}(X)$ if there exist $\beta\in\mathbb{N}$ and sets $A,B\subset\mathbb{N}$ with
\[
\overline{\text{dens}}(A)=\overline{\text{dens}}(B)=1
\]
such that
\[
\lim_{\substack{n\to\infty, n\in A}}T^n x=0
\qquad\text{and}\qquad
\lim_{\substack{n\to\infty, n\in B}}\Vert{T^n x\Vert}_{\beta}=\infty.
\]
Existing notions of chaos in linear dynamics depend implicitly on the underlying observation scheme. In practice, the literature has predominantly focused on two schemes: Dirac measures and Cesàro means (see \eqref{lenses}). Rather than introducing new notions of chaos, we adopt a structural viewpoint in which the underlying dynamical behavior and the observation scheme are treated as independent components. This leads to the study of chaos through the space $\Upsilon$ of observation schemes, which becomes the primary object governing the emergence of irregularity. This viewpoint allows one to treat irregularity as a global phenomenon across observation schemes, revealing structural properties such as genericity and rigidity that are not accessible when working with a fixed scheme.

More precisely, $\Upsilon$ consists of sequences $(\mu_m)_m$ of probability measures on $\mathbb{N}$ that asymptotically vanish on finite sets. This condition prevents concentration of mass on bounded time intervals and enforces a genuinely asymptotic regime of observation. A precise definition and its topological properties are given in Section~\ref{b.t.o.s}.

The classical observation schemes arise naturally within this framework. Indeed, for $T\in\mathcal{L}(X)$, one has
\begin{equation}\label{lenses}
	\int_{\mathbb{N}} \|T^{t}x\| \, d\mu_{m}(t) =
	\begin{cases}
		\|T^{m}x\| & \text{if } \mu_{m} = \delta_{m}, \\[6pt]
		\displaystyle{\frac{1}{m}\sum_{k=1}^{m}\|T^{k}x\|} & \text{if } \mu_{m} = \displaystyle{\frac{1}{m}\sum_{k=1}^{m}\delta_{k}}.
	\end{cases}
\end{equation}
Thus, classical notions such as Li--Yorke chaos and mean Li--Yorke chaos arise as particular instances of the same observation-driven framework.

Our main result is a trichotomy that describes the global behavior of irregularity across the space of observation schemes $\Upsilon$.

\begin{theorem}[Trichotomy for Observation Schemes (Banach-space case)]
	Let $X$ be a separable real or complex Banach space and let 
	$T \in \mathcal{L}(X)$. Assume that the set 
	\[
	\{x \in X : T^{n}x \to 0\}
	\]
	is dense in $X$. Then exactly one of the following alternatives holds:
	\begin{enumerate}
		\item[(I)] $T^{n}x \to 0$ for every $x \in X$;
		
		\item[(II)] for every $(\mu_m)\in\Upsilon$, the operator $T$ admits both a dense absolutely $(\mu_m)$-irregular manifold and a dense distributionally $(\mu_m)$-irregular manifold;
		
		\item[(III)] there exists a residual set $\Upsilon_0\subset\Upsilon$ such that, for every $(\mu_m)\in\Upsilon_0$, the operator $T$ admits a dense absolutely $(\mu_m)$-irregular manifold but no distributionally $(\mu_m)$-irregular vector.
	\end{enumerate}
\end{theorem}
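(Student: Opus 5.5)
We split into cases according to whether $T$ is uniformly power bounded along orbits and whether it is distributionally irregular for the Cesàro/Dirac-type schemes. Throughout, we use the abstract criteria for dense-lineability established later in the paper, which we assume in the following form: if the set of vectors with $T^n x\to 0$ is dense and there is a vector (or a bounded sequence of vectors) witnessing unbounded $(\mu_m)$-averages, then there is a dense absolutely $(\mu_m)$-irregular manifold. The analogous statement holds with ``unbounded on a set of upper $(\mu_m)$-density one'' in place of ``unbounded averages'', yielding a dense distributionally $(\mu_m)$-irregular manifold.

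First, suppose (I) fails. Then there is $x$ with $T^n x\not\to 0$. If $\sup_n\|T^n\|<\infty$, the set $\{x: T^nx\to0\}$ is closed, since it is the kernel of a bounded-orbit limit by the Banach--Steinhaus argument. Being dense, it is all of $X$, so (I) holds. Hence, if (I) fails, $\sup_n\|T^n\|=\infty$. By the uniform boundedness principle, the set $\{x:\sup_n\|T^nx\|=\infty\}$ is then residual.

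For a fixed $(\mu_m)\in\Upsilon$, the key step is to upgrade pointwise unboundedness of $\|T^nx\|$ to unboundedness of $\int\|T^tx\|\,d\mu_m(t)$. I would do this by a gliding-hump/Baire argument. Consider the sets
\[
U_{k}=\Bigl\{x:\ \exists m,\ \int\|T^tx\|\,d\mu_m>k\Bigr\}
\]
and
\[
V_k=\Bigl\{x:\ \exists m,\ \int\min(\|T^tx\|,1)\,d\mu_m<1/k\Bigr\}.
\]
$V_k$ is open and dense, because of the dense set of vectors tending to zero together with the vanishing of $\mu_m$ on finite sets. Whether $U_k$ is dense depends on $(\mu_m)$. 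This is where the dichotomy between (II) and (III) arises.

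Define
\[
\Upsilon_{D}=\{(\mu_m): T \text{ admits a distributionally } (\mu_m)\text{-irregular vector}\}.
\]
The plan is to show two things.
\begin{enumerate}
\item[(a)] For a residual set of $(\mu_m)$, which in fact is every $(\mu_m)$ once $\sup\|T^n\|=\infty$, a dense absolutely $(\mu_m)$-irregular manifold exists. The residual set will be the one where $\mu_m$ concentrates near Dirac-type masses on times $n$ where $\|T^n\|$ is large. Genericity in the topology of $\Upsilon$ should be obtained by perturbing finitely many $\mu_m$ toward $\delta_{n_j}$.
\item[(b)] If $T$ admits a distributionally irregular vector for some scheme in a way that is scheme-independent, that is, if there is a vector whose orbit is unbounded on sets of upper density one in the strong sense (``$\|T^n x\|\to\infty$ along a set $B$ of full upper density uniformly for all schemes''), then (II) holds for all $(\mu_m)$. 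Otherwise the set of schemes with no distributionally irregular vector is residual. I would show $\Upsilon\setminus\Upsilon_D$ is a $G_\delta$ and dense: Dirac-like schemes concentrated on a sparse sequence of times where orbits are small are dense, which gives density; the $G_\delta$ property comes from writing non-existence as a countable intersection over $\beta,k$.
\end{enumerate}

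Exclusivity of the three alternatives is clear: (I) forbids irregular vectors, and (II) and (III) differ on schemes in $\Upsilon_0$.

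The main obstacle is the (II)/(III) dichotomy itself: proving that $\Upsilon_D$ is either all of $\Upsilon$ or meager. I would attempt a zero-one law. Show that $\Upsilon_D$ is invariant under the perturbations that define the topology, namely finite modifications and tail-equivalence, and that it has the Baire property. A topological zero-one law for tail-invariant Baire sets then yields that $\Upsilon_D$ is meager or comeager. The comeager case would then have to be upgraded to ``all of $\Upsilon$''. This upgrade would go via the distributional criterion applied to the Cesàro scheme, transferring to arbitrary $(\mu_m)$ through the vanishing-on-finite-sets condition. This last upgrade is where I expect the real difficulty.
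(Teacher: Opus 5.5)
\textbf{There is a genuine gap, and it is exactly where you place the ``real difficulty''.} The (II)/(III) split rests on the dichotomy ``$\Upsilon_D=\Upsilon$ or $\Upsilon_D$ is meager''. Your proposal does not prove it, and the route you suggest cannot.

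\emph{The zero--one law does not apply.} $\Upsilon$ carries the uniform metric $\sup_m\|\mu_m-\nu_m\|_{\mathrm{TV}}$, not the product topology. Finite modifications are not small perturbations, and tail-invariant sets need not be meager or comeager. For instance, with $E=2\mathbb{N}$, the sets $\{(\mu_m)\in\Upsilon:\limsup_m\mu_m(E)>1/2\}$ and $\{(\mu_m)\in\Upsilon:\limsup_m\mu_m(E)<1/2\}$ are disjoint, nonempty, open and tail-invariant. So no topological zero--one law is available.

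\emph{The density claim in (b) fails as stated.} The Ces\`aro scheme is at distance $1$ from every scheme whose entries are Dirac masses, so Dirac-like schemes are not dense in this metric.

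\emph{The parenthetical in (a) is false.} The weighted shift of Example~\ref{ejemplo} is mixing, hence not power bounded, yet it is absolutely Ces\`aro bounded. It therefore has no absolutely irregular vector at all for the Ces\`aro scheme. Only a residual set of schemes is available for the absolute part. The paper obtains it as $\mathfrak{U}(\beta_0)$ (Proposition~\ref{set-abs-bounded}) via the mixtures $s\delta_m+(1-s)\nu_m$, which is close to your ``perturb toward Dirac masses'' idea.

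\textbf{The missing idea is convexity of $\Upsilon$ combined with Baire in $X$, not tail-invariance.}

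First reduce to unboundedness. Every $x$ with $T^nx\to0$ is $(\mu_m)$-distributionally near zero for every scheme, because the schemes vanish on finite sets. Hence $\Upsilon_D=\bigcup_\beta J_\beta$, where $J_\beta$ is the set of schemes admitting a $\beta$-distributionally unbounded vector.

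Now suppose $\nu\notin J_\beta$.
\begin{itemize}
\item Baire in $X$ gives a nonempty open $U\subset X$ and $k_0$ with $\nu_m(\{t:\|T^tx\|_\beta>k_0\})\le 1-1/k_0$ for all $x\in U$ and $m\ge k_0$.
\item A bound of the same kind, with $2k_0$ in place of $k_0$, persists for every scheme within distance $1/(4k_0)$ of $\nu$.
\item For mixtures one gets $(s\nu_m+(1-s)\eta_m)(\{t:\|T^tx\|_\beta>k_0\})\le 1-s/k_0$ on $U$.
\item In either case $U$ misses $R_{k,\beta}$ for suitable $k$. One distributionally unbounded vector forces a residual set of them (Proposition~\ref{dist-unbounded}). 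So neither the nearby schemes nor any $s\nu+(1-s)\eta$ with $s\in(0,1]$ lies in $J_\beta$.
\end{itemize}
Thus a single scheme outside $J_\beta$ makes $\Upsilon\setminus J_\beta$ open and dense. Each $J_\beta$ is therefore either $\Upsilon$ or closed and nowhere dense (Proposition~\ref{closed-J}). No ``comeager $\Rightarrow$ everything'' upgrade through the Ces\`aro scheme is needed.

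Then split on whether some $J_\beta=\Upsilon$.
\begin{itemize}
\item If so, Markov's inequality (Proposition~\ref{beta-desi}) gives non-absolute-boundedness for every scheme. Theorems~\ref{dense-lineable} and~\ref{dense-disti} then give (II).
\item If not, $\mathfrak{U}(\beta_0)\setminus\bigcup_\beta J_\beta$ is the residual set required in (III).
\end{itemize}
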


In particular, alternative~(III) provides a separation between absolute \((\mu_m)\)-irregularity and distributional $(\mu_m)$-irregularity for a residual class of observation schemes. This provides a partial answer to \cite[Question 16]{BeBoPe20}, showing that, although the classical problem for Cesàro averages remains open, such a separation occurs generically at the level of observation schemes.

Under the above assumptions, the irregularity alternative can be substantially strengthened. 
Indeed, if there exists a decreasing sequence $(E_n)_{n\in\mathbb{N}}$ of infinite-dimensional closed subspaces of $X$ such that
\[
\sup_{n\in\mathbb N}\|T^{n}|_{E_{n}}\|<\infty,
\]
then there exists a residual subset of lenses in $\Upsilon$ for which, for every $(\mu_m)_m$ in this set, the operator \(T\) admits both a dense absolutely \((\mu_m)\)-irregular manifold and a closed infinite-dimensional absolutely \((\mu_m)\)-irregular manifold. In particular, this provides new instances of spaceability phenomena within the framework of linear chaos, highlighting the richness of irregular behavior under suitable structural assumptions.

Prior to the trichotomy, we establish that chaos satisfies rigid topological laws when viewed through the space $\Upsilon$. More precisely, for a fixed operator \(T\), the class of observation schemes for which \(T\) is densely \((\mu_m)\)-distributionally chaotic is either maximal or topologically negligible in \(\Upsilon\) (see Theorem~\ref{dicho-dist}). In sharp contrast, the class of schemes for which \(T\) is densely \((\mu_m)\)--Li--Yorke chaotic always forms a \(G_{\delta}\)-subset of \(\Upsilon\) (see Theorem~\ref{G_delta-dense-LY}).

The paper is organized as follows. Section~3 introduces absolute $(\mu_m)$-boundedness and establishes its basic properties. Section~4 develops a general framework for $(\mu_m)$-Li--Yorke chaos, while Section~5 studies the associated $\mathrm{D}$-phenomenon by means of Furstenberg--Borel families (see Definition~\ref{Furs-Borel}). Section~6 provides abstract criteria for the existence of large linear structures. These criteria are applied in Sections~7 and~8 to obtain dense-lineability and spaceability results for absolutely $(\mu_m)$-irregular and distributionally $(\mu_m)$-irregular vectors. Finally, Section~9 is devoted to the proof of the trichotomy for observation schemes.


\section*{Acknowledgements}

The author would like to thank Alexander Arbieto, Nilson Bernardes, Ali Messaoudi and Manuel Stadlbauer for their valuable comments and insightful discussions related to this work.

\section{Setting and Notation}

Throughout this work, we consider a metric space $\Lambda$  and
\((T_t)_{t\in\Lambda}\) a locally equicontinuous family of continuous linear
operators from an \(F\)-space \(X\) into a normed space or, more generally,
into a metrizable locally convex space \(Y\).
The topology of \(Y\) is assumed to be generated by a countable directed
family of seminorms \((\|\cdot\|_{\ell})_{\ell\in\mathbb{N}}\), and the
associated translation-invariant metric on \(Y\) is given by
\[
\rho(x,y)
:=\sum_{\ell=1}^{\infty}2^{-\ell}\,\frac{\|x-y\|_{\ell}}{1+\|x-y\|_{\ell}},
\qquad x,y\in Y.
\]

To fix notation, we denote by \(\mathrm{D}\) a translation-invariant metric on
\(X\) that induces the topology of \(X\).
In the particular case \(X=Y\), that is, when
\((T_t)_{t\in\Lambda}\subset \mathcal{L}(X)\), we additionally assume that
\(X\) is an infinite-dimensional Banach space or a Fr\'echet space, endowed
with its norm \(\|\cdot\|\) and metric \(\rho\), respectively.

On the other hand, \((\mu_i)_{i\in I}\) denotes a family of Borel probability
measures on \(\Lambda\), each with compact support (unless stated otherwise), where
\(I\subset \mathbb{R}_{\ge 0}\) is unbounded and such that, for every
\(i\in I\),
\begin{equation}\label{compact-soporte}
	\overline{\bigcup_{\substack{j\in I\\ j\le i}}
		\operatorname{supp}(\mu_j)}
\end{equation}
is a compact subset of \(\Lambda\).
Moreover, for every compact set \(K\subset \Lambda\),
\[
\lim_{i\to\infty}\mu_i(K)=0.
\]

Further developments and related results can be found in \cite{BeBoMuPe13, BeBo15, BeBoPeWu18, BoKo19, JiLi25}. For background on linear dynamics and related notions such as hypercyclicity, we refer to \cite{BaMa, GrPe}.


\section{Absolute $(\mu_i)_{i\in I}$-boundedness}

In this section, we introduce the notion of absolute
$(\mu_i)_{i\in I}$-boundedness and establish its basic properties. In
particular, we obtain a characterization and a useful consequence that
will be applied later to the study of Li--Yorke-type chaos.

The most classical boundedness condition in this setting is power
boundedness. If $X$ is a Banach space, an operator
$T\in\mathcal{L}(X)$ is said to be \emph{power bounded} if
\[
\sup_{n\in\mathbb{N}}\|T^n\|<\infty.
\]
If $X$ is a Fr\'echet space, $T\in\mathcal{L}(X)$ is said to be power
bounded when the family
\[
\{T^n:n\in\mathbb{N}\}
\]
is equicontinuous. Equivalently, every orbit
\[
\mathcal{O}(x):=\{T^n x:n\in\mathbb{N}\},
\qquad x\in X,
\]
is bounded in $X$.

Luo and Huo~\cite{LuHo15} introduced the notion of absolute Ces\`aro
boundedness for operators on Banach spaces. This property is closely
related to several other boundedness conditions, including mean square
boundedness, absolute Kreiss boundedness, and absolute strong Kreiss
boundedness; see, for instance,
\cite{BeBoMuPe20,BeBoPe20,CoCuEiLi}.

\begin{definition}
	Let $X$ be a Banach space and let $T\in\mathcal{L}(X)$. We say that
	$T$ is \emph{absolutely Ces\`aro bounded} if there exists $C>0$ such
	that
	\[
	\sup_{N\geq 1}
	\frac{1}{N}\sum_{k=1}^{N}\|T^k x\|
	\leq C\|x\|
	\qquad\text{for every }x\in X.
	\]
\end{definition}

These boundedness conditions admit a common formulation in terms of
averages with respect to probability measures. For example, absolute
Kreiss boundedness is associated with the geometric probability measures
$\mu_r$ on $\mathbb{N}\cup\{0\}$ defined by
\begin{equation}\label{geo}
	\mu_r(\{n\})=(1-r)r^n,
	\qquad n\geq 0,\quad 0<r<1.
\end{equation}
For every $x\in X$, one then has
\[
\int_{\mathbb{N}\cup\{0\}}\|T^t x\|\,d\mu_r(t)
=
(1-r)\sum_{n=0}^{\infty}r^n\|T^n x\|.
\]
Similarly, absolute strong Kreiss boundedness is associated with the
Poisson probability measures
\begin{equation}\label{poisson}
	\mu_r(\{n\})=e^{-r}\frac{r^n}{n!},
	\qquad n\geq 0,\quad r>0,
\end{equation}
for which
\[
\int_{\mathbb{N}\cup\{0\}}\|T^t x\|\,d\mu_r(t)
=
e^{-r}\sum_{n=0}^{\infty}\frac{r^n}{n!}\|T^n x\|.
\]

These observations lead to the following general notion of boundedness. Note that the probability measures considered in the previous examples need not have compact support. Accordingly, in the following definition we do not impose any compactness assumption on the family $(\mu_i)_{i\in I}$.

\begin{definition}\label{def:abs-bdd}
	Let $(\mu_i)_{i\in I}$ be a family of probability measures on $\Lambda$
	(not necessarily compactly supported and a general index set \(I\)). A family $(T_t)_{t\in\Lambda}\subset\mathcal{L}(X,Y)$ is said to be
	\emph{\(\beta\)-absolutely $(\mu_i)_{i\in I}$-bounded}
	if there exist constants $C>0$ and $\delta>0$ such that
	\begin{equation}\label{abs-measure}
		\sup_{\substack{x\in X\\ \mathrm{D}(x,0)<\delta}}
		\ \sup_{i\in I}
		\int_{\Lambda}\|T_t x\|_{\beta}\, d\mu_i(t)
		\le C.
	\end{equation}
	We say that $(T_t)_{t\in\Lambda}$ is \emph{absolutely $(\mu_i)_{i\in I}$-bounded}
	if, for every $\beta\in\mathbb{N}$, the family $(T_t)_{t\in\Lambda}$ is \(\beta\)-absolutely
	$(\mu_i)_{i\in I}$-bounded.
\end{definition}

In the Banach-space setting, where $X=Y$ and
$(T_t)_{t\in\Lambda}\subset\mathcal{L}(X)$, the local condition
\eqref{abs-measure} is equivalent, by homogeneity, to the existence of a
constant $C>0$ such that
\[
\sup_{i\in I}
\int_{\Lambda}\|T_t x\|\,d\mu_i(t)
\leq C\|x\|
\qquad\text{for every }x\in X.
\]

Consequently, given $T\in\mathcal{L}(X)$, applying
Definition~\ref{def:abs-bdd} to the family of iterates
\[
(T^n)_{n\in\mathbb{N}\cup\{0\}}
\]
recovers the classical Kreiss-type boundedness conditions. More
precisely, $T$ is \emph{absolutely Kreiss bounded} if
$(T^n)_{n\in\mathbb{N}\cup\{0\}}$ is absolutely
$(\mu_r)_{0<r<1}$-bounded with respect to the geometric probability
measures defined in \eqref{geo}. Similarly, $T$ is
\emph{absolutely strong Kreiss bounded} if
$(T^n)_{n\in\mathbb{N}\cup\{0\}}$ is absolutely
$(\mu_r)_{r>0}$-bounded with respect to the Poisson probability measures
defined in \eqref{poisson}.

It is worth noting that Cohen et al.\ \cite[Proposition~3.5]{CoCuEiLi} proved that, for bounded linear operators on Banach spaces, absolute Ces\`aro boundedness is equivalent to absolute Kreiss boundedness.

\begin{theorem}\label{bounded-beta}
	Let $(T_t)_{t\in \Lambda}\subset \mathcal{L}(X,Y)$ and let $\beta\in\mathbb{N}$. Then the following assertions are equivalent:
	\begin{enumerate}
		\item $(T_t)_{t\in \Lambda}$ is \(\beta\)-absolutely $(\mu_i)$-bounded.
		\item For every $x\in X$,
		\[
		\sup_{i\in I}\int_{\Lambda} \|T_t x\|_{\beta}\, d\mu_i(t)<\infty.
		\]
		\item  $(T_t)_{t \in \Lambda}$ satisfies
		\[
		\lim_{r \downarrow 0^{+}} \sup_{\substack{x\in X\\ \mathrm{D}(x,0)<r}} \sup_{i \in I} \int_\Lambda \|T_t x\|_\beta \, d\mu_i(t) = 0.
		\]
	\end{enumerate}
\end{theorem}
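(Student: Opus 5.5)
Write $\Phi(x):=\sup_{i\in I}\int_\Lambda\|T_tx\|_\beta\,d\mu_i(t)\in[0,\infty]$ for $x\in X$. We prove (3)$\Rightarrow$(1)$\Rightarrow$(2)$\Rightarrow$(3).

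For (3)$\Rightarrow$(1), choose $r_0>0$ with $\sup_{\mathrm D(x,0)<r_0}\Phi(x)\le 1$ and take $\delta=r_0$, $C=1$.

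For (1)$\Rightarrow$(2), fix $x\in X$. Since scalar multiplication is continuous in the $F$-space $X$, $\lambda x\to 0$ as $\lambda\to0$, so there is $N\in\mathbb N$ with $\mathrm D(x/N,0)<\delta$. Homogeneity of the seminorm $\|\cdot\|_\beta$ and linearity of $T_t$ give $\Phi(x)=N\,\Phi(x/N)\le NC<\infty$. Note that measurability of $t\mapsto\|T_tx\|_\beta$ is implicit in the statement; local equicontinuity gives continuity of $t\mapsto T_tx$ for the relevant families.

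The substantive implication is (2)$\Rightarrow$(3), which we obtain by a Baire category argument. Set
\[
F_n:=\{x\in X:\ \Phi(x)\le n\}=\bigcap_{i\in I}\Bigl\{x:\int_\Lambda\|T_tx\|_\beta\,d\mu_i(t)\le n\Bigr\}.
\]
Each $x\mapsto\int_\Lambda\|T_tx\|_\beta\,d\mu_i(t)$ is lower semicontinuous. Indeed, if $x_k\to x$, then for each $t$ we have $\|T_tx_k\|_\beta\to\|T_tx\|_\beta$ by continuity of $T_t$, and Fatou's lemma gives $\int\|T_tx\|_\beta\,d\mu_i\le\liminf_k\int\|T_tx_k\|_\beta\,d\mu_i$. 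This requires no compact support of $\mu_i$, which fits the general setting of Definition~\ref{def:abs-bdd}. Hence each $F_n$ is closed. Each $F_n$ is also symmetric and convex, since $\Phi$ is a supremum of seminorms on $X$ (absolutely homogeneous and subadditive). By (2), $X=\bigcup_n F_n$, and since $X$ is complete metrizable, Baire gives some $F_N$ containing a ball $B(x_0,\varepsilon)$.

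For $\mathrm D(y,0)<\varepsilon$ we have $x_0+y,\ x_0\in F_N$, hence $y=(x_0+y)-x_0$ satisfies $\Phi(y)\le 2N$. So $\Phi\le M:=2N$ on $B(0,\varepsilon)$. To get the limit $0$ in (3), given $\eta>0$ pick $k\in\mathbb N$ with $M/k<\eta$. Since $y\mapsto ky$ is continuous at $0$, there is $r>0$ with $\mathrm D(y,0)<r\Rightarrow\mathrm D(ky,0)<\varepsilon$. Then $\Phi(y)=\Phi(ky)/k\le M/k<\eta$. Thus $\sup_{\mathrm D(y,0)<r}\Phi(y)\to0$ as $r\downarrow0$; the supremum is monotone in $r$, so the limit exists.

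The main obstacle is this last implication. The point to get right is that the Baire step must use closedness of the $F_n$, obtained from Fatou and pointwise continuity, rather than continuity of $\Phi$. Since $\mathrm D$ is only an $F$-space metric and not a norm, we cannot rescale balls directly, so the argument uses translation invariance of $\mathrm D$ together with continuity of scalar multiplication.
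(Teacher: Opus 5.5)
Your proof is correct, but it reaches the key closedness step by a different mechanism than the paper.

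The paper shows that $A_n=\{x:\Phi(x)\le n\}$ is closed by using that $\operatorname{supp}(\mu_i)$ is compact. Then $(T_t)_{t\in\operatorname{supp}(\mu_i)}$ is equicontinuous, so $\int_\Lambda\|T_t(x_\ell-x)\|_\beta\,d\mu_i(t)<\varepsilon$ once $x_\ell$ is close to $x$. This is why the paper's Remark says compact support is used in (2)$\Rightarrow$(1), and why it treats the $\sigma$-compact case by truncating the measures.

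Your Fatou argument needs only continuity of each single $T_t$. It gives lower semicontinuity of $x\mapsto\int_\Lambda\|T_tx\|_\beta\,d\mu_i(t)$ for arbitrary probability measures, arbitrary $\Lambda$ and arbitrary index set $I$, and that is exactly what closedness of sublevel sets requires. So your route proves the whole equivalence with no compact-support or equicontinuity hypothesis, only measurability of the integrands, and it makes the truncation Remark superfluous. Under its standing assumptions the paper's argument gives actual continuity of each integral functional, not just lower semicontinuity, but that extra information is not needed for this theorem.

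The remaining difference is organizational. The paper proves (1)$\Rightarrow$(3) by contradiction, choosing $y_n\to 0$ with $\Phi(y_n)\ge a/2$ and $\mathrm D(ny_n,0)<\delta$. You run the same rescaling directly inside (2)$\Rightarrow$(3); the two are equivalent.

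One side comment of yours is inaccurate. Local equicontinuity is uniformity in $x$; it does not give continuity, or even measurability, of $t\mapsto T_tx$. Measurability of $t\mapsto\|T_tx\|_\beta$ has to be taken as part of the standing hypotheses. It is automatic for $\Lambda=\mathbb N$ and follows from the joint-continuity assumption on $(t,x)\mapsto T_tx$ that the paper imposes later. This does not affect your argument.
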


\begin{remark}
	In the proof of Theorem~\ref{bounded-beta}, the compact-support
	assumption on the measures $(\mu_i)_{i\in I}$ is used only in the
	implication \emph{(2)}$\Rightarrow$\emph{(1)}. The remaining
	implications hold for arbitrary families of probability measures and
	for an arbitrary index set $I$. In particular, no additional structure
	on $I$ is required.
	
	Moreover, if $\Lambda$ is a $\sigma$-compact metric space, then the
	compact-support assumption may be omitted. Indeed, let
	$(K_n)_{n\geq 1}$ be an increasing sequence of compact subsets of
	$\Lambda$ such that $\Lambda=\bigcup_{n\geq 1}K_n$. For each $i\in I$,
	choose $n(i)\in\mathbb{N}$ such that $\mu_i(K_{n(i)})\geq 1/2$, and,
	for $j\geq n(i)$, define
	\[
	\mu_{i,j}(A)
	:=
	\frac{\mu_i(A\cap K_j)}{\mu_i(K_j)}.
	\]
	Then $\mu_{i,j}$ is compactly supported and, for every $x\in X$,
	\[
	\int_{\Lambda}\|T_t x\|_{\beta}\,d\mu_{i,j}(t)
	\leq
	2\int_{\Lambda}\|T_t x\|_{\beta}\,d\mu_i(t),
	\]
	while
	\[
	\int_{\Lambda}\|T_t x\|_{\beta}\,d\mu_{i,j}(t)
	\longrightarrow
	\int_{\Lambda}\|T_t x\|_{\beta}\,d\mu_i(t)
	\qquad (j\to\infty).
	\]
	Applying Theorem~\ref{bounded-beta} to the family
	$\bigl(\mu_{i,j}\bigr)_{i\in I,\,j\geq n(i)}$ and then letting
	$j\to\infty$ yields the conclusion.
\end{remark}

\begin{proof}[Proof of Theorem \ref{bounded-beta}]
	$(1)\Rightarrow(2)$.  
	Assume that $(T_t)_t$ is \(\beta\)-absolutely $(\mu_i)$-bounded. Then there exist constants
	$C>0$ and $\delta>0$ such that
	\[
	\sup_{i\in I}\int_{\Lambda} \|T_t x\|_{\beta}\, d\mu_i(t)\leq C,
	\qquad \text{whenever } \mathrm{D}(x,0)<\delta.
	\]
	Fix $x\in X$. Choose $s>0$ such that $\mathrm{D}(x/s,0)<\delta$. Thus,
	\[
	\sup_{i\in I}\int_{\Lambda} \|T_t x\|_{\beta}\, d\mu_i(t)
	\leq Cs < \infty.
	\]
	This proves $(2)$.
	
	\medskip
	\noindent
	$(2)\Rightarrow(1)$.  
	Assume that $(2)$ holds. For each $n\in\mathbb{N}$, define
	\[
	A_n:=\Bigl\{x\in X : \sup_{i\in I}\int_{\Lambda} \|T_t x\|_{\beta}\, d\mu_i(t)\leq n\Bigr\}.
	\]
	We first show that each $A_n$ is closed in $X$. Let $(x_\ell)_\ell \subset A_n$ be a sequence converging to some $x \in X$. Fix $\varepsilon > 0$ and $i \in I$. Since $\mathrm{supp}(\mu_i)$ is compact, the family $(T_t)_{t \in \mathrm{supp}(\mu_i)}$ is equicontinuous. Consequently, there exists $r > 0$ such that
	\[
	T_t\bigl(\{z\in X:\mathrm{D}(z,0)<r\}\bigr)\subset
	\{y\in Y:\|y\|_{\beta}<\varepsilon\}
	\quad\text{for all } t\in\mathrm{supp}(\mu_i).
	\]
	Choose $\ell\in\mathbb{N}$ such that $\mathrm{D}(x_\ell-x,0)<r$. Then
	\begin{align*}
		\int_{\Lambda} \|T_t x\|_{\beta}\,d\mu_i(t)
		&\le
		\int_{\Lambda} \|T_t x_\ell\|_{\beta}\,d\mu_i(t)
		+
		\int_{\Lambda} \|T_t(x_\ell-x)\|_{\beta}\,d\mu_i(t)\\
		&\le n+\varepsilon.
	\end{align*}
	Since $\varepsilon>0$ and $i\in I$ are arbitrary, it follows that $x\in A_n$. Hence $A_n$ is closed in $X$.
	
	By assumption,
	\[
	X=\bigcup_{n\in\mathbb{N}} A_n.
	\]
	By the Baire Category Theorem, there exist $k\in\mathbb{N}$ and a nonempty open set
	$U\subset X$ such that $U\subset A_k$. Then $U-U$ is an open neighborhood of $0$ and
	\[
	U-U \subset \Bigl\{x\in X : \sup_{i\in I}\int_{\Lambda} \|T_t x\|_{\beta}\, d\mu_i(t)\leq 2k\Bigr\}.
	\]
	Hence, there exists $\delta>0$ such that
	\[
	\{x\in X : \mathrm{D}(x,0)<\delta\}\subset U-U,
	\]
	and consequently,
	\[
	\sup_{\substack{x\in X\\ \mathrm{D}(x,0)<\delta}}
	\sup_{i\in I}\int_{\Lambda} \|T_t x\|_{\beta}\, d\mu_i(t)\leq 2k.
	\]
	This shows that $(T_t)_t$ is \(\beta\)-absolutely $(\mu_i)_{i\in I}$-bounded.
	
	The implication (3) $\Rightarrow$ (1) is immediate. Now assume that (1) holds; that is, there exist $C>0$ and $\delta>0$ such that
	\[
	g(\delta):=\sup_{\substack{x\in X\\ \mathrm{D}(x,0)<\delta}}	\sup_{i \in I} \int_{\Lambda} \|T_t x\|_{\beta}\, d\mu_i(t)\leq C.
	\] 
	If (3) does not hold and since $g$ is increasing, then there exists $a>0$ such that 
	\[
	\lim_{r\downarrow 0^{+}} \sup_{\substack{x\in X\\ \mathrm{D}(x,0)<r}} \sup_{i \in I} \int_{\Lambda} \|T_t x\|_\beta \, d\mu_i(t)=a.
	\]
	We can choose a sequence $y_{n}\rightarrow 0$ in $X$ such that $\displaystyle{\sup_{i \in I}} \int_{\Lambda} \|T_t y_{n}\|_{\beta}\, d\mu_i(t)\geq \frac{a}{2}$ and $\mathrm{D}(n y_{n},0)<\delta$. Hence, for every $n\in \mathbb{N}$ we have
	\[
	\frac{a n}{2}\leq \sup_{i \in I} \int_{\Lambda} \|T_t (n y_{n})\|_{\beta}\, d\mu_i(t)\leq \sup_{\substack{x\in X\\ \mathrm{D}(x,0)<\delta}}\sup_{i \in I} \int_{\Lambda} \|T_t x\|_{\beta}\, d\mu_i(t) \leq C,
	\]
	which yields a contradiction. Thus $a=0$, and (1) implies (3).
\end{proof}

\begin{corollary}\label{cor:power-bounded} Let $X$ be a Fréchet space and let $T\in\mathcal{L}(X)$. Then $T$ is power bounded if and only if it is absolutely $(\delta_m)_{m\in\mathbb{N}}$-bounded.
\end{corollary}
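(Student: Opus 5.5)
The plan is to specialize the setting to $\Lambda=\mathbb{N}$ (with the discrete metric), $X=Y$ a Fréchet space with its countable directed family of seminorms $(\|\cdot\|_\beta)_{\beta\in\mathbb{N}}$, $T_t=T^t$, and $\mu_m=\delta_m$, $m\in\mathbb{N}$. We first check that this family of measures fits the standing assumptions. Each $\delta_m$ has compact support $\{m\}$. The set $\overline{\bigcup_{j\le m}\operatorname{supp}(\delta_j)}=\{1,\dots,m\}$ is compact. For every compact, hence finite, $K\subset\mathbb{N}$ we have $\delta_m(K)=0$ for $m>\max K$. Local equicontinuity of $(T^n)_n$ is automatic, since compact subsets of $\mathbb{N}$ are finite and finitely many continuous operators are equicontinuous. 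For this family,
\[
\int_{\mathbb{N}}\|T^t x\|_\beta\,d\delta_m(t)=\|T^m x\|_\beta ,
\]
so, for each $\beta$, condition (2) of Theorem~\ref{bounded-beta} reads $\sup_m\|T^mx\|_\beta<\infty$ for every $x\in X$.

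\emph{Power bounded $\Rightarrow$ absolutely $(\delta_m)$-bounded.} If $T$ is power bounded, every orbit $\mathcal{O}(x)$ is bounded. Hence $\sup_m\|T^mx\|_\beta<\infty$ for every $x\in X$ and every $\beta$. By the implication (2)$\Rightarrow$(1) of Theorem~\ref{bounded-beta}, $T$ is $\beta$-absolutely $(\delta_m)$-bounded for every $\beta$, that is, absolutely $(\delta_m)$-bounded.

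\emph{Absolutely $(\delta_m)$-bounded $\Rightarrow$ power bounded.} Here we use (1)$\Rightarrow$(3) of Theorem~\ref{bounded-beta}. Given $\beta$ and $\varepsilon>0$, it yields $r>0$ such that $\mathrm{D}(x,0)<r$ implies $\sup_m\|T^mx\|_\beta\le\varepsilon$. Thus, for every $\beta$, the set $\bigcap_m (T^m)^{-1}\{y:\|y\|_\beta\le\varepsilon\}$ contains a neighbourhood of $0$. Since the sets $\{\|y\|_\beta\le\varepsilon\}$ form a base of neighbourhoods of $0$ (the seminorms are directed), this is exactly equicontinuity of $\{T^m: m\in\mathbb{N}\}$, i.e.\ power boundedness. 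Alternatively, we could argue directly from (1)$\Rightarrow$(2) and note that bounded orbits are equivalent to equicontinuity by Banach--Steinhaus in the Fréchet space $X$, which the paper already records as an equivalent definition.

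The main point requiring care, rather than a real obstacle, is matching indices. Power boundedness requires the estimate for all $m\in\mathbb{N}$, and possibly $m=0$, which is trivial. We must also remember that equicontinuity has to be checked seminorm by seminorm; this is precisely why the definition of absolute boundedness quantifies over every $\beta$.
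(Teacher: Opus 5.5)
Your proposal is correct and follows the route the paper intends: the corollary is given without a separate proof, as a direct specialization of Theorem~\ref{bounded-beta} to $\Lambda=I=\mathbb{N}$, $T_t=T^t$, $\mu_m=\delta_m$. In that case condition (2) says that every orbit is bounded in each seminorm $\|\cdot\|_\beta$, which the paper records as equivalent to power boundedness. Your check of the standing hypotheses (compact supports, condition \eqref{compact-soporte}, vanishing on compacts, local equicontinuity) and your seminorm-by-seminorm equicontinuity argument via (1)$\Rightarrow$(3) are both sound. The forward direction could even skip the Baire step, since equicontinuity of $\{T^m\}$ gives condition (1) directly.
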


The preceding theorem leads to the following statement, which will play a central role in our analysis of Li--Yorke type chaos. In the particular discrete setting where $T_t := T^t$ for $t \in \mathbb{N}$, with $T$ a bounded linear operator on a Banach space, and
\[
\mu_m := \frac{1}{m}\sum_{k=1}^{m}\delta_k,
\]
a related result is obtained in \cite[Theorem~4]{BeBoPe20}.

\begin{corollary}\label{not-abs-bounded}
	Let $(T_t)_{t \in \Lambda} \subset \mathcal{L}(X, Y)$. For $\beta\in \mathbb{N}$, the following assertions are equivalent:
	\begin{enumerate}
		\item $(T_t)_{t\in \Lambda}$ is not \(\beta\)-absolutely $(\mu_i)$-bounded.
		\item There exists $x\in X$ such that
		\[
		\sup_{i\in I}\int_{\Lambda} \|T_t x\|_{\beta}\, d\mu_i(t)=\infty.
		\]
		\item The set
		\[
		\Bigl\{ x \in X : \sup_{i \in I} \int_\Lambda \|T_t x\|_{\beta} \, d\mu_i(t) = \infty \Bigr\}
		\]
		is residual in $X$.
	\end{enumerate}
\end{corollary}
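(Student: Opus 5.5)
The plan is to prove the cycle (1)$\Rightarrow$(3)$\Rightarrow$(2)$\Rightarrow$(1), with Theorem~\ref{bounded-beta} doing most of the work. The implication (2)$\Rightarrow$(1) is the contrapositive of (1)$\Rightarrow$(2) in Theorem~\ref{bounded-beta}. Likewise, (3)$\Rightarrow$(2) is immediate: $X$ is an $F$-space, hence a Baire space, so a residual subset of $X$ is nonempty.

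For (1)$\Rightarrow$(3), write
\[
\Phi(x):=\sup_{i\in I}\int_{\Lambda}\|T_t x\|_{\beta}\,d\mu_i(t)\in[0,\infty]
\]
and introduce, as in the proof of Theorem~\ref{bounded-beta}, the sets $A_n:=\{x\in X:\Phi(x)\le n\}$. That proof already shows that each $A_n$ is closed, using the compactness of $\operatorname{supp}(\mu_i)$ and the local equicontinuity of $(T_t)$. Since $\{x:\Phi(x)=\infty\}=X\setminus\bigcup_n A_n$, it suffices to show that every $A_n$ has empty interior. Then $\bigcup_n A_n$ is meagre, and its complement is a dense $G_\delta$, in particular residual.

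The key step is that no $A_k$ has nonempty interior under (1). Suppose some $A_k$ contained a nonempty open set $U$. Then $U-U$ is a neighbourhood of $0$. By subadditivity of $\|\cdot\|_\beta$ and of the integrals and suprema, $\Phi(u-v)\le\Phi(u)+\Phi(v)$, so $U-U\subset A_{2k}$. Exactly as in Theorem~\ref{bounded-beta}, (2)$\Rightarrow$(1), this yields $\delta>0$ with $\Phi(x)\le 2k$ whenever $\mathrm{D}(x,0)<\delta$. That is precisely $\beta$-absolute $(\mu_i)$-boundedness, contradicting (1).

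The step needing the most care is the closedness of $A_n$, since it is where compact support is used. I would quote the argument from the proof of Theorem~\ref{bounded-beta} rather than redo it: for fixed $i$ and $\varepsilon>0$, equicontinuity on $\operatorname{supp}(\mu_i)$ gives $\int\|T_t(x_\ell-x)\|_\beta\,d\mu_i<\varepsilon$ for $x_\ell$ close to $x$. The remaining steps are routine Baire-category bookkeeping.
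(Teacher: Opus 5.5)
Your proof is correct, but it takes a different route from the paper's for the implication into (3).

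The paper gets (1)$\Leftrightarrow$(2) directly from Theorem~\ref{bounded-beta}. It then proves (2)$\Rightarrow$(3) with a perturbation trick. Fix a witness $q$ with $\Phi(q)=\infty$. By subadditivity and homogeneity, $\Phi(y+\lambda q)=\infty$ for every $y$ with $\Phi(y)<\infty$ and every $\lambda\neq 0$. Letting $\lambda\to 0$ shows that $Z=\{\Phi=\infty\}$ is dense. Since $Z$ is a $G_\delta$ (which again rests on the closedness of the sets $A_n$), it is residual.

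You instead prove (1)$\Rightarrow$(3) by rerunning the Baire-category argument from the proof of Theorem~\ref{bounded-beta} in contrapositive form. You show that each sublevel set $A_n$ is closed with empty interior, so $Z=\bigcap_n (X\setminus A_n)$ is a countable intersection of dense open sets.

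What each route buys:
\begin{itemize}
\item The paper's route needs no second Baire argument for density. It only uses the existence of a single unbounded vector, which is the standard device in the theory of irregular vectors.
\item Your route needs no witness at all. It uses only the easy scaling direction (1)$\Rightarrow$(2) of Theorem~\ref{bounded-beta}, together with the closedness of the $A_n$ from its proof. You then get the corollary's (1)$\Rightarrow$(2) for free via (3)$\Rightarrow$(2), instead of importing the Baire half of the theorem. In effect, you absorb that Baire argument into the corollary.
\end{itemize}

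Both arguments depend on exactly the same analytic input: the closedness of the $A_n$, which comes from compact supports and local equicontinuity. So neither is more general than the other.
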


\begin{proof}
	The equivalence between $(1)$ and $(2)$ follows from the previous theorem, and the implication $(3)\Rightarrow(2)$ is immediate.
	
	To prove $(2)\Rightarrow(3)$, fix $q \in X$ satisfying $(2)$ and define
	\[
	Z := \Bigl\{ y \in X : \sup_{i \in I} \int_{\Lambda} \|T_t y\|_{\beta} \, d\mu_i(t) = \infty \Bigr\}.
	\]
	For any $y \in X \setminus Z$ and any $\lambda \neq 0$, we have $y+\lambda q \in Z$, which implies
	$y \in \overline{Z}$. Hence, $Z$ is dense in $X$. Since $Z$ is clearly a $G_\delta$ subset of $X$,
	it follows that $Z$ is residual in $X$.
\end{proof}



\section{A framework for $(\mu_i)$-Li--Yorke chaos}

Let $(Z,\vartheta)$ be a metric space and let $f\colon Z\to Z$ be
continuous. The map $f$ is said to be \emph{Li--Yorke chaotic} if there
exists an uncountable set $\Gamma\subset Z$ such that, for every pair of
distinct points $x,y\in\Gamma$,
\begin{equation}\label{LY-clasico}
	\liminf_{n\to\infty}
	\vartheta\bigl(f^n(x),f^n(y)\bigr)=0
	\quad\text{and}\quad
	\limsup_{n\to\infty}
	\vartheta\bigl(f^n(x),f^n(y)\bigr)>0.
\end{equation}

As recalled in the Introduction, Berm\'udez et al.~\cite{BeBoMaPe}
proved that, for bounded operators on Banach spaces, Li--Yorke chaos is
equivalent to the existence of an irregular vector. Bernardes et
al.~\cite{BeBo15} subsequently extended this characterization to
continuous linear operators on Fr\'echet spaces. In this setting, a
vector $x\in X$ is irregular for $T$ if there exists
$\beta\in\mathbb{N}$ such that
\[
\liminf_{n\to\infty}\rho(T^n x,0)=0
\qquad\text{and}\qquad
\limsup_{n\to\infty}\|T^n x\|_{\beta}=\infty.
\]
The existence of such a vector is again equivalent to Li--Yorke chaos.

Bernardes et al.~\cite{BeBoPe20} introduced mean Li--Yorke chaos for
bounded operators on Banach spaces. The corresponding notion for
continuous linear operators on Fr\'echet spaces was later studied by
Jiang and Li~\cite{JiLi25}.

More recently, Arbieto and Saavedra~\cite{ArSa} developed a notion of
Li--Yorke chaos for sequences of continuous linear operators from an
$F$-space into a normed space, formulated directly in terms of irregular
vectors. This approach places irregularity
at the center of the theory.

Following this point of view, we take irregularity as the basic notion
underlying Li--Yorke-type chaos in linear dynamics. This is also
consistent with the role played by dense irregular manifolds and dense
mean irregular manifolds, which provide particularly strong linear
manifestations of Li--Yorke chaotic behavior.

\begin{definition}\label{def-irre}
	A vector $x \in X$ is said to be \emph{\(\beta\)-absolutely $(\mu_i)_{i \in I}$-irregular}
	for the family $(T_t)_{t \in \Lambda}$ if
	\[
	\liminf_{\substack{i\rightarrow \infty\\ i\in I}}
	\int_{\Lambda} \rho\!\left(T_t x,0\right)\, d\mu_i(t) = 0,\quad \text{and}
	\quad
	\limsup_{\substack{i\rightarrow \infty\\ i\in I}}
	\int_{\Lambda} \|T_t x\|_{\beta}\, d\mu_i(t) = \infty .
	\]
	
	The family $(T_t)_{t \in \Lambda}\subset \mathcal{L}(X,Y)$ is said to be
	\emph{$(\mu_i)_{i \in I}$-Li--Yorke chaotic} if there exist an uncountable set
	$\Gamma \subset X$ and a seminorm $\|\cdot\|_{\beta}$ such that $x - y$ is
	\(\beta\)-absolutely $(\mu_i)_{i \in I}$-irregular
	for every pair of distinct points $x,y \in \Gamma$.
	
	It is said to be \emph{densely $(\mu_i)_{i \in I}$-Li--Yorke chaotic} if, in addition,
	$\Gamma$ can be chosen to be dense in $X$.
\end{definition}

It is worth noting that the standing assumptions—namely, that the measures $\mu_i$ are Borel probability measures with compact support, that the family $(T_t)_{t\in\Lambda}$ is locally equicontinuous and the condition (\ref{compact-soporte})—are essential to guarantee the appropriate topological structure of the sets under consideration. In particular, they ensure that
\[
\left\{ x  : \liminf_{i\rightarrow \infty} \int_{\Lambda} \rho(T_t x,0) \, d\mu_i = 0 \right\}
= \bigcap_{m \in \mathbb{N}} \bigcup_{\substack{i>m\\ i\in I}}
\left\{ x : \int_{\Lambda} \rho(T_t x,0) \, d\mu_i < \frac{1}{m} \right\},
\]
and
\[
\left\{ x : \limsup_{i\rightarrow \infty} \int_{\Lambda} \|T_t x\|_{\beta} \, d\mu_i= \infty \right\}
= \bigcap_{m \in \mathbb{N}} \bigcup_{\substack{i>m\\ i\in I}}
\left\{ x : \int_{\Lambda} \|T_t x\|_{\beta} \, d\mu_i > m \right\},
\]
are $G_{\delta}$-subsets of $X$.

\begin{proposition}\label{LY-irr}
	Let $(T_t)_{t} \subset \mathcal{L}(X)$ be a family of continuous linear operators.
	The following assertions are equivalent:
	\begin{enumerate}
		\item $(T_t)_{t}$ is $(\mu_i)$-Li--Yorke chaotic;
		\item $(T_t)_{t}$ admits a \(\beta\)-absolutely $(\mu_i)$-irregular vector
		for some $\beta \in \mathbb{N}$.
	\end{enumerate}
\end{proposition}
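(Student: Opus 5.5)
The implication (1)$\Rightarrow$(2) is immediate from Definition~\ref{def-irre}. If $\Gamma$ is uncountable and $\|\cdot\|_\beta$ is the associated seminorm, then $\Gamma$ contains two distinct points $x,y$. By definition, $x-y$ is a $\beta$-absolutely $(\mu_i)$-irregular vector.

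For (2)$\Rightarrow$(1), fix a $\beta$-absolutely $(\mu_i)$-irregular vector $x_0\neq 0$. The natural candidate for the scrambled set is the segment $\Gamma:=\{\lambda x_0:\lambda\in[0,1]\}$, which is uncountable. For distinct points $\lambda x_0,\lambda' x_0\in\Gamma$ the difference is $c\,x_0$ with $c=\lambda-\lambda'\neq 0$. So the proof reduces to showing that the set of $\beta$-absolutely $(\mu_i)$-irregular vectors is invariant under multiplication by nonzero scalars.

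The $\limsup$ part of this invariance is trivial by homogeneity of the seminorm:
\[
\int_\Lambda \|T_t(cx_0)\|_\beta\,d\mu_i(t)=|c|\int_\Lambda\|T_t x_0\|_\beta\,d\mu_i(t).
\]
Hence the $\limsup$ over $i\in I$ remains $\infty$.

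The $\liminf$ part is the main obstacle, because $\rho$ is not homogeneous. Choose a sequence $i_k\to\infty$ in $I$ with $\int_\Lambda\rho(T_tx_0,0)\,d\mu_{i_k}(t)\to0$. Since $\rho(y,0)=\sum_\ell 2^{-\ell}\frac{\|y\|_\ell}{1+\|y\|_\ell}$, I will use two elementary facts.

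\emph{Monotonicity.} The map $s\mapsto s/(1+s)$ is increasing, so $\rho(cy,0)\le\rho(y,0)$ whenever $|c|\le1$. This alone settles the case $|c|\le 1$.

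\emph{Dilation bound.} For $|c|\ge1$ one has $\frac{|c|s}{1+|c|s}\le |c|\frac{s}{1+s}$, which follows from $1+s\le 1+|c|s$. Summing over $\ell$ gives $\rho(cy,0)\le |c|\,\rho(y,0)$.

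Applying the dilation bound with $y=T_tx_0$ and integrating against $\mu_{i_k}$ yields
\[
\int_\Lambda\rho(T_t(cx_0),0)\,d\mu_{i_k}(t)\le |c|\int_\Lambda\rho(T_tx_0,0)\,d\mu_{i_k}(t)\longrightarrow0.
\]
Therefore the $\liminf$ for $cx_0$ is also $0$.

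No Baire argument is needed, since the definition of Li--Yorke chaos asks only for an uncountable $\Gamma$, and a segment suffices. Measurability of $t\mapsto\rho(T_tx,0)$ and of $t\mapsto\|T_tx\|_\beta$ is already implicit in Definition~\ref{def-irre}, so no new issue arises there. The only care needed is to fix a single seminorm $\beta$ for all pairs in $\Gamma$, and the segment construction does this automatically.
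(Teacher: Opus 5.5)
Your proof is correct and is essentially the paper's argument: the paper takes $\Gamma=\mathbb{K}q$ and reduces to nonzero scalar multiples of $q$, using $\rho(\lambda y,0)\le(|\lambda|+1)\rho(y,0)$ for the $\liminf$ and homogeneity of $\|\cdot\|_\beta$ for the $\limsup$, just as you do. Because your segment $[0,1]x_0$ only produces differences $cx_0$ with $0<|c|\le 1$, the monotonicity observation already suffices; your dilation bound is needed only if you take the whole line, as the paper does.
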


\begin{proof}
	The implication $(1)\Rightarrow(2)$ follows directly from the definition.
	Conversely, we argue by a classical construction. Assume that there exists
	$q\in X$ which is $\beta$-absolutely $(\mu_i)$-irregular for some
	$\beta\in\mathbb{N}$. Consider the uncountable set \(\Gamma := \mathbb{K}\,q \subset X \).
	
	Let $a,b\in\Gamma$ with $a\neq b$. Then there exists
	$\lambda\in\mathbb{K}\setminus\{0\}$ such that $a-b=\lambda q$. Hence
	\[
	\liminf_{i\to\infty}
	\int \rho(T_t a, T_t b)\, d\mu_i(t)
	\le (|\lambda|+1)
	\liminf_{i\to\infty}
	\int \rho(T_t q,0)\, d\mu_i(t)
	=0,
	\]
	and
	\[
	\limsup_{i\to\infty}
	\int \|T_t(a-b)\|_{\beta}\, d\mu_i(t)
	=
	|\lambda|
	\limsup_{i\to\infty}
	\int \|T_t q\|_{\beta}\, d\mu_i(t)
	=\infty.
	\]
	Therefore $a-b$ is again $\beta$-absolutely $(\mu_i)$-irregular.
	Consequently, $(T_t)_t$ is $(\mu_i)$-Li--Yorke chaotic.
\end{proof}

It is immediate to observe that if the family $(T_t)_t$ is $(\mu_i)$-Li--Yorke chaotic, then $(T_t)_t$ cannot be absolutely $(\mu_i)$-bounded.

Recall that an operator $T$ on a Banach space $X$ is said to be
\emph{mean Li--Yorke chaotic} \cite[Definition~1]{BeBoPe20} if there exists
an uncountable subset $\Gamma \subset X$ such that every pair $(x,y)$ of
distinct points in $\Gamma$ is a mean Li--Yorke pair for $T$, that is,
\[
\liminf_{N\to\infty} \frac{1}{N}\sum_{k=1}^{N}\|T^{k}(x-y)\| = 0
\quad \text{and} \quad
\limsup_{N\to\infty} \frac{1}{N}\sum_{k=1}^{N}\|T^{k}(x-y)\| > 0 .
\]
If $\Gamma$ can be chosen dense in $X$, then $T$ is said to be
\emph{densely mean Li--Yorke chaotic}.

\begin{proposition}
	Let $X$ be a separable infinite-dimensional Banach space and let $T\in\mathcal{L}(X)$. Then:
	\begin{enumerate}
		\item $T$ is Li--Yorke chaotic if and only if \(T\) is $(\delta_m)$-Li--Yorke chaotic.
		\item $T$ is mean Li--Yorke chaotic if and only if \(T\) is $\left(\frac{1}{m}\sum_{k=1}^{m}\delta_k\right)$-Li--Yorke chaotic.
	\end{enumerate}
\end{proposition}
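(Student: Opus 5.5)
The plan is to reduce both equivalences to the known characterizations through irregular vectors, and then use Proposition~\ref{LY-irr}. Here $\Lambda=\mathbb{N}$, $T_t=T^t$, $X=Y$ is a Banach space, $\|\cdot\|_\beta=\|\cdot\|$ for all $\beta$, and $\rho(x,0)=\|x\|/(1+\|x\|)$ after collapsing the seminorm sum.

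First we check that both observation schemes satisfy the standing assumptions of Section~2. The measures $\delta_m$ and $\frac1m\sum_{k=1}^m\delta_k$ have finite, hence compact, supports. The unions of their supports up to index $i$ are finite, and $\mu_m(K)\to0$ for every finite $K\subset\mathbb{N}$. This holds trivially for $\delta_m$ and because $\mu_m(K)\le |K|/m$ in the Cesàro case. So Proposition~\ref{LY-irr} applies. It shows that $(\mu_m)$-Li--Yorke chaos is equivalent to the existence of an absolutely $(\mu_m)$-irregular vector $x$, that is, a vector with
\[
\liminf_{m\to\infty}\int\frac{\|T^tx\|}{1+\|T^tx\|}\,d\mu_m(t)=0
\quad\text{and}\quad
\limsup_{m\to\infty}\int\|T^tx\|\,d\mu_m(t)=\infty .
\]

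For (1), with $\mu_m=\delta_m$, the first condition reads $\liminf_m \|T^mx\|/(1+\|T^mx\|)=0$, which is equivalent to $\liminf_m\|T^mx\|=0$. The second reads $\limsup_m\|T^mx\|=\infty$. So absolutely $(\delta_m)$-irregular vectors are exactly Beauzamy's irregular vectors, and the theorem of Bermúdez et al.~\cite{BeBoMaPe} gives the equivalence with Li--Yorke chaos.

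For (2), the key step is to show that $\liminf_N \frac1N\sum_{k\le N}\rho(T^kx,0)=0$ iff $\liminf_N\frac1N\sum_{k\le N}\|T^kx\|=0$. One direction holds since $\rho\le\|\cdot\|$. The converse is the main obstacle, because $\rho$ is bounded while the norm is not. In general the equivalence fails for an arbitrary vector: the mass can sit on a sparse set of times with huge norms. So I would not aim for exact equality of the two vector classes. Instead I would pass through the characterization of Bernardes et al.~\cite{BeBoPe20}: $T$ is mean Li--Yorke chaotic iff it admits an (absolutely) mean irregular vector, i.e. one with $\liminf$ of Cesàro norm averages equal to $0$ and $\limsup$ equal to $\infty$.

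Such a vector is clearly absolutely $(\frac1m\sum\delta_k)$-irregular, which gives one implication. For the converse, suppose $x$ is $(\mu_m)$-irregular in our sense. Then $\limsup$ of the Cesàro norm averages is infinite, and the $\rho$-averages have $\liminf$ zero. Along the relevant subsequence $N_j$, for any $\varepsilon>0$ the density of times $k\le N_j$ with $\|T^kx\|\ge\varepsilon$ tends to $0$. Hence $\liminf_N \frac1N\sum_{k\le N}\min(\|T^kx\|,1)=0$, and the pair $(x,0)$ is then a mean Li--Yorke-type pair in the bounded metric. Equivalently, $T$ is mean Li--Yorke chaotic in the sense of the original definition, which is formulated with the norm, together with the unboundedness.

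To close this gap I would invoke the result of \cite{BeBoPe20} that mean Li--Yorke chaos is equivalent to the existence of a vector with $\liminf_N\frac1N\sum\|T^kx\|=0$ and $\limsup>0$, and that this in turn produces a residual set of absolutely mean irregular vectors. If needed, I would combine this with Corollary~\ref{not-abs-bounded}, since $\limsup=\infty$ gives non-absolute-Cesàro-boundedness and hence a residual set of vectors with unbounded averages. Intersecting this with a residual set of vectors having null lower norm averages yields a genuine mean irregular vector and finishes (2). The delicate point is producing that residual set of null lower averages from the $\rho$-version. I would first try to show that the $G_\delta$ set of vectors with $\liminf$ of $\rho$-averages zero is dense, using the density of $\mathbb{K}x$ translates as in Corollary~\ref{not-abs-bounded}, and then apply the arguments of \cite{BeBoPe20} on separable $X$.
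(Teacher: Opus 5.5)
Your treatment of (1) and of the forward implication in (2) is fine and matches the paper. The converse in (2) is where the proposal breaks down, and the cause is how you read $\rho$. In the Setting and Notation section the paper stipulates that when $X=Y$ is a Banach space it is endowed with its norm. So in Definition~\ref{def-irre} the term $\rho(T^tx,0)$ is simply $\|T^tx\|$, not $\|T^tx\|/(1+\|T^tx\|)$. With this convention an absolutely $\bigl(\frac1m\sum_{k=1}^m\delta_k\bigr)$-irregular vector is literally an absolutely mean irregular vector in the sense of \cite{BeBoPe20}, and the converse follows directly from the definitions. If $\Gamma$ is the uncountable set given by $\bigl(\frac1m\sum_{k=1}^m\delta_k\bigr)$-Li--Yorke chaos, then for distinct $x,y\in\Gamma$ one has $\liminf_N\frac1N\sum_{k\le N}\|T^k(x-y)\|=0$ and $\limsup_N\frac1N\sum_{k\le N}\|T^k(x-y)\|=\infty>0$. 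Hence $(x,y)$ is a mean Li--Yorke pair. This is exactly what the paper means by ``the converse implication is immediate''; no residuality argument is needed.

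Under your bounded-metric reading, the steps you sketch do not work, and they cannot be repaired.
\begin{itemize}
\item The sentence ``Equivalently, $T$ is mean Li--Yorke chaotic in the sense of the original definition'' conflates a mean pair for the bounded metric with a mean pair for the norm.
\item The translation trick of Corollary~\ref{not-abs-bounded} relies on $\sup=\infty$ surviving the addition of a vector with finite supremum. It has no analogue for a condition of the form $\liminf=0$.
\item A residual set of vectors with null lower $\rho$-averages supplies no vector with null lower norm averages to intersect with the residual set of unbounded averages.
\end{itemize}

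In fact the implication is false in that reading. Let $Te_j=w_je_{j-1}$ on $\ell^p(\mathbb{Z})$, with $w_j=1$ for $j\ge1$ and $0<w_{-n}\le2$. Choose the $w_{-n}$ so that $W(n):=\|T^ne_0\|$ stays at a baseline tending to $0$, except on short geometric spikes whose length is logarithmic in their height. Make the spikes tall enough that $\max_{m\le M}W(m)\ge cM$ for large $M$ and $\limsup_N\frac1N\sum_{n\le N}W(n)=\infty$. Make them sparse enough that $\{n:W(n)\ge\varepsilon\}$ has lower density zero for each $\varepsilon>0$. Then $e_0$ has null lower $\rho$-Ces\`aro averages and unbounded norm Ces\`aro averages, so $T$ would be $\bigl(\frac1m\sum_{k=1}^m\delta_k\bigr)$-Li--Yorke chaotic in your sense. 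However, $\|T^ny\|\ge|y_j|\,\|T^ne_j\|$ for every $j$, and $\|T^ne_j\|$ is eventually a positive multiple of a time shift of $W$. Hence every $y\neq0$ satisfies $\liminf_N\frac1N\sum_{k\le N}\|T^ky\|>0$, so $T$ has no mean Li--Yorke pair at all.
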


\begin{proof}
	We prove the first equivalence. If $T$ is Li--Yorke chaotic, then $T$
	admits an irregular vector by \cite[Theorem~5]{BeBoMaPe}. Hence, by
	Proposition~3.2, $T$ is $(\delta_m)$-Li--Yorke chaotic.
	The converse implication is immediate.
	
	For the second equivalence, assume that $T$ is mean Li--Yorke chaotic.
	Then $T$ admits a mean irregular vector by \cite[Theorem~5]{BeBoPe20}.
	Proposition~3.2 again yields that $T$ is
	$\left(\frac{1}{m}\sum_{k=1}^{m}\delta_k\right)$-Li--Yorke chaotic.
	The converse implication is immediate.
\end{proof}

In the setting of Fr\'echet spaces, $T$
is $\left(\frac{1}{m}\sum_{k=1}^{m}\delta_k\right)$-Li--Yorke chaotic
if and only if $T$ is mean Li--Yorke $\beta$-extremely chaotic for some \(\beta\in \mathbb{N}\), a notion introduced by Jiang and Li in \cite{JiLi25}.

\begin{theorem}\label{weak:li-yorke}
	Let \(X\) be a Fréchet or Banach space. If $T$ is (densely) $(\mu_i)$-Li–Yorke chaotic, then $T$ is (densely) Li–Yorke chaotic.
\end{theorem}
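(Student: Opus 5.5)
The plan is to show that the very same scrambled set $\Gamma$ from Definition~\ref{def-irre} is a classical Li--Yorke scrambled set for $T$. Denseness is then inherited for free. Here $T_t=T^t$ with $t\in\mathbb{N}$, and the $\mu_i$ are compactly (hence finitely) supported probability measures on $\mathbb{N}$ with $\mu_i(K)\to 0$ for every finite $K\subset\mathbb{N}$. So it suffices to prove the following pointwise claim: every $\beta$-absolutely $(\mu_i)$-irregular vector $q$ satisfies
\[
\liminf_{n\to\infty}\rho(T^n q,0)=0
\qquad\text{and}\qquad
\limsup_{n\to\infty}\|T^n q\|_{\beta}=\infty .
\]
Given the claim, for distinct $x,y\in\Gamma$ we apply it to $q=x-y$. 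By translation invariance of $\rho$ this gives $\liminf_n\rho(T^nx,T^ny)=0$. The unboundedness of $\|T^n(x-y)\|_\beta$ forces $T^n x-T^n y\not\to 0$, hence $\limsup_n\rho(T^nx,T^ny)>0$. Thus $(x,y)$ is a Li--Yorke pair in the sense of \eqref{LY-clasico}. This shows that $T$ is Li--Yorke chaotic, and densely so when $\Gamma$ is dense. In the Banach case $\rho$ is just the norm metric, so the same argument applies verbatim.

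For the $\limsup$ part of the claim, argue by contraposition. If $\sup_n\|T^nq\|_\beta=M<\infty$, then since each $\mu_i$ is a probability measure,
\[
\int_{\mathbb{N}}\|T^tq\|_\beta\,d\mu_i(t)\le M\quad\text{for all } i,
\]
contradicting $\limsup_i\int\|T^tq\|_\beta\,d\mu_i=\infty$.

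For the $\liminf$ part, suppose instead that $\liminf_n\rho(T^nq,0)=c>0$. Choose $N$ such that $\rho(T^nq,0)\ge c/2$ for all $n> N$. Since $\rho\ge 0$,
\[
\int_{\mathbb{N}}\rho(T^tq,0)\,d\mu_i(t)\ \ge\ \frac{c}{2}\,\mu_i\bigl(\{n>N\}\bigr)=\frac{c}{2}\bigl(1-\mu_i(\{0,\dots,N\})\bigr).
\]
The right-hand side tends to $c/2$ as $i\to\infty$, because $\{0,\dots,N\}$ is compact in $\Lambda=\mathbb{N}$ and the standing assumption gives $\mu_i(\{0,\dots,N\})\to0$. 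Hence $\liminf_i\int\rho(T^tq,0)\,d\mu_i\ge c/2>0$, contradicting irregularity.

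The only step with real content is this last one. It is exactly where the hypothesis that the observation scheme asymptotically vanishes on compact sets is used: it lets the lower bound on the tail of the orbit transfer to the averages. Without it, a scheme concentrating mass on an initial segment could make the averages small without the orbit approaching $0$. Everything else is bookkeeping. Alternatively, one could invoke Proposition~\ref{LY-irr} together with the characterizations of \cite{BeBoMaPe} and \cite{BeBo15} via irregular vectors. The direct scrambled-set argument above avoids those results and handles the dense case simultaneously.
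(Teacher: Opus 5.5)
Your argument is correct and is essentially the paper's proof. Both keep the same scrambled set $\Gamma$ and, for $q=x-y$ with $x\neq y$ in $\Gamma$, derive $\liminf_n\rho(T^nq,0)=0$ and $\limsup_n\|T^nq\|_\beta=\infty$ by contradiction from the $(\mu_i)$-irregularity of $q$.

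The one difference is in the $\liminf$ step. The paper bounds the initial segment from below by $\min_{0\le k\le n_0}\rho(T^kq,0)$, which is positive since otherwise the orbit of $q$ would eventually vanish, instead of using $\mu_i(\{0,\dots,N\})\to 0$. This gives $\int\rho(T^tq,0)\,d\mu_i(t)\ge\inf_n\rho(T^nq,0)>0$ for \emph{any} probability measures. So, contrary to your closing remark, the vanishing-on-compacts hypothesis is not actually needed in that step.
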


\begin{proof}
	Suppose that $T$ is $(\mu_i)$--Li--Yorke chaotic, and let $\Gamma \subset X$ be the
	corresponding uncountable set together with $\beta \in \mathbb{N}$ associated
	with $(\mu_i)$.
	
	Fix any $x \neq y$ in $\Gamma$. We claim that
	$\displaystyle{\liminf_{n\to\infty} \rho(T^{n}x,T^{n}y)=0}$.
	Suppose, by contradiction, that
	$\displaystyle{\liminf_{n\to\infty} \rho(T^{n}x,T^{n}y) > 0}$.
	Then there exist $\delta>0$ and $n_{0}\in\mathbb{N}$ such that
	$\rho(T^{n}x,T^{n}y)>\delta$ for all $n>n_{0}$.
	For each $i\in I$, we have
	\begin{align*}
		\int \rho(T^{t}x,T^{t}y)\, d\mu_i(t)
		&\ge \min_{0\le k\le n_{0}} \rho(T^{k}x,T^{k}y)\,\mu_i([0,n_{0}])
		+ \delta \bigl(1-\mu_i([0,n_{0}])\bigr) \\
		&\ge \min\Bigl\{\delta,\min_{0\le k\le n_{0}} \rho(T^{k}x,T^{k}y)\Bigr\},
	\end{align*}
	which contradicts the $(\mu_i)$-irregularity of $x$.
	Hence,
	$\displaystyle{\liminf_{n\to\infty} \rho(T^{n}x,T^{n}y)=0}$.
	
	To conclude, we claim that
	$\displaystyle{\limsup_{n\to\infty} \|T^{n}(x-y)\|_{\beta} = \infty}$.
	Suppose otherwise, that is,
	$\displaystyle{\limsup_{n\to\infty} \|T^{n}(x-y)\|_{\beta} < \infty}$. Put \(z:=x-y\),
	then there exist $M>0$ and $m_{0}\in\mathbb{N}$ such that
	$\|T^{n}z\|_{\beta} \le M$ for all $n>m_{0}$.
	Hence, for each $i\in I$,
	\begin{align*}
		\int \|T^{t}z\|_{\beta}\, d\mu_i(t)
		&\le \max_{0\le k\le m_{0}} \|T^{k}z\|_{\beta}\,\mu_i([0,m_{0}])
		+ M\bigl(1-\mu_i([0,m_{0}])\bigr) \\
		&\le \max\Bigl\{M,\max_{0\le k\le m_{0}} \|T^{k}z\|_{\beta}\Bigr\},
	\end{align*}
	which again contradicts the $(\mu_i)$-irregularity of $x$.
	
	Therefore, $T$ is Li--Yorke chaotic. If $\Gamma$ is dense in $X$, it follows that
	$T$ is densely Li--Yorke chaotic.
\end{proof}

\begin{theorem}[Mycielski Theorem \cite{Mycielski64,TaXiLu}] \label{Myci}
	Suppose that \( X \) is a separable complete metric space without isolated points, and that for every \( n \in \mathbb{N} \), the set \( \mathcal{R}_{n} \) is residual in the product space \( X^{n} \). Then there is a Mycielski set \( \mathcal{K} \) in \( X \) such that
	\[
	(x_{1}, x_{2}, \ldots, x_{n}) \in \mathcal{R}_{n}
	\]
	for each \( n \in \mathbb{N} \) and any pairwise distinct \( n \) points \( x_{1}, x_{2}, \ldots, x_{n} \) in \( \mathcal{K} \).
\end{theorem}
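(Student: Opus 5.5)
The plan is to follow Mycielski's classical argument: build a Cantor-type set inside $X$ by a dyadic scheme of nested closed balls, where each stage also steers the finite tuples of balls into the residual sets. First write each $\mathcal{R}_n$ as a countable intersection of open dense sets, $\mathcal{R}_n\supset\bigcap_{k\in\mathbb{N}}U_{n,k}$ with $U_{n,k}\subset X^n$ open and dense. Enumerate the pairs $(n,k)$ as a single sequence $(n_j,k_j)_{j\ge1}$, so that every requirement is handled at some finite stage. Working in the complete metric $d$ of $X$, we construct, for each level $j$ and each binary word $s\in\{0,1\}^j$, a nonempty open set $V_s$ with the following properties:
\begin{itemize}
	\item $\overline{V_{s0}},\overline{V_{s1}}\subset V_s$ and $\overline{V_{s0}}\cap\overline{V_{s1}}=\emptyset$;
	\item $\operatorname{diam}V_s<2^{-j}$;
	\item for every choice of pairwise distinct words $s_1,\dots,s_{n}\in\{0,1\}^j$ with $n\le j$, and every $(n_i,k_i)$ with $i\le j$ and $n_i=n$, we have $V_{s_1}\times\cdots\times V_{s_n}\subset U_{n_i,k_i}$.
\end{itemize}

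For the inductive step from level $j$ to $j+1$, we first split each $V_s$ into two disjoint nonempty open subsets with small closures. Such a split exists because $X$ has no isolated points, so each $V_s$ contains two distinct points that can be separated by small balls. This produces $2^{j+1}$ candidate sets $W_t$. We then refine them to meet finitely many new conditions, one for each ordered tuple $(t_1,\dots,t_n)$ of distinct words of length $j+1$ with $n\le j+1$ and each relevant index $i\le j+1$. These conditions are processed one at a time. Since $U_{n,k}$ is open and dense in $X^n$, the product $W_{t_1}\times\cdots\times W_{t_n}$ meets $U_{n,k}$. Openness then gives a point $(y_1,\dots,y_n)$ in the intersection together with a box neighbourhood $B_1\times\cdots\times B_n\subset U_{n,k}\cap(W_{t_1}\times\cdots\times W_{t_n})$. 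We replace each $W_{t_l}$ by $B_l$. Shrinking only helps, so the conditions already achieved, as well as disjointness and the diameter bound, are preserved. After finitely many such shrinkings we set $V_t:=W_t$.

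Finally, define $\mathcal{K}:=\bigcap_j\bigcup_{|s|=j}\overline{V_s}$. By completeness and the shrinking diameters, each infinite branch $\sigma\in\{0,1\}^{\mathbb{N}}$ determines a unique point $x_\sigma$. Disjointness of siblings makes $\sigma\mapsto x_\sigma$ an injective continuous map, so $\mathcal{K}$ is a homeomorphic copy of the Cantor set, which is a Mycielski set. Now take pairwise distinct $x_{\sigma_1},\dots,x_{\sigma_n}\in\mathcal{K}$ and a fixed $k$. Choose $j$ so large that $j\ge n$, the pair $(n,k)$ equals $(n_i,k_i)$ for some $i\le j$, and the level-$j$ prefixes $s_l$ of the $\sigma_l$ are pairwise distinct. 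Then $(x_{\sigma_1},\dots,x_{\sigma_n})\in\overline{V_{s_1}}\times\cdots\times\overline{V_{s_n}}$. To land in the open set itself, we use the level-$(j+1)$ sets, whose closures lie inside the level-$j$ sets. This gives $(x_{\sigma_1},\dots,x_{\sigma_n})\in U_{n,k}$ for all $k$, hence in $\mathcal{R}_n$.

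The main obstacle is the bookkeeping in the inductive step. At each stage we must guarantee finitely many conditions simultaneously while keeping the sets nonempty, which is handled by the sequential box-shrinking described above. We must also ensure that the tuples of interest eventually separate into distinct prefixes, which follows from distinctness of the branches. If a Mycielski set is meant as a countable union of Cantor sets dense in $X$, we apply the same construction starting from a countable base $(G_p)$: run the scheme inside each $G_p$, and interleave the requirements across different $p$ so that tuples drawn from different Cantor pieces are also controlled.
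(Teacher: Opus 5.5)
The paper gives no proof of this theorem; it is quoted from Mycielski and Tan--Xiong--Lu. Your Cantor-scheme construction is correct, and it is the standard proof that there is a \emph{single} Cantor set all of whose tuples of distinct points lie in $\mathcal{R}_n$. It has the right ingredients: nested cells with disjoint sibling closures, diameters $<2^{-j}$, and sequential box-shrinking into $U_{n,k}$ over all ordered tuples of distinct words.

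\textbf{The gap.} It lies in the sentence ``$\mathcal{K}$ is a homeomorphic copy of the Cantor set, which is a Mycielski set''. In this paper a Mycielski set is one whose intersection with \emph{every} nonempty open set contains a Cantor set. In particular it is dense, and density is exactly what the later applications use: Theorems~\ref{equiv-dense}, \ref{prod-LY} and \ref{prod-dis-near 0} need a dense Mycielski set and pick a dense sequence inside $\mathcal{K}$. A compact set meeting every nonempty open set is closed and dense, hence equal to $X$. So your $\mathcal{K}$ qualifies only when $X$ is itself a Cantor set, which never happens for the $F$-spaces considered here.

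Your last paragraph names the fix but omits its only new content, namely how tuples mixing points from different pieces are controlled. The argument goes as follows.
\begin{enumerate}
\item Fix a countable base $(G_p)_{p\ge1}$. At stage $j$, keep the finite family of \emph{labelled} cells $V^{(p)}_s$ with $p\le j$ and $s\in\{0,1\}^{j-p}$. Seed the new piece with a small ball $V^{(j)}_{\emptyset}\subset G_j$.
\item Split every old cell as you do. Then run your box-shrinking over all ordered tuples of $n\le j$ pairwise distinct \emph{labels}, which may come from different pieces. Cells of different pieces may overlap as sets; the refinement only needs distinct labels so that each cell is shrunk independently, and all earlier conditions survive shrinking.
\item Set $C_p:=\bigcap_{j\ge p}\bigcup_{|s|=j-p}\overline{V^{(p)}_s}\subset G_p$ and $\mathcal{K}:=\bigcup_p C_p$.
\item Given pairwise distinct $x_1,\dots,x_n\in\mathcal{K}$, choose $p_l$ with $x_l\in C_{p_l}$. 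For $j$ large, the labels of the stage-$j$ cells containing the $x_l$ are pairwise distinct: different pieces give different labels, and points in the same piece separate along branches. Hence $(x_1,\dots,x_n)\in U_{n,k}$ for every $k$.
\item Every nonempty open set contains some $G_p\supset C_p$, so this $\mathcal{K}$ is a Mycielski set in the paper's sense.
\end{enumerate}
This is also the point where separability is actually used.
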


A set \( \mathcal{K} \) is referred to as a Mycielski set if the intersection of \( \mathcal{K} \) and any nonempty open set \( U \) contains a Cantor set.

\begin{theorem}\label{equiv-dense}
	Let $(T_t)_{t\in\Lambda}\subset\mathcal{L}(X,Y)$, where $X$ is
	separable. Then the following assertions are equivalent:
	\begin{enumerate}
		\item[(1)] The family $(T_t)_{t\in\Lambda}$ is densely
		$(\mu_i)_{i\in I}$-Li--Yorke chaotic.
		
		\item[(2)] For some $\beta\in\mathbb{N}$, the family
		$(T_t)_{t\in\Lambda}$ admits a dense set of $\beta$-absolutely
		$(\mu_i)_{i\in I}$-irregular vectors.
		
		\item[(3)] For some $\beta\in\mathbb{N}$, the family
		$(T_t)_{t\in\Lambda}$ admits a residual set of $\beta$-absolutely
		$(\mu_i)_{i\in I}$-irregular vectors.
	\end{enumerate}
\end{theorem}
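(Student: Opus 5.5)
The plan is to prove the cycle $(1)\Rightarrow(2)\Rightarrow(3)\Rightarrow(1)$.

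For $(1)\Rightarrow(2)$, let $\Gamma$ be the dense uncountable set and $\beta$ the seminorm from Definition~\ref{def-irre}. Fix $x_0\in\Gamma$. Then $\Gamma - x_0$ is dense in $X$, and every element $x-x_0$ with $x\neq x_0$ is $\beta$-absolutely $(\mu_i)$-irregular. So $(\Gamma-x_0)\setminus\{0\}$ is a dense set of $\beta$-irregular vectors. It remains dense because $X$, being an $F$-space carrying an irregular vector, is nontrivial and has no isolated points.

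For $(2)\Rightarrow(3)$, let $R_\beta$ be the set of $\beta$-absolutely $(\mu_i)$-irregular vectors. Write $R_\beta=A\cap B$, where
\[
A=\Bigl\{x:\liminf_{i}\int\rho(T_tx,0)\,d\mu_i=0\Bigr\},\qquad B=\Bigl\{x:\limsup_i\int\|T_tx\|_\beta\,d\mu_i=\infty\Bigr\}.
\]
By the remark following Definition~\ref{def-irre}, both $A$ and $B$ are $G_\delta$ sets; this is exactly where compact supports, condition \eqref{compact-soporte} and local equicontinuity are used, since they make $x\mapsto\int\rho(T_tx,0)\,d\mu_i$ and $x\mapsto\int\|T_tx\|_\beta\,d\mu_i$ continuous. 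Both sets contain the dense set $R_\beta$, so each is a dense $G_\delta$ in the complete metric space $X$. By Baire, $R_\beta=A\cap B$ is residual.

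For $(3)\Rightarrow(1)$, I would apply the Mycielski Theorem~\ref{Myci}. $X$ is separable, complete, and has no isolated points, since it is a nontrivial topological vector space. For each $n$, let
\[
\mathcal R_n=\{(x_1,\dots,x_n)\in X^n:\ x_j-x_k\in R_\beta\ \text{for all } j\neq k\}.
\]
For fixed $j\neq k$, the map $(x_1,\dots,x_n)\mapsto x_j-x_k$ from $X^n$ to $X$ is continuous, linear and surjective, hence open. The preimage of a residual set under a continuous open map is residual: the preimage of a dense open set is open, and it is dense by openness of the map. So $\mathcal R_n$ is a finite intersection of residual sets and is therefore residual in $X^n$.

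Theorem~\ref{Myci} then yields a Mycielski set $\mathcal K$ such that $x-y\in R_\beta$ for all distinct $x,y\in\mathcal K$. Since $\mathcal K$ meets every nonempty open set in a Cantor set, it is dense and uncountable. Taking $\Gamma=\mathcal K$ gives dense $(\mu_i)$-Li--Yorke chaos.

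The main obstacle is the $G_\delta$ claim in $(2)\Rightarrow(3)$, namely that $x\mapsto\int\rho(T_tx,0)\,d\mu_i(t)$ is continuous. I would derive this from equicontinuity of $(T_t)_{t\in\operatorname{supp}\mu_i}$, as in the closedness argument in the proof of Theorem~\ref{bounded-beta}. Uniform boundedness of $\rho\le 1$ then gives convergence of the integrals; for the seminorm version one uses the triangle inequality.
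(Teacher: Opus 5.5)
Your proof is correct and follows the paper's argument essentially step by step. You translate $\Gamma$ by a fixed point for $(1)\Rightarrow(2)$, use that the irregular vectors form a dense $G_\delta$ for $(2)\Rightarrow(3)$, and for $(3)\Rightarrow(1)$ pull the residual set back under the continuous open map $(p,q)\mapsto p-q$ before applying the Mycielski Theorem. Your only departures are cosmetic: you define $\mathcal R_n$ for every $n$ (the paper needs only the condition on pairs), and you split $R_\beta$ explicitly as $A\cap B$.
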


\begin{proof}
	Assume (1). Then there exist a dense uncountable set
	$\Gamma\subset X$ and $\beta\in\mathbb{N}$ such that $p-q$ is
	$\beta$-absolutely $(\mu_i)_{i\in I}$-irregular whenever
	$p,q\in\Gamma$ are distinct. Fix $q_0\in\Gamma$ and set
	\[
	E:=\{p-q_0:p\in\Gamma\setminus\{q_0\}\}.
	\]
	Then $E$ is dense in $X$ and consists of $\beta$-absolutely
	$(\mu_i)_{i\in I}$-irregular vectors. Thus (2) holds.
	
	Assume now (2). The set of $\beta$-absolutely
	$(\mu_i)_{i\in I}$-irregular vectors is a $G_\delta$-subset of $X$.
	Since it is dense, it is residual, and hence (3) follows.
	
	Finally, assume (3), and let $\mathcal{R}_\beta$ denote the
	set of $\beta$-absolutely $(\mu_i)_{i\in I}$-irregular vectors. Choose
	dense open sets $(U_\ell)_{\ell\in\mathbb{N}}$ in $X$ such that
	\[
	\bigcap_{\ell\in\mathbb{N}}U_\ell\subset\mathcal{R}_\beta.
	\]
	For each $\ell\in\mathbb{N}$, set
	\[
	W_\ell
	:=
	\{(p,q)\in X\times X:p-q\in U_\ell\}.
	\]
	The map
	\[
	\pi\colon X\times X\longrightarrow X,
	\qquad
	\pi(p,q)=p-q,
	\]
	is continuous, open, and surjective. Consequently,
	$W_\ell=\pi^{-1}(U_\ell)$ is a dense open subset of $X\times X$.
	Therefore,
	\[
	W:=\bigcap_{\ell\in\mathbb{N}}W_\ell
	\]
	is residual in $X\times X$.
	
	Under (3), the space $X$ is nontrivial and hence, being a
	separable $F$-space, is a perfect Polish space. By the Mycielski
	Theorem, there exists a dense Mycielski set $\mathcal{K}\subset X$ such that
	\[
	(\mathcal{K}\times\mathcal{K})\setminus\Delta\subset W,
	\]
	where $\Delta$ denotes the diagonal of $X\times X$. Thus, for every
	pair of distinct points $p,q\in\mathcal{K}$,
	\[
	p-q\in\bigcap_{\ell\in\mathbb{N}}U_\ell
	\subset\mathcal{R}_\beta.
	\]
	Hence $(T_t)_{t\in\Lambda}$ is densely
	$(\mu_i)_{i\in I}$-Li--Yorke chaotic.
\end{proof}

The following criterion is inspired by \cite[Definition~7]{BeBoMaPe}.

\begin{definition}\label{LYCC}
	We say that $(T_t)_{t\in\Lambda}\subset\mathcal{L}(X,Y)$ satisfies the
	\emph{$(\mu_i)_{i\in I}$-Li--Yorke chaos criterion} if there exist a set
	$X_0\subset X$ and $\beta\in\mathbb{N}$ such that the following
	conditions hold:
	\begin{enumerate}
		\item[(1)] There exists a sequence $(i_m)_{m\in\mathbb{N}}$ in $I$
		with $i_m\to\infty$ such that
		\[
		\int_{\Lambda}\rho(T_t x,0)\,d\mu_{i_m}(t)
		\longrightarrow 0
		\qquad\text{for every }x\in X_0.
		\]
		
		\item[(2)] There exist a bounded sequence
		$(y_k)_{k\in\mathbb{N}}$ in
		$\overline{\operatorname{span}}(X_0)$ and a sequence
		$(j_k)_{k\in\mathbb{N}}$ in $I$, with $j_k\to\infty$, such that
		\[
		\int_{\Lambda}\|T_t y_k\|_{\beta}\,d\mu_{j_k}(t)>k
		\qquad\text{for every }k\in\mathbb{N}.
		\]
	\end{enumerate}
	
	If, in addition, $X_0$ is dense in $X$, we say that
	$(T_t)_{t\in\Lambda}$ satisfies the
	\emph{dense $(\mu_i)_{i\in I}$-Li--Yorke chaos criterion}.
\end{definition}

\begin{theorem}\label{equiv-LY-criterion} 
	A family \((T_{t})_{t\in \Lambda}\subset \mathcal{L}(X,Y)\) is \((\mu_{i})\)\textit{-Li--Yorke chaotic} if and only if it satisfies the \((\mu_{i})\)\textit{-Li--Yorke chaos criterion}. 
\end{theorem}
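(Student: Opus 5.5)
The plan is to prove the two implications separately. Throughout we work in the closed subspace $Z:=\overline{\operatorname{span}}(X_0)$, which is itself an $F$-space, so the Baire-category results of Section~3 apply to the restricted family $(T_t|_Z)_{t\in\Lambda}\subset\mathcal{L}(Z,Y)$.

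\emph{Chaos $\Rightarrow$ criterion.} Suppose $(T_t)$ is $(\mu_i)$-Li--Yorke chaotic with seminorm $\|\cdot\|_\beta$. By Definition~\ref{def-irre}, any difference $q=x-y$ of two distinct points of $\Gamma$ is $\beta$-absolutely $(\mu_i)$-irregular. Take $X_0:=\mathbb{K}q$.
\begin{itemize}
\item Since $\liminf_i\int\rho(T_tq,0)\,d\mu_i=0$, choose $i_m\to\infty$ with $\int\rho(T_tq,0)\,d\mu_{i_m}\to0$.
\item Subadditivity of $\rho$ gives $\rho(\lambda y,0)\le\lceil|\lambda|\rceil\rho(y,0)$. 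Hence the same convergence holds for every $\lambda q\in X_0$, which is condition (1).
\item Since $\limsup_i\int\|T_tq\|_\beta\,d\mu_i=\infty$, choose $j_k\to\infty$ with $\int\|T_tq\|_\beta\,d\mu_{j_k}>k$.
\item Take the constant, hence bounded, sequence $y_k:=q\in\overline{\operatorname{span}}(X_0)$. This gives condition (2).
\end{itemize}

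\emph{Criterion $\Rightarrow$ chaos.} The goal is a single $\beta$-absolutely $(\mu_i)$-irregular vector $z\in Z$. Once we have it, the argument of Proposition~\ref{LY-irr} shows that $\Gamma=\mathbb{K}z$ witnesses chaos; that argument only uses linearity and the metric $\rho$ on $Y$, so it works verbatim in $\mathcal{L}(X,Y)$.

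First, the small-observation half. The integral $\int\rho(T_t\cdot,0)\,d\mu_{i_m}$ is subadditive and satisfies the scalar estimate above. Hence condition (1) passes from $X_0$ to $\operatorname{span}(X_0)$, which is dense in $Z$. The set $\{x\in Z:\liminf_i\int\rho(T_tx,0)\,d\mu_i=0\}$ is a $G_\delta$ by the remark following Definition~\ref{def-irre}. It contains a dense set, so it is residual in $Z$.

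Second, the large-observation half. I claim $(T_t|_Z)$ is not $\beta$-absolutely $(\mu_i)$-bounded. Suppose it were, with constants $C,\delta$ as in Definition~\ref{def:abs-bdd}.
\begin{itemize}
\item Because $(y_k)$ is bounded in the topological vector space $Z$, there is $s>0$ with $\mathrm{D}(sy_k,0)<\delta$ for all $k$.
\item Then $\int\|T_ty_k\|_\beta\,d\mu_{j_k}\le C/s$ for every $k$, contradicting condition (2).
\end{itemize}
Corollary~\ref{not-abs-bounded}, applied on $Z$, then makes $\{x\in Z:\sup_i\int\|T_tx\|_\beta\,d\mu_i=\infty\}$ residual in $Z$.

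The step I expect to need most care is upgrading $\sup_i=\infty$ to $\limsup_{i\to\infty}=\infty$. For this I use \eqref{compact-soporte}. For fixed $m$, the supports of $\mu_i$ with $i\le m$ lie in one compact set $K\subset\Lambda$. Local equicontinuity then makes $\{T_tx:t\in K\}$ bounded in $Y$, so $\sup_{i\le m}\int\|T_tx\|_\beta\,d\mu_i<\infty$. Consequently an infinite supremum must be realized along $i\to\infty$. (Alternatively, one can work directly with the $G_\delta$ description of the $\limsup$ set and the same density argument.)

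Finally, intersect the two residual subsets of $Z$. By the Baire category theorem the intersection is nonempty, and any $z$ in it is $\beta$-absolutely $(\mu_i)$-irregular, which completes the proof.
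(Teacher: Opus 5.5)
Your proposal is correct and follows the paper's proof essentially step by step. In both, you restrict to $Z=\overline{\operatorname{span}}(X_0)$, use the bounded sequence $(y_k)$ to show that $(T_t|_Z)$ is not $\beta$-absolutely $(\mu_i)$-bounded, invoke Corollary~\ref{not-abs-bounded}, upgrade $\sup_i$ to $\limsup_i$ via \eqref{compact-soporte}, intersect with the residual set where the $\rho$-averages have $\liminf$ zero, and conclude with Proposition~\ref{LY-irr}. The only differences are cosmetic: you take $X_0=\mathbb{K}q$ instead of $\{q\}$, and you use the $\liminf$ over all of $I$ rather than along $(i_m)$.
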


\begin{proof}
	Assume first that $(T_t)_{t\in\Lambda}$ is
	$(\mu_i)_{i\in I}$-Li--Yorke chaotic. Then it admits a
	$\beta$-absolutely $(\mu_i)_{i\in I}$-irregular vector $p$ for some
	$\beta\in\mathbb{N}$. Hence there exist sequences $(i_m)_{m\in\mathbb N}$
	and $(j_k)_{k\in\mathbb N}$ in $I$, both tending to infinity, such that
	\[
	\int_\Lambda \rho(T_t p,0)\,d\mu_{i_m}(t)\longrightarrow 0
	\]
	and
	\[
	\int_\Lambda\|T_t p\|_\beta\,d\mu_{j_k}(t)>k
	\qquad (k\in\mathbb N).
	\]
	Thus the criterion holds with $X_0:=\{p\}$ and $y_k:=p$ for every
	$k\in\mathbb N$.
	
	Conversely, suppose that $(T_t)_{t\in\Lambda}$ satisfies the
	$(\mu_i)_{i\in I}$-Li--Yorke chaos criterion, and set
	\[
	Z:=\overline{\operatorname{span}}(X_0).
	\]
	Then $Z$, endowed with the metric induced by $\mathrm D$, is an
	$F$-space. For $t\in\Lambda$, let
	\[
	S_t:=\left.T_t\right|_Z.
	\]
	We first show that $(S_t)_{t\in\Lambda}$ is not
	$\beta$-absolutely $(\mu_i)_{i\in I}$-bounded. Let $\delta>0$ and
	$M>0$. Since $(y_k)_{k\in\mathbb N}$ is bounded, there exists
	$\ell\in\mathbb N$ such that
	\[
	\mathrm D\left(\frac{1}{\ell}y_k,0\right)<\delta
	\qquad\text{for every }k\in\mathbb N.
	\]
	Choosing $k>\ell M$, we obtain
	\[
	\int_\Lambda
	\left\|S_t\left(\frac{1}{\ell}y_k\right)\right\|_\beta
	\,d\mu_{j_k}(t)
	=
	\frac{1}{\ell}
	\int_\Lambda\|S_t y_k\|_\beta\,d\mu_{j_k}(t)
	>
	\frac{k}{\ell}>M.
	\]
	Therefore,
	\[
	\sup_{\substack{z\in Z\\ \mathrm D(z,0)<\delta}}
	\sup_{i\in I}
	\int_\Lambda\|S_t z\|_\beta\,d\mu_i(t)
	=\infty.
	\]
	By Corollary~\ref{not-abs-bounded}, the set
	\[
	\mathcal B:=
	\left\{
	x\in Z:
	\sup_{i\in I}
	\int_\Lambda\|S_t x\|_\beta\,d\mu_i(t)=\infty
	\right\}
	\]
	is residual in $Z$.
	
	For $a\in I$, let
	\[
	K_a:=
	\overline{
		\bigcup_{\substack{i\in I\\ i\leq a}}
		\operatorname{supp}(\mu_i)
	}.
	\]
	By \eqref{compact-soporte}, $K_a$ is compact, and hence, for every
	$x\in Z$,
	\[
	\sup_{\substack{i\in I\\ i\leq a}}
	\int_\Lambda\|S_t x\|_\beta\,d\mu_i(t)
	\leq
	\sup_{t\in K_a}\|S_t x\|_\beta
	<\infty.
	\]
	Consequently,
	\[
	\mathcal B
	=
	\left\{
	x\in Z:
	\limsup_{\substack{i\to\infty\\ i\in I}}
	\int_\Lambda\|S_t x\|_\beta\,d\mu_i(t)=\infty
	\right\}.
	\]
	Now set
	\[
	\mathcal A:=
	\left\{
	x\in Z:
	\liminf_{m\to\infty}
	\int_\Lambda\rho(S_t x,0)\,d\mu_{i_m}(t)=0
	\right\}.
	\]
	Condition~(i) in Definition~\ref{LYCC}, together with linearity, implies that \[ \text{span}(X_0)\subset\mathcal A. \] Since both $\mathcal A$ and $\mathcal B$ are residual in $Z$, we may choose $x\in\mathcal A\cap\mathcal B$. Then $x$ is a $\beta$-absolutely $(\mu_i)_{i\in I}$-irregular vector for $(S_t)_{t\in\Lambda}$, and hence also for $(T_t)_{t\in\Lambda}$. The conclusion follows from Proposition~\ref{LY-irr}.
\end{proof}

\begin{theorem}\label{prod-LY}
	Let $(T_t)_{t\in\Lambda}\subset\mathcal{L}(X,Y)$, where $X$ is
	separable. Then the following assertions are equivalent:
	\begin{enumerate}
		\item[(1)] The family $(T_t)_{t\in\Lambda}$ satisfies the dense
		$(\mu_i)_{i\in I}$-Li--Yorke chaos criterion.
		
		\item[(2)] For every $n\in\mathbb{N}$, the family
		\[
		\left(\bigoplus_{k=1}^{n}T_t\right)_{t\in\Lambda}
		\]
		is densely $(\mu_i)_{i\in I}$-Li--Yorke chaotic.
	\end{enumerate}
\end{theorem}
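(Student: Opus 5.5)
We prove both implications, working throughout with the product family $S^{(n)}_t:=\bigoplus_{k=1}^n T_t$ acting from $X^n$ to $Y^n$. We equip $X^n$ with the translation-invariant metric $\mathrm D_n(x,y):=\sum_k\mathrm D(x_k,y_k)$, and $Y^n$ with the seminorms $\|(y_1,\dots,y_n)\|_\ell:=\sum_{k}\|y_k\|_\ell$. Then $X^n$ is again a separable $F$-space and $(S^{(n)}_t)_t$ is locally equicontinuous, so all earlier results of this section apply to it. In particular, by Theorem~\ref{equiv-dense}, statement (2) for a fixed $n$ is equivalent to the existence of a dense set of $\beta$-absolutely $(\mu_i)$-irregular vectors for $(S^{(n)}_t)_t$, for some $\beta$.

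\emph{(1)$\Rightarrow$(2).} Fix $n$ and take $X_0$, $\beta$, $(i_m)$, $(y_k)$, $(j_k)$ as in Definition~\ref{LYCC}. We check that $(S^{(n)}_t)_t$ satisfies the dense criterion with $X_0^{(n)}:=X_0^n$, which is dense in $X^n$, and the same $(i_m)$.

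Condition (1) holds coordinatewise, since $\rho$ on $Y^n$ is comparable to the sum of the coordinate metrics. Alternatively, one can define $\rho_n$ directly as a sum and note that the $G_\delta$/liminf arguments only use a compatible invariant metric.

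For condition (2), we use the vectors $(y_k,0,\dots,0)$. They lie in $\overline{\operatorname{span}}(X_0)\times\{0\}^{n-1}\subset\overline{\operatorname{span}}(X_0^n)$, because $X_0^n$ contains $(x,x_2,\dots,x_n)$ and linear combinations allow isolating coordinates once we know $\operatorname{span}(X_0^n)=\operatorname{span}(X_0)^n$. This equality holds because $X_0^n$ contains $(x,z,\dots,z)$ and $(x',z,\dots,z)$, whose difference is $(x-x',0,\dots,0)$. If one wants to avoid subtleties with differences, there is a cleaner route: simply note $\overline{\operatorname{span}}(X_0^n)=X^n$, since $X_0$ is dense. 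These vectors form a bounded sequence, and $\int\|S^{(n)}_t(y_k,0,\dots)\|_\beta\,d\mu_{j_k}>k$.

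Theorem~\ref{equiv-LY-criterion} then only gives Li--Yorke chaos, so for density we argue directly, as in its proof. The set $\mathcal B$ of vectors with $\limsup_i\int\|S^{(n)}_tx\|_\beta d\mu_i=\infty$ is residual in $X^n$ by Corollary~\ref{not-abs-bounded}, since non-boundedness was just shown. The set $\mathcal A$ of vectors with $\liminf_m\int\rho(S^{(n)}_tx,0)d\mu_{i_m}=0$ is a $G_\delta$ containing the dense set $\operatorname{span}(X_0)^n$, hence is residual. Therefore $\mathcal A\cap\mathcal B$ is residual, hence dense, and consists of irregular vectors. Theorem~\ref{equiv-dense} then gives (2).

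\emph{(2)$\Rightarrow$(1).} This is the main obstacle: we must produce a single $\beta$, a single sequence $(i_m)$ that works simultaneously for all points of a dense set $X_0$, and a bounded sequence $(y_k)$.

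The plan is to use separability. Fix a countable dense sequence $(u_\ell)$ in $X$. For each $n$, the residual set of irregular vectors of $S^{(n)}$ is dense, with some $\beta_n$. Since seminorms are directed, we may enlarge $\beta_n$ and are forced to pick one $\beta$; we take $\beta:=\beta_1$. An irregular vector for $S^{(n)}$ with respect to $\beta_n$ is irregular with respect to any larger index, but not necessarily a smaller one. Using a vector of the form $(x,0,\dots,0)$ shows that irregularity for $S^{(1)}$ with $\beta_1$ is already available, so we take $\beta=\beta_1$ and only use the liminf part from the products.

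For each $n$ we choose an irregular vector $(x^n_1,\dots,x^n_n)$ with $\mathrm D(x^n_k,u_k)<1/n$. The liminf condition for the product gives a single index $i$, as large as we like, with $\int\rho(T_tx^n_k,0)\,d\mu_i<1/n$ simultaneously for all $k\le n$, because $\rho$ on the product dominates each coordinate.

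The difficulty is that the common $i$ depends on $n$. To fix this, we take $X_0$ to be a diagonal-type set. Choose $i_n>n$ as above, and set $X_0:=\{x^n_k: k\le n\}$. For a fixed element $x^N_k$, however, we also need $\int\rho(T_tx^N_k,0)\,d\mu_{i_m}\to0$ for $m\ge N$, and this is not automatic.

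The fix is to choose recursively. At stage $n$ we pick the vector in $X^{n'}$ with $n'$ large, whose coordinates approximate both $u_1,\dots,u_n$ and all previously chosen vectors $x^{m}_k$. This does not give the exact previous vectors, so the recursion is to be set up instead via a Baire argument in $X^{\mathbb N}$ approximations.

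I expect this simultaneous choice of the sequence $(i_m)$ to be the delicate step. What I would try first is this: apply (2) with $n$ and use residuality, which allows choosing the irregular tuple with its first $n-1$ coordinates arbitrarily close to the earlier ones. Then I would instead define $X_0$ using limits of a Cauchy construction, ensuring via local equicontinuity on the compact sets $K_a$ that the smallness at the finitely many chosen indices $i_1,\dots,i_{n-1}$ is preserved under small perturbation. The bounded sequence $(y_k)$ then comes from any single $\beta$-irregular vector $p\in X$, setting $y_k:=p$ and adding $p$ to $X_0$ compatibly within the same recursion.
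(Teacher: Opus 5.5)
The (1)$\Rightarrow$(2) direction is correct and follows the paper's argument: verify the criterion for $X_0^n$ using the vectors $(y_k,0,\dots,0)$, show residuality of $\mathcal A\cap\mathcal B$, then apply Theorem~\ref{equiv-dense}.

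The gap is in (2)$\Rightarrow$(1). You correctly isolate what is needed: a \emph{fixed} countable dense set $\{q_\ell\}$ and a \emph{single} sequence $i_k\to\infty$ with $\max_{\ell\le k}\int_\Lambda\rho(T_tq_\ell,0)\,d\mu_{i_k}(t)<1/k$. But you never construct them. The diagonal attempt fails, as you observe yourself, and what follows is a list of plans rather than an argument. From (2), Theorem~\ref{equiv-dense}, and the fact that the product metric dominates each coordinate, you do get that
\[
\mathcal R_n:=\Bigl\{(x_1,\dots,x_n)\in X^n:\ \exists\, j_k\to\infty \text{ with } \int_\Lambda\rho(T_tx_\ell,0)\,d\mu_{j_k}(t)\to0 \text{ for all } \ell\le n\Bigr\}
\]
is residual in $X^n$. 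The obstruction is that this residuality says nothing about the fibre over an already chosen $(q_1,\dots,q_{n-1})$, so points cannot simply be appended one at a time. The paper overcomes this with the Mycielski theorem (Theorem~\ref{Myci}). Applied to the residual sets $\mathcal R_n$, it gives a dense set $\mathcal K\subset X$ all of whose $n$-tuples of distinct points lie in $\mathcal R_n$. Taking a dense sequence of distinct $q_\ell\in\mathcal K$, the fact that $(q_1,\dots,q_k)\in\mathcal R_k$ supplies an arbitrarily large $i_k$ for each $k$, and no point ever has to be perturbed.

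Your final sketch can also be made into a proof, giving a more elementary alternative to Mycielski, but it has to be made precise as follows. At stage $n+1$, pick a point of the dense set $\mathcal R_{n+1}$ inside the open set of $(z_1,\dots,z_{n+1})$ satisfying three conditions: $\mathrm D(z_\ell,z^{(n)}_\ell)<2^{-n-1}$ for $\ell\le n$; $z_{n+1}$ close to $u_{n+1}$; and $\int\rho(T_tz_\ell,0)\,d\mu_{i_m}<1/m$ for $\ell\le m\le n$. This set is open because $x\mapsto\int\rho(T_tx,0)\,d\mu_i$ is continuous, by equicontinuity on the compact set $\operatorname{supp}\mu_i$. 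It is nonempty because it contains $(z^{(n)}_1,\dots,z^{(n)}_n,u_{n+1})$. Then choose $i_{n+1}>i_n$ for the new tuple, and set $q_\ell:=\lim_n z^{(n)}_\ell$; the bounds $\le 1/m$ pass to the limit by continuity.

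Also, do not try to place the irregular vector $p$ in $X_0$. A prescribed $p$ need not satisfy condition (1) along your $(i_m)$, and it is unnecessary anyway, since $X_0$ dense gives $\overline{\operatorname{span}}(X_0)=X\ni p$.
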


\begin{proof}
	Assume first that $(T_t)_{t\in\Lambda}$ satisfies the dense
	$(\mu_i)_{i\in I}$-Li--Yorke chaos criterion. Let
	$X_0\subset X$, $\beta\in\mathbb{N}$, $(i_m)_{m\in\mathbb{N}}$,
	$(j_k)_{k\in\mathbb{N}}$, and $(y_k)_{k\in\mathbb{N}}$ be as in
	Definition~\ref{LYCC}. For $n\in\mathbb{N}$, set
	\[
	T_t^{(n)}:=\bigoplus_{k=1}^{n}T_t
	\colon X^n\longrightarrow Y^n.
	\]
	With respect to the corresponding product metric and seminorm, the
	family $(T_t^{(n)})_{t\in\Lambda}$ satisfies the dense
	$(\mu_i)_{i\in I}$-Li--Yorke chaos criterion with the dense set
	$X_0^n$. Indeed, the first condition follows coordinatewise, while
	the second one follows by considering the bounded sequence
	\[
	\bigl(y_k,0,\ldots,0\bigr)_{k\in\mathbb{N}}
	\subset
	\overline{\operatorname{span}}(X_0^n).
	\]
	Arguing as in the proof of Theorem~\ref{equiv-LY-criterion}, the
	family $(T_t^{(n)})_{t\in\Lambda}$ admits a residual set of
	$\beta$-absolutely $(\mu_i)_{i\in I}$-irregular vectors in $X^n$.
	Theorem~\ref{equiv-dense} now shows that it is densely
	$(\mu_i)_{i\in I}$-Li--Yorke chaotic. Thus (2) holds.
	
	Conversely, assume (2). For each $n\in\mathbb{N}$, let
	\[
	\mathcal{R}_n
	:=
	\left\{
	(x_1,\ldots,x_n)\in X^n:
	\begin{array}{c}
		\text{there exists }(j_k)_{k\in\mathbb{N}}\subset I,
		\ j_k\to\infty,\\[2pt]
		\displaystyle
		\int_\Lambda\rho(T_t x_\ell,0)\,d\mu_{j_k}(t)
		\longrightarrow 0
		\quad\text{for }1\leq\ell\leq n
	\end{array}
	\right\}.
	\]
	By Proposition~\ref{dist-near to 0}, each $\mathcal{R}_n$ is residual
	in $X^n$. The Mycielski theorem therefore yields a dense Mycielski set
	$\mathcal K\subset X$ such that every $n$-tuple of pairwise distinct
	points of $\mathcal K$ belongs to $\mathcal R_n$.
	
	Choose a sequence of distinct points
	$(q_\ell)_{\ell\in\mathbb{N}}\subset\mathcal K$ which is dense in
	$X$. We recursively select a sequence $(i_k)_{k\in\mathbb{N}}$ in
	$I$, with $i_k\to\infty$, such that
	\[
	\max_{1\leq\ell\leq k}
	\int_\Lambda\rho(T_t q_\ell,0)\,d\mu_{i_k}(t)
	<\frac{1}{k}.
	\]
	This is possible since
	$(q_1,\ldots,q_k)\in\mathcal R_k$ for every $k$. Consequently,
	\[
	\int_\Lambda\rho(T_t q_\ell,0)\,d\mu_{i_k}(t)
	\longrightarrow 0
	\qquad\text{for every }\ell\in\mathbb{N}.
	\]
	Thus condition~(1) in Definition~\ref{LYCC} holds for
	\[
	X_0:=\{q_\ell:\ell\in\mathbb{N}\},
	\]
	which is dense in $X$.
	
	It remains to verify condition~(2) of the criterion. By the
	case $n=1$ of the assumption, $(T_t)_{t\in\Lambda}$ is densely
	$(\mu_i)_{i\in I}$-Li--Yorke chaotic. Hence it admits a
	$\beta$-absolutely $(\mu_i)_{i\in I}$-irregular vector $p$ for some
	$\beta\in\mathbb{N}$. Since
	\[
	\overline{\operatorname{span}}(X_0)=X,
	\]
	the constant sequence $y_k:=p$ is a bounded sequence in
	$\overline{\operatorname{span}}(X_0)$. Moreover, there exists a
	sequence $(j_k)_{k\in\mathbb{N}}$ in $I$, with $j_k\to\infty$, such
	that
	\[
	\int_\Lambda\|T_t y_k\|_\beta\,d\mu_{j_k}(t)>k
	\qquad\text{for every }k\in\mathbb{N}.
	\]
	Therefore, $(T_t)_{t\in\Lambda}$ satisfies the dense
	$(\mu_i)_{i\in I}$-Li--Yorke chaos criterion.
\end{proof}



\section{\(\mathrm{D}\)-phenomenon \(\Psi^{\mathrm{LY}}\) and Furstenberg--Borel families} \label{D-phe}

In this section we assume that the map
\begin{align}\label{continuous}
	(t,x)\in \Lambda\times X \longmapsto T_t x \in Y
\end{align}
is continuous.

\begin{definition}\label{Furs-Borel}
	Let \(\Lambda\) be a metric space and let \(\mathcal{B}(\Lambda)\) denote the Borel \(\sigma\)-algebra of \(\Lambda\). A collection \(\mathcal{F} \subset \mathcal{B}(\Lambda)\) is called a \emph{Furstenberg--Borel family on \(\Lambda\)} if it is upward closed within \(\mathcal{B}(\Lambda)\); that is,
	\begin{equation}\label{eq:furstenberg_property}
		A \in \mathcal{F}, \; B \in \mathcal{B}(\Lambda) \text{ and } A \subset B 
		\quad \Longrightarrow \quad 
		B \in \mathcal{F}.
	\end{equation}
	We say that \(\mathcal{F}\) is \emph{proper} if \(\emptyset \notin \mathcal{F}\).
\end{definition}

Observe that when \(\Lambda=\mathbb{N}\) is endowed with the discrete topology, this notion reduces to the classical concept of a Furstenberg family.

\begin{definition}[\cite{ArSa}]\label{def:D-phenomenon}
	Let $X$ and $Y$ be topological vector spaces, and let $\Psi$ be a set-valued 
	mapping on $X$ such that, for every $x \in X$, $\Psi(x)$ is a non-empty 
	family of non-empty open subsets of $Y$. Then $\Psi$ is called a 
	\(\mathrm{D}\)-\emph{phenomenon} from $X$ to $Y$ if, for every finite linear 
	combination $p = \sum_{j=1}^n \alpha_j x_j$ with $\alpha_j \neq 0$, and every 
	$V \in \Psi(p)$, there exist sets $W_j \in \Psi(x_j)$ such that
	\[
	\sum_{j=1}^n \alpha_j W_j \subset V.
	\]
\end{definition}

Let \((T_t)_{t\in\Lambda}\subset\mathcal{L}(X,Y)\) be a family of operators, let \(\mathcal{F}\) be a Furstenberg--Borel family on \(\Lambda\), and let \(\Psi\) be a \(D\)-phenomenon from \(X\) to \(Y\). We define
\begin{equation}\label{eq:F-Psi-set}
	\mathcal{F}\Psi((T_t)_{t\in\Lambda})
	:=
	\left\{
	x\in X :
	\forall\, V\in\Psi(x),\;
	\{t\in\Lambda : T_t x \in V\}\in\mathcal{F}
	\right\}.
\end{equation}

By assumption \eqref{continuous}, for every \(x\in X\) and every non-empty
open set \(V\subset Y\), the set of return times
\[
\{t\in\Lambda : T_t x \in V\}
\]
is open in \(\Lambda\), and therefore belongs to \(\mathcal{B}(\Lambda)\).

We define the \emph{family of upper density one} associated with \((\mu_i)\) by
\begin{equation}\label{eq:measure_family}
	\mathcal{F}_{(\mu_i)} := \left\{ A \in \mathcal{B}(\Lambda) : \limsup_{i \to \infty} \mu_i(A) = 1 \right\}.
\end{equation}
It is straightforward to verify that \(\mathcal{F}_{(\mu_i)}\) is a proper Furstenberg--Borel family.

For any \(\beta\in\mathbb{N}\), we define the \(D\)-phenomenon \(\Psi_{\beta}^{\mathrm{LY}}\) from \(X\) to \(Y\) by
\[
\Psi_{\beta}^{\mathrm{LY}}(x)
:=
\bigl\{
\{y\in Y : \rho(y,0)<r\},\;
\{y\in Y : \|y\|_{\beta}>s\}
: r,s>0
\bigr\}.
\]

In the metrizable locally convex
space \(Y\) we use the seminorms
\((\|\cdot\|_\beta)_{\beta\in\mathbb N}\), which leads to the
family of \(\mathrm{D}\)-phenomena \(\Psi^{\mathrm{LY}}_\beta\).
In the normed space setting, where a single norm is available,
we simply write \(\Psi^{\mathrm{LY}}\).

In \cite{ArSa}, A. Arbieto and M. Saavedra showed that in the discrete setting, that is, when \(T\in\mathcal{L}(X)\) and \(X\) is a Banach space, one has
\begin{align*}
	\mathcal{F}_{(\mu_m)}\Psi^{\mathrm{LY}}(T)
	=
	\begin{cases}
		\{\text{all irregular vectors for } T\}
		&\text{if } \mu_m=\delta_m, \\[6pt]
		\{\text{all distrib. irregular vectors for } T\}
		&\text{if } \displaystyle \mu_m=\frac{1}{m}\sum_{k=1}^{m}\delta_k .
	\end{cases}
\end{align*}

\begin{definition}
	Let \((T_t)_{t\in\Lambda}\subset\mathcal{L}(X,Y)\), let \(\beta\in\mathbb{N}\), and let \((\mu_i)\) be as above.  
	A vector \(x\in X\) belonging to
	\[
	\mathcal{F}_{(\mu_i)}\Psi_{\beta}^{\mathrm{LY}}\big((T_t)_{t\in\Lambda}\big)
	\]
	is called an \emph{\(\beta\)-distributionally \((\mu_i)\)-irregular} vector for the family \((T_t)_{t\in\Lambda}\).
	
	We say that \((T_t)_{t\in\Lambda}\) is \emph{\((\mu_i)\)-distributionally chaotic} if there exist an uncountable set \(\Gamma\subset X\) and \(\beta\in\mathbb{N}\) such that, for every distinct \(p,q\in \Gamma\),
	\[
	p-q\in \mathcal{F}_{(\mu_i)}\Psi_{\beta}^{\mathrm{LY}}\big((T_t)_{t\in\Lambda}\big).
	\]
	
	If, in addition, \(\Gamma\) can be chosen dense in \(X\), we say that \((T_t)_{t\in\Lambda}\) is \emph{densely \((\mu_i)\)-distributionally chaotic}.
\end{definition}

\begin{proposition}\label{dist-G-delta}
	\(\mathcal{F}_{(\mu_{i})}\Psi^{\mathrm{LY}}_{\beta}((T_{t})_{t\in \Lambda})\)
	is a \(G_\delta\)-subset of \(X\).
\end{proposition}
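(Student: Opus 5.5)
Since every element of $\Psi^{\mathrm{LY}}_\beta(x)$ is either a ball $\{\rho(y,0)<r\}$ or a set $\{\|y\|_\beta>s\}$, and shrinking $r$ or enlarging $s$ shrinks the return set, we can restrict to rational (even reciprocal integer) parameters. This gives the representation
\[
\mathcal{F}_{(\mu_i)}\Psi^{\mathrm{LY}}_\beta\big((T_t)\big)
=\bigcap_{k\in\mathbb N}\Bigl(\mathcal{A}_k\cap\mathcal{B}_k\Bigr),
\]
where
\[
\mathcal{A}_k:=\Bigl\{x:\limsup_{i\to\infty}\mu_i\bigl(\{t:\rho(T_tx,0)<1/k\}\bigr)=1\Bigr\},
\qquad
\mathcal{B}_k:=\Bigl\{x:\limsup_{i\to\infty}\mu_i\bigl(\{t:\|T_tx\|_\beta>k\}\bigr)=1\Bigr\}.
\]
Indeed, if $x$ lies in every $\mathcal A_k$ and $\mathcal B_k$, then for arbitrary $r,s>0$ we pick $k$ with $1/k<r$ and $k>s$. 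The return sets contain those for $k$, and $\mathcal F_{(\mu_i)}$ is upward closed. So it suffices to show each $\mathcal A_k$ and $\mathcal B_k$ is $G_\delta$.

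Next we unwind the $\limsup$. Since $\mu_i\le 1$, $\limsup_i\mu_i(A)=1$ is equivalent to
\[
\forall n\in\mathbb N\ \ \forall m\in\mathbb N\ \ \exists i\in I,\ i>m:\ \mu_i(A)>1-1/n .
\]
Thus
\[
\mathcal A_k=\bigcap_{n,m}\bigcup_{i>m}\Bigl\{x:\mu_i\bigl(\{t:\rho(T_tx,0)<1/k\}\bigr)>1-1/n\Bigr\},
\]
and $\mathcal B_k$ has the same form. It remains to check that each inner set is open in $X$.

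This openness is the main step. Fix $i$, and put $U_x:=\{t:T_tx\in V\}$, where $V$ is the relevant open set in $Y$. We show that $x\mapsto \mu_i(U_x)$ is lower semicontinuous. Let $x_\ell\to x$. For each $t\in U_x$, continuity of $(t,x)\mapsto T_tx$ from \eqref{continuous} and openness of $V$ give $T_tx_\ell\in V$ for all large $\ell$. Hence $\mathbf 1_{U_x}\le\liminf_\ell\mathbf 1_{U_{x_\ell}}$ pointwise, and Fatou's lemma yields
\[
\mu_i(U_x)\le\liminf_{\ell}\mu_i(U_{x_\ell}).
\]
Measurability of the sets $U_{x_\ell}$ holds because they are open, as already noted after \eqref{eq:F-Psi-set}. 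Since $X$ is metrizable, sequential lower semicontinuity suffices, so $\{x:\mu_i(U_x)>1-1/n\}$ is open.

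Combining these steps, each of $\mathcal A_k$ and $\mathcal B_k$ is a countable intersection of unions of open sets, hence $G_\delta$. Therefore the countable intersection $\mathcal{F}_{(\mu_i)}\Psi^{\mathrm{LY}}_\beta\big((T_t)\big)$ is a $G_\delta$-subset of $X$.
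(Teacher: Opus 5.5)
Your proof is correct, but the key openness step takes a different route from the paper's.

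The decomposition is essentially the same. The paper collapses all parameters into one index and writes the set as $\bigcap_k S_k\cap\bigcap_k R_{k,\beta}$, where $S_k=\{x:\exists\, i\ge k,\ \mu_i(\{t:\rho(T_tx,0)<1/k\})>1-1/k\}$ and $R_{k,\beta}$ is defined analogously. You keep the radius/threshold $k$, the index bound $m$ and the level $1-1/n$ separate. Both versions reduce the claim to showing that certain sets are open.

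For openness, the paper fixes $x\in S_k$ and a witnessing $i$, then proceeds in three steps:
\begin{enumerate}
\item inner regularity gives a compact $A$ inside the return set with $\mu_i(A)>1-1/k$;
\item continuity of $t\mapsto T_tx$ gives $\max_{t\in A}\rho(T_tx,0)<1/k$;
\item local equicontinuity of $(T_t)_{t\in A}$ gives a neighbourhood $U$ of $0$ with $x+U\subset S_k$, with the same $i$ and $A$ working for every point of $x+U$.
\end{enumerate}

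You instead show that $x\mapsto\mu_i(\{t:T_tx\in V\})$ is lower semicontinuous, via a pointwise indicator inequality and Fatou's lemma. This is more elementary and more general. It uses only continuity of each individual $T_t$ and Borel measurability of the return sets; the pointwise step does not need the joint continuity \eqref{continuous}, which serves only to make the return sets open. So you do not need compact supports, regularity of $\mu_i$, or equicontinuity.

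What the paper's route buys in exchange is that it exhibits the compact witnesses $A$ with $\mu_i(A)$ close to $1$, on which the orbit is uniformly small (resp. large). This is the reformulation recorded in Remark~\ref{form distri-irre} and used later, e.g.\ in Proposition~\ref{equi dis-cha dist-irr} and Theorem~\ref{dense-disti}. That reformulation follows from inner regularity alone, though, so it is independent of how openness is proved, and your argument loses nothing for the statement at hand.
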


\begin{proof}
	For each \(k\in \mathbb{N}\), define
	\begin{align*}
		S_{k} &:= \left\{ x\in X : \exists i\in I,\ i\ge k \text{ s.t }
		\mu_{i}\big(\{t\in \Lambda : \rho(T_{t}x,0)<\frac{1}{k}\}\big)>1-\frac{1}{k} \right\},\\
		R_{k, \beta} &:= \left\{ x\in X : \exists i\in I,\ i\ge k \text{ s.t }
		\mu_{i}\big(\{t\in \Lambda : \|T_{t}x\|_{\beta}>k\}\big)>1-\frac{1}{k} \right\}.
	\end{align*}
	
	We claim that \(S_{k}\) and \(R_{k,\beta}\) are open. We prove it for \(S_{k}\); the argument for \(R_{k,\beta}\) is analogous.
	
	Let \(x\in S_{k}\). Then there exist \(i\ge k\) and, by the regularity of the measure \(\mu_i\), a compact set \(A\subset \Lambda\) such that
	\[
	A \subset \{t\in \Lambda : \rho(T_{t}x,0)<k^{-1}\}
	\quad \text{and} \quad
	\mu_{i}(A)>1-k^{-1}.
	\]
	Since \((T_{t})_{t\in A}\) is equicontinuous, there exists a neighborhood \(U\) of \(0\) in \(X\) such that
	\[
	\rho(T_{t}y,0) < \frac{1}{k} - \max_{t\in A}\rho(T_{t}x,0),
	\quad \forall\, y\in U,\ \forall\, t\in A.
	\]
	It follows that
	\[
	\rho(T_{t}(x+y),0) < \frac{1}{k}, \quad \forall\, y\in U,\ \forall\, t\in A,
	\]
	and hence
	\[
	A \subset \{t\in \Lambda : \rho(T_{t}(x+y),0)<\frac{1}{k}\}, \quad \forall\, y\in U.
	\]
	Therefore,
	\[
	\mu_{i}\big(\{t\in \Lambda : \rho(T_{t}(x+y),0)<k^{-1}\}\big)
	\ge \mu_{i}(A) > 1-k^{-1},
	\]
	which shows that \(x+U \subset S_{k}\). Thus, \(S_{k}\) is open.
	
	Finally,
	\[
	\mathcal{F}_{(\mu_{i})}\Psi^{\mathrm{LY}}_{\beta}((T_{t})_{t\in \Lambda})
	=
	\bigcap_{k\in \mathbb{N}} S_{k} \cap \bigcap_{k\in \mathbb{N}} R_{k,\beta},
	\]
	and the conclusion follows.
\end{proof}

\begin{remark}
	The proof identifies two natural \(G_\delta\)-sets, namely
	\(\bigcap_{k\in \mathbb{N}} S_{k}\) and \(\bigcap_{k\in \mathbb{N}} R_{k,\beta}\).
	In the discrete setting, when \(T\in \mathcal{L}(X)\) with \(X\) a Banach space and
	\(\mu_{m}:=\frac{1}{m}\sum_{\ell=1}^{m}\delta_{\ell}\), these sets correspond,
	respectively, to the set of vectors which are distributionally close to zero
	and to the set of vectors with distributionally \(\beta\)-unbounded orbit.
\end{remark}	

\begin{remark}\label{form distri-irre}
	From the previous proof, it follows that \(x\in \bigcap_{k\in \mathbb{N}} S_{k}\) if and only if there exist a sequence \((i_{m})_{m}\) in \(I\) with \(i_{m}\to\infty\), and a sequence of compact sets \((A_{m})_{m}\) in \(\Lambda\) such that \(\mu_{i_{m}}(A_{m})>1-\frac{1}{m}\) for each \(m\in \mathbb{N}\) and
	\begin{equation}\label{near 0}
		\lim_{m\to\infty} \max_{t\in A_{m}} \rho(T_{t}x,0)=0.
	\end{equation}

	Similarly, \(x\in \bigcap_{k\in \mathbb{N}} R_{k,\beta}\) if and only if there exist a sequence \((j_{m})_{m}\) in \(I\) with \(j_{m}\to\infty\), and a sequence of compact sets \((B_{m})_{m}\) in \(\Lambda\) such that \(\mu_{j_{m}}(B_{m})>1-\frac{1}{m}\) for each \(m\in \mathbb{N}\) and
	\begin{equation}\label{beta-unbo}
		\lim_{m\to\infty} \min_{t\in B_{m}} \|T_{t}x\|_{\beta}=\infty.
	\end{equation}
\end{remark}	

\begin{proposition}\label{equi dis-cha dist-irr}
	$(T_{t})_{t\in \Lambda}$ is $(\mu_{i})$-distributionally chaotic if and only if it admits a \(\beta\)-distributionally \((\mu_{i})\)-irregular vector for some \(\beta\in \mathbb{N}\).
\end{proposition}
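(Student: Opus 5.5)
The forward implication is immediate from the definition. If $(T_t)_{t\in\Lambda}$ is $(\mu_i)$-distributionally chaotic, there are an uncountable $\Gamma$ and $\beta\in\mathbb{N}$ such that $p-q\in\mathcal{F}_{(\mu_i)}\Psi^{\mathrm{LY}}_\beta((T_t)_{t\in\Lambda})$ for all distinct $p,q\in\Gamma$. Any such difference is then a $\beta$-distributionally $(\mu_i)$-irregular vector.

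For the converse I will follow the argument of Proposition~\ref{LY-irr}. Let $q$ be $\beta$-distributionally $(\mu_i)$-irregular and set $\Gamma:=\mathbb{K}q$. This set is uncountable because $q\neq 0$: the vector $0$ fails the second requirement, since $\{t:\|T_t0\|_\beta>s\}=\emptyset$ and $\mathcal F_{(\mu_i)}$ is proper. For distinct $a,b\in\Gamma$ we have $a-b=\lambda q$ with $\lambda\neq0$. The key step is therefore to show that $\mathcal F_{(\mu_i)}\Psi^{\mathrm{LY}}_\beta$ is invariant under multiplication by nonzero scalars. I will check this against the two families of neighbourhoods in $\Psi^{\mathrm{LY}}_\beta(\lambda q)$.

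\emph{Large neighbourhoods.} For $V=\{y:\|y\|_\beta>s\}$ we have
\[
\{t:\|T_t(\lambda q)\|_\beta>s\}=\{t:\|T_tq\|_\beta>s/|\lambda|\}.
\]
The right-hand set belongs to $\mathcal F_{(\mu_i)}$ because $q$ is irregular and $\{y:\|y\|_\beta>s/|\lambda|\}\in\Psi^{\mathrm{LY}}_\beta(q)$.

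\emph{Small neighbourhoods.} For $V=\{y:\rho(y,0)<r\}$, scalar multiplication by $\lambda$ is continuous on $Y$, so there is $r'>0$ with $\rho(y,0)<r'\Rightarrow\rho(\lambda y,0)<r$. Concretely, each seminorm scales by $|\lambda|$, so a bound of the form $\rho(\lambda y,0)\le (|\lambda|+1)\rho(y,0)$ holds, as already used in Proposition~\ref{LY-irr}. Hence
\[
\{t:\rho(T_t(\lambda q),0)<r\}\supset\{t:\rho(T_tq,0)<r'\}\in\mathcal F_{(\mu_i)}.
\]
Upward closedness of the Furstenberg--Borel family $\mathcal F_{(\mu_i)}$ then gives membership. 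Both return-time sets are Borel (they are open) by \eqref{continuous}.

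I see no genuine obstacle. The only care needed is the scaling estimate for $\rho$ in the small-neighbourhood case, and it suffices to invoke continuity of scalar multiplication rather than an exact constant. The single step is equivalently an instance of the $\mathrm{D}$-phenomenon property with $n=1$. With it, every difference of distinct points of $\Gamma$ lies in $\mathcal F_{(\mu_i)}\Psi^{\mathrm{LY}}_\beta((T_t)_{t\in\Lambda})$, so the family is $(\mu_i)$-distributionally chaotic.
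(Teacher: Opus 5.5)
Your proof is correct and follows the same route as the paper: take $\Gamma=\mathbb{K}q$ and show that nonzero scalar multiples of a $\beta$-distributionally $(\mu_i)$-irregular vector remain such, using $\rho(\lambda y,0)\le(|\lambda|+1)\rho(y,0)$ and the homogeneity of $\|\cdot\|_\beta$. The only cosmetic differences are these: the paper verifies the scaling through the compact sets $A_m,B_m$ of Remark~\ref{form distri-irre}, whereas you check it directly on each neighbourhood in $\Psi^{\mathrm{LY}}_\beta(\lambda q)$ via upward closedness of $\mathcal{F}_{(\mu_i)}$; and you also make explicit that $q\neq 0$, which the paper leaves implicit.
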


\begin{proof}
	Assume that $(T_{t})_{t\in \Lambda}$ admits a \(\beta\)-distributionally  $(\mu_{i})$-irregular vector $z\in X$, and set $\Gamma:=\mathbb{K}z$. 
	For any distinct $p,q\in \Gamma$, there exists $a\in\mathbb{K}\setminus\{0\}$ such that $p-q=a z$.
	
	By Remark~\ref{form distri-irre}, there exist sequences $(i_m)_m$, $(j_m)_m\subset I$ with $i_m,j_m\to\infty$, and sequences of compact sets $(A_m)_m$, $(B_m)_m\subset \Lambda$  such that \(\mu_{i_{m}}(A_{m})>1-\frac{1}{m}\), \(\mu_{j_{m}}(B_{m})>1-\frac{1}{m}\) for every \(m\in \mathbb{N}\) and
	\[
	\lim_{m\to\infty}\max_{t\in A_m}\rho(T_t z,0)=0,
	\qquad
	\lim_{m\to\infty}\min_{t\in B_m}\|T_t z\|_\beta=\infty.
	\]
	It follows that
	\[
	\max_{t\in A_m}\rho(T_t(a z),0)
	\le (|a|+1)\max_{t\in A_m}\rho(T_t z,0)\xrightarrow[m\to\infty]{}0,
	\]
	and
	\[
	\min_{t\in B_m}\|T_t(a z)\|_\beta
	=|a|\min_{t\in B_m}\|T_t z\|_\beta \xrightarrow[m\to\infty]{}\infty.
	\]
	Since \(\Gamma\) is uncountable, \((T_t)_{t\in\Lambda}\) is \((\mu_i)\)-distributionally chaotic. 
	The converse implication is immediate.
\end{proof}

\begin{theorem}\label{dense-mu_i-distri}
	Let $(T_t)_{t\in \Lambda}\subset \mathcal{L}(X,Y)$ with $X$ separable.  
	The following assertions are equivalent:
	\begin{enumerate}
		\item $(T_t)_t$ is densely $(\mu_i)$-distributionally chaotic;
		
		\item there exists $\beta\in\mathbb{N}$ such that 
		\[
		\mathcal{F}_{(\mu_i)}\Psi_{\beta}^{\mathrm{LY}}((T_t)_{t\in \Lambda})
		\]
		is dense in $X$.
		\item there exists $\beta\in\mathbb{N}$ such that 
		\[
		\mathcal{F}_{(\mu_i)}\Psi_{\beta}^{\mathrm{LY}}((T_t)_{t\in \Lambda})
		\]
		is residual in $X$.
	\end{enumerate}
\end{theorem}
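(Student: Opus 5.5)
The plan mirrors the proof of Theorem~\ref{equiv-dense}, replacing absolute irregularity by membership in $\mathcal{F}_{(\mu_i)}\Psi_{\beta}^{\mathrm{LY}}$. We prove $(1)\Rightarrow(2)\Rightarrow(3)\Rightarrow(1)$.

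\emph{Step $(1)\Rightarrow(2)$.} Let $\Gamma$ be a dense uncountable set and $\beta$ the index given by dense $(\mu_i)$-distributional chaos. Fix $q_0\in\Gamma$ and set $E:=\{p-q_0: p\in\Gamma\setminus\{q_0\}\}$. Then $E\subset\mathcal{F}_{(\mu_i)}\Psi_{\beta}^{\mathrm{LY}}((T_t))$. Since $X$ has no isolated points and $\Gamma$ is dense, $\Gamma\setminus\{q_0\}$ is dense, and hence so is its translate $E$.

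\emph{Step $(2)\Rightarrow(3)$.} This is immediate from Proposition~\ref{dist-G-delta}: a dense $G_\delta$ subset of the complete metric space $X$ is residual.

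\emph{Step $(3)\Rightarrow(1)$.} Let $\mathcal{R}_\beta:=\mathcal{F}_{(\mu_i)}\Psi_{\beta}^{\mathrm{LY}}((T_t))$. Choose dense open sets $U_\ell$ with $\bigcap_\ell U_\ell\subset\mathcal{R}_\beta$. Pull them back by the continuous open surjection $\pi(p,q)=p-q$ to get dense open sets $W_\ell=\pi^{-1}(U_\ell)\subset X\times X$, so that $W:=\bigcap_\ell W_\ell$ is residual in $X^2$. Residuality of $\mathcal{R}_\beta$ forces $X\neq\{0\}$, since $0\notin\mathcal{R}_\beta$: for $x=0$ the set $\{t:\|T_t0\|_\beta>s\}$ is empty and so is not in $\mathcal{F}_{(\mu_i)}$. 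Hence $X$ is a separable $F$-space, and therefore a perfect Polish space. Apply Mycielski's Theorem~\ref{Myci} with $\mathcal{R}_2:=W$ and $\mathcal{R}_n:=X^n$ for $n\neq2$. This gives a Mycielski set $\mathcal K$, which is dense (it meets every nonempty open set in a Cantor set) and uncountable, such that $p-q\in\mathcal{R}_\beta$ for all distinct $p,q\in\mathcal K$. Taking $\Gamma:=\mathcal K$ proves $(1)$.

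The only point needing care is $(3)\Rightarrow(1)$, namely that $\pi$ is open and that the $G_\delta$ structure is available. Openness holds because $\pi(U\times V)=U-V$ is a union of translates of the open set $U$. The $G_\delta$ structure is exactly Proposition~\ref{dist-G-delta}, which is where the continuity assumption~\eqref{continuous} and local equicontinuity enter. Everything else is formal, so I expect no genuine obstacle.
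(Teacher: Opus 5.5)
Your proof is correct and follows the paper's argument essentially step for step: you translate $\Gamma$ by a fixed $q_0$ for $(1)\Rightarrow(2)$, invoke the $G_\delta$ property (Proposition~\ref{dist-G-delta}) for $(2)\Rightarrow(3)$, and for $(3)\Rightarrow(1)$ pull the dense open sets back under $(p,q)\mapsto p-q$ and then apply Mycielski's theorem. The extra details you supply are points the paper leaves implicit: that $0\notin\mathcal{F}_{(\mu_i)}\Psi_{\beta}^{\mathrm{LY}}$ forces $X\neq\{0\}$, that the difference map is open, and the choice $\mathcal{R}_n=X^n$ for $n\neq 2$.
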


\begin{proof}
	Assume that $(T_t)_t$ is densely $(\mu_i)$-distributionally chaotic. Then there exist an uncountable dense set $\Gamma\subset X$ and $\beta\in\mathbb{N}$ such that, for every distinct $p,q\in\Gamma$,
	\[
	p-q \in \mathcal{F}_{(\mu_i)}\Psi_{\beta}^{\mathrm{LY}}((T_t)_{t\in \Lambda}).
	\]
	Fix $q_0\in\Gamma$ and set
	\[
	E:=\{p-q_0 : p\in \Gamma\setminus\{q_0\}\}.
	\]
	Then $E$ is dense in $X$ and $E\subset \mathcal{F}_{(\mu_i)}\Psi_{\beta}^{\mathrm{LY}}((T_t)_{t\in \Lambda})$, which proves (2).
	
	The implication (2) $\Rightarrow$ (3) follows from Proposition~\ref{dist-G-delta}, since $\mathcal{F}_{(\mu_i)}\Psi_{\beta}^{\mathrm{LY}}((T_t))$ is a $G_\delta$ subset of $X$.
	
	Assume now that (3) holds. By Proposition~\ref{dist-G-delta}, there exists a sequence of dense open sets $(U_\ell)_{\ell\in\mathbb{N}}$ such that
	\[
	\bigcap_{\ell\in\mathbb{N}} U_\ell=
	\mathcal{F}_{(\mu_i)}\Psi_{\beta}^{\mathrm{LY}}((T_t)_{t\in \Lambda}).
	\]
	For each $\ell\in\mathbb{N}$, define \(	W_\ell:=\{(p,q)\in X\times X : p-q \in U_\ell\}.\)
	Then each $W_\ell$ is a dense open subset of $X\times X$, and hence \(	\bigcap_{\ell} W_\ell\)
	is residual in $X\times X$. By Mycielski's theorem, there exists a Mycielski set $\Gamma\subset X$ such that
	\[
	\Gamma\times \Gamma \setminus \Delta \subset \bigcap_{\ell\in\mathbb{N}} W_\ell.
	\]
	Therefore, for every distinct $p,q\in \Gamma$,
	\[
	p-q \in \mathcal{F}_{(\mu_i)}\Psi_{\beta}^{\mathrm{LY}}((T_t)_{t\in \Lambda}),
	\]
	which shows that $(T_t)_t$ is densely $(\mu_i)$-distributionally chaotic.
\end{proof}

Let \(\Psi^{\mathbf{0}}\) be the \(D\)-phenomenon from \(X\) to \(Y\) defined by
\[
\Psi^{\mathbf{0}}(x):=\{U\subset Y : U \text{ is open and } 0\in U\}, \quad x\in X.
\]

\begin{definition}
	A vector \(x\in X\) is said to be \((\mu_i)\)-distributionally near to \(0\) for \((T_t)_{t\in \Lambda}\) if \(x \in \bigcap_{k} S_k\).
	Similarly, \(x\in X\) is said to be \(\beta\)-distributionally \((\mu_i)\)-unbounded for \((T_t)_{t\in \Lambda}\) if \(x \in \bigcap_{k} R_{k,\beta}.\)
\end{definition}

The following identification is immediate.

\begin{proposition}\label{dist-near to 0}
	We have
	\[
	\mathcal{F}_{(\mu_{i})}\Psi^{\mathbf{0}}((T_{t})_{t\in \Lambda})
	=
	\{x\in X : x \text{ is } (\mu_i)\text{-distributionally near to } 0 \text{ for } (T_t)_{t\in \Lambda}\}.
	\]
	Moreover, such vectors satisfy \eqref{near 0}.
\end{proposition}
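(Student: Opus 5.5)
The plan is to prove the displayed equality by unwinding both sides. We will use the definition \eqref{eq:F-Psi-set} with $\Psi=\Psi^{\mathbf{0}}$ and $\mathcal F=\mathcal F_{(\mu_i)}$, and compare it with the sets $S_k$ from the proof of Proposition~\ref{dist-G-delta}. By definition, $x\in\mathcal F_{(\mu_i)}\Psi^{\mathbf 0}$ means that, for every open $U\ni 0$ in $Y$,
\[
\limsup_{i\to\infty}\mu_i\bigl(\{t:T_tx\in U\}\bigr)=1 .
\]
The balls $B_r:=\{y:\rho(y,0)<r\}$ form a neighborhood base at $0$, and return-time sets are monotone in $U$. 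So this condition is equivalent to requiring $\limsup_i \mu_i(\{t:\rho(T_tx,0)<r\})=1$ for every $r>0$; one direction is the special case $U=B_r$, and the other follows from $B_r\subset U$ for small $r$ and the upward closedness of $\mathcal F_{(\mu_i)}$.

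\emph{Step 1 (inclusion $\subset$).} Fix $k$. Take $r=1/k$; since the $\limsup$ equals $1$ and $I$ is unbounded, there is $i\ge k$ with $\mu_i(\{\rho(T_tx,0)<1/k\})>1-1/k$. Hence $x\in S_k$.

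\emph{Step 2 (inclusion $\supset$).} Let $x\in\bigcap_k S_k$ and $r>0$. For every $k\ge 1/r$ we obtain $i_k\ge k$ with $\mu_{i_k}(\{\rho(T_tx,0)<r\})\ge\mu_{i_k}(\{\rho(T_tx,0)<1/k\})>1-1/k$. Since $i_k\to\infty$, the $\limsup$ is $1$.

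\emph{Step 3 (the ``moreover'' part).} This is exactly the first half of Remark~\ref{form distri-irre}. From $x\in S_m$ we pick $i_m\ge m$. By inner regularity of $\mu_{i_m}$, as already used in the proof of Proposition~\ref{dist-G-delta}, we pick a compact $A_m\subset\{t:\rho(T_tx,0)<1/m\}$ with $\mu_{i_m}(A_m)>1-1/m$. By continuity of $t\mapsto T_tx$, the maximum over $A_m$ is attained and is $<1/m$, which gives \eqref{near 0}.

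No step is a genuine obstacle. The only points needing care are the reduction from arbitrary neighborhoods to metric balls, which uses upward closedness of the Furstenberg--Borel family, and ensuring $i\ge k$ in Step 1, which is immediate from the $\limsup$ being taken as $i\to\infty$.
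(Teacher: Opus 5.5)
Your proof is correct and takes the same route as the paper. The paper states the equality without proof, as an immediate unwinding of the definitions of $\mathcal{F}_{(\mu_i)}$, $\Psi^{\mathbf{0}}$ and the sets $S_k$, and the ``moreover'' part is the first half of Remark~\ref{form distri-irre}, which rests on the same inner-regularity argument as Proposition~\ref{dist-G-delta}. Your reduction to the balls $\{\rho(y,0)<r\}$ via upward closedness makes explicit the one detail the paper leaves unstated; the return-time sets involved are open, hence Borel, by \eqref{continuous}.
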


\begin{theorem}\label{prod-dis-near 0}
	Let \(X\) be a separable space. The following statements are equivalent:
	\begin{enumerate}
		\item For each \(n\in \mathbb{N}\), \(\displaystyle{\mathcal{F}_{(\mu_{i})}\Psi^{\mathbf{0}}\left((\bigoplus_{k=1}^{n}T_{t})_{t\in \Lambda}\right)}\)
		is dense in \(X^{n}\).
		
		\item There exist a sequence \((i_{m})_{m}\) in \(I\) with \(i_{m}\to\infty\), a sequence of compact sets \((A_{m})_{m}\) in \(\Lambda\), and a dense subspace \(X_{0}\subset X\) such that \(\mu_{i_{m}}(A_{m})>1-\frac{1}{m}\) for each \(m\in \mathbb{N}\) and
		\[
		\lim_{m\to\infty} \max_{t\in A_{m}}\rho(T_{t}x,0)=0, \quad \forall x\in X_{0}.
		\]
	\end{enumerate}
\end{theorem}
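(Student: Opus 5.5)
Both implications follow the pattern of Theorem~\ref{prod-LY}; the Mycielski theorem does the combinatorial work in the hard direction.

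\emph{(2)$\Rightarrow$(1).} Fix $n$ and set $Z:=X_0^n$, a dense subspace of $X^n$. Let $x=(x_1,\dots,x_n)\in Z$. Each coordinate satisfies $\max_{t\in A_m}\rho(T_tx_\ell,0)\to0$. With the product metric (for instance the sum, or the maximum, of the coordinate metrics), the $n$-fold orbit $\bigl(\bigoplus T_t\bigr)x$ is uniformly close to $0$ on $A_m$. Now take any open $U\ni 0$ in $Y^n$ and fix $\varepsilon>0$ with $B(0,\varepsilon)\subset U$. For all large $m$,
\[
\{t:\textstyle(\bigoplus T_t)x\in U\}\supset A_m,
\]
so its $\mu_{i_m}$-measure exceeds $1-1/m$. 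Since $i_m\to\infty$, the $\limsup$ equals $1$, and $x\in\mathcal F_{(\mu_i)}\Psi^{\mathbf 0}$. Hence this set contains the dense set $Z$ and is dense.

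\emph{(1)$\Rightarrow$(2).} The first step is to observe that $\mathcal F_{(\mu_i)}\Psi^{\mathbf 0}\bigl((\bigoplus_{k=1}^n T_t)_t\bigr)=\bigcap_k S_k^{(n)}$. This is Proposition~\ref{dist-near to 0} applied to the product family, whose assumptions (continuity and local equicontinuity) pass to finite products. It is therefore a $G_\delta$, and by (1) it is dense, hence residual in $X^n$; call it $\mathcal R_n$. Since $X$ is a separable $F$-space, it is a perfect Polish space; if $X=\{0\}$ the statement is trivial. The Mycielski theorem then gives a dense Mycielski set $\mathcal K$ such that every $n$-tuple of distinct points of $\mathcal K$ lies in $\mathcal R_n$.

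Choose distinct $q_1,q_2,\dots\in\mathcal K$ forming a dense subset of $X$. For each $m$ we have $(q_1,\dots,q_m)\in\mathcal R_m$, so Remark~\ref{form distri-irre} applied to the product family gives the following. We may recursively choose $i_m\in I$ with $i_m>\max(m,i_{m-1})$ and a compact set $A_m$ with $\mu_{i_m}(A_m)>1-1/m$ and
\[
\max_{t\in A_m}\max_{\ell\le m}\rho(T_tq_\ell,0)<1/m.
\]
(Directly from the definition of $S_k$ for the product: there are arbitrarily large indices with a compact set of measure $>1-1/k$ on which the product orbit is within $1/k$ of $0$. For the max or sum product metric this controls every coordinate.) Consequently $\max_{t\in A_m}\rho(T_tq_\ell,0)\to0$ for each fixed $\ell$.

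Finally, let $X_0:=\operatorname{span}\{q_\ell\}$, which is dense. For $x=\sum_{\ell\le L}\alpha_\ell q_\ell$, translation invariance of $\rho$ gives
\[
\rho(T_tx,0)\le\sum_\ell\rho(\alpha_\ell T_tq_\ell,0).
\]
For a fixed scalar $\alpha$, $\rho(\alpha y,0)\le(\lfloor|\alpha|\rfloor+1)\rho(y,0)$ when $|\alpha|\ge1$, while $\rho(\alpha y,0)\le\rho(y,0)$ when $|\alpha|\le1$, because the seminorm terms are monotone. Hence the maximum over $A_m$ of $\rho(T_tx,0)$ tends to $0$. The main obstacle is this diagonal choice: a single sequence $(i_m,A_m)$ must work simultaneously for all $q_\ell$. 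That is exactly why we pass through $m$-tuples in $\mathcal R_m$ and use the Mycielski theorem, rather than working with each vector separately.
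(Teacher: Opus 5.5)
Your proposal is correct and follows essentially the same route as the paper: you show $\mathcal{F}_{(\mu_i)}\Psi^{\mathbf{0}}$ is residual in each $X^n$, apply the Mycielski theorem, make a diagonal choice of $(i_m,A_m)$ along a dense sequence of distinct points of the Mycielski set, and pass to the linear span. You also spell out details the paper leaves implicit, namely the $G_\delta$ property behind residuality, the pairwise distinctness needed in the Mycielski conclusion, and the estimate $\rho(\alpha y,0)\le(\lfloor|\alpha|\rfloor+1)\rho(y,0)$ used to extend the convergence to $\operatorname{span}\{q_\ell\}$.
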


\begin{proof}
	The implication \((2)\Rightarrow(1)\) is immediate. 
	We prove \((1)\Rightarrow(2)\).
	
	By assumption and Proposition~\ref{dist-near to 0}, for each 
	\(n\in\mathbb N\),
	\[
	\mathcal{F}_{(\mu_{i})}\Psi^{\mathbf{0}}\left((\bigoplus_{k=1}^{n}T_{t})_{t\in \Lambda}\right)
	\]
	is residual in \(X^{n}\).
	
	By Mycielski's theorem, there exists a Mycielski set 
	\(\mathcal K\subset X\) such that
	\[
	\mathcal K^{n}
	\subset 
	\mathcal{F}_{(\mu_{i})}\Psi^{\mathbf{0}}\left((\bigoplus_{k=1}^{n}T_{t})_{t\in \Lambda}\right)
	\quad \forall\; n\in\mathbb N.
	\]
	
	Let \(\{y_{\ell}\}_{\ell\in\mathbb N}\subset \mathcal K\) 
	be a countable dense subset. One can construct a sequence 
	\((A_{m})_{m\in\mathbb N}\) of compact subsets of \(\Lambda\) and a strictly increasing sequence 
	\((i_{m})_{m\in\mathbb N}\) in \(I\) with \(i_{m}\to \infty\) such that
	\[
	\mu_{i_{m}}(A_{m})>1-\frac{1}{m}
	\]
	and
	\[
	\max_{t\in A_{m}}
	\left(
	\max_{1\le \ell\le m}
	\rho(T_{t}y_{\ell},0)
	\right)
	<\frac{1}{m},
	\qquad m\in\mathbb N.
	\]
	In particular, for every \(\ell\in\mathbb N\),
	\[
	\lim_{m\to\infty}
	\max_{t\in A_{m}}\rho(T_{t}y_{\ell},0)=0.
	\]
	
	Set \(X_{0}:=\mathrm{span}\{y_{\ell}:\ell\in\mathbb N\}\). Then \(X_{0}\) is dense in \(X\), and the above estimate extends to every \(x\in X_{0}\). This completes the proof.
\end{proof}

\begin{proposition}\label{dist-unbounded}
	The following assertions are equivalent:
	\begin{enumerate}
		\item There exist \(\varepsilon>0\), a sequence 
		\((x_m)_{m\in\mathbb N}\subset X\), and a sequence 
		\((i_m)_{m\in\mathbb N}\) in \(I\) with \(i_m\to\infty\) such that 
		\(x_m\to 0\) and
		\[
		\int_{\Lambda} \mathbf{1}_{(\varepsilon,\infty)}
		\bigl(\|T_{t}x_m\|_{\beta}\bigr)\, d\mu_{i_m}(t)
		\;\ge\; 1-\frac{1}{m},
		\qquad m\in\mathbb N.
		\]
		
		\item There exists a \(\beta\)-distributionally \((\mu_i)\)-unbounded vector for \((T_t)_{t\in \Lambda}\).
		
		\item The set of all \(\beta\)-distributionally \((\mu_i)\)-unbounded vectors for \((T_t)_{t\in \Lambda}\) is residual in \(X\).
	\end{enumerate}
\end{proposition}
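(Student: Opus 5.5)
We prove the cycle (3)$\Rightarrow$(2)$\Rightarrow$(1)$\Rightarrow$(3). The implication (3)$\Rightarrow$(2) is immediate, since a residual subset of the Baire space $X$ is nonempty.

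For (2)$\Rightarrow$(1), let $z$ be $\beta$-distributionally $(\mu_i)$-unbounded. By Remark~\ref{form distri-irre}, there are $j_m\to\infty$ and compact sets $B_m$ with $\mu_{j_m}(B_m)>1-\frac1m$ and $c_m:=\min_{t\in B_m}\|T_tz\|_\beta\to\infty$. Passing to a subsequence, we may assume $c_m\ge 1$ and even $c_m\ge m^2$. Set $x_m:=z/\sqrt{c_m}$, so that $x_m\to0$ in $X$ because scalar multiplication is continuous in an $F$-space. On $B_m$ we have $\|T_tx_m\|_\beta\ge \sqrt{c_m}\ge 1$. Hence (1) holds with $\varepsilon=1/2$ and $i_m:=j_m$, after reindexing the subsequence so that the bound $1-\frac1m$ is kept (a subsequence only improves it).

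The main work is (1)$\Rightarrow$(3). By Proposition~\ref{dist-G-delta}, the set $\bigcap_k R_{k,\beta}$ of $\beta$-distributionally unbounded vectors is a countable intersection of open sets $R_{k,\beta}$. By Baire it therefore suffices to show that each $R_{k,\beta}$ is dense. The difficulty is that (1) only gives smallness of the \emph{vectors} $x_m$, and we need a dense supply of vectors. The plan is a perturbation argument. Fix $k$, $y\in X$ and a neighbourhood $U$ of $0$. We look for $\lambda$ and $m$ such that $y+\lambda x_m\in (y+U)\cap R_{k,\beta}$, where $\lambda$ is a large scalar and $m$ is large.

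The key idea is to rescale. Fix the compact set $K$ from (\ref{compact-soporte}). Setting $G_m:=\{t:\|T_tx_m\|_\beta>\varepsilon\}$, we have $\mu_{i_m}(G_m)\ge 1-\frac1m$. On $G_m$,
\[
\|T_t(y+\lambda x_m)\|_\beta\ge |\lambda|\varepsilon-\|T_ty\|_\beta .
\]
So we need $\|T_ty\|_\beta$ to be controlled on most of $G_m$ with respect to $\mu_{i_m}$. By inner regularity, pick a compact $C$ with $\mu_{i_m}(C)$ close to $1$; then $\sup_{t\in C}\|T_ty\|_\beta=:M<\infty$ by local equicontinuity. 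Choose $\lambda$ with $|\lambda|\varepsilon>k+M$. Then the set where $\|T_t(y+\lambda x_m)\|_\beta>k$ has $\mu_{i_m}$-measure at least $\mu_{i_m}(G_m\cap C)>1-\frac1k$, provided $m$ is large and $C$ is chosen appropriately.

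The obstacle is the order of choices: $C$ and $M$ depend on $m$, while $\lambda$ depends on $M$, and we need $\lambda x_m\in U$. To break the circularity, choose $\lambda$ depending on $m$ but slowly enough. Pick a compact $C_m$ with $\mu_{i_m}(C_m)>1-\frac1{2k}$, set $M_m:=\sup_{C_m}\|T_ty\|_\beta$, and take $\lambda_m:=(k+M_m+1)/\varepsilon$. It is then not automatic that $\lambda_mx_m\to0$, so I would first try to strengthen (1) by replacing $x_m$ with $\gamma_mx_m$ for scalars $\gamma_m\to\infty$ still satisfying $\gamma_mx_m\to0$. This boosts the threshold $\varepsilon$ to $\gamma_m\varepsilon\to\infty$, and it only requires $\gamma_m$ to grow slowly relative to the convergence $x_m\to 0$. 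The remaining issue is that $M_m$ may grow faster than $\gamma_m$. If that happens, I would instead replace the fixed $y$ by a reduction in which $y$ itself is shown to be in a good position: either $\|T_ty\|_\beta\le k$ fails on a large set, in which case no perturbation is needed, or a pigeonhole/convexity argument over $y+\lambda x_m$ and $y-\lambda x_m$ handles it. The triangle inequality gives
\[
\max\{\|T_t(y+\lambda x_m)\|_\beta,\ \|T_t(y-\lambda x_m)\|_\beta\}\ge |\lambda|\,\|T_tx_m\|_\beta>|\lambda|\varepsilon
\]
pointwise, which avoids $M$ entirely. To turn this pointwise maximum into the measure estimate $>1-\frac1k$, I would split $G_m$ according to which sign wins and use several perturbations. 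This splitting is the step I expect to require the most care.
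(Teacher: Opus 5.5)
Your arguments for (3)$\Rightarrow$(2) and (2)$\Rightarrow$(1) are fine. Reducing (1)$\Rightarrow$(3) to the density of each open set $R_{k,\beta}$, via Proposition~\ref{dist-G-delta} and Baire, is also exactly right. However, the density of $R_{k,\beta}$ is the entire content of the implication, and you do not prove it.

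The route through $M_m=\sup_{t\in C_m}\|T_ty\|_\beta$ is a dead end. The measures $\mu_{i_m}$ escape every compact set, and nothing in (1) ties the growth of $M_m$ to the rate at which $x_m\to0$. So in general no choice of $\lambda_m$ or $\gamma_m$ gives both $\lambda_m\varepsilon>k+M_m$ and $\lambda_m x_m\to0$.

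Your fallback dichotomy is not exhaustive. The case where $\|T_ty\|_\beta>k$ holds only on a set of moderate $\mu_{i_m}$-measure is precisely where $y$ and $\lambda x_m$ can cancel.

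The two-sign trick only shows that one of $y\pm\lambda x_m$ satisfies $\|T_t(\cdot)\|_\beta>|\lambda|\varepsilon$ on a set of measure at least $\frac12\mu_{i_m}(G_m)$. Splitting $G_m$ by which sign wins cannot do better, because two sets whose union is $G_m$ can each carry only about half its measure. You need a measure $>1-\frac1k$, and this step is left open.

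The missing idea is to reverse the pigeonhole: take many equally spaced perturbations and use the fact that their \emph{bad} sets are pairwise disjoint.

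\begin{enumerate}
\item A fixed multiple of a null sequence is null. So for a balanced neighbourhood $U$ of $0$ and $m$ large (with $m>2k$ and $i:=i_m\ge k$), the vector $u:=\frac{k}{\varepsilon}x_m$ lies in $\frac1{4k}U$.
\item The set $D:=\{t:\|T_tu\|_\beta>k\}$ then has $\mu_i(D)>1-\frac1{2k}$.
\item Put $y_s:=y+2su\in y+U$ for $s=0,\dots,2k-1$, and $B_s:=\{t\in D:\|T_ty_s\|_\beta\le k\}$.
\item If $t\in B_s\cap B_r$ with $s\ne r$, then $2k\ge\|T_t(y_s-y_r)\|_\beta=2|s-r|\,\|T_tu\|_\beta>2k$, which is absurd. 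So the $2k$ sets $B_s$ are disjoint, and some $B_{s_0}$ satisfies $\mu_i(B_{s_0})\le\frac1{2k}\mu_i(D)$.
\item Hence $\mu_i(\{t:\|T_ty_{s_0}\|_\beta>k\})\ge(1-\frac1{2k})\mu_i(D)>(1-\frac1{2k})^2>1-\frac1k$, so $y_{s_0}\in R_{k,\beta}\cap(y+U)$.
\end{enumerate}

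No control of $T_ty$ is needed at all, which is why the circularity you ran into never arises. This is the paper's argument.
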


\begin{proof}
	The implication (3) $\Rightarrow$ (2) is immediate.
	
	Assume that (2) holds. Then there exists $x\in X$ which is 
	$\beta$-distributionally $(\mu_i)$-unbounded for $(T_t)_{t\in\Lambda}$. 
	By \eqref{beta-unbo}, there exist a sequence $(j_m)_m$ in $I$ with 
	$j_m\to\infty$ and a sequence of compact sets \((B_m)_m\) in \(\Lambda\) such that
	\[
	\mu_{j_m}(B_m)>1-\frac{1}{m}
	\quad \forall m\in\mathbb N,
	\qquad\text{and}\qquad
	\lim_{m\to\infty}\min_{t\in B_m}\|T_t x\|_\beta=\infty.
	\]
	
	Fix $\varepsilon>0$ and define $x_k:=x/k$ for $k\in\mathbb N$. Then $x_k\to 0$. 
	Given $k$, choose $m_0=m_0(k)$ such that
	\[
	\min_{t\in B_m}\|T_t x\|_\beta > k\varepsilon
	\qquad \text{for all } m\ge m_0.
	\]
	It follows that, for $m\ge m_0$,
	\begin{align*}
		\int_{\Lambda} \mathbf{1}_{(\varepsilon,\infty)}\bigl(\|T_t x_k\|_\beta\bigr)\, d\mu_{j_m}(t)
		&=
		\int_{\Lambda} \mathbf{1}_{(k\varepsilon,\infty)}\bigl(\|T_t x\|_\beta\bigr)\, d\mu_{j_m}(t) \\
		&\ge \mu_{j_m}(B_m).
	\end{align*}
	Since $\mu_{j_m}(B_m)\to 1$, we can extract a subsequence $(i_k)_k$ such that (1) holds.
	
	\medskip
	
	Now, we prove that (1) implies (3). For each $k\in\mathbb N$, we rewrite \(R_{k,\beta}\) as
	\[
	R_{k,\beta}
	=
	\left\{
	x\in X: \exists i\in I, i\ge k\; \text{such that}
	\int_{\Lambda} \mathbf{1}_{(k,\infty)}\bigl(\|T_t x\|_\beta\bigr)\, d\mu_i(t)
	> 1-\frac{1}{k}
	\right\}.
	\]
	
	We claim that $R_{k,\beta}$ is dense in $X$. Let $x\in X$ and let $U$ be a balanced neighborhood of $0$ in $X$ (see \cite[Theorem 1.14]{Ru}). By (1), there exist $u\in \frac{1}{4k}U$ and $i\in I$, $i\ge k$, such that
	\[
	\int_{\Lambda} \mathbf{1}_{(k,\infty)}\bigl(\|T_t u\|_\beta\bigr)\, d\mu_i(t)
	> 1-\frac{1}{2k}.
	\]
	
	Define $x_s:=x+2su$ for $s=0,\dots,2k-1$, so that $x_s\in x+U$. Set
	\[
	D:=\{t\in \operatorname{supp}(\mu_i):\ \|T_t u\|_\beta>k\},
	\]
	so that $\mu_i(D)>1-\frac{1}{2k}$. For each $s$, let
	\[
	B_s:=\{t\in \operatorname{supp}(\mu_i):\ \|T_t x_s\|_\beta\le k\}.
	\]
	
	We claim that the sets $B_s\cap D$ are pairwise disjoint. Indeed, if 
	$t_0\in B_s\cap B_r\cap D$ with $s\neq r$, then
	\[
	\|T_{t_0}x_s - T_{t_0}x_r\|_\beta
	\le 2k,
	\]
	whereas
	\[
	\|T_{t_0}x_s - T_{t_0}x_r\|_\beta
	=2|s-r|\,\|T_{t_0}u\|_\beta
	>2k,
	\]
	a contradiction.
	
	Thus the sets $B_s\cap D$ are disjoint, and hence for some $s_0$,
	\[
	\mu_i(B_{s_0}\cap D)
	\le \frac{1}{2k}\mu_i(D).
	\]
	Consequently,
	\[
	\mu_i(D\setminus B_{s_0})
	\ge \mu_i(D)-\mu_i(B_{s_0}\cap D)
	> \left(1-\frac{1}{2k}\right)^2
	> 1-\frac{1}{k}.
	\]
	
	For $t\in D\setminus B_{s_0}$ we have $\|T_t x_{s_0}\|_\beta>k$, and therefore
	\[
	\int_{\Lambda} \mathbf{1}_{(k,\infty)}\bigl(\|T_t x_{s_0}\|_\beta\bigr)\, d\mu_i(t)
	\ge \mu_i(D\setminus B_{s_0})
	> 1-\frac{1}{k}.
	\]
	Hence $x_{s_0}\in R_{k,\beta}$, proving that $R_{k,\beta}$ is dense.
	
	It follows that $\bigcap_{k} R_{k,\beta}$ is residual in $X$, which completes the proof.
\end{proof}

\begin{definition}\label{criterio-distrib}
	A family $(T_{t})_{t\in \Lambda}$ satisfies the \emph{$(\mu_i)$-distributional chaos criterion} if there exist a subset \(X_{0}\subset X\) and \(\beta\in \mathbb{N}\) such that:
	\begin{enumerate}
		\item There exist a sequence of compact sets $(A_{m})_{m}$ in \(\Lambda\) and a sequence $(i_{m})_{m}\subset I$ with \(i_m \to \infty\) such that \(\mu_{i_{m}}(A_{m}) > 1 - \frac{1}{m}\) for each \(m\in \mathbb{N}\) and
		\[
		\lim_{m\to\infty} \max_{t\in A_{m}} \rho(T_{t}x,0)= 0,
		\quad \forall\, x \in X_{0}.
		\]
		
		\item There exist $\varepsilon > 0$, a sequence $(y_k)_k \subset \overline{\mathrm{span}}(X_{0})$ with $y_k \to 0$, and a sequence $(j_k)_k \subset I$ with $j_k \to \infty$ such that
		\[
		\int_{\Lambda} \mathbf{1}_{(\varepsilon, \infty)}
		\bigl(\|T^{t} y_k\|_{\beta}\bigr)\, d\mu_{j_k}(t)
		\geq 1 - \frac{1}{k},
		\quad \forall\, k \in \mathbb{N}.
		\]
	\end{enumerate}
	If \(X_{0}\) can be taken dense in \(X\), we say that $(T_t)_{t\in\Lambda}$ satisfies the \emph{densely $(\mu_i)$-distributional chaos criterion}.
\end{definition}

\begin{theorem}
	$(T_{t})_{t\in \Lambda}$ is $(\mu_i)$-distributionally chaotic if and only if it satisfies the $(\mu_i)$-distributional chaos criterion.
\end{theorem}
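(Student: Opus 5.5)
The plan follows the proof of Theorem~\ref{equiv-LY-criterion}, with Proposition~\ref{dist-unbounded} and Remark~\ref{form distri-irre} replacing the absolute-boundedness results.

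\emph{Necessity.} Suppose $(T_t)_{t\in\Lambda}$ is $(\mu_i)$-distributionally chaotic. By Proposition~\ref{equi dis-cha dist-irr} it admits a $\beta$-distributionally $(\mu_i)$-irregular vector $p$. Take $X_0:=\{p\}$. Condition~(1) of Definition~\ref{criterio-distrib} is exactly the description of $\bigcap_k S_k$ in Remark~\ref{form distri-irre}. For condition~(2), $p$ is $\beta$-distributionally $(\mu_i)$-unbounded, so the implication (2)$\Rightarrow$(1) of Proposition~\ref{dist-unbounded} applies. Its proof uses $y_k:=p/k\in\operatorname{span}\{p\}$, so it yields $y_k\to0$ and indices $j_k\to\infty$ with the required measure estimate. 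This gives the criterion.

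\emph{Sufficiency.} Set $Z:=\overline{\operatorname{span}}(X_0)$, which is an $F$-space, and $S_t:=T_t|_Z$. The family $(S_t)$ inherits local equicontinuity and the continuity assumption \eqref{continuous}, so the earlier results of this section apply to it.

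First, condition~(2) of the criterion is hypothesis~(1) of Proposition~\ref{dist-unbounded} for $(S_t)$ on $Z$, since $y_k\in Z$ and $y_k\to0$. The implication (1)$\Rightarrow$(3) then shows that the set $\bigcap_k R_{k,\beta}$ of $\beta$-distributionally $(\mu_i)$-unbounded vectors is residual in $Z$.

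Second, condition~(1) of the criterion extends by linearity to all of $\operatorname{span}(X_0)$. For a finite combination $x=\sum_j\alpha_jx_j$ we have
\[
\rho(S_tx,0)\le\sum_j(|\alpha_j|+1)\rho(S_tx_j,0),
\]
so the maxima over $A_m$ still tend to $0$, using the same $A_m$ and $i_m$. Hence $\operatorname{span}(X_0)\subset\bigcap_kS_k$ (computed for $(S_t)$), and $\operatorname{span}(X_0)$ is dense in $Z$. Proposition~\ref{dist-G-delta} shows that $\bigcap_kS_k$ is a $G_\delta$ in $Z$, so it is residual in $Z$.

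Taking $x$ in the intersection of these two residual sets gives, by Proposition~\ref{dist-G-delta}, a $\beta$-distributionally $(\mu_i)$-irregular vector for $(S_t)$, hence for $(T_t)$. Proposition~\ref{equi dis-cha dist-irr} then gives $(\mu_i)$-distributional chaos.

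The main point needing care is the transfer of structure to $Z$. The sets $S_k$ and $R_{k,\beta}$ must be open relative to $Z$ (the openness argument in Proposition~\ref{dist-G-delta} works verbatim inside $Z$). The density argument in Proposition~\ref{dist-unbounded} must also stay within $Z$; it does, because the perturbations $x+2su$ use $u$ drawn from the sequence $y_k\subset Z$. One should also note that the criterion's $\|T^ty_k\|_\beta$ is read as $\|T_ty_k\|_\beta$. The remaining steps are routine.
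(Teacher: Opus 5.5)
Your proposal is correct and follows essentially the paper's route. Necessity takes $X_0=\{p\}$. Sufficiency restricts to $Z=\overline{\operatorname{span}}(X_0)$, gets residuality of the near-to-zero vectors from the dense subspace $\operatorname{span}(X_0)$ and of the $\beta$-distributionally unbounded vectors from Proposition~\ref{dist-unbounded}, intersects, and applies Proposition~\ref{equi dis-cha dist-irr}. The one difference is that the paper gets the near-to-zero residuality through the direct sums $\bigoplus_{k=1}^n S_t$ and Theorem~\ref{prod-dis-near 0}, which is stated for separable spaces, whereas you work directly at $n=1$; this is cleaner and avoids any separability assumption.
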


\begin{proof}
	Assume first that $(T_{t})_{t\in \Lambda}$ is $(\mu_i)$-distributionally chaotic. Then it admits a $\beta$-distributionally \((\mu_i)\)-irregular vector $x\in X$. Taking $X_0:=\{x\}$, it follows immediately that the $(\mu_i)$-distributional chaos criterion is satisfied.
	
	Conversely, suppose that $(T_{t})_{t\in \Lambda}$ satisfies the $(\mu_i)$-distributional chaos criterion, and let $X_0\subset X$ be as in Definition~\ref{criterio-distrib}. Set
	\[
	Z:=\overline{\mathrm{span}}(X_0),
	\]
	endowed with the induced metric $\mathrm{D}$, so that $(Z,\mathrm{D})$ is an $F$-space. For each $t\in\Lambda$, define $S_t:=T_t|_Z$.
	
	By condition~(1), for every $z\in \mathrm{span}(X_0)$,
	\[
	\lim_{m\to\infty}\max_{t\in A_m}\rho(S_t z,0)=0.
	\]
	By Theorem~\ref{prod-dis-near 0}, it follows that, for each $n\in\mathbb{N}$, the set of $(\mu_i)$-distributionally near-to-zero vectors for $(\bigoplus_{k=1}^n S_t)_{t\in\Lambda}$ is residual in $Z^n$.
	
	On the other hand, by condition~(2) and Proposition~\ref{dist-unbounded}, the set of $\beta$-distributionally $(\mu_i)$-unbounded vectors for $(S_t)_{t\in\Lambda}$ is residual in $Z$. Consequently, for each $n\in\mathbb{N}$, the family $(\bigoplus_{k=1}^n S_t)_{t\in\Lambda}$ is $(\mu_i)$-distributionally chaotic.
	
	Therefore, for each $n\in\mathbb{N}$, the set
	\[
	\mathcal{F}_{(\mu_i)}\Psi_{\beta}^{\mathrm{LY}}\big((\bigoplus_{k=1}^{n} S_t)_{t\in\Lambda}\big)
	\]
	is residual in $Z^n$. Thus, $(S_t)_{t\in\Lambda}$ admits a $\beta$-distributionally \((\mu_i)\)-irregular vector. By Proposition~\ref{equi dis-cha dist-irr}, $(T_{t})_{t\in \Lambda}$ is $(\mu_i)$-distributionally chaotic.
\end{proof}

\begin{theorem}\label{prod-dense-dist}
	Let \(X\) be a separable space. The family $(T_{t})_{t\in \Lambda}$ satisfies the densely $(\mu_i)$-distributional chaos criterion if and only if, for every \(n\in \mathbb{N}\), the direct sum \(\left(\bigoplus_{k=1}^{n} T_{t}\right)_{t\in \Lambda}\) is densely $(\mu_i)$-distributionally chaotic.
\end{theorem}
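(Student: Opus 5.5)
The plan is to mirror the proof of Theorem~\ref{prod-LY}, with the Li--Yorke notions replaced by their distributional counterparts.

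\emph{Forward direction.} Assume the densely $(\mu_i)$-distributional chaos criterion holds with a dense set $X_0$, a parameter $\beta$, sequences $(A_m),(i_m)$, $\varepsilon$, $(y_k)$ and $(j_k)$. Fix $n$ and put $T^{(n)}_t:=\bigoplus_{k=1}^n T_t$ on $X^n$, equipped with the product metric and the seminorm $\|(z_1,\dots,z_n)\|_\beta:=\max_\ell\|z_\ell\|_\beta$. Condition (1) of Definition~\ref{criterio-distrib} holds for the dense set $X_0^n$ with the same $A_m$ and $i_m$: the maximum over $t\in A_m$ of the coordinatewise distances still tends to $0$. Condition (2) holds for the sequence $(y_k,0,\dots,0)\to 0$, which lies in $\overline{\mathrm{span}}(X_0^n)=X^n$; here we use that $\|T_t y_k\|_\beta>\varepsilon$ forces the max-seminorm to exceed $\varepsilon$.

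Next, argue as in the proof of the preceding theorem, but without restricting to a subspace, since $\overline{\mathrm{span}}(X_0^n)=X^n$:
\begin{itemize}
\item Theorem~\ref{prod-dis-near 0} gives that the $(\mu_i)$-distributionally near-to-zero vectors of $(T^{(n)}_t)$ form a dense set. These vectors form the $G_\delta$ set $\bigcap_k S_k$, so the set is residual.
\item Proposition~\ref{dist-unbounded} gives that the $\beta$-distributionally unbounded vectors of $(T^{(n)}_t)$ form a residual set.
\end{itemize}
The intersection of these two residual sets is exactly $\mathcal F_{(\mu_i)}\Psi^{\mathrm{LY}}_\beta((T^{(n)}_t))$, so this set is residual. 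Theorem~\ref{dense-mu_i-distri} then gives that $(T^{(n)}_t)$ is densely $(\mu_i)$-distributionally chaotic.

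\emph{Converse.} Assume every $(T^{(n)}_t)$ is densely $(\mu_i)$-distributionally chaotic. By Theorem~\ref{dense-mu_i-distri}, for each $n$ there is $\beta_n$ with $\mathcal F_{(\mu_i)}\Psi^{\mathrm{LY}}_{\beta_n}((T^{(n)}_t))$ residual in $X^n$. In particular the set $\mathcal F_{(\mu_i)}\Psi^{\mathbf 0}((T^{(n)}_t))$ of near-to-zero vectors is dense in $X^n$ for every $n$.

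The key point is to check that this distributional near-to-zero property of an $n$-tuple for the direct sum gives a common sequence $(i_m)$ and common compact sets $(A_m)$ that work for all coordinates simultaneously. It does, because the near-to-zero condition is stated through $\rho$ of the whole tuple on the product space.

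Theorem~\ref{prod-dis-near 0}, implication (1)$\Rightarrow$(2), then supplies $(i_m)$, $(A_m)$ and a dense subspace $X_0$ satisfying condition (1) of the criterion.

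For condition (2) we use the case $n=1$. There is a $\beta_1$-distributionally $(\mu_i)$-irregular vector $p$; it is in particular $\beta_1$-distributionally unbounded. The implication (2)$\Rightarrow$(1) of Proposition~\ref{dist-unbounded} gives $\varepsilon>0$, $y_k\to0$ (namely $y_k=p/k$) and $j_k\to\infty$ with the required measure estimate. Since $\overline{\mathrm{span}}(X_0)=X$, these $y_k$ are admissible, and the criterion holds with $\beta:=\beta_1$.

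\emph{Main obstacle.} The only delicate step is the forward direction's use of the residuality results on $X^n$. One has to make sure the product metric and seminorms on $X^n$, $Y^n$ fit the standing assumptions: local equicontinuity, continuity of $(t,x)\mapsto T_t x$, and a directed family of seminorms. One also has to make sure the $\rho$ on $Y^n$ is compatible with the coordinatewise estimates. I would take $\rho_{Y^n}:=\sum_\ell\rho(\cdot_\ell)$ or the maximum over coordinates, and check both directions of this compatibility once, before running the argument above.
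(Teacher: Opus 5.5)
Your proposal is correct and takes the same route as the paper. In the forward direction you rerun the argument of the preceding theorem on $X^n$: density of near-to-zero vectors via $X_0^n$, residuality of $\beta$-distributionally unbounded vectors via Proposition~\ref{dist-unbounded} applied to $(y_k,0,\dots,0)$, and then Theorem~\ref{dense-mu_i-distri}. In the converse you obtain condition (1) from Theorem~\ref{prod-dis-near 0}, and your verification of condition (2) through the case $n=1$ and the implication (2)$\Rightarrow$(1) of Proposition~\ref{dist-unbounded} supplies a step that the paper's one-line converse leaves implicit.
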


\begin{proof}
	Assume that $(T_{t})_{t\in \Lambda}$ satisfies the densely $(\mu_i)$-distributional chaos criterion. Arguing as in the proof of the previous result, and using that $X_0$ is dense in $X$ together with the separability of $X$, it follows from Theorem~\ref{dense-mu_i-distri} that, for every $n \in \mathbb{N}$, the direct sum $\left(\bigoplus_{k=1}^{n} T_{t}\right)_{t\in \Lambda}$ is densely $(\mu_i)$-distributionally chaotic.
	
	Conversely, the result follows from Theorem~\ref{prod-dis-near 0}.
\end{proof}

\begin{proposition}\label{beta-desi} For \(\delta>0\), we have
	\[
	\mu_{i}(\{t\in \Lambda: \Vert{T_{t}x\Vert}_{\beta}\geq \delta\})\leq \frac{1}{\delta} \int_{\Lambda} \Vert{T_{t}x\Vert}_{\beta}\ d\mu_{i}(t)
	\]	
\end{proposition}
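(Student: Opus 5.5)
This is Markov's inequality applied to the nonnegative function $t\mapsto \|T_t x\|_\beta$ with respect to the probability measure $\mu_i$. First I will check measurability. By the standing assumption \eqref{continuous}, the map $t\mapsto T_t x$ is continuous from $\Lambda$ into $Y$. The seminorm $\|\cdot\|_\beta$ is continuous on $Y$, so $f(t):=\|T_t x\|_\beta$ is a continuous, hence Borel, nonnegative function on $\Lambda$. It follows that the set $E_\delta:=\{t\in\Lambda:\|T_t x\|_\beta\ge\delta\}$ is closed, and in particular Borel. Without \eqref{continuous}, local equicontinuity of $(T_t)$ still gives continuity of $t\mapsto T_tx$ only if that is assumed, so I will rely on \eqref{continuous}, which is in force throughout this section.

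Next comes the pointwise estimate
\[
\delta\,\mathbf{1}_{E_\delta}(t)\le \|T_t x\|_\beta \qquad\text{for every } t\in\Lambda .
\]
It holds trivially off $E_\delta$, because there the left side is $0$, and on $E_\delta$ it holds by definition. Integrating this against $\mu_i$ gives $\delta\,\mu_i(E_\delta)\le\int_\Lambda\|T_tx\|_\beta\,d\mu_i(t)$, where the right-hand side may be $+\infty$, in which case there is nothing to prove. Dividing by $\delta>0$ yields the claim.

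The only point needing any care is the measurability of the integrand and of $E_\delta$. Continuity settles this, as explained above, so I expect no genuine obstacle.
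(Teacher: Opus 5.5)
Your proof is correct and matches the paper, which states this proposition without proof because it is just Markov's inequality for the nonnegative function $t\mapsto\|T_tx\|_\beta$ with respect to $\mu_i$. Your measurability check via the standing continuity assumption \eqref{continuous} is the right justification, and it is trivial anyway when $\Lambda=\mathbb{N}$, which is where the result is used. Your aside about local equicontinuity is muddled (equicontinuity controls continuity in $x$, not in $t$), but you do not rely on it, so the argument is unaffected.
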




\section{Abstract criteria for linear structures}

The main objective of this section is to establish two abstract criteria for the existence of large linear structures, namely Lemma~\ref{dense-citerio} and Lemma~\ref{space-cri}, which will play a central role in the subsequent sections.

For the reader’s convenience, we briefly recall the relevant notions. Let $X$ be an infinite-dimensional $F$-space. A subset $A \subset X$ is said to be \emph{dense-lineable} if $A \cup \{0\}$ contains a dense linear subspace of $X$, and \emph{spaceable} if $A \cup \{0\}$ contains an infinite-dimensional closed subspace of $X$. For further aspects of these and related notions, we refer to \cite{aron2016lineability, bernal2014lineability}.

The formulation and proof of Lemma~\ref{dense-citerio}, concerning dense-lineability, are inspired by \cite[Theorem 3.36]{JiLi25}. Related adaptations of this technique have also been used to establish sufficient conditions for the existence of dense distributionally irregular manifolds for \(C_{0}\)-semigroups on complex sectors; see \cite[Theorem 3.16]{Jiang2025}.

On the other hand, Lemma~\ref{space-cri} combines structural aspects of Lemma~\ref{dense-citerio} with ideas originating from the spaceability techniques developed in \cite{GoLeMo, Lo24, SaSt}.

\subsection{A dense-lineability criterion in \(F\)-spaces}

Let $X$ be a separable infinite-dimensional $F$-space, and let 
$\mathcal{S} \subset \mathcal{P}^{\mathbb{N}}$ be an abstract class of sequences with parameter set $\mathcal{P}$. 
We assume that $\mathcal{S}$ is stable under extraction of subsequences, that is, if 
$(A_k)_{k \in \mathbb{N}} \in \mathcal{S}$ and $(k_j)_{j \in \mathbb{N}}$ is a strictly increasing sequence of integers, 
then $(A_{k_j})_{j \in \mathbb{N}} \in \mathcal{S}$.

Consider three mappings
\begin{align*}
	f,g,h \colon X \times \mathcal{P} \longrightarrow [0,\infty).
\end{align*}
Assume that the following conditions are satisfied:
\begin{itemize}
	\item For every \(\lambda \in \mathbb{K}\) and every \(x \in X\),
	\[
	f(\lambda x,\cdot)=|\lambda|\,f(x,\cdot), 
	\qquad
	g(\lambda x,\cdot)=|\lambda|\,g(x,\cdot),
	\qquad
	h(\lambda x,\cdot)\leq (|\lambda|+1)\,h(x,\cdot).
	\]
	\item The mappings \(g(x,\cdot)\) and \(h(x,\cdot)\) satisfy the triangle inequality, that is,
	\[
	g(x+y,\cdot)\leq g(x,\cdot)+g(y,\cdot)
	\quad\text{and}\quad
	h(x+y,\cdot)\leq h(x,\cdot)+h(y,\cdot)
	\]
	for all \(x,y\in X\).
	\item For all \(x,y\in X\),
	\[
	f(x+y,\cdot)\geq f(x,\cdot)-g(y,\cdot).
	\]
\end{itemize}
Define
\begin{align*}
	G:=\bigl\{x\in X:\exists (B_{k})_{k}\in \mathcal{S}\ \text{such that}\ \lim_{k\to\infty} f(x,B_{k})=\infty\bigr\}.
\end{align*}
For each \((A_{k})_{k}\in \mathcal{S}\), define
\begin{align*}
	P(A_{k})&:=\{x\in X:\liminf_{k\to\infty} g(x,A_{k})=0\},\\
	Q(A_{k})&:=\{x\in X:\liminf_{k\to\infty} h(x,A_{k})=0\}.
\end{align*}

\begin{lemma}\label{dense-citerio}
	Assume that \(G\) is residual and that, for each \((A_{k})_{k}\in \mathcal{S}\), the sets \(P(A_{k})\) and \(Q(A_{k})\) are residual in \(X\).
	Then there exists a dense manifold \(Z\subset X\) such that for every \(z\in Z\setminus\{0\}\) there exist two sequences \((\Theta_{k})_{k}\) and \((\Lambda_{k})_{k}\) in \(\mathcal{S}\) satisfying
	\begin{align}\label{req-asyn}
		\lim_{k\to\infty} h(z,\Theta_{k})=0
		\quad\text{and}\quad
		\lim_{k\to\infty} f(z,\Lambda_{k})=\infty.
	\end{align}
\end{lemma}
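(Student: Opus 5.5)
The plan is an inductive construction of a sequence $(z_n)_{n\in\mathbb N}$ in $X$, together with nested families of sequences in $\mathcal S$. We then take $Z:=\operatorname{span}\{z_n:n\in\mathbb N\}$.

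For density, fix a dense sequence $(u_n)_n$ in $X$ (possible by separability) and require $\mathrm D(z_n,u_n)<1/n$. Then $\{z_n\}$ is dense, hence so is $Z$.

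The relevant sets at each step are residual. Indeed, $G$ and every $P(A_k)$, $Q(A_k)$ are residual by hypothesis, and each step uses only finitely many of them. Their intersection is therefore residual, hence dense by the Baire category theorem. So $z_n$ can always be chosen inside it and within $1/n$ of $u_n$.

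\emph{Bookkeeping.} At the end of stage $n$ we will have the following data, all in $\mathcal S$.
\begin{enumerate}
\item[(a)] A sequence $\Theta^{(n)}$, a subsequence of $\Theta^{(n-1)}$, such that $h(z_j,\Theta^{(n)}_k)\to0$ for every $j\le n$.
\item[(b)] For each $m\le n$, a sequence $\Lambda^{m,n}$, a subsequence of $\Lambda^{m,n-1}$, such that $f(z_m,\Lambda^{m,n}_k)\to\infty$ and $g(z_j,\Lambda^{m,n}_k)\to0$ for all $m<j\le n$.
\end{enumerate}

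To start, take $z_1\in G$ with $\mathrm D(z_1,u_1)<1$ and let $\Lambda^{1,1}\in\mathcal S$ witness $z_1\in G$. Let $\Theta^{(0)}$ be any element of $\mathcal S$, for instance $\Lambda^{1,1}$ itself. Since $Q(\Theta^{(0)})$ is residual, choose $z_1$ also in $Q(\Theta^{(0)})$. Then $\liminf_k h(z_1,\Theta^{(0)}_k)=0$, so some subsequence $\Theta^{(1)}$ has $h(z_1,\Theta^{(1)}_k)\to0$.

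At stage $n\ge2$, choose
\[
z_n\in G\cap Q(\Theta^{(n-1)})\cap\bigcap_{m<n}P(\Lambda^{m,n-1}),\qquad \mathrm D(z_n,u_n)<1/n .
\]
Let $\Lambda^{n,n}$ witness $z_n\in G$. Because the liminf of $h(z_n,\cdot)$ along $\Theta^{(n-1)}$ vanishes, extract $\Theta^{(n)}$ with $h(z_n,\Theta^{(n)}_k)\to0$. Similarly, for each $m<n$, extract $\Lambda^{m,n}\subset\Lambda^{m,n-1}$ with $g(z_n,\Lambda^{m,n}_k)\to0$.

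These are separate sequences, so there is no simultaneity problem. Stability of $\mathcal S$ under subsequences keeps everything in $\mathcal S$. Limits already established along a sequence persist along its subsequences, so (a) and (b) are preserved.

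\emph{Verification.} Let $z=\sum_{j=1}^n a_jz_j\ne0$, and let $m$ be the smallest index with $a_m\neq0$.

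For the $h$-condition, take $\Theta_k:=\Theta^{(n)}_k$. By subadditivity and $h(\lambda x,\cdot)\le(|\lambda|+1)h(x,\cdot)$,
\[
h(z,\Theta_k)\le\sum_{j\le n}(|a_j|+1)\,h(z_j,\Theta^{(n)}_k)\to0 .
\]

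For the $f$-condition, take $\Lambda_k:=\Lambda^{m,n}_k$. Using $f(x+y,\cdot)\ge f(x,\cdot)-g(y,\cdot)$ together with the homogeneity and subadditivity of $g$,
\[
f(z,\Lambda_k)\ \ge\ |a_m|\,f(z_m,\Lambda^{m,n}_k)-\sum_{m<j\le n}|a_j|\,g(z_j,\Lambda^{m,n}_k)\ \longrightarrow\ \infty .
\]
This gives \eqref{req-asyn}.

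The main obstacle I expect is this bookkeeping: ensuring that the lowest-index vector dominates along its own sequence. The $\liminf$-type residual sets $P$ and $Q$ only supply subsequences, and those must be re-extracted at each later stage. The resolution is that, for a fixed $z$, only the finitely many stages up to $n$ matter, so the stage-$n$ sequences suffice and no diagonal limit over all stages is needed.
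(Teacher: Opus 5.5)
Your proof is correct and follows essentially the paper's route: the same induction choosing $z_n$ near a dense sequence inside a finite intersection of residual sets, with a single nested sequence for $h$ (the paper's $C^{(n)}$), and for each $m$ successive refinements of the $G$-witness of $z_m$ along which $g(z_j,\cdot)\to0$ for all later $j$, tested at the lowest nonzero index. (Your indexing here is cleaner than the paper's written verification, which uses $A^{(n+1,j)}$ where $A^{(n+1,j+1)}\subset B^{(j)}$ is meant.) The one slip is suggesting $\Theta^{(0)}:=\Lambda^{1,1}$: this is circular because $\Lambda^{1,1}$ depends on $z_1$, so fix $\Theta^{(0)}\in\mathcal S$ before choosing $z_1$, as the paper does with its arbitrary $(A_k)_k$.
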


\begin{proof}
	Let $\{y_\ell\}_{\ell\in\mathbb{N}}$ be a countable dense subset of $X$. We shall inductively construct a sequence
	$\{x_\ell\}_{\ell\in\mathbb{N}}\subset X$ such that
	\[
	\mathrm{D}(x_\ell,y_\ell)<\frac{1}{\ell}\quad\text{for all }\ell\in\mathbb{N},
	\]
	and such that every nonzero vector in $\text{span}\{x_\ell:\ell\in\mathbb{N}\}$ satisfies \eqref{req-asyn}.
	The density of this span then follows immediately.
	
	Fix an arbitrary sequence $(A_k)_k\in\mathcal{S}$. Since $G$, $P(A_k)$ and $Q(A_k)$ are residual, we may choose
	\[
	x_1\in G\cap P(A_k)\cap Q(A_k)
	\quad\text{with}\quad
	\mathrm{D}(x_1,y_1)<1.
	\]
	Hence, there exist sequences $(A_k^{(1,1)})_k$, $(B_k^{(1)})_k$ and $(C_k^{(1)})_k$ in $\mathcal{S}$, where
	$(A_k^{(1,1)})_k$ and $(C_k^{(1)})_k$ are subsequences of $(A_k)_k$, such that
	\[
	\lim_{k\to\infty} g(x_1,A_k^{(1,1)})=0,\;\;
	\lim_{k\to\infty} f(x_1,B_k^{(1)})=\infty,
	\;\;\text{and}\\;\;
	\lim_{k\to\infty} h(x_1,C_k^{(1)})=0.
	\]
	By homogeneity, these limits hold for any nonzero scalar multiple of $x_1$.
	
	Assume now that, for some $n\ge1$, we have constructed vectors $x_1,\dots,x_n\in X$ and sequences
	\[
	(A_k^{(\ell,j)})_k,\quad (B_k^{(\ell)})_k,\quad (C_k^{(\ell)})_k \in \mathcal{S},
	\qquad 1\le j\le \ell\le n,
	\]
	satisfying the following properties:
	\begin{enumerate}
		\item $\mathrm{D}(x_\ell,y_\ell)<\frac{1}{\ell}$ for $1\le \ell\le n$;
		\item for $2\le \ell\le n$, $(C_k^{(\ell)})_k$ is a subsequence of $(C_k^{(\ell-1)})_k$;
		\item for every $1\le \ell\le n$,
		\[
		\lim_{k\to\infty} h(x_\ell,C_k^{(n)})=0;
		\]
		\item for $2\le \ell\le n$ and $1\le j\le \ell-1$,
		$(A_k^{(\ell,j)})_k$ is a subsequence of $(A_k^{(\ell-1,j)})_k$, and
		$(A_k^{(\ell,\ell)})_k$ is a subsequence of $(B_k^{(\ell-1)})_k$;
		\item for all $1\le j\le \ell\le n$,
		\[
		\lim_{k\to\infty} g(x_\ell,A_k^{(\ell,j)})=0
		\quad\text{and}\quad
		\lim_{k\to\infty} f(x_\ell,B_k^{(\ell)})=\infty;
		\]
		\item every nonzero vector in $\operatorname{span}\{x_1,\dots,x_n\}$ satisfies \eqref{req-asyn}.
	\end{enumerate}
	
	Define
	\[
	R_{n+1}:=\bigcap_{j=1}^{n} P(A_k^{(n,j)})\cap P(B_k^{(n)})\cap G\cap Q(C_k^{(n)}),
	\]
	which is residual in $X$. We may thus choose
	$x_{n+1}\in R_{n+1}$ such that
	\[
	\mathrm{D}(x_{n+1},y_{n+1})<\frac{1}{n+1}.
	\]
	By construction, for each $1\le j\le n$ there exists a subsequence
	$(A_k^{(n+1,j)})_k$ of $(A_k^{(n,j)})_k$ such that
	\[
	\lim_{k\to\infty} g(x_{n+1},A_k^{(n+1,j)})=0.
	\]
	Moreover, there exist a subsequence $(A_k^{(n+1,n+1)})_k$ of $(B_k^{(n)})_k$,
	a sequence $(B_k^{(n+1)})_k$ in $\mathcal{S}$, and a sequence $(C_k^{(n+1)})_k$ in $\mathcal{S}$ such that
	\begin{itemize}
		\item \(\displaystyle{\lim_{k\to\infty} g(x_{n+1},A_k^{(n+1,n+1)})=0}\),
		\item \(\displaystyle{\lim_{k\to\infty} f(x_{n+1},B_k^{(n+1)})=\infty}\), and
		\item \(\displaystyle{\lim_{k\to\infty} h(x_{n+1},C_k^{(n+1)})=0}\).
	\end{itemize}
	Let $0\neq z=\sum_{\ell=1}^{n+1}\alpha_\ell x_\ell\in\text{span}\{x_1,\dots,x_{n+1}\}$. Then,
	by the triangle inequality and the growth condition on $h$,
	\[
	h(z,C_k^{(n+1)})
	\le \sum_{\ell=1}^{n+1}(|\alpha_\ell|+1)h(x_\ell,C_k^{(n+1)})
	\xrightarrow[k\to\infty]{}0.
	\]
	Let $j=\min\{\ell:\alpha_\ell\neq0\}$. If $j=n+1$, then
	\[
	f(z,B_k^{(n+1)})=|\alpha_{n+1}|f(x_{n+1},B_k^{(n+1)})\xrightarrow[k\to\infty]{}\infty.
	\]
	If $j\le n$, writing $z=\sum_{\ell=j}^{n+1}\alpha_\ell x_\ell$, we obtain
	\[
	f(z,A_k^{(n+1,j)})
	\ge |\alpha_j|f(x_j,A_k^{(n+1,j)})
	-\sum_{\ell=j+1}^{n+1}|\alpha_\ell|\,g(x_\ell,A_k^{(n+1,j)}).
	\]
	By the inductive construction and item~(5), the right-hand side diverges to $+\infty$ as $k\to\infty$.
	Therefore, every nonzero vector in $\text{span}\{x_1,\dots,x_{n+1}\}$ satisfies \eqref{req-asyn}.
	
	By induction, the same holds for every nonzero vector in
	$\text{span}\{x_\ell:\ell\in\mathbb{N}\}$, which completes the proof.
\end{proof}

\subsection{A spaceability criterion in separable Banach spaces}

\begin{lemma}\label{space-cri}
	Let $X$ be a separable infinite-dimensional real or complex Banach space. 
	Assume the hypotheses of Lemma~\ref{dense-citerio} concerning 
	$\mathcal{S}$, $\mathcal{P}$, the maps $f,g$, and the sets $G$, $P(A_k)$ and $Q(A_k)$.
	
	Suppose moreover that there exists a surjective map 
	$J:\mathcal{P}\to \mathbb{N}$ and a decreasing sequence 
	$(E_n)_{n\in\mathbb{N}}$ of infinite-dimensional closed subspaces of $X$ such that
	\[
	g(x,A)\le M\|x\|
	\quad 
	\text{for all } x\in E_n,\ 
	\text{for all } A\in\mathcal{P} \text{ with } J(A)= n,
	\quad n\in\mathbb{N},
	\]
	for some constant $M>0$, and that for each $n\in\mathbb{N}$ and each $\delta>0$ 
	there exists $\varepsilon>0$ such that
	\begin{align}\label{condi J-g}
		\|y\|<\varepsilon 
		\quad \Longrightarrow \quad 
		g(y,A)<\delta
		\quad 
		\text{for all } A\in\mathcal{P} \text{ with } J(A)= n.
	\end{align}
	Then there exist an infinite-dimensional closed subspace 
	$F\subset X$ and a sequence $(\Theta_n)_{n\in\mathbb{N}}\in\mathcal{S}$  such that
	\[
	\lim_{n\to\infty} g(x,\Theta_n)=0
	\quad \text{for every } x\in F,
	\]
	and for every $x\in F\setminus\{0\}$ there exists a sequence 
	$(\Lambda_k)\in\mathcal{S}$ satisfying
	\[
	\lim_{k\to\infty} f(x,\Lambda_k)=\infty.
	\]
\end{lemma}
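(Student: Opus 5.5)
We build a basic sequence $(x_n)_{n\in\mathbb N}$ with $x_n\in E_{N_n}$ for a rapidly increasing sequence of indices $N_n$, together with a single sequence $(\Theta_n)_n\in\mathcal S$ with $J(\Theta_n)$ tending to infinity. The subspace will be $F:=\overline{\operatorname{span}}\{x_n\}$. The strategy combines the inductive diagonal scheme of Lemma~\ref{dense-citerio} with the classical Mazur/gliding-hump construction of basic sequences in a decreasing chain of infinite-dimensional subspaces. This is the technique of \cite{GoLeMo,Lo24,SaSt}.

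\emph{Step 1: inductive choice of the vectors.} Assume $x_1,\dots,x_{n}$ have been chosen, with the corresponding sequences $(A^{(\ell,j)}_k)_k$, $(B^{(\ell)}_k)_k$ as in the proof of Lemma~\ref{dense-citerio}, and with finitely many terms $\Theta_1,\dots,\Theta_{n}$ already fixed. We then choose $x_{n+1}$ as follows.

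(a) It lies in a finite-codimensional subspace of $E_{N_{n+1}}$, which by Mazur's lemma makes $(x_\ell)$ basic with basis constant at most $2$, so coefficient functionals are bounded by $4/\|x_\ell\|$.

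(b) It is close to an element of the residual set $G\cap\bigcap_j P(A^{(n,j)}_k)\cap P(B^{(n)}_k)$. Intersecting a residual set with an infinite-dimensional closed subspace is not automatic. So we take a point $u$ of this residual set within $\eta_{n+1}$ of a unit vector $v\in E_{N_{n+1}}\cap(\text{finite-codim})$ and set $x_{n+1}$ as a normalized small multiple of $u$. Then we must verify that closeness to $E_{N_{n+1}}$ suffices in the estimates below, using \eqref{condi J-g} to control $g(u-v,\cdot)$.

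(c) It is small, namely $\|x_{n+1}\|\le 2^{-n}$, and it satisfies $g(x_{n+1},\Theta_m)<2^{-n}$ for $m\le n$. This is possible by \eqref{condi J-g}, since $J(\Theta_m)$ takes finitely many values.

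After this, we pick $\Theta_{n+1}$ as a far term of the common refined subsequence, chosen so that $g(x_\ell,\Theta_{n+1})<2^{-n}$ for all $\ell\le n+1$ and $J(\Theta_{n+1})\le N_{n+1}$. Then the bound $g(x_\ell,\Theta_m)\le M\|x_\ell\|$ holds for $\ell>m$, because $x_\ell\in E_{N_\ell}\subset E_{J(\Theta_m)}$ (approximately so, by (b)). Stability of $\mathcal S$ under subsequences ensures $(\Theta_n)\in\mathcal S$.

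\emph{Step 2: $g\to0$ on all of $F$.} For $x=\sum_\ell a_\ell x_\ell\in F$ we have $|a_\ell|\le 4\|x\|/\|x_\ell\|$. We split $g(x,\Theta_m)$ into the terms $\ell\le m$ and the tail $\ell>m$. Each head term is at most $|a_\ell|2^{-m}$. For this to be summable we need the head bound to beat $\|x_\ell\|^{-1}$, so we choose $\Theta_{m}$ with $g(x_\ell,\Theta_m)<2^{-m}\|x_\ell\|$, which is possible since only finitely many $\ell$ are involved. The tail is controlled by $M\|\sum_{\ell>m}a_\ell x_\ell\|$, which tends to $0$. Passing the triangle inequality to the infinite sum requires continuity of $g(\cdot,A)$ uniformly in $A$ with fixed $J(A)$; this is exactly \eqref{condi J-g}.

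\emph{Step 3: divergence of $f$.} For $x\in F\setminus\{0\}$, let $j$ be the first index with $a_j\neq0$ and use $f(x,A)\ge|a_j|f(x_j,A)-g(x-a_jx_j,A)$ along $A=A^{(\cdot,j)}_k$, diagonalized across all later stages. The head is handled as in Lemma~\ref{dense-citerio}. The tail, now infinite, must be kept bounded by the uniform estimate $g\le M\|\cdot\|$. This is the main obstacle: it forces us to choose the $A^{(n,j)}$ sequences (after diagonalization) with $J$-values not exceeding $N_{j+1}$, so that all later $x_\ell$ lie in the good subspace for those parameters. It also requires controlling $g$ of the infinite tail by a bounded quantity while $f(x_j,\cdot)\to\infty$, so that the difference still diverges.
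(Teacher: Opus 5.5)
Your architecture (a Mazur basic sequence in the decreasing chain, perturbed into the residual sets, a diagonal choice of $\Theta_n$ and $\Lambda_{k,j}$, and a head/tail split with the $M$-bound on the tail) is the paper's. However, the bookkeeping between $J$-values and subspace indices runs in the wrong direction, and the step you call the main obstacle fails as you resolve it.

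You require $J(\Theta_{n+1})\le N_{n+1}$ after $x_{n+1}$ has already been placed near $E_{N_{n+1}}$. In Step 3 you require that all diagonal parameters used for a fixed $j$ have $J$-values at most $N_{j+1}$, so that the whole remainder $x-a_jx_j$ lies in the good subspace. Neither can be arranged.
\begin{itemize}
\item The vector $x_{n+1}$ only gives $\liminf g(x_{n+1},\cdot)=0$ along the refined sequence. The terms where $g(x_{n+1},\cdot)$ is small may all have $J$-values beyond $N_{n+1}$.
\item A sequence $(\Lambda_{k,j})_k\in\mathcal S$ with $f(x_j,\Lambda_{k,j})\to\infty$ cannot in general have bounded $J$-values. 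In Theorem~\ref{condi-suf-space-irre}, $J(m)=m$ and the members of $\mathcal S$ are strictly increasing.
\item Requirement (c) does not help. Since $|a_\ell|\le 4\|x\|/\|x_\ell\|$, the quantity $|a_\ell|\,g(x_\ell,\cdot)$ is scale invariant, so shrinking $x_\ell$ gains nothing.
\end{itemize}

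The missing idea is to interleave the choices the other way round, with a split point that moves with $k$. At stage $n+1$ the paper proceeds in this order:
\begin{enumerate}
\item Choose $\omega_{n+1}$ larger than $J$ of every parameter already fixed, namely $\Theta_n$ and the $\Lambda_{k,j}$ with $k+j=n+1$.
\item Choose $\varepsilon_{n+1}$ from \eqref{condi J-g} for all $\zeta\le\omega_{n+1}$.
\item Choose $p_{\omega_{n+1}}$ in the residual set within $\min\{\varepsilon_{n+1},2^{-(n+2)}C^{-1}\}$ of $e_{\omega_{n+1}}\in E_{\omega_{n+1}}$.
\item Only afterwards pick $\Theta_{n+1}$ and, for $j\le n$, pick $\Lambda_{n+2-j,j}$ far along $A^{(n+1,j+1)}_i$. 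This gives $f(p_{\omega_j},\Lambda_{k,j})>k$ and $g(p_{\omega_\ell},\Lambda_{k,j})<2^{-(\ell+k)}$ for the finitely many $j<\ell\le k+j-1$.
\end{enumerate}

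In Step 3 the intermediate vectors are then controlled by this explicit smallness. The infinite tail starts at $\ell=k+j$, where $\omega_\ell>J(\Lambda_{k,j})$. The part $\sum_{\ell\ge k+j}\beta_\ell e_{\omega_\ell}$ lies in $E_{J(\Lambda_{k,j})}$ and contributes at most $M\|\sum_{\ell\ge k+j}\beta_\ell e_{\omega_\ell}\|\to0$. The perturbations $p_{\omega_\ell}-e_{\omega_\ell}$ contribute at most $\|\beta\|_\infty\sum_{\ell\ge k+j}2^{-\ell}$, by the choice of $\varepsilon_\ell$.

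The same ordering is what makes your Step 2 tail bound legitimate. That bound needs every $x_\ell$ with $\ell>m$ to lie in $E_{J(\Theta_m)}$, up to a perturbation controlled at $\Theta_m$.
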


Let $(E_{n})_{n}$ be the collection of subspaces appearing in the second condition of the hypothesis of the previous theorem. By Mazur’s construction \cite[Lemma C.1.1]{BaMa}, there exists a normalized basic sequence $(e_n)_{n \in \mathbb{N}}$ with $e_n \in E_n$, which constitutes a Schauder basis for $E := \overline{\mathrm{span}}\{e_n : n \in \mathbb{N}\}$. The same property holds for any subsequence of a normalized basic sequence. Denote by $(e_n^*)_{n} \subset E^*$ the corresponding sequence of coordinate functionals on $E$, characterized by $\langle e_m^*, \sum_k \alpha_k e_k \rangle = \alpha_m$ for each $m \in \mathbb{N}$. Moreover, since $(e_n)_{n}$ is normalized, it follows that $\sup_n \|e_n^*\| < \infty$ \cite{Me}, and we define $C := 1 + \sup_{m} \|e_m^*\|$.

\begin{proof}
	Put $\omega_{1}=1$ and adopt the notation of the sets $P(A_k)$ and $\mathrm{G}$ as in the proof of Lemma~\ref{dense-lineable}. 
	Choose
	\[
	p_{\omega_{1}} \in P(A_{k})\cap \mathrm{G}
	\quad \text{such that} \quad 
	\|p_{\omega_{1}}-e_{\omega_{1}}\|<2^{-(1+1)}C^{-1}.
	\]
	By definition of $P(A_{k})$ and $\mathrm{G}$, there exist subsequences
	$(A_{k}^{(1,1)})_{k}$ of $(A_{k})_{k}$ and $(B_{k}^{(1)})_{k}$ in $\mathcal{S}$ such that
	\[
	\lim_{k\to\infty}g(p_{\omega_{1}},A_{k}^{(1,1)})=0,
	\qquad
	\lim_{k\to\infty}f(p_{\omega_{1}},B_{k}^{(1)})=\infty.
	\]
	Hence, we may select \(\Theta_{1}\)
	from the sequence \((A_{k}^{(1,1)})_{k}\) and $\Lambda_{1,1}$ from the sequence $(B_{k}^{(1)})_{k}$ such that
	\[
	g(p_{\omega_{1}}, \Theta_{1})<2^{-(1+1)},
	\qquad
	f(p_{\omega_{1}},\Lambda_{1,1})>1.
	\]
	
	Next, choose $\omega_{2}>\max\{J(\Theta_{1}),J(\Lambda_{1,1})\}$ and fix $e_{\omega_{2}}\in E$.
	By condition (\ref{condi J-g}), there exists $\varepsilon_{2}>0$ such that
	\[
	g(y, A)<2^{-(2+\zeta)}
	\quad  \text{for all } J(A)=\zeta\,; 1\le \zeta\le \omega_{2} 
	\]
	whenever $y\in X$ satisfies $\|y\|<\varepsilon_{2}$.
	
	We claim that there exist a sequence $(p_{\omega_{\ell}})_{\ell\in\mathbb{N}}\subset X$,
	a strictly increasing sequence of positive integers $(\omega_{\ell})_{\ell\in\mathbb{N}}$, a sequence $(\Theta_{\ell})_{\ell\in \mathbb{N}}\in \mathcal{S}$,
	and, for each $\ell\in\mathbb{N}$, a sequence
	$(\Lambda_{k,\ell})_{k\in\mathbb{N}}\in \mathcal{S}$ such that the following properties hold:
	\begin{enumerate}
		\item $\|p_{\omega_{\ell}}-e_{\omega_{\ell}}\|<2^{-(\ell+1)}C^{-1}$ for every $\ell\in\mathbb{N}$;
		\item $g(p_{\omega_{\ell}}, \Theta_{n})<2^{-(\ell+n)}$ for all $1\le \ell\le n$, $n\in\mathbb{N}$;
		\item $g(p_{\omega_{\ell}}-e_{\omega_{\ell}}, A)<2^{-(\ell+\zeta)}$
		for all \( J(A)= \zeta\,; 1\le \zeta\le \omega_{n}\).
		\item for each $j\in\mathbb{N}$,
		\[
		f(p_{\omega_{j}},\Lambda_{k,j})>k \quad \text{for all } k\in\mathbb{N},
		\]
		and
		\[
		g(p_{\omega_{\ell}},\Lambda_{k,j})<2^{-(\ell+k)}
		\quad \text{for } j\le \ell\le k+j-1,\; k\in\mathbb{N}.
		\]
	\end{enumerate}
	The proof of the above statement proceeds by induction. Assume that we have constructed $(p_{\omega_{\ell}})_{\ell=1}^{n}\subset X$, $(\omega_{\ell})_{\ell=1}^{n}$,  $(\Theta_{\ell})_{\ell=1}^{n}$, $(\Lambda_{k,j})_{\substack{k+j\leq n+1\\ j,k\geq 1}}$, and $(\varepsilon_{\ell})_{\ell=2}^{n}$ such that 
	\begin{itemize}
		\item $\displaystyle{\|p_{\omega_{\ell}}-e_{\omega_{\ell}}\|<\min\{2^{-(\ell+1)}C^{-1}, \varepsilon_{\ell}\}}$ for $1\leq \ell\leq n$;
		\item $g(p_{\omega_{\ell}}, \Theta_{n})<2^{-(\ell+n)}$ for $1\leq \ell\leq n$;
		\item $g(y, A)<2^{-(k+\zeta)}$, for each \(J(A)=\zeta\), $1\leq \zeta \leq \omega_{k}$, $\|y\|<\varepsilon_{k}$,  $1\leq k\leq n$;
		\item $f(p_{\omega_{j}}, \Lambda_{k,j}) >k$, $1\leq j\leq n$;
		\item $g(p_{\omega_{\ell}}, \Lambda_{k,j})<2^{-(\ell+k)}$, $1\leq j<\ell\leq n$.
	\end{itemize}
	Such elements of these sequences are chosen from the sequences
	$(A_i^{(\ell,j)})_i$ and $(B_i^{(\ell)})_i$, with $1 \leq j \leq \ell \leq n$, as in the proof of Lemma~\ref{dense-citerio}.
	
	Let
	\[
	\omega_{n+1}>\max \{\omega_{n}, J(\Theta_{n}), J(\Lambda_{n,1}), \ldots, J(\Lambda_{1,n})\}.
	\]
	By the condition (\ref{condi J-g}), there exists $\varepsilon_{n+1}>0$ such that
	\[
	g(y,A)< 2^{-(\zeta +n+1)}, \quad J(A)=\zeta,\;
	1\leq \zeta\leq \omega_{n+1},\ \|y\|<\varepsilon_{n+1}.
	\]
	Now, for $e_{\omega_{n+1}}\in E$, we may choose
	\[
	p_{\omega_{n+1}}\in \bigcap_{j=1}^{n} P(A_i^{(n,j)})\cap P(B_i^{(n)})\cap \mathrm{G}
	\]
	such that
	\[
	\|p_{\omega_{n+1}}-e_{\omega_{n+1}}\|
	<\min\{2^{-(n+2)}C^{-1}, \varepsilon_{n+1}\}.
	\]
	Then, there exist subsequences $(A_{i}^{(n+1,j)})_{i}$ of $(A_{i}^{(n,j)})_{i}$ for $1\leq j\leq n$,
	a subsequence $(A_{i}^{(n+1,n+1)})_{i}$ of $(B_{i}^{(n)})_{i}$,
	and a sequence $(B_{i}^{(n+1)})_{i}$ such that
	\[
	\lim_{i\to\infty}g(p_{\omega_{n+1}}, A_{i}^{(n+1,\ell)})=0,
	\quad 1\leq \ell\leq n+1,
	\]
	and
	\[
	\lim_{i\to\infty}f(p_{\omega_{n+1}}, B_{i}^{(n+1)})=\infty.
	\]
	From the subsequence $(A_i^{(n+1,1)})_i$, which is itself a subsequence of $(A_i^{(1,1)})_i$, 
	we choose $\Theta_{n+1}$ with index strictly larger than that of $\Theta_n$. 
	This yields a strictly increasing sequence of indices 
	$m_1 < m_2 < \cdots$ such that 
	$\Theta_\ell = A_{m_\ell}^{(1,1)}$ for every $\ell \le n+1$. Moreover, choosing the index sufficiently large, we may ensure that
	\[
	g(p_{\omega_{\ell}}, \Theta_{n+1})
	<2^{-(\ell+n+1)},\quad 1\leq \ell\leq n+1.
	\]
	For each $1 \le j \le n$, 
	arguing as in the choice of $\Theta_{n+1}$, 
	we choose $\Lambda_{n+2-j,j}$ from $(A_i^{(n+1,j+1)})_i$ 
	with index sufficiently large to ensure that
	\[
	f(p_{\omega_j}, \Lambda_{n+2-j,j}) > n+2-j.
	\]
	and
	\[
	g(p_{\omega_{\ell}},\Lambda_{n+2-j,j})
	<2^{-(\ell+n+2-j)}, \quad 1\leq j<\ell\leq n+1.
	\]
	Additionally, we choose $\Lambda_{1,n+1}$ such that
	\[
	f(p_{\omega_{n+1}}, \Lambda_{1,n+1})>1.
	\]
	This completes the proof of the claim.
	
	Since $(A_i^{(1,1)})_i \in \mathcal{S}$ and $\mathcal{S}$ is stable under extraction of subsequences, 
	it follows that $(\Theta_n)_{n\in\mathbb{N}} \in \mathcal{S}$. 
	Likewise, for each fixed $j \in \mathbb{N}$, the sequence $(\Lambda_{k,j})_{k}$ 
	is obtained as a subsequence of a sequence $(B_i^{(j)})_i \in \mathcal{S}$; hence,
	\[
	(\Lambda_{k,j})_{k \in \mathbb{N}} \in \mathcal{S}.
	\]
	
	\medskip
	
	Note that, by condition (1),
	\[
	\sum_{n \in \mathbb{N}} \|e^*_{\omega_n}\| \|p_{\omega_n} - e_{\omega_n}\|
	= \sum_{n \in \mathbb{N}} \|e^*_{\omega_n}\| \|q_{\omega_n}\|
	\leq \sum_{n \in \mathbb{N}} 2^{-(n+1)} < 1.
	\]
	Hence, by \cite[Lemma 10.6]{GrPe}, the sequences $(p_{\omega_n})_n$ and
	$(e_{\omega_n})_n$ are equivalent basic sequences.
	Let
	\[
	F := \overline{\mathrm{span}}\{p_{\omega_n} : n \in \mathbb{N}\}.
	\]
	
	\textbf{Claim.}
	\[
	\lim_{n\rightarrow \infty}g(x, \Theta_{n})=0,
	\quad \forall x\in F.
	\]
	
	Fix $n\in \mathbb{N}$ and $x\in F$. Write
	$x=\sum_{\ell} \beta_\ell p_{\omega_\ell}$, where $(\beta_\ell)_\ell\in c_0(\mathbb{N})$.
	Recall that $p_{\omega_\ell}=e_{\omega_\ell}+q_{\omega_\ell}$.
	Since $(e_{\omega_\ell})$ is a basic sequence equivalent to
	$(p_{\omega_\ell})$, the series $\sum_\ell \beta_\ell e_{\omega_\ell}$ converges.
	Moreover, by condition (1), the series $\sum_\ell \beta_\ell q_{\omega_\ell}$
	is absolutely convergent.
	
	Thus,
	\[
	x=\sum_{\ell=1}^{n}\beta_{\ell}p_{\omega_{\ell}}
	+\sum_{\ell>n}\beta_{\ell}e_{\omega_{\ell}}
	+\sum_{\ell>n}\beta_{\ell}q_{\omega_{\ell}},
	\]
	and therefore
	\begin{align*}
		g(x, \Theta_{n})
		&\leq \sum_{\ell=1}^{n}|\beta_{\ell}|
		g(p_{\omega_{\ell}}, \Theta_{n})
		+ g(\sum_{\ell>n}\beta_{\ell}e_{\omega_{\ell}}, \Theta_{n})
		+ \sum_{\ell>n}|\beta_{\ell}|
		g(q_{\omega_{\ell}}, \Theta_{n}) \\
		&\leq \frac{\|\beta\|_{\infty}}{2^{n}}
		+M \Bigl\|\sum_{\ell>n}\beta_{\ell}e_{\omega_{\ell}}\Bigr\|.
	\end{align*}
	
	Since $(e_{\omega_n})$ is a basic sequence, it follows that
	$\bigl\|\sum_{\ell>n}\beta_n e_{\omega_n}\bigr\|\to 0$ as $n\to\infty$,
	which proves the claim.
	
	\textbf{Claim.}
	\[
	\limsup f(x, \cdot)=\infty,
	\quad \forall x\in F\setminus \{0\}.
	\]
	
	Let $x\in F\setminus\{0\}$. Write
	$x=\sum_{\ell} \beta_\ell p_{\omega_\ell}$, with
	$(\beta_\ell)_\ell\in c_0(\mathbb{N})$, and set
	\[
	j=\min\{\ell\in \mathbb{N}:\beta_\ell\neq 0\}.
	\]
	Then, for each $k\in \mathbb{N}$,
	\[
	x=\beta_{j}p_{\omega_{j}}
	+\sum_{\ell=j+1}^{k+j}\beta_{\ell}p_{\omega_{\ell}}
	+\sum_{\ell>k+j}\beta_{\ell}e_{\omega_{\ell}}
	+\sum_{\ell>k+j}\beta_{\ell}q_{\omega_{\ell}}.
	\]
	To establish the claim, we consider the sequence $(\Lambda_{k,j})_{k}$.
	Observe that
	\[
	g\Bigl(\sum_{\ell=j+1}^{k+j}\beta_{\ell}p_{\omega_{\ell}}, \Lambda_{k,j}\Bigr)
	\leq
	\sum_{\ell=j+1}^{k+j}|\beta_{\ell}|
	g(p_{\omega_{\ell}}, \Lambda_{k,j})
	<
	\frac{\|\beta\|_{\infty}}{2^{k}}.
	\]
	Moreover, since \(J(\Lambda_{k,j})<\omega_{k+j+1}\)
	\begin{align*}
		g\Bigl(\sum_{\ell>k+j}\beta_{\ell}e_{\omega_{\ell}},\Lambda_{k,j}\Bigr)
		&\leq
		M
		\cdot
		\Bigl\|\sum_{\ell>k+j}\beta_{\ell}e_{\omega_{\ell}}\Bigr\| \xrightarrow[k\rightarrow\infty]{}0
	\end{align*}
	since $(e_{\omega_\ell})$ is a basic sequence.
	
	Finally, by condition~(\ref{condi J-g}),
	\begin{align*}
		g\Bigl(\sum_{\ell>k+j}\beta_{\ell}q_{\omega_{\ell}}, \Lambda_{k,j}\Bigr)
		&\leq
		\sum_{\ell>k+j}|\beta_{\ell}|
		g(q_{\omega_{\ell}}, \Lambda_{k,j}) \\
		&\leq
		\|\beta\|_{\infty}
		\sum_{\ell>k+j}\frac{1}{2^{\ell+k}}.
	\end{align*}
	Therefore, combining the above estimate with the previous bounds and using the triangle inequality for
	\(x-\beta_j p_{\omega_j}\), we obtain
	\begin{align*}
		f(x,\Lambda_{k,j})
		&\geq
		|\beta_{j}|
		f(p_{\omega_{j}}, \Lambda_{k,j})
		-
		g(x-\beta_{j}p_{\omega_{j}}, \Lambda_{k,j}) \\
		&\geq
		|\beta_{j}|\,k
		-
		g(x-\beta_{j}p_{\omega_{j}}, \Lambda_{k,j}) 
		\xrightarrow[k\to\infty]{}\infty.
	\end{align*}
	This completes the proof.
\end{proof}




\section{Dense-Lineable Structures in Irregularity}

We say that an operator \(T\) on a separable \(F\)-space \(X\) is \emph{hypercyclic} if there exists \(x\in X\) such that the orbit \(\{T^{n}x: n\in \mathbb{N}\}\) is dense in \(X\). The set of all such vectors is denoted by \(\mathrm{HC}(T)\). It is well known that, whenever \(T\) is hypercyclic, the set \(\mathrm{HC}(T)\) forms a dense \(G_{\delta}\)-subset of \(X\). Moreover, it was shown in \cite{bes1999invariant, bourdon1993invariant, He91, We03} that \(\mathrm{HC}(T)\) is always dense-lineable.

Another class of operators recently introduced and studied in linear dynamics is that of recurrent operators; see \cite{costakis2012szemeredi,costakis2014recurrent}. If \(T\) is recurrent on an \(F\)-space, then the set of recurrent vectors \(\mathrm{Rec}(T)\) is again a \(G_{\delta}\)-subset of \(X\). Nevertheless, recent results show that there exist recurrent operators on separable Banach spaces for which \(\mathrm{Rec}(T)\) fails to be dense-lineable; see \cite{lopez2025two, SaSt}.

For operators exhibiting some form of irregular behavior, several sufficient conditions ensuring the dense-lineability of the corresponding sets of irregular vectors have been established in \cite{BeBoMaPe, BeBoMuPe13, BeBo15, BeBoPe20, BeBoPeWu18, JiLi25}, among others. Nevertheless, it is still unknown whether every densely Li--Yorke chaotic operator on a separable Banach space admits a dense irregular manifold; see \cite[Problem~1]{BeBo15}.

The situation changes in the setting of sequences of operators. Indeed, A. Arbieto and M. Saavedra showed that every infinite-dimensional separable complex Banach space \(X\) supports a densely Li--Yorke chaotic sequence of bounded operators \((T_n)_n\subset\mathcal L(X)\) which admits no dense irregular manifold \cite[Theorem~4.3]{ArSa}.

The purpose of this section is to establish sufficient conditions for the existence of dense absolutely \((\mu_i)\)-irregular manifolds and dense distributionally \((\mu_i)\)-irregular manifolds for the family \((T_t)_{t\in\Lambda}\); see Theorems~\ref{dense-lineable} and Theorem \ref{dense-disti}, respectively.

\subsection{Dense lineability of absolutely \((\mu_{m})\)-irregular vectors}

\begin{definition}
	We say that the family \((T_t)_{t\in \Lambda}\subset \mathcal{L}(X,Y)\) admits a dense absolutely \((\mu_i)_{i\in I}\)-irregular manifold if there exist \(\beta\in \mathbb{N}\) and a dense linear subspace 
	\(Z\subset X\) such that, for every \(x\in Z\setminus \{0\}\),
	\[
	\liminf_{i\to\infty}
	\int_{\Lambda} \rho(T_t x,0)\, d\mu_i(t)=0,
	\qquad \text{and} \qquad
	\limsup_{i\to\infty}
	\int_{\Lambda} \|T_t x\|_{\beta}\, d\mu_i(t)=\infty.
	\]
\end{definition}

We begin with the following dichotomy, which provides a sufficient condition for the existence of a dense absolutely \((\mu_i)\)-irregular manifold.

\begin{theorem}\label{dense-lineable}
	Let $(T_t)_{t\in \Lambda} \subset \mathcal{L}(X,Y)$ with \(X\) separable. Suppose that the set
	\begin{align}\label{dense-semin-condi}
		\bigcap_{\alpha \in \mathbb{N}}\Bigl\{x\in X : \lim_{\substack{i\to\infty\\ i\in I}}
		\int_{\Lambda} \|T_t x\|_{\alpha}\, d\mu_i(t)=0 \Bigr\}
	\end{align}
	is dense in $X$. Then one of the following alternatives holds:
	\begin{enumerate}
		\item \(\displaystyle{\lim_{i\to\infty}
			\int_{\Lambda} \Vert{T_t x\Vert}_{\beta}\, d\mu_i(t)=0\;\; \forall x\in X, \forall \beta\in \mathbb{N}}\),
		
		\item
		the family $(T_t)_t$ admits a dense absolutely $(\mu_i)_{i\in I}$-irregular manifold.
	\end{enumerate}
\end{theorem}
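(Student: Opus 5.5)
Assume alternative (1) fails, so there exist $x_0\in X$ and $\beta\in\mathbb{N}$ with $\limsup_{i}\int_\Lambda\|T_tx_0\|_\beta\,d\mu_i(t)>0$. The plan is to produce a dense absolutely $(\mu_i)$-irregular manifold by feeding Lemma~\ref{dense-citerio} suitable data. First we upgrade the failure of (1) to unboundedness. Let $X_{00}$ denote the dense set in \eqref{dense-semin-condi}. Suppose $(T_t)$ were $\beta$-absolutely $(\mu_i)$-bounded. By Theorem~\ref{bounded-beta}(3), for every $\eta>0$ there is $r>0$ with $\sup_i\int\|T_tz\|_\beta\,d\mu_i<\eta$ whenever $\mathrm{D}(z,0)<r$. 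Choose $y\in X_{00}$ with $\mathrm{D}(x_0-y,0)<r$. Then $\limsup_i\int\|T_tx_0\|_\beta\,d\mu_i\le \limsup_i\int\|T_ty\|_\beta\,d\mu_i+\eta=\eta$, and since $\eta$ is arbitrary this contradicts the choice of $x_0$. Hence $(T_t)$ is not $\beta$-absolutely bounded, and Corollary~\ref{not-abs-bounded} makes $\{x:\sup_i\int\|T_tx\|_\beta\,d\mu_i=\infty\}$ residual. Using \eqref{compact-soporte} exactly as in the proof of Theorem~\ref{equiv-LY-criterion}, this set coincides with $\{x:\limsup_i\int\|T_tx\|_\beta\,d\mu_i=\infty\}$.

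Next we set up the abstract data. Take $\mathcal{P}=I$ and let $\mathcal{S}$ be the set of sequences $(i_k)$ in $I$ with $i_k\to\infty$; this class is stable under subsequences. Define $f(x,i)=\int\|T_tx\|_\beta\,d\mu_i$, $g(x,i)=\int\|T_tx\|_\beta\,d\mu_i$ and $h(x,i)=\int\rho(T_tx,0)\,d\mu_i$. Homogeneity and the triangle inequality hold for $f$ and $g$, and $f(x+y)\ge f(x)-g(y)$ by the reverse triangle inequality. For $h$, subadditivity follows from translation invariance of $\rho$, and $\rho(\lambda y,0)\le(|\lambda|+1)\rho(y,0)$ holds because $\|\cdot\|/(1+\|\cdot\|)$ is subadditive and increasing. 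Then $G$ is exactly the residual set from the first step.

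It remains to show that $P((i_k))$ and $Q((i_k))$ are residual for every $(i_k)\in\mathcal{S}$. Each is a $G_\delta$, since $P=\bigcap_m\bigcup_{k>m}\{x:g(x,i_k)<1/m\}$ and each of these sets is open: $\mu_{i_k}$ has compact support and $(T_t)$ is locally equicontinuous, as in the remark preceding Proposition~\ref{LY-irr}. Each also contains $X_{00}$. Indeed, for $x\in X_{00}$ we have $g(x,i)\to0$, and dominated convergence applied to $\rho\le\sum_\ell 2^{-\ell}\min(1,\|\cdot\|_\ell)$ gives $h(x,i)\le\sum_\ell2^{-\ell}\min\bigl(1,\int\|T_tx\|_\ell\,d\mu_i\bigr)\to0$. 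So both sets are dense $G_\delta$ sets. Lemma~\ref{dense-citerio} then yields a dense manifold $Z$ on which every nonzero $z$ has $\liminf h=0$ and $\limsup f=\infty$ along sequences tending to infinity, which is alternative (2).

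The alternatives are mutually exclusive, since under (1) no vector has $\limsup=\infty$. I expect the main obstacle to be the first step: the residuality of $G$ depends on the density hypothesis together with the continuity statement of Theorem~\ref{bounded-beta}(3), and some care is needed to ensure the $\limsup$ over $i\to\infty$, rather than the supremum, is infinite.
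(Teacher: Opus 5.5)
Your proposal is correct and follows essentially the paper's proof. The closedness argument of Proposition~\ref{closed-sub}, which you run inline for the single seminorm $\|\cdot\|_\beta$ via Theorem~\ref{bounded-beta}(3), rules out $\beta$-absolute boundedness when (1) fails; Corollary~\ref{not-abs-bounded} with \eqref{compact-soporte} then gives a residual $G$, and Lemma~\ref{dense-citerio} is applied with $f=g=\int_\Lambda\|T_t\cdot\|_\beta\,d\mu_i$ and $h=\int_\Lambda\rho(T_t\cdot,0)\,d\mu_i$, exactly as in the paper, except that you take $\mathcal{S}$ to be all sequences in $I$ tending to infinity rather than subsequences of a fixed $(i_n)$ and you supply the dense-$G_\delta$ argument (including the dominated-convergence step for $\rho$) for the residuality of $P$ and $Q$ that the paper only asserts.
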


Before proving the preceding statement, we require the following auxiliary result.

\begin{proposition}\label{closed-sub}
	Let $(T_t)_{t} \subset \mathcal{L}(X,Y)$.  
	If $(T_t)_{t}$ is absolutely $(\mu_i)_{i}$-bounded, then, for each
	$\beta\in\mathbb{N}$, the set
	\[
	A_{\beta}
	:=
	\Bigl\{ x\in X : \lim_{i\to\infty}\int_{\Lambda} \|T_t x\|_{\beta}\, d\mu_i(t)=0 \Bigr\}
	\]
	is a closed linear subspace of $X$.
\end{proposition}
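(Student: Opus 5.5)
The proof splits into two independent parts: $A_\beta$ is a linear subspace, and $A_\beta$ is closed. The first part does not use boundedness; the second does.

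\emph{Linearity.} For scalars $\lambda$ we have $\|T_t(\lambda x)\|_\beta=|\lambda|\,\|T_tx\|_\beta$. The seminorm also gives
\[
\int_\Lambda\|T_t(x+y)\|_\beta\,d\mu_i(t)\le\int_\Lambda\|T_tx\|_\beta\,d\mu_i(t)+\int_\Lambda\|T_ty\|_\beta\,d\mu_i(t).
\]
Hence $0\in A_\beta$, and $A_\beta$ is closed under addition and scalar multiplication.

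\emph{Closedness.} Take $x_n\in A_\beta$ with $x_n\to x$ in $X$. Set
\[
\Phi(z):=\sup_{i\in I}\int_\Lambda\|T_tz\|_\beta\,d\mu_i(t).
\]
By hypothesis the family is $\beta$-absolutely $(\mu_i)$-bounded. Theorem~\ref{bounded-beta}, implication (1)$\Rightarrow$(3), then gives
\[
\lim_{r\downarrow0}\ \sup_{\mathrm D(z,0)<r}\Phi(z)=0.
\]
Fix $\varepsilon>0$ and choose $r>0$ with $\Phi(z)<\varepsilon$ whenever $\mathrm D(z,0)<r$. Since $\mathrm D$ is translation invariant, we can pick $n$ with $\mathrm D(x-x_n,0)<r$. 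Subadditivity then gives, for every $i\in I$,
\[
\int_\Lambda\|T_tx\|_\beta\,d\mu_i(t)\le\int_\Lambda\|T_tx_n\|_\beta\,d\mu_i(t)+\varepsilon.
\]
Taking $\limsup_{i\to\infty}$ and using $x_n\in A_\beta$, we get $\limsup_i\int_\Lambda\|T_tx\|_\beta\,d\mu_i(t)\le\varepsilon$. As $\varepsilon$ is arbitrary and the integrals are nonnegative, the limit is $0$, so $x\in A_\beta$.

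\emph{Where the hypothesis matters.} The only nontrivial point is that closedness needs a bound that is uniform in $i$ and small on a neighborhood of $0$. Pointwise finiteness of $\Phi$ alone would not give this. Absolute boundedness, in the local form (3) of Theorem~\ref{bounded-beta}, supplies exactly this uniform bound. No Banach-space homogeneity is used, so the argument works for a general $F$-space $X$.
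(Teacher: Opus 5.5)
Your proof is correct and follows essentially the same route as the paper. Like the paper, you use the local form (3) of Theorem~\ref{bounded-beta} to get a bound $\varepsilon$, uniform in $i$, on a $\mathrm{D}$-ball of radius $r$, approximate the limit point within $r$ by an element of $A_\beta$, and pass to the $\limsup$ using subadditivity; the only difference is that you also verify the routine linearity of $A_\beta$, which the paper leaves implicit.
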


\begin{proof}
	Let $z\in \overline{A_{\beta}}$ and fix $\varepsilon>0$. Since $(T_t)_t$ is
	absolutely $(\mu_i)_i$-bounded with respect to $\|\cdot\|_{\beta}$, there exists
	$r>0$ such that
	\[
	\sup_{\substack{y\in X\\ \mathrm{D}(y,0)<r}}
	\ \sup_{i\in I}
	\int_{\Lambda} \|T_t y\|_{\beta}\, d\mu_i(t)
	< \varepsilon .
	\]
	Choose $x\in A_{\beta}$ with $\mathrm{D}(z,x)<r$. Then, for every $i$,
	\[
	\int_{\Lambda} \|T_t z\|_{\beta}\, d\mu_i(t)
	\le
	\int_{\Lambda} \|T_t(z-x)\|_{\beta}\, d\mu_i(t)
	+
	\int_{\Lambda} \|T_t x\|_{\beta}\, d\mu_i(t).
	\]
	Taking the limit superior as $i\to\infty$ and using that $x\in A_{\beta}$, we obtain
	\[
	\limsup_{i\to\infty}\int_{\Lambda} \|T_t z\|_{\beta}\, d\mu_i(t)
	\le \varepsilon .
	\]
	Since $\varepsilon>0$ is arbitrary, it follows that
	\[
	\lim_{i\to\infty}\int_{\Lambda} \|T_t z\|_{\beta}\, d\mu_i(t)=0,
	\]
	and hence $z\in A_{\beta}$. Therefore $A_{\beta}$ is closed in $X$.
\end{proof}

\begin{proof}[Proof of Theorem \ref{dense-lineable}]
	Assume first that $(T_t)_t$ is absolutely $(\mu_i)$-bounded. By Proposition~\ref{closed-sub} in the condition ~\ref{dense-semin-condi}
	\[
	\lim_{i\to\infty}\int_{\Lambda} \Vert{T_t x\Vert}_{\beta}\, d\mu_i(t)=0
	\qquad \forall x\in X, \forall \beta\in \mathbb{N}.
	\]
	Assume now that $(T_t)_t$ is not absolutely $(\mu_i)$-bounded, that is, there exists \(\beta\in \mathbb{N}\) such that \((T_{t})_{t}\) is not absolutely $(\mu_i)$-bounded with respect to \(\Vert{\cdot\Vert}_{\beta}\).  By Corollary~\ref{not-abs-bounded} together with condition~\ref{compact-soporte}, 
	there exists a strictly increasing and unbounded sequence 
	\((i_n)_{n\in\mathbb N}\subset I\) such that the set
	\[
	\mathrm{G}:=
	\Bigl\{
	x\in X :
	\limsup_{n\to\infty}
	\int_{\Lambda} \|T_t x\|_{\beta}\, d\mu_{i_n}(t)=\infty
	\Bigr\}
	\]
	is residual in \(X\).
	
	Moreover, condition (\ref{dense-semin-condi}) ensures that the set
	\[
	\Bigl\{
	x\in X :
	\lim_{i\to\infty}
	\int_{\Lambda} \rho(T_t x,0)\, d\mu_i(t)=0
	\Bigr\}
	\]
	is dense in \(X\).
	
	Now set \(\mathcal P := \{i_n : n\in\mathbb N\}\) and define
	\[
	\mathcal S :=
	\left\{
	(m_k)_{k\in\mathbb N}\subset \mathcal P :
	\text{ strictly increasing}
	\right\}.
	\]
	Define the maps \(f,g,h : X\times \mathcal P \to [0,\infty) \) by
	\[
	f(x,m)=g(x,m):=
	\int_{\Lambda} \|T_t x\|_{\beta}\, d\mu_m(t),
	\quad
	h(x,m):=
	\int_{\Lambda} \rho(T_t x,0)\, d\mu_m(t).
	\]
	
	For each \((m_k)_k\in\mathcal{S}\), the sets
	\[
	P(m_k):=\left\{
	x\in X:\;
	\liminf_{k\to\infty}\int_{\Lambda}\|T_t x\|_{\beta}\,d\mu_{m_{k}}(t)=0
	\right\}
	\]
	and
	\[
	Q(m_k):=\left\{
	x\in X:\;
	\liminf_{k\to\infty}\int_{\Lambda}\rho(T_t x,0)\,d\mu_{m_{k}}(t)=0
	\right\}
	\]
	are residual in \(X\).

	By Lemma~\ref{dense-citerio}, there exists a dense subspace 
	\(Z\subset X\) such that, for every \(x\in Z\setminus \{0\}\),
	\[
	\liminf_{i\to\infty}
	\int_{\Lambda} \rho(T_t x,0)\, d\mu_i(t)=0,
	\quad	\text{and}\quad
	\limsup_{i\to\infty}
	\int_{\Lambda} \|T_t x\|_{\beta}\, d\mu_i(t)=\infty.
	\]
	Therefore, \((T_t)_t\) admits a dense absolutely 
	\((\mu_i)_{i\in I}\)-irregular manifold.
\end{proof}

\begin{corollary}
	Let $(\mu_m)_{m\in\mathbb{N}}$ satisfy \textup{(M1)}. Let $B_{\omega}$ be a unilateral weighted backward shift on a Fréchet or Banach space $X$ admitting a basis $(e_n)_{n\in\mathbb{N}}$. Then exactly one of the following alternatives holds:
	\begin{enumerate}
		\item For each $x\in X$ and each \(\beta\in \mathbb{N}\)
		\[
		\lim_{m\to\infty} \int_{\mathbb{N}} \Vert{B_{\omega}^{t}x\Vert}_{\beta} \, d\mu_m(t)=0;
		\]
		\item The operator $B_{\omega}$ admits a dense absolutely $(\mu_m)$-irregular manifold.
	\end{enumerate}
\end{corollary}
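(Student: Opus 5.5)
The corollary will follow from Theorem~\ref{dense-lineable} applied to the family $T_t:=B_\omega^t$, $t\in\Lambda:=\mathbb N$. The main task is to check its density hypothesis \eqref{dense-semin-condi}. Afterwards we show that the two alternatives are mutually exclusive, which gives the ``exactly one''.

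\emph{Standing assumptions.} With $\Lambda=\mathbb N$ discrete, compact sets are finite. Local equicontinuity and the continuity \eqref{continuous} are then automatic. Condition \textup{(M1)} is understood as the paper's standing requirement on $(\mu_m)$: compact supports, the finiteness condition \eqref{compact-soporte}, and $\mu_m(K)\to0$ for every finite $K\subset\mathbb N$.

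\emph{Density hypothesis.} We take the candidate dense set
\[
c_{00}=\operatorname{span}\{e_n:n\in\mathbb N\},
\]
which is dense because $(e_n)$ is a basis. For $x=\sum_{k\le N}\alpha_k e_k$ we have $B_\omega^t x=0$ for all $t\ge N$, since a backward shift kills $e_1$ and lowers indices. Hence, for every $\alpha\in\mathbb N$,
\[
\int_{\mathbb N}\|B_\omega^t x\|_\alpha\,d\mu_m(t)\le \max_{0\le t<N}\|B_\omega^t x\|_\alpha\;\mu_m(\{0,\dots,N-1\})\xrightarrow[m\to\infty]{}0,
\]
using $\mu_m(K)\to0$ for the finite set $K=\{0,\dots,N-1\}$. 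Thus $c_{00}$ lies in the set \eqref{dense-semin-condi}, and that set is dense. (If $t=0$ is excluded from $\Lambda$, the same bound holds with $K=\{1,\dots,N-1\}$.)

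\emph{Applying the theorem.} Theorem~\ref{dense-lineable} now yields that alternative (1) or alternative (2) holds. For exclusivity, suppose (1) holds. Then for every $x\neq0$ and every $\beta$ we have $\limsup_m\int\|B_\omega^t x\|_\beta\,d\mu_m=0<\infty$, so no nonzero vector is absolutely $(\mu_m)$-irregular. In particular there is no dense absolutely $(\mu_m)$-irregular manifold, and (2) fails.

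\emph{Main obstacle.} The only delicate point is bookkeeping: making sure the paper's standing hypotheses (compact support and \eqref{compact-soporte}) are exactly what \textup{(M1)} encodes for $(\mu_m)$, since these are needed in Corollary~\ref{not-abs-bounded} inside the proof of Theorem~\ref{dense-lineable}. If \textup{(M1)} only requires $\mu_m(K)\to0$ for finite $K$, without compact supports, I would instead use the $\sigma$-compactness remark following Theorem~\ref{bounded-beta} (note $\mathbb N$ is $\sigma$-compact) to remove the compact-support assumption.
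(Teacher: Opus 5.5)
Your proposal is correct and is the argument the paper intends (the paper gives no separate proof). The corollary is a direct application of Theorem~\ref{dense-lineable}: the finitely supported vectors, which $B_\omega^t$ annihilates for large $t$, supply the dense set in \eqref{dense-semin-condi}, separability of $X$ is automatic from the basis, and exclusivity of the alternatives is immediate as you say.

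Your worry about compact supports does not arise, since the standing assumptions of Section~2 already require each $\mu_m$ to be finitely supported. Your $\sigma$-compactness fallback would not suffice on its own anyway. The proof of Theorem~\ref{dense-lineable} needs finite supports in a second place besides Corollary~\ref{not-abs-bounded}: it uses them to make the sets $P(m_k)$ $G_\delta$, via continuity of $x\mapsto\int\|B_\omega^t x\|_\beta\,d\mu_m(t)$.
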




\subsection{Dense Lineability of distributionally \((\mu_m)\)-irregular Vectors}

\begin{definition}
	We say that the family \((T_t)_{t\in \Lambda}\subset \mathcal{L}(X,Y)\) admits a dense distributionally \((\mu_i)_{i\in I}\)-irregular manifold if there exist \(\beta\in \mathbb{N}\) and a dense linear subspace 
	\(Z\subset X\) such that
	\[
	Z\setminus \{0\}\subset 
	\mathcal{F}_{(\mu_i)}\Psi_{\beta}^{\mathrm{LY}}
	((T_t)_{t\in \Lambda}).
	\]
\end{definition}

We now establish the corresponding criterion for the existence of dense distributionally \((\mu_i)\)-irregular manifolds.

\begin{theorem}\label{dense-disti}
	Let \((T_{t})_{t\in \Lambda}\subset \mathcal{L}(X,Y)\), where \(X\) is separable. Suppose that there exists a family \((C_{i})_{i\in I}\) of compact subsets of \(\Lambda\) such that \(\mu_{i}(C_{i})\to 1\) as \(i\to\infty\), and that the set
	\begin{align}\label{subspace-distri}
		\bigcap_{\alpha \in \mathbb{N}}\Bigl\{x\in X : \lim_{i\to\infty}
		\max_{t\in C_{i}} \|T_t x\|_{\alpha}=0 \Bigr\}
	\end{align}
	is dense in \(X\). Then the following statements are equivalent:
	\begin{enumerate}
		\item \((T_t)_{t\in \Lambda}\) admits a \(\beta\)-distributionally \((\mu_i)\)-unbounded vector for some \(\beta\in\mathbb{N}\);
		\item \((T_t)_{t\in \Lambda}\) admits a dense distributionally \((\mu_i)\)-irregular manifold.
	\end{enumerate}
\end{theorem}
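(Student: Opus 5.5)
The implication (2)$\Rightarrow$(1) is immediate: any nonzero vector of a dense distributionally $(\mu_i)$-irregular manifold lies in $\mathcal{F}_{(\mu_i)}\Psi_{\beta}^{\mathrm{LY}}((T_t)_{t\in\Lambda})\subset\bigcap_k R_{k,\beta}$, so it is $\beta$-distributionally $(\mu_i)$-unbounded. For (1)$\Rightarrow$(2) I will apply Lemma~\ref{dense-citerio}, in the same spirit as the proof of Theorem~\ref{dense-lineable}, but with parameters encoding the distributional behaviour.

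Fix $\beta$ from (1). By Proposition~\ref{dist-unbounded} the set of $\beta$-distributionally $(\mu_i)$-unbounded vectors is residual, and by Remark~\ref{form distri-irre} each such $x$ admits $j_m\to\infty$ and compact $B_m$ with $\mu_{j_m}(B_m)>1-\frac1m$ and $\min_{B_m}\|T_tx\|_\beta\to\infty$. I take as parameter set $\mathcal P$ the pairs $(i,A)$ with $i\in I$ and $A\subset\Lambda$ compact, and let $\mathcal S$ be the class of sequences $(i_k,A_k)_k$ with $i_k\to\infty$ and $\mu_{i_k}(A_k)\to1$; this class is stable under subsequences. The maps will be
\[
f(x,(i,A)):=\min_{t\in A}\|T_tx\|_\beta,\qquad g(x,(i,A)):=\max_{t\in A}\|T_tx\|_\beta,\qquad h(x,(i,A)):=\max_{t\in A}\rho(T_tx,0).
\]
Then $f,g$ are homogeneous, $g$ is subadditive, $f(x+y,\cdot)\ge f(x,\cdot)-g(y,\cdot)$, and $h$ is subadditive with $h(\lambda x,\cdot)\le(|\lambda|+1)h(x,\cdot)$, since $\rho(\lambda y,0)\le(|\lambda|+1)\rho(y,0)$. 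The set $G$ contains the residual set of distributionally unbounded vectors, so it is residual.

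The main obstacle is to show that $P(A_k)$ and $Q(A_k)$ are residual for every $(A_k)\in\mathcal S$, since an arbitrary compact sequence in $\mathcal S$ need not interact with the $C_i$ of the hypothesis. First, I will restrict $\mathcal S$ to sequences of the form $(i_k, A_k\cap C_{i_k})$; this is harmless because $\mu_{i_k}(A_k\cap C_{i_k})\to1$ still holds, and the sequences produced by $G$ can be intersected with $C_{j_m}$ at no cost, since $f$ increases under shrinking of $A$ and membership in $G$ is preserved. Next, $P(A_k)=\bigcap_n\bigcap_N\bigcup_{k\ge N}\{x:\max_{A_k}\|T_tx\|_\beta<\frac1n\}$. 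Each set in the union is open by the equicontinuity of $(T_t)_{t\in A_k}$ on the compact set $A_k$, exactly as in Proposition~\ref{dist-G-delta}. Hence $P(A_k)$ is a $G_\delta$ set, and it contains the dense set \eqref{subspace-distri}, because $A_k\subset C_{i_k}$. The same argument applies to $Q(A_k)$, using that convergence in every seminorm $\|\cdot\|_\alpha$ yields $\rho\to0$. If intersecting with the $C_i$ causes trouble in verifying stability, the fallback is to define $\mathcal S$ directly as the sequences with $A_k\subset C_{i_k}$.

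Lemma~\ref{dense-citerio} then yields a dense subspace $Z$ such that every $z\ne0$ admits sequences $(\Theta_k)$ and $(\Lambda_k)$ in $\mathcal S$ with $\max_{\Theta_k}\rho(T_tz,0)\to0$ and $\min_{\Lambda_k}\|T_tz\|_\beta\to\infty$. Since the $\mu$-mass of these compact sets tends to $1$, Remark~\ref{form distri-irre} gives $z\in\bigcap_kS_k\cap\bigcap_kR_{k,\beta}=\mathcal F_{(\mu_i)}\Psi^{\mathrm{LY}}_\beta$. Thus $Z$ is a dense distributionally $(\mu_i)$-irregular manifold, which proves (2).
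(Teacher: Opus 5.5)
Your proof is correct and is essentially the paper's: both apply Lemma~\ref{dense-citerio} with $f=\min_A\|T_t\cdot\|_\beta$, $g=\max_A\|T_t\cdot\|_\beta$ and $h=\max_A\rho(T_t\cdot,0)$, after shrinking the compact sets into the $C_i$ so that \eqref{subspace-distri} makes every $P(A_k)$ and $Q(A_k)$ a dense $G_\delta$. The differences are minor. The paper fixes one unbounded vector $y$, takes $\mathcal S$ to be the subsequences of $L_k=D_k\cap C_{i_k}$, and gets residuality of $G$ directly from $y+z\in G$ for $z$ in the dense set \eqref{subspace-distri}. You instead allow all admissible sequences inside the $C_i$, carrying the index in the parameter, and get residuality of $G$ from Proposition~\ref{dist-unbounded}.
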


\begin{proof}
	Assume that \((T_t)_{t\in \Lambda}\) admits a \(\beta\)-distributionally \((\mu_i)\)-unbounded vector \(y\in X\). By \eqref{beta-unbo}, there exist a sequence of compact subsets \((D_k)_{k\in\mathbb N}\subset \Lambda\), a sequence \((i_k)_k\subset I\) with \(i_k\to\infty\), and \(\beta\in\mathbb N\) such that
	\[
	\mu_{i_k}(D_k)\to 1
	\quad\text{and}\quad
	\lim_{k\to\infty}\min_{t\in D_k}\|T_t y\|_{\beta}=\infty.
	\]	
	For each \(k\in \mathbb{N}\), set \(L_k:=D_k\cap C_{i_k}\). Then \(L_k\) is a compact subset of \(\Lambda\) and \(\mu_{i_k}(L_k)\to 1\) as \(k\to\infty\).
	
	Let \(z\) be as in \eqref{subspace-distri}. A straightforward estimate yields
	\begin{align*}
		\min_{t\in L_{k}}\|T_{t}(y+z)\|_{\beta}
		&\geq \min_{t\in L_{k}}\|T_{t}y\|_{\beta} - \max_{t\in L_{k}}\|T_{t}z\|_{\beta}\\
		&\geq \min_{t\in D_{k}}\|T_{t}y\|_{\beta} - \max_{t\in C_{i_{k}}}\|T_{t}z\|_{\beta}
		\longrightarrow \infty.
	\end{align*}	
	Consequently, the set
	\[
	G:=\left\{
	x\in X:\;
	\limsup_{k\to\infty}\min_{t\in L_k}\|T_t x\|_{\beta}=\infty
	\right\}
	\]
	is residual in \(X\).
	
	Let \(\mathcal{P}\) denote the family of nonempty compact subsets of \(\Lambda\), and set
	\[
	\mathcal{S}:=\{(A_k)_k : (A_k)_k \text{ is a subsequence of } (L_k)_k\}.
	\]
	Define \(f,g,h : X\times \mathcal{P} \to [0,\infty)\) by
	\[
	f(x,A)=\min_{t\in A}\|T_t x\|_{\beta},\quad
	g(x,A)=\max_{t\in A}\|T_t x\|_{\beta},\quad
	h(x,A)=\max_{t\in A}\rho(T_t x,0).
	\]
	
	By \eqref{subspace-distri} and since \(L_k\subset C_{i_k}\), it follows that for each \((A_k)_k\in\mathcal{S}\), the sets
	\[
	P(A_k):=\left\{
	x\in X:\;
	\liminf_{k\to\infty}\max_{t\in A_k}\|T_t x\|_{\beta}=0
	\right\}
	\]
	and
	\[
	Q(A_k):=\left\{
	x\in X:\;
	\liminf_{k\to\infty}\max_{t\in A_k}\rho(T_t x,0)=0
	\right\}
	\]
	are residual in \(X\).
	
	By Lemma~\ref{dense-citerio}, there exists a dense subspace \(Z\subset X\) such that for every \(x\in Z\setminus\{0\}\) there exist sequences of compact sets \((A_m)_m\), \((B_m)_m\subset \Lambda\) and sequences \((i_m)_m\), \((j_m)_m\subset I\) with \(i_m,j_m\to\infty\) satisfying
	\[
	\mu_{i_m}(A_m)\to 1,\qquad
	\mu_{j_m}(B_m)\to 1,
	\]
	and
	\[
	\lim_{m\to\infty}\max_{t\in A_m}\rho(T_t x,0)=0,\qquad
	\lim_{m\to\infty}\min_{t\in B_m}\|T_t x\|_{\beta}=\infty.
	\]
	
	In particular, for some \(\beta\in\mathbb N\),
	\[
	Z\setminus \{0\}\subset \mathcal{F}_{(\mu_i)}\Psi_{\beta}^{\mathrm{LY}}((T_t)_{t\in \Lambda}),
	\]
	which completes the proof.
\end{proof}

When $\Lambda=\mathbb{N}$ and $I=\mathbb{N}$, we say that
$(\mu_m)_{m\in\mathbb{N}}$ satisfies condition \((\mathrm{M1})\) if
\[
\lim_{m\to\infty}\mu_m(K)=0
\]
for every finite set $K\subset\mathbb{N}$. Further details concerning
this condition will be given in Section~\ref{b.t.o.s}.

\begin{theorem}
	Let \(X\) be a Fréchet sequence space in which \((e_{n})_{n\in \mathbb{N}}\) is a basis, and let \((\mu_{m})_{m\in \mathbb{N}}\) satisfy \((\mathrm{M1})\). Suppose that the unilateral backward shift \(T\) acts on \(X\). If there exists a set \(A\subset \mathbb{N}\) with \(\displaystyle{\limsup_{m\to\infty}\mu_{m}(A)=1}\) such that
	\[
	\sum_{\ell\in A} e_{\ell}
	\]
	converges in \(X\), then \(T\) admits a dense distributionally \((\mu_i)\)-irregular manifold.
\end{theorem}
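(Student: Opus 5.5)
The plan is to apply Theorem~\ref{dense-disti} with $\Lambda=I=\mathbb{N}$ and $T_t=T^t$. This requires two things: a family of compact (here finite) sets $C_m\subset\mathbb{N}$ with $\mu_m(C_m)\to1$ and a dense set of vectors satisfying \eqref{subspace-distri}, and a $\beta$-distributionally $(\mu_m)$-unbounded vector.

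\textbf{The dense set.} Take $X_0:=\operatorname{span}\{e_n:n\in\mathbb{N}\}$, which is dense because $(e_n)$ is a basis. For $x\in X_0$ supported in $\{1,\dots,N\}$ we have $T^tx=0$ for all $t\ge N$. For the sets $C_m$, I would use (M1): since $\mu_m(\{1,\dots,N\})\to0$ for each $N$, a diagonal argument yields finite sets $C_m=\{N_m,\dots,M_m\}$ with $N_m\to\infty$ and $\mu_m(C_m)\to1$. Concretely, take $N_m\to\infty$ slowly enough that $\mu_m([1,N_m))\to0$, and $M_m$ large enough that $\mu_m((M_m,\infty))<1/m$. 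Then for every $x\in X_0$ and every $\alpha$, $\max_{t\in C_m}\|T^tx\|_\alpha=0$ for $m$ large, so \eqref{subspace-distri} holds on $X_0$.

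\textbf{The unbounded vector.} Here I would use the hypothesis on $A$. Let $u:=\sum_{\ell\in A}e_\ell\in X$. Since $T$ is the unweighted backward shift, $T^tu=\sum_{\ell\in A,\,\ell>t}e_{\ell-t}$, and for $t\in A$ the coefficient of $e_0$ (or $e_1$, depending on indexing) would need $t+1\in A$. That is the wrong shape, so I would instead verify condition (1) of Proposition~\ref{dist-unbounded} directly. Set
\[
x_k:=\sum_{\ell\in A,\ \ell>k}e_{\ell+1},
\]
the tail of the convergent series shifted by one. Convergence of $\sum_{\ell\in A}e_\ell$ together with continuity of $T$ does not immediately give convergence of the shifted series, so a better route is to work with tails of $u$ itself via coordinates. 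With $x_k:=\sum_{\ell\in A,\ \ell\ge k}e_\ell$, we get $x_k\to0$ as tails of a convergent series, and for $t\in A$ with $t\ge k$ we have $e_t^*$-coordinate $1$ in $x_k$. Hence $T^{t-1}x_k$ (or $T^{t}$, depending on indexing) has first coordinate $1$. Continuity of the first coordinate functional gives $\beta$ and $\varepsilon>0$ with $\|y\|_\beta>\varepsilon$ whenever $|e_1^*(y)|=1$. To align the set $A$ rather than $A-1$, I would replace $x_k$ by $T x_k$ if needed, or absorb the shift into the index convention $e_0$. Then $\{t:\|T^tx_k\|_\beta>\varepsilon\}\supset A\cap[k,\infty)$. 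Choosing $m_k\ge k$ with $\mu_{m_k}(A)>1-1/(2k)$ and using (M1) to get $\mu_{m_k}([1,k))<1/(2k)$ yields condition (1) of Proposition~\ref{dist-unbounded}. That proposition then provides a $\beta$-distributionally $(\mu_m)$-unbounded vector, and Theorem~\ref{dense-disti} concludes the proof.

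\textbf{Main obstacle.} The expected difficulty is the index bookkeeping: ensuring that $T^t x_k$ hits a fixed nonzero coordinate exactly for $t\in A$, not for $t\in A-1$. I would first try indexing the basis so that $T^te_t=e_0$. Alternatively, since $\{t:\|T^t x_k\|_\beta>\varepsilon\}$ contains $A-1$, it suffices to replace $x_k$ by the tail of $\sum_{\ell\in A}e_{\ell}$ and use $T^{t}e_{t+1}\neq0$ appropriately. Also, (M1) ensures that $\limsup\mu_m(A-1)$ versus $\limsup\mu_m(A)$ issues can be handled only if the shift $t\mapsto t-1$ preserves mass asymptotically, which is not guaranteed, so the coordinate convention must be chosen correctly.
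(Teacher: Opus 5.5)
\textbf{The gap: an off-by-one in the unbounded-vector step.} Your verification of the density hypothesis \eqref{subspace-distri} is correct, and the paper's proof does not write this step out at all. You take $X_0=\operatorname{span}\{e_n\}$ and finite windows $C_m=\{N_m,\dots,M_m\}$ with $N_m\to\infty$ and $\mu_m(C_m)\to1$.

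The genuine gap is the one you call the main obstacle and leave open. Take $Te_1=0$ and $x_k=\sum_{\ell\in A,\,\ell\ge k}e_\ell$. The first coordinate of $T^tx_k$ is $1$ exactly when $t+1\in A$ and $t+1\ge k$. So you only know $\{t:\|T^tx_k\|_\beta>\varepsilon\}\supset(A-1)\cap[k-1,\infty)$. Condition (1) of Proposition~\ref{dist-unbounded} would then need $\limsup_m\mu_m(A-1)=1$, and (M1) says nothing about how $\mu_m$ compares with its translate.

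None of your three repairs works:
\begin{itemize}
\item Replacing $x_k$ by $Tx_k$ moves the support the wrong way; you get $A-2$.
\item The forward-shifted series $\sum_{\ell\in A}e_{\ell+1}$ need not converge, as you yourself note.
\item Indexing the basis from $e_0$ is not a convention. It changes the hypothesis, and amounts to replacing $A$ by $A-1$.
\end{itemize}

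\textbf{The step cannot be closed: the statement is false as written.} The paper's own proof makes exactly the same slip. It shows $T^{n-1}y_k=e_1+\cdots$ for $n\in A$, $n\ge k$, and then asserts that $\int\mathbf{1}_{(\varepsilon,\infty)}(\|T^ty_k\|_\beta)\,d\mu_{m_k}(t)\to1$.

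Here is a counterexample. Let $\sigma(j)$ be the number of ones in the binary expansion of $j\ge0$, put $v_n:=2^{-\sigma(n-1)}$, and let $X:=\ell^1(v)$.
\begin{itemize}
\item Since $\sigma(j+2^i)\le\sigma(j)+1$, we get $\|T^{2^i}\|=\sup_n v_n/v_{n+2^i}\le2$ for every $i\ge0$; in particular $T$ is bounded.
\item For $A:=\{2^i:i\ge1\}$ we have $\sum_{\ell\in A}\|e_\ell\|=\sum_{i\ge1}2^{-\sigma(2^i-1)}=\sum_{i\ge1}2^{-i}<\infty$.
\item The scheme $\mu_m:=\delta_{2^m}$ satisfies (M1) and $\mu_m(A)=1$. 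Re-index as $\mu_m:=\delta_{2^{\lfloor\log_2 m\rfloor}}$ if you insist on $\operatorname{supp}\mu_m\subset\{1,\dots,m\}$.
\item Yet $\|T^{2^m}x\|\le2\|x\|$ for all $m$ and $x$. So no vector is distributionally $(\mu_m)$-unbounded, and hence none is distributionally $(\mu_m)$-irregular.
\end{itemize}
Consistently with the obstruction above, $\mu_m(A-1)=0$, and $\|e_{2^i+1}\|=1/2$, so the forward-shifted series diverges.

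\textbf{With a corrected hypothesis your argument works.} Assume $\limsup_m\mu_m(A-1)=1$; equivalently, there is $A$ with $\limsup_m\mu_m(A)=1$ and $\sum_{\ell\in A}e_{\ell+1}$ convergent. Then your argument goes through verbatim:
\begin{itemize}
\item use $x_k\to0$ and continuity of $e_1^*$;
\item pick $m_k$ with $\mu_{m_k}(A-1)>1-\frac{1}{2k}$ and $\mu_{m_k}([1,k))<\frac{1}{2k}$;
\item apply Proposition~\ref{dist-unbounded} and then Theorem~\ref{dense-disti}.
\end{itemize}
This is the paper's route, with the density hypothesis it omits supplied by your first step.
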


\begin{proof}
	For each \(k\in\mathbb{N}\), define
	\[
	y_k:=\sum_{\substack{n\in A\\ n\ge k}}e_n.
	\]
	Since \(\sum_{\ell\in A}e_\ell\) converges in \(X\), we have \(y_k\to0\). Moreover, for every \(n\in A\) with \(n\ge k\),
	\(T^{\,n-1}y_k=e_1+\cdots\).
	
	Since the coordinate functional \(x\mapsto x_1\) is continuous, there exists \(\varepsilon>0\) such that
	\(d(x,0)<2\varepsilon\) implies \(|x_1|<1\).
	
	Choose \(\beta\in\mathbb{N}\) so that \(2^{-\beta}<\varepsilon\). Then, for every \(n\in A\) with \(n\ge k\),
	\[
	\|T^{\,n-1}y_k\|_\beta+\frac1{2^\beta}
	>d(T^{\,n-1}y_k,0)>2\varepsilon,
	\]
	and therefore
	\[
	\|T^{\,n-1}y_k\|_\beta>\varepsilon.
	\]
	
	Consequently, there exists a strictly increasing sequence of positive integers
	\((m_k)_k\) such that
	\[
	\int_{\mathbb N}\mathbf 1_{(\varepsilon,\infty)}
	\bigl(\|T^ty_k\|_\beta\bigr)\,d\mu_{m_k}(t)\longrightarrow1.
	\]
	By Proposition~\ref{dist-unbounded}, \(T\) admits a \(\beta\)-distributionally \((\mu_m)\)-unbounded vector. The conclusion now follows from Theorem~\ref{dense-disti}.
\end{proof}



\section{Spaceable Structures in Irregularity}

In recent decades, spaceability phenomena associated with sets of vectors exhibiting prescribed dynamical properties on a separable infinite-dimensional Banach space \(X\) have attracted considerable attention. Prominent examples include the set of hypercyclic vectors \(\mathrm{HC}(T)\) and the set of recurrent vectors \(\mathrm{Rec}(T)\). We refer the reader to \cite{GoLeMo,LeMo1997,LeMo2001,Lo24,Mo1996,SaSt}, as well as to \cite[Chapter~8]{BaMa} and \cite[Chapter~10]{GrPe}, for further developments in this direction.

These spaceability phenomena are often connected with structural properties of the underlying operator. Typical sufficient conditions involve the intersection of the essential spectrum of \(T\) with the closed unit disk \(\overline{\mathbb{D}}\), or the existence of an infinite-dimensional closed subspace \(E\subset X\) together with an increasing sequence \((\theta_n)_n\) of positive integers such that
\begin{align*}
	\sup_{n\in\mathbb N}\bigl\|T^{\theta_n}|_{E}\bigr\|<\infty.
\end{align*}

Motivated by these developments, we consider the question of whether the set of irregular vectors for \(T\),
\[
\Bigl\{x\in X:\,
\liminf_{n\to\infty}\|T^{n}x\|=0
\ \text{and}\
\limsup_{n\to\infty}\|T^{n}x\|=\infty
\Bigr\},
\]
is spaceable.

The aim of this section is to establish sufficient conditions on the operator \(T\) ensuring that sets of vectors exhibiting various forms of irregular behavior are spaceable, as will be shown in Theorems~\ref{condi-suf-space-irre} and Theorem \ref{condi-suf-spa-dis-irre}. The proofs rely on Lemma~\ref{space-cri}, and the form of the assumptions is inspired by \cite[Theorem~2.1]{GoLeMo}, \cite[Theorem~3.2]{Lo24}, and \cite[Theorem~4.5]{SaSt}.

We begin by recalling two spectral notions that will play a role in our arguments. Let \(T\in\mathcal L(X)\). The essential spectrum of \(T\), denoted by \(\sigma_e(T)\), consists of all \(\lambda\in\mathbb C\) such that \(T-\lambda\) fails to be a Fredholm operator. Recall that an operator \(S\in\mathcal L(X)\) is Fredholm whenever \(\operatorname{Ran}(S)\) is closed, \(\dim(\ker S)<\infty\), and \(\operatorname{codim}(\operatorname{Ran}(S))<\infty\). Similarly, the left essential spectrum of \(T\), denoted by \(\sigma_{\ell e}(T)\), consists of all \(\lambda\in\mathbb C\) such that \(T-\lambda\) fails to be a left-Fredholm operator. Here \(S\in\mathcal L(X)\) is called left-Fredholm if \(\operatorname{Ran}(S)\) is closed and \(\dim(\ker S)<\infty\).

We formulate our results in terms of the left essential spectrum rather than the essential spectrum. For hypercyclic and recurrent operators, these two notions coincide; see \cite{GoLeMo, Lo24}.

\begin{lemma}[\cite{GoLeMo, Lo24}]\label{diverg}
	Let $X$ be a separable infinite-dimensional complex Banach space, and let $T \in \mathcal{L}(X)$. Suppose that
	\[
	\sigma_{\ell e}(T) \cap \overline{\mathbb{D}} = \emptyset.
	\]
	Then every infinite-dimensional closed subspace $Z \subset X$ contains a vector $x \in Z$ such that
	\[
	\lim_{n \to \infty} \|T^n x\| = \infty.
	\]
\end{lemma}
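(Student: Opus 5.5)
This lemma is cited from \cite{GoLeMo, Lo24}, so the plan is to reproduce the standard argument. The idea is to combine a spectral lower bound with a basic-sequence construction inside $Z$.

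The first step is to use $\sigma_{\ell e}(T)\cap\overline{\mathbb{D}}=\emptyset$ to obtain a lower bound for $T^n$ on a finite-codimensional subspace. The left essential spectrum is the spectrum of the image of $T$ in a Calkin-type quotient, namely the algebra $\mathcal{L}(X)/\mathcal{K}(X)$ taken with respect to left invertibility. Left-Fredholm operators form an open semigroup, and $\sigma_{\ell e}(T^n)=\sigma_{\ell e}(T)^n$ by the spectral mapping theorem for the left essential spectrum. Hence $T^n-\lambda$ is left-Fredholm for all $|\lambda|\le1$. I would then pass to the essential lower bound. Since $\sigma_{\ell e}(T)$ lies outside $\overline{\mathbb{D}}$, the quantity
\[
r:=\lim_n \Bigl(\sup_{\operatorname{codim}L<\infty}\ \inf_{x\in L,\ \|x\|=1}\|T^nx\|\Bigr)^{1/n}
\]
satisfies $r>1$. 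This is the formula relating the left essential spectral radius inner bound to the essential minimum modulus, and it is the fact I would cite from \cite{GoLeMo}. Consequently there are $\rho>1$ and, for each $n$, a closed finite-codimensional subspace $L_n$ with $\|T^nx\|\ge c\rho^n\|x\|$ for $x\in L_n$.

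The second step builds the vector inductively. Since $Z$ is infinite-dimensional and finite intersections of the $L_n$ remain finite-codimensional, $Z\cap L_{n_1}\cap\dots\cap L_{n_k}$ is infinite-dimensional at every stage. I would choose unit vectors $z_k$ in these intersections, with weights $a_k\searrow0$ and a rapidly increasing sequence $n_k$. The vector is then $x=\sum_k a_kz_k\in Z$, using that $Z$ is closed. The growth comes from $\|T^{n_k}a_kz_k\|\ge c\,a_k\rho^{n_k}$. The remaining terms must stay controlled. The earlier terms contribute at most $\|T^{n_k}\|\sum_{j<k}a_j$; since $\|T^{n_k}\|$ may be larger than $\rho^{n_k}$, they have to be absorbed, so I would select $z_k$ so that $T^{n_k}z_k$ is nearly ``orthogonal'' to the finite-dimensional span $\operatorname{span}\{T^{n_k}z_j:j<k\}$. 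The tail terms are handled by making $a_{k+1}\|T^{n_k}\|$ tiny. Using Mazur's technique in finite-codimensional pieces, I would pick $z_k$ in the intersection of $L_{n_k}$ with annihilators of finitely many norming functionals of $T^{n_k}z_j$, $j<k$, so that $\|T^{n_k}x\|\gtrsim a_k\rho^{n_k}/2$. This gives $\limsup\|T^nx\|=\infty$.

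The third step is the main obstacle: upgrading $\limsup$ to an actual $\lim$, which requires a bound for \emph{all} $n$. I would instead work on each level $n$ simultaneously by exploiting the Fredholm structure. $T$ is itself left-Fredholm with the left essential spectrum excluding $\overline{\mathbb D}$, so there is a single finite-codimensional closed subspace $L$, chosen complemented to $\ker T$ and invariant-compatible, on which $\|T^nx\|\ge c\rho^n\|x\|$ holds for every $n$ along $T$-iterates. If such a uniform subspace cannot be found directly, the fallback is \cite[Lemma]{Lo24}, which provides this uniform estimate for a finite-codimensional subspace $L$. With such an $L$ in hand, any nonzero $x\in Z\cap L$, which exists because $Z$ is infinite-dimensional, gives $\|T^nx\|\to\infty$, and the inductive construction of the second step becomes unnecessary. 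I expect obtaining this uniform-in-$n$ lower bound to be the crux, so I would rely on the cited references for it.
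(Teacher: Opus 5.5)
\textbf{There is a genuine gap in Step 3.} It carries the whole lemma, and the claim it rests on is false. In general there is \emph{no} closed finite-codimensional subspace $L$ with $\|T^nx\|\ge c\rho^n\|x\|$ for all $n$ and all $x\in L$, so neither \cite{GoLeMo} nor \cite{Lo24} can supply one.

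Take $T=2B$, twice the backward shift on $\ell^2$. For $|\lambda|<2$ the operator $2B-\lambda$ is surjective with one-dimensional kernel, so $\sigma_{\ell e}(2B)\cap\overline{\mathbb{D}}=\emptyset$. But $\ker T^n=\operatorname{span}\{e_1,\dots,e_n\}$, so every closed subspace of codimension $d$ contains a nonzero $x$ with $T^nx=0$ for all $n>d$. The subspaces $L_n$ from Step 1 genuinely depend on $n$: here $L_n\cap\ker T^n=\{0\}$ forces $\operatorname{codim}L_n\ge n$, and $\bigcap_n L_n$ can be $\{0\}$.

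That leaves Step 2, which only gives $\limsup_n\|T^nx\|=\infty$. Its tail device, making $a_{k+1}\|T^{n_k}\|$ tiny, does not extend to every $n$. Covering a block $N_k\le n<N_{k+1}$ that way needs $a_{k+1}\|T^n\|\lesssim a_k\rho^n$ across the block together with $a_{k+1}\rho^{N_{k+1}}\to\infty$. This fails whenever $\|T^n\|$ grows like $R^n$ with $R>\rho^2$, which can happen, e.g.\ for $2S\oplus 100I$ with $S$ the forward shift.

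\textbf{The missing idea} is to control the tail by Mazur's condition rather than by smallness, working over consecutive blocks of exponents. Step 1 is fine and can be kept.
\begin{enumerate}
\item Fix summable $a_k>0$ and $\epsilon_k\le 1/4$ with $\prod_k(1-\epsilon_k)\ge 1/2$. Choose in advance $N_1<N_2<\cdots$, large enough for Step 1 to apply, with $a_k\rho^{N_k}\to\infty$.
\item At stage $k$, for each $N_1\le n<N_{k+1}$, let $W^{(k)}_n$ be a closed finite-codimensional subspace with $\|f+w\|\ge(1-\epsilon_k)\|f\|$ for all $f\in\operatorname{span}\{T^nz_j:j<k\}$ and $w\in W^{(k)}_n$.
\item Pick a unit vector
\[
z_k\in Z\cap\bigcap_{N_k\le n<N_{k+1}}L_n\cap\bigcap_{N_1\le n<N_{k+1}}T^{-n}\bigl(W^{(k)}_n\bigr).
\]
It exists because this is $Z$ intersected with a closed finite-codimensional subspace.
\item Set $x=\sum_k a_kz_k\in Z$.
\end{enumerate}
For $n\in[N_k,N_{k+1})$, every $z_j$ with $j\ge k$ satisfies the Mazur condition at level $n$. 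Passing to the limit in the partial sums gives
\[
\|T^nx\|\ge\tfrac12\Bigl\|T^n\sum_{j\le k}a_jz_j\Bigr\|\ge\tfrac12\cdot\tfrac{1-\epsilon_k}{2-\epsilon_k}\,a_k\|T^nz_k\|\ge\tfrac{c}{6}\,a_k\rho^{N_k}.
\]
The right-hand side tends to $\infty$ with $k$, so $\lim_n\|T^nx\|=\infty$. The argument only uses lower bounds $\alpha_n\to\infty$ on $n$-dependent finite-codimensional subspaces.

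One minor point: with the paper's definition of left-Fredholm (closed range and finite-dimensional kernel), $\sigma_{\ell e}$ is not in general a left spectrum in $\mathcal{L}(X)/\mathcal{K}(X)$. The Zemánek-type formula in Step 1 should therefore be quoted for upper semi-Fredholm operators.
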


\subsection{Spaceability of absolutely $(\mu_m)_m$-irregular vectors }

\begin{theorem} \label{condi-suf-space-irre}
	Let $X$ be a separable infinite-dimensional real or complex Banach space and let
	$T \in \mathcal{L}(X)$. Let $(\mu_m)_{m\in\mathbb{N}}$ satisfy condition \emph{(M1)}.
	Assume that $T$ is not absolutely $(\mu_m)_{m\in\mathbb{N}}$-bounded and that the
	following conditions hold:
	\begin{itemize}
		\item The set \(\displaystyle{\Bigl\{x \in X : \lim_{m\to\infty}\int_{\mathbb{N}} \|T^{t}x\|\,d\mu_m=0\Bigr\}}\)
		is dense in $X$.
		\item There exists a decreasing sequence $(E_n)_{n\in\mathbb{N}}$ of
		infinite-dimensional closed subspaces of $X$ such that
		\[
		M:=\sup_{n\in\mathbb{N}} \int_{\mathbb{N}} \bigl\|T^{t}|_{E_n}\bigr\|\,d\mu_n(t)
		< \infty.
		\]
	\end{itemize}
	Then there exist an infinite-dimensional closed subspace $F \subset X$ and a
	strictly increasing sequence of positive integers $(\theta_n)_{n\in\mathbb{N}}$
	such that
	\[
	\lim_{n\to\infty} \int_{\mathbb{N}} \|T^{t}x\|\,d\mu_{\theta_n}=0\;
	\text{and}\;
	\limsup_{m\to\infty} \int_{\mathbb{N}} \|T^{t}x\|\,d\mu_m =\infty\;\;
	\forall x\in F\setminus\{0\}.
	\]
\end{theorem}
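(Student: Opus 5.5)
The plan is to apply Lemma~\ref{space-cri} in the discrete setting $\Lambda=\mathbb{N}$, $T_t=T^t$, $X=Y$ a Banach space, with the same choices as in the proof of Theorem~\ref{dense-lineable}. Since $T$ is not absolutely $(\mu_m)$-bounded, Corollary~\ref{not-abs-bounded} shows that the set of $x$ with $\sup_m\int\|T^tx\|\,d\mu_m=\infty$ is residual. Each $\mu_m$ has compact (finite) support, so the supremum over any finite set of indices is finite. Hence this set equals
\[
G:=\Bigl\{x:\limsup_{m\to\infty}\int_{\mathbb N}\|T^tx\|\,d\mu_m=\infty\Bigr\}.
\]

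I take $\mathcal P:=\mathbb N$ and let $\mathcal S$ be the family of strictly increasing sequences in $\mathbb N$. I set
\[
f(x,m)=g(x,m)=h(x,m):=\int_{\mathbb N}\|T^tx\|\,d\mu_m(t).
\]
The homogeneity and triangle inequalities required in Lemma~\ref{dense-citerio} are immediate, and $f(x+y,\cdot)\ge f(x,\cdot)-g(y,\cdot)$ holds by the reverse triangle inequality. For $(m_k)\in\mathcal S$, the set $P(m_k)=Q(m_k)$ contains the dense set from the first bullet. It is a $G_\delta$, since
\[
P(m_k)=\bigcap_{N}\bigcup_{k>N}\Bigl\{x:\int\|T^tx\|\,d\mu_{m_k}<1/N\Bigr\},
\]
and each inner set is open because the integral is a finite sum of continuous functions. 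So $P(m_k)$ is residual. Thus $G$ is residual, and so are all $P(A_k)$ and $Q(A_k)$, as the lemma requires.

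For the extra hypotheses of Lemma~\ref{space-cri}, I take $J:\mathbb N\to\mathbb N$ to be the identity, which is surjective, and use the given decreasing sequence $(E_n)$. For $x\in E_n$ and $J(A)=n$, that is $A=n$,
\[
g(x,n)=\int\|T^tx\|\,d\mu_n\le\int\|T^t|_{E_n}\|\,d\mu_n\,\|x\|\le M\|x\|.
\]
Condition (\ref{condi J-g}) holds because for fixed $n$ the measure $\mu_n$ has finite support $K_n$. So $g(y,n)\le\max_{t\in K_n}\|T^t\|\,\|y\|$, and it suffices to take $\varepsilon=\delta/(1+\max_{t\in K_n}\|T^t\|)$. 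The lemma is stated with "$J(A)=\zeta$, $1\le\zeta\le\omega_n$", which only involves finitely many $n$ here, so no uniformity issue arises.

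Lemma~\ref{space-cri} then yields an infinite-dimensional closed subspace $F$ and a sequence $(\Theta_n)=(\theta_n)\in\mathcal S$, strictly increasing, such that $\int\|T^tx\|\,d\mu_{\theta_n}\to0$ for all $x\in F$. For each $x\in F\setminus\{0\}$ it also gives a strictly increasing $(\Lambda_k)$ with $f(x,\Lambda_k)\to\infty$, which gives the $\limsup=\infty$ conclusion. The main point I expect to need care with is checking the spaceability lemma's hypotheses precisely as stated. In particular, the bound $g\le M\|x\|$ must be used only on $E_n$ with $J(A)=n$. In the lemma's proof it is applied to tails $\sum_{\ell>k+j}\beta_\ell e_{\omega_\ell}$ lying in $E_{\omega_{k+j+1}}\subset E_{J(\Lambda_{k,j})}$, which uses that $(E_n)$ is decreasing and $e_{\omega_\ell}\in E_{\omega_\ell}$. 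Beyond this verification, the argument is a direct application, and condition (M1) is only needed to make $\Upsilon$-type limits meaningful.
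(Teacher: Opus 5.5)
Your proposal is correct and is essentially the paper's proof. Both apply Lemma~\ref{space-cri} with $\mathcal P=\mathbb N$, $f=g=\int_{\mathbb N}\|T^tx\|\,d\mu_m(t)$ and $J$ the identity, take the residuality of $G$ from Corollary~\ref{not-abs-bounded}, and use the estimate $g(x,n)\le M\|x\|$ for $x\in E_n$. The only difference is cosmetic: the paper takes $\mathcal S$ to be the subsequences of a fixed sequence $(m_k)$ along which some vector's averages blow up, whereas you take all strictly increasing sequences and check explicitly the residuality of the sets $P(A_k)$ and condition (\ref{condi J-g}), which the paper leaves as straightforward.
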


\begin{proof}
	By assumption, \(T\) is not absolutely \((\mu_{m})\)-bounded. Hence, there exist a vector
	\(y\in X\) and a strictly increasing sequence of positive integers
	\((m_{k})_{k}\) such that
	\[
	\lim_{k\to\infty}\int_{\mathbb{N}} \|T^{t}y\|\, d\mu_{m_{k}}(t)=\infty.
	\]
	Thus, by Corollary~\ref{not-abs-bounded},
	\[
	G=\Bigl\{x\in X:\limsup_{k\to\infty}\int_{\mathbb{N}} \|T^{t}x\|\, d\mu_{m_{k}}(t)=\infty\Bigr\}
	\]
	is a residual subset of \(X\).
	
	Now set \(\mathcal{P}:=\mathbb{N}\) and
	\[
	\mathcal{S}:=\{(n_{k})_{k}:\ (n_{k})_{k}\ \text{is a subsequence of } (m_{k})_{k}\}.
	\]
	Define \(f,g:X\times \mathcal{P}\rightarrow [0,\infty)\) by
	\[
	f(x,m)=g(x,m):=\int_{\mathbb{N}} \|T^{t}x\|\, d\mu_{m}(t),
	\]
	and define \(J:\mathcal{P}\rightarrow \mathbb{N}\) by \(J(m):=m\).
	
	Notice that for each \(n\in \mathbb{N}\) and \(x\in E_{n}\),
	\[
	g(x,n)=\int_{\mathbb{N}} \|T^{t}x\|\, d\mu_{n}(t)
	\leq \|x\| \int_{\mathbb{N}} \|T^{t}|_{E_{n}}\|\, d\mu_{n}(t)
	\leq M\|x\|.
	\]
	It is straightforward to verify that condition~(\ref{condi J-g}) holds.
	
	Therefore, by Lemma~\ref{space-cri}, there exist an infinite-dimensional
	closed subspace \(F\subset X\) and a strictly increasing sequence of
	positive integers \((\theta_{n})_{n\in\mathbb{N}}\) such that
	\[
	\lim_{n\to\infty} \int_{\mathbb{N}} \|T^{t}x\|\, d\mu_{\theta_{n}}(t)=0
	\quad \forall x\in F,
	\]
	and
	\[
	\limsup_{m\to\infty} \int_{\mathbb{N}} \|T^{t}x\|\, d\mu_{m}(t)=\infty
	\quad \forall x\in F\setminus\{0\}.
	\]
	This completes the proof.
\end{proof}

\begin{theorem}\label{equi-space-irregu}
	Let $X$ be a separable infinite-dimensional complex Banach space and let
	$T \in \mathcal{L}(X)$. Assume that $T$ is not power bounded and that the set
	\[
	\{x \in X : T^{n}x \xrightarrow[n\to\infty]{} 0\}
	\]
	is dense in $X$. Then the following assertions are equivalent.
	
	\begin{enumerate}
		
		\item The set
		\[
		\Bigl\{x\in X : \liminf_{n\to\infty} \|T^{n}x\| = 0\Bigr\}
		\]
		is spaceable.
		
		\item The set of all irregular vector for \(T\) is spaceable.
		
		\item If $T$ is not absolutely $(\mu_m)$-bounded and $(\mu_m)$ satisfies \emph{(M1)}, then there exist an
		infinite-dimensional closed subspace $F\subset X$ and a strictly increasing
		sequence of positive integers $(\theta_n)_{n\in\mathbb N}$ such that
		\[
		\lim_{m\to\infty} \int_{\mathbb{N}} \|T^{t}x\|\, d\mu_{\theta_m}(t) = 0
		\qquad \forall x\in F,
		\]
		and
		\[
		\limsup_{m\to\infty} \int_{\mathbb{N}} \|T^{t}x\|\, d\mu_{m}(t) = \infty
		\qquad \forall x\in F\setminus\{0\}.
		\]
		
		\item There exist a strictly increasing sequence of positive integers
		$(\theta_n)_{n\in\mathbb N}$ and an infinite-dimensional closed
		subspace $E\subset X$ such that
		\[
		\sup_{n\in\mathbb N} \bigl\|T^{\theta_n}|_{E}\bigr\| < \infty.
		\]
		
		\item The left essential spectrum of $T$ intersects the closed unit disk
		$\overline{\mathbb{D}}$.
		
	\end{enumerate}
\end{theorem}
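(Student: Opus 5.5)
I would prove the equivalences through the cycle (5)$\Rightarrow$(4)$\Rightarrow$(3)$\Rightarrow$(2)$\Rightarrow$(1)$\Rightarrow$(5). This uses Lemma~\ref{diverg} for the last step and Theorem~\ref{condi-suf-space-irre} for the key step (4)$\Rightarrow$(3).

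\emph{Step (1)$\Rightarrow$(5).} I argue by contraposition. If $\sigma_{\ell e}(T)\cap\overline{\mathbb D}=\emptyset$, then by Lemma~\ref{diverg} every infinite-dimensional closed subspace $Z$ contains a vector with $\|T^nx\|\to\infty$. Such a vector violates $\liminf_n\|T^nx\|=0$, so the set in (1) is not spaceable.

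\emph{Step (5)$\Rightarrow$(4).} This is the known spectral argument of \cite{GoLeMo, Lo24}. Pick $\lambda\in\sigma_{\ell e}(T)$ with $|\lambda|\le1$. A perturbation and basic-sequence argument then produces an infinite-dimensional closed subspace $E$ and a sequence $(\theta_n)$ with $\sup_n\|T^{\theta_n}|_E\|<\infty$. I would quote this from those references rather than reprove it; this quotation is the likely main obstacle.

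\emph{Step (4)$\Rightarrow$(3).} Given (4) and a scheme $(\mu_m)$ satisfying (M1) for which $T$ is not absolutely $(\mu_m)$-bounded, I want to apply Theorem~\ref{condi-suf-space-irre}. I need to check two hypotheses.
\begin{itemize}
\item[(a)] \emph{Density.} If $T^nx\to0$ then $\int\|T^tx\|\,d\mu_m\to0$. This follows from (M1): split the integral over a finite initial set and its complement, exactly as in the proof of Theorem~\ref{weak:li-yorke}.
\item[(b)] \emph{A decreasing sequence $(E_n)$ with $\sup_n\int\|T^t|_{E_n}\|\,d\mu_n<\infty$.} This is the delicate point, because (4) controls only the subsequence $T^{\theta_n}$, not all iterates on $E$.
\end{itemize}
For (b) my first attempt is to refine (4) as in \cite{GoLeMo}. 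Using the density of vectors with $T^nx\to0$ and a gliding-hump/Mazur construction, I would produce finite-codimensional pieces $E_n$ of $E$ on which $\|T^t|_{E_n}\|$ is uniformly bounded for $t$ in sets carrying most of the mass of $\mu_n$. The tail mass then has to be handled by shrinking the $E_n$ further, so that the integral is dominated by the bounded part.

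If that fails, I would instead apply Lemma~\ref{space-cri} directly, with $J$ adapted to the subsequence $(\theta_n)$ rather than $J(m)=m$. The only place the bound on $g$ enters is the control of the tails $\sum_{\ell>n}\beta_\ell e_{\omega_\ell}$, and it suffices to have it for the specific indices selected along the construction.

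\emph{Steps (3)$\Rightarrow$(2) and (2)$\Rightarrow$(1).} Since $T$ is not power bounded, Corollary~\ref{cor:power-bounded} shows $T$ is not absolutely $(\delta_m)$-bounded, and $(\delta_m)$ satisfies (M1). Statement (3) applied to $(\delta_m)$ gives a closed infinite-dimensional subspace $F$ in which every nonzero $x$ has $\|T^{\theta_m}x\|\to0$ and $\limsup_m\|T^mx\|=\infty$. Every such $x$ is irregular, which proves (2). The implication (2)$\Rightarrow$(1) is trivial.
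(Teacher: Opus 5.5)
The gap is in your step (4)$\Rightarrow$(3), and it comes from the order of your cycle. Your steps (1)$\Rightarrow$(5), (3)$\Rightarrow$(2), (2)$\Rightarrow$(1) and the density check (a) are fine.

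Theorem~\ref{condi-suf-space-irre}, and equally Lemma~\ref{space-cri}, needs the bound $\int\|T^t x\|\,d\mu_n(t)\le M\|x\|$ on a decreasing chain $(E_n)$ of infinite-dimensional subspaces. This integral involves \emph{every} iterate $T^t$ with $t\in\operatorname{supp}\mu_n$. Statement (4) controls only the times $\theta_n$. A scheme satisfying (M1) may put all its mass off $\{\theta_n\}$, e.g.\ $\mu_m=\delta_{s_m}$ with $s_m\to\infty$ and $s_m\notin\{\theta_n\}$. Nothing in (4) bounds $\|T^{s}|_{E'}\|$ for such $s$ on any infinite-dimensional $E'$.

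Your gliding-hump refinement cannot supply this bound. Vectors with $T^nx\to0$ give pointwise convergence along subsequences, not operator-norm bounds on restrictions. Shrinking to finite-codimensional subspaces only removes \emph{compact} errors. Your fallback fails for the same reason. In Lemma~\ref{space-cri} the indices where $g$ must be controlled ($\Theta_n$, $\Lambda_{k,j}$) are extracted from the sequence $(m_k)$ along which $T$ fails absolute $(\mu_m)$-boundedness. That sequence is dictated by $(\mu_m)$ and is unrelated to $(\theta_n)$, and at each such index $g$ still integrates over the whole support.

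The needed chain comes from the spectral hypothesis, so reorganize as the paper does. Get (1)$\Rightarrow$(5) and (4)$\Rightarrow$(5) both from Lemma~\ref{diverg}, and prove (5)$\Rightarrow$(3) directly:
\begin{itemize}
\item For $\lambda\in\sigma_{\ell e}(T)\cap\overline{\mathbb D}$, \cite[Proposition D.3.4]{BaMa} gives an infinite-dimensional closed $E$ and a compact $K$ with $(T-K)|_E=\lambda\,\mathrm{Id}|_E$.
\item Then $T^n=(T-K)^n+K_n$ with $K_n$ compact and $\|(T-K)^n|_E\|\le1$.
\item \cite[Lemma 8.13]{BaMa} yields a non-increasing chain of finite-codimensional $E_n\subset E$ with $\|K_n|_{E_n}\|\le1$, so $\|T^n|_{E_n}\|\le2$.
\item Because the chain decreases, $\|T^t|_{E_n}\|\le\|T^t|_{E_t}\|\le2$ for all $t\le n$. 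This bounds $\int\|T^t|_{E_n}\|\,d\mu_n$ directly if $\operatorname{supp}\mu_n\subset\{1,\dots,n\}$. Otherwise replace $E_n$ by $E_{N_n}$, where $N_n$ bounds the finite supports of $\mu_1,\dots,\mu_n$.
\item With your density check (a), Theorem~\ref{condi-suf-space-irre} now gives (3).
\end{itemize}
Then (3)$\Rightarrow$(4) follows by applying (3) to $(\delta_m)$ and using Banach--Steinhaus on $F$.

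This also removes the step you flagged as the main obstacle. The spectral argument naturally produces the decreasing chain, not a single subspace. The single-subspace form (4) is obtained through (3), which is where the density of $\{x: T^nx\to0\}$ is used.
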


\begin{proof}
	The implications $(3)\Rightarrow(2)$ and $(2)\Rightarrow(1)$ are immediate.
	Now we show that $(3)\Rightarrow(4)$. By hypothesis, $T$ is not absolutely
	$(\delta_m)_{m\in\mathbb N}$-bounded. Then, there exist a strictly increasing sequence of positive integers
	$(\theta_m)_m$ and an infinite-dimensional closed subspace $F\subset X$
	such that
	\[
	\lim_{m\to\infty}\|T^{\theta_m}x\|=0
	\qquad \text{for each } x\in F.
	\]
	Therefore, by the Banach--Steinhaus Theorem, it follows that $(4)$ holds.
	
	The implications $(1)\Rightarrow(5)$ and $(4)\Rightarrow(5)$ are direct
	consequences of Lemma~\ref{diverg}.
	
	It therefore remains to prove that $(5)\Rightarrow(3)$. Assume that there
	exists $\lambda \in \sigma_{\ell e}(T) \cap \overline{\mathbb{D}}$;
	equivalently, $T-\lambda$ is not a left-Fredholm operator. By
	\cite[Proposition D.3.4]{BaMa}, there exist an infinite-dimensional closed
	subspace $E$ and a compact operator $K \in \mathcal{L}(X)$ such that
	\[
	(T-K)|_E = \lambda \mathrm{Id}|_E .
	\]
	In particular, $\|(T-K)^{n}|_E\|\leq 1$ for each \(n\in \mathbb{N}\).
	
	For each $n$ we can write
	\[
	T^{n} = (T-K)^{n} + K_{n},
	\]
	where $K_{n}$ is compact. Consider the sequence $\{K_n\}_{n \in \mathbb{N}}$
	of compact operators on $X$ arranged as
	\[
	K_{1}, K_{2}, \ldots, K_{n}, \cdots .
	\]
	According to \cite[Lemma 8.13]{BaMa}, there exists a non-increasing sequence
	$\{E_n\}_{n \in \mathbb{N}}$ of finite-codimensional closed subspaces of $E$
	such that $\|K_n|_{E_n}\| \leq 1$.
	
	Fix any $n$. Then
	\begin{align*}
		\|T^{n}|_{E_n}\|
		= \|(T-K)^{n}|_{E_n} + K_{n}|_{E_n}\| 
		\leq \|(T-K)^{n}|_{E_n}\| + \|K_{n}|_{E_n}\| 
		\leq 2 .
	\end{align*}
	Since $n$ is arbitrary, it follows that
	\[
	\sup_{n\geq 1} \|T^{n}|_{E_{n}}\| \leq 2 .
	\]
	
	According to Theorem~\ref{condi-suf-space-irre}, there exist an
	infinite-dimensional closed subspace $F\subset X$ and a strictly increasing
	sequence of positive integers $(\theta_n)_{n\in\mathbb N}$ such that
	\[
	\lim_{m\to\infty} \int_{\mathbb{N}} \|T^{t}x\|\, d\mu_{\theta_m}(t) = 0
	\; \text{and} \limsup_{m\to\infty} \int_{\mathbb{N}} \|T^{t}x\|\, d\mu_{m}(t) = \infty
	\;\; \forall x\in F\setminus\{0\}.
	\]
	This concludes the proof.
\end{proof}

\subsection{Spaceability of Distributionally \((\mu_m)\)-Irregular Vectors}

\begin{theorem}\label{condi-suf-spa-dis-irre}
	Let $X$ be a separable infinite-dimensional real or complex Banach space and let
	$T \in \mathcal{L}(X)$. Let $(\mu_{m})$ satisfy condition \emph{(M1)}. Assume that there exists a sequence of finite subsets of $\mathbb{N}$, $(D_{k})_{k}$, and a  sequence of integers $(m_{k})_{k}$ in \(\mathbb{N}\) with \(m_{k}\rightarrow \infty\) such that \(	\mu_{m_{k}}(D_{k})>1-\frac{1}{k}\) and such that the set
	\[
	\Bigl\{x \in X : \lim_{k\rightarrow \infty} \max_{t\in D_{k}} \Vert T^{t}x\Vert =0\Bigr\}
	\]
	is dense in $X$. Assume moreover that the following conditions hold:
	\begin{itemize}
		\item There exists \(y\in X\) such that
		\[
		\lim_{k\rightarrow \infty} \min_{t\in D_{k}} \Vert T^{t}y\Vert =\infty .
		\]
		
		\item There exists a decreasing sequence $(E_n)_{n\in\mathbb{N}}$ of
		infinite-dimensional closed subspaces of $X$ such that
		\[
		\sup_{n\in\mathbb{N}} \bigl\|T^{n}|_{E_n}\bigr\| < \infty .
		\]
	\end{itemize}
	
	Then there exist an infinite-dimensional closed subspace $F \subset X$
	and a set \(A\subset \mathbb{N}\) with \(\displaystyle \limsup_{m\to\infty}\mu_{m}(A)=1\) such that
	\[
	\lim_{n\in A} T^{n}x=0 \qquad \forall x\in F,
	\]
	and for every \(x\in F\setminus\{0\}\) there exists a set \(B\subset \mathbb{N}\) with
	\(\displaystyle \limsup_{m\to\infty}\mu_{m}(B)=1\) such that
	\[
	\lim_{n\in B} \Vert T^{n}x\Vert=\infty .
	\]
\end{theorem}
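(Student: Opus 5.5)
We plan to apply Lemma~\ref{space-cri} with the parameter set $\mathcal P$ equal to the family of nonempty finite subsets of $\mathbb N$ and
\[
f(x,A)=\min_{t\in A}\|T^t x\|,\qquad g(x,A)=h(x,A)=\max_{t\in A}\|T^t x\|.
\]
These maps satisfy the homogeneity and triangle-type requirements of Lemma~\ref{dense-citerio}; in particular $f(x+y,A)\ge f(x,A)-g(y,A)$.

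\textbf{Choice of $\mathcal S$ and the residual sets.} Let $\mathcal S$ consist of the subsequences of $(D_k)_k$. The vector $y$ gives $\min_{t\in D_k}\|T^t y\|\to\infty$. Let $z$ range over the dense set where $\max_{t\in D_k}\|T^tz\|\to 0$. Then
\[
\min_{D_k}\|T^t(y+z)\|\ge \min_{D_k}\|T^ty\|-\max_{D_k}\|T^tz\|\to\infty,
\]
so the set $G$ is dense. It is also $G_\delta$, since $f(\cdot,A)$ is continuous for finite $A$. Hence $G$ is residual. For each subsequence $(A_k)$ of $(D_k)$, the sets $P(A_k)=Q(A_k)$ contain the dense set above and are $G_\delta$, so they are residual as well.

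\textbf{The map $J$ and the estimate on $E_n$.} We need $J:\mathcal P\to\mathbb N$ together with subspaces $E_n$ such that $g(x,A)\le M\|x\|$ whenever $x\in E_n$ and $J(A)=n$. The hypothesis only controls the single power $T^n$ on $E_n$, whereas $g$ involves all $t\in A$. We therefore first refine the subspaces, as in the proof of Theorem~\ref{equi-space-irregu}.
\begin{itemize}
\item Enumerate all pairs and reindex so that we obtain a decreasing sequence $(E'_N)$ of infinite-dimensional closed subspaces with $\sup_{t\le N}\|T^t|_{E'_N}\|\le M'$.
\item To do this, set $E'_N:=E_N$. Then for every $t\le N$ we have $E_N\subset E_t$, so $\|T^t|_{E'_N}\|\le\|T^t|_{E_t}\|\le M$.
\end{itemize}
Since $(E_n)$ is decreasing, this refinement is automatic, and $\max_{t\le N}\|T^t x\|\le M\|x\|$ for $x\in E_N$.

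Now put $J(A):=\max A$. This map is surjective using singletons. If $J(A)=n$ and $x\in E_n$, then $A\subset\{1,\dots,n\}$, so $g(x,A)\le M\|x\|$. Condition~(\ref{condi J-g}) also holds: if $J(A)=n$, then $g(y,A)\le \max_{t\le n}\|T^t\|\,\|y\|$, which is small when $\|y\|$ is small.

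\textbf{Conclusion.} Lemma~\ref{space-cri} then yields an infinite-dimensional closed subspace $F$ and a subsequence $\Theta_n=D_{k_n}$ with $\max_{t\in\Theta_n}\|T^tx\|\to0$ for all $x\in F$. It also gives, for each $x\in F\setminus\{0\}$, a subsequence $\Lambda_k=D_{l_k}$ with $\min_{t\in\Lambda_k}\|T^tx\|\to\infty$.

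\textbf{Main obstacle: passing to sets $A$ and $B$.} The difficult step is converting these sequences of finite sets into a single set $A$ with $\limsup\mu_m(A)=1$ and $\lim_{n\in A}T^nx=0$. The natural attempt $A=\bigcup_n\Theta_n$ fails, because the sets $D_k$ may overlap and need not march to infinity. We use (M1) to handle this. Since $\mu_m(K)\to0$ for every finite $K$ and $m_k\to\infty$, we may pass to a further subsequence and remove an initial segment $\{1,\dots,N_n\}$ from each $\Theta_n$. We choose the $N_n$ so that the truncated sets $\Theta'_n=\Theta_n\setminus[1,N_n]$ satisfy:
\begin{itemize}
\item they are pairwise disjoint and lie in increasing blocks, with $N_{n+1}>\max\Theta_n$;
\item $\mu_{m_{k_n}}(\Theta'_n)>1-2/n$.
\end{itemize}
The truncation keeps the subsequence property and preserves the estimates, because passing to subsets only shrinks $g$ and enlarges $f$. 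Set $A:=\bigcup_n\Theta'_n$. Every $t\in A$ large lies in some $\Theta'_n$ with $n$ large, so $\|T^tx\|\le\max_{\Theta_n}\|T^\cdot x\|\to0$; moreover $\limsup\mu_m(A)=1$. We apply the same truncation to $(\Lambda_k)$ to obtain $B$. Since $\Lambda_k$ depends on $x$, this is done separately for each $x$, which the statement permits.
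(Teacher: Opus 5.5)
Your proposal is correct and follows the paper's proof. It makes the same choices: $\mathcal P$ the (nonempty) finite sets, $\mathcal S$ the subsequences of $(D_k)$, $f=\min$, $g=\max$ and $J(A)=\max A$, with the same bound obtained from $E_n\subset E_t$ for $t\le n$, all fed into Lemma~\ref{space-cri}.

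You also spell out the passage from the sequences $(\Theta_n)$, $(\Lambda_k)$ to the sets $A$, $B$, which the paper leaves implicit. However, your truncation is unnecessary, and your claim that $\bigcup_n\Theta_n$ fails is mistaken. For $t$ outside the finite set $\bigcup_{n\le n_0}\Theta_n$, one has $t\in\Theta_n$ only for $n>n_0$, so the plain union already works, and likewise $\bigcup_k\Lambda_k$.
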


\begin{proof}
	Notice that the set
	\[
	G:=\{x\in X: \limsup_{k\to\infty} \min_{t\in D_k} \|T^{t}x\|=\infty\}
	\]
	is residual in \(X\). Let
	\[
	\mathcal{P}:=\{A\subset \mathbb{N}: A \text{ finite}\}
	\]
	and define
	\[
	\mathcal S :=
	\left\{
	(A_k)_k : (A_k)_k \text{ is a subsequence of } (D_k)_k
	\right\}.
	\]
	Define \(f,g : X\times \mathcal P \to [0,\infty)\) by
	\[
	f(x,A):=\min_{t\in A}\|T^{t}x\|,
	\qquad
	g(x,A):=\max_{t\in A}\|T^{t}x\|.
	\]
	Also define \(J: \mathcal{P}\to\mathbb{N}\) by
	\[
	J(A):=\max\{n: n\in A\}.
	\]
	Let \(n\in\mathbb{N}\), \(x\in E_n\), and suppose that \(J(A)=n\). Then
	\begin{align*}
		g(x,A)
		= \max_{t\in A}\|T^t x\| 
		& \le \max_{1\le t\le n}\|T^t x\| \\
		&	\le \max_{1\le t\le n}\|T^t|_{E_t}\|\,\|x\| \\
		&	\le \sup_{m\in\mathbb{N}}\|T^m|_{E_m}\|\,\|x\|.
	\end{align*}
	On the other hand, it is easy to verify that condition (\ref{condi J-g}) holds.
	
	Therefore, by Lemma \ref{space-cri}, there exist an infinite-dimensional closed subspace \(F\subset X\) and a set \(A\subset\mathbb N\) with
	\[
	\limsup_{m\to\infty}\mu_m(A)=1
	\]
	such that
	\[
	\lim_{n\in A}T^n x =0
	\qquad \text{for all } x\in F,
	\]
	and for every \(x\in F\setminus\{0\}\) there exists a set \(B\subset\mathbb N\) with
	\[
	\limsup_{m\to\infty}\mu_m(B)=1
	\]
	such that
	\[
	\lim_{n\in B}\|T^n x\|=\infty .
	\]
	This concludes the proof.
\end{proof}

\begin{theorem}
	Let $X$ be a separable infinite-dimensional complex Banach space and let
	$T \in \mathcal{L}(X)$. Assume  $T$ is not power bounded and that the set
	\[
	\Bigl\{x \in X : T^{n}x\xrightarrow[n\rightarrow \infty]{} 0\Bigr\}
	\]
	is dense in $X$. Then the following statements are equivalent:
	\begin{enumerate}
		\item The set
		\[
		\Bigl\{x\in X : \liminf_{n\to\infty} \|T^{n}x\| = 0\Bigr\}
		\]
		is spaceable.
		
		\item The set of all irregular vector for \(T\) is spaceable.
		
		\item If $T$ admits a vector with $(\mu_{m})$-distributionally unbounded orbit and \((\mu_{m})\) satisfies \emph{(M1)}, then there exist an infinite-dimensional closed subspace $F \subset X$
		and a set \(A\subset \mathbb{N}\) with \(\displaystyle{\limsup_{m\to\infty}\mu_{m}(A)=1}\) such that
		\[
		\lim_{n\in A} T^{n}x=0 \qquad \text{for all } x\in F,
		\]
		and for each \(x\in F\setminus\{0\}\) there exists a set \(B\subset \mathbb{N}\) with \(\displaystyle{\limsup_{m\rightarrow \infty}\mu_{m}(B)}=1\) such that
		\[
		\lim_{n\in B} \Vert T^{n}x\Vert=\infty .
		\]
		
		\item There exists a decreasing sequence $(E_n)_{n\in\mathbb{N}}$ of
		infinite-dimensional closed subspaces of $X$ such that
		\[
		\sup_{n\in\mathbb{N}} \bigl\|T^{n}|_{E_n}\bigr\| < \infty .
		\]
		\item The left essential spectrum of $T$ intersects the closed unit disk
		$\overline{\mathbb{D}}$.
	\end{enumerate}
\end{theorem}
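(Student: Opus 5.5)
The plan is to mirror the proof of Theorem~\ref{equi-space-irregu}, replacing the Cesàro-type conclusion (3) there by the distributional conclusion (3) here and invoking Theorem~\ref{condi-suf-spa-dis-irre} instead of Theorem~\ref{condi-suf-space-irre}. The chain of implications will be
\[
(3)\Rightarrow(2)\Rightarrow(1)\Rightarrow(5)\Rightarrow(4)\Rightarrow(3),
\]
with $(4)\Rightarrow(5)$ also following directly from Lemma~\ref{diverg}.

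For $(3)\Rightarrow(2)$, take $\mu_m=\delta_m$, which satisfies (M1). Since $T$ is not power bounded, Corollary~\ref{cor:power-bounded} and Corollary~\ref{not-abs-bounded} yield a vector $y$ with $\sup_n\|T^ny\|=\infty$, hence $\|T^{n_k}y\|\to\infty$ along some subsequence. With $D_k=\{n_k\}$, this makes $y$ a $(\delta_m)$-distributionally unbounded vector. Statement (3) then gives $F$ and a set $A$ with $\limsup_m\delta_m(A)=1$, i.e.\ $A$ infinite, such that $T^nx\to0$ along $A$ for every $x\in F$. It also gives, for $x\in F\setminus\{0\}$, an infinite set $B$ along which $\|T^nx\|\to\infty$. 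Every nonzero $x\in F$ is therefore irregular, which is (2). The implication $(2)\Rightarrow(1)$ is trivial.

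For $(1)\Rightarrow(5)$, argue by contraposition: if $\sigma_{\ell e}(T)\cap\overline{\mathbb D}=\emptyset$, Lemma~\ref{diverg} gives, in any infinite-dimensional closed subspace, a vector with $\|T^nx\|\to\infty$, contradicting (1). For $(5)\Rightarrow(4)$, reuse verbatim the construction in the proof of Theorem~\ref{equi-space-irregu}. Use \cite[Proposition D.3.4]{BaMa} to write $(T-K)|_E=\lambda\,\mathrm{Id}|_E$ with $|\lambda|\le1$ and $K$ compact, expand $T^n=(T-K)^n+K_n$ with $K_n$ compact, and apply \cite[Lemma 8.13]{BaMa} to get decreasing subspaces $E_n$ with $\|K_n|_{E_n}\|\le1$. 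This gives $\sup_n\|T^n|_{E_n}\|\le2$.

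The main step is $(4)\Rightarrow(3)$: verifying the remaining hypotheses of Theorem~\ref{condi-suf-spa-dis-irre}. Given $(\mu_m)$ with (M1) and a distributionally unbounded vector $y$, Remark~\ref{form distri-irre} supplies $j_k\to\infty$ and compact (hence finite) $B_k\subset\mathbb N$ with $\mu_{j_k}(B_k)>1-1/k$ and $\min_{t\in B_k}\|T^ty\|\to\infty$. Set $D_k:=B_k$ and $m_k:=j_k$. It remains to check density of $\{x:\max_{t\in D_k}\|T^tx\|\to0\}$. This is where the hypothesis is used: if $T^nx\to0$, then $\max_{t\in D_k}\|T^tx\|\le\sup_{t\ge\min D_k}\|T^tx\|$. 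So it suffices that $\min D_k\to\infty$. I expect arranging this to be the only delicate point. I would secure it using (M1): remove from $B_k$ the finite set $\{1,\dots,N_k\}$, choosing $N_k\to\infty$ slowly enough that $\mu_{j_k}(\{1,\dots,N_k\})<1/k$, which is possible since $\mu_{j}(\{1,\dots,N\})\to0$ as $j\to\infty$ for each fixed $N$. After passing to a subsequence, the truncated sets still have measure $>1-2/k$ and still carry the divergence of $y$ after reindexing. Theorem~\ref{condi-suf-spa-dis-irre} then yields (3), closing the cycle.
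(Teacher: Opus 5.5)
Your proposal is correct and uses the same ingredients as the paper's proof: Lemma~\ref{diverg} for $(1)\Rightarrow(5)$, the compact-perturbation construction for $(5)\Rightarrow(4)$, and Theorem~\ref{condi-suf-spa-dis-irre} for $(4)\Rightarrow(3)$; arranging them as a single cycle spares you the paper's terse claims $(3)\Rightarrow(4)$ and $(4)\Rightarrow(5)$, and you explicitly verify the hypotheses on the finite sets $D_k$ and the density condition, which the paper leaves implicit. The step you flag as delicate is in fact automatic, because $\max_{t\le N}\|T^t y\|<\infty$ for each $N$ while $\min_{t\in B_k}\|T^t y\|\to\infty$, so the sets $B_k$ eventually avoid $\{1,\dots,N\}$ and $\min B_k\to\infty$ with no truncation (and your aside that $(4)\Rightarrow(5)$ follows directly from Lemma~\ref{diverg} is not needed for the cycle, and is not immediate anyway, since the $E_n$ shrink).
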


\begin{proof}
	The implications (3) $\Rightarrow$ (2) and (2) $\Rightarrow$ (1) are immediate. The implication (3) $\Rightarrow$ (4) follows from the Banach–Steinhaus theorem. On the other hand, according to Lemma \ref{diverg}, (1) implies (5) and (4) implies (5).
	
	We now show that (5) implies (3). Proceeding as in the proof of the theorem \ref{equi-space-irregu}, there exists a decreasing sequence $(E_n)_{n\in\mathbb{N}}$ of infinite-dimensional closed subspaces of $X$ such that
	\[
	\sup_{n\in\mathbb{N}} \bigl\|T^{n}|_{E_n}\bigr\|\leq 2 .
	\]
	
	According to Theorem \ref{condi-suf-spa-dis-irre}, there exist an infinite-dimensional closed subspace $F \subset X$
	and a set \(A\subset \mathbb{N}\) with \(\displaystyle{\limsup_{m\to\infty}\mu_{m}(A)=1}\) such that
	\[
	\lim_{n\in A} T^{n}x=0 \qquad \text{for all } x\in F,
	\]
	and for each \(x\in F\setminus\{0\}\) there exists a set \(B\subset \mathbb{N}\) with \(\displaystyle{\limsup_{m\rightarrow \infty}\mu_{m}(B)}=1\) such that
	\[
	\lim_{n\in B} \|T^{n}x\|=\infty .
	\]
	This concludes the proof.
\end{proof}



\section{Baire Theorem and Observation Schemes}\label{b.t.o.s}

In this section, \(I:=\mathbb{N}\), \(\Lambda:=\mathbb{N}\) and \(T_{t}:=T^{t}\) with \(t\in \mathbb{N}\) for some continuous linear operator on \(X\). In this way, without loss of generality, we assume throughout that
\[
\operatorname{supp}(\mu_m)\subset\{1,\dots,m\}
\qquad \text{for every } m\in \mathbb{N}.
\]

For each $m$, let $\mathcal{P}(\{1,\dots,m\})$ denote the simplex of
probability measures on $\{1,\dots,m\}$, equipped with the total
variation norm
\[
\|\mu-\nu\|_{\mathrm{TV}}
:=\frac12\sum_{\ell=1}^m
\bigl|\mu(\{\ell\})-\nu(\{\ell\})\bigr|.
\]

The natural ambient space for the lenses is the product
\[
\mathcal{M}:=\prod_{m=1}^\infty \mathcal{P}(\{1,\dots,m\}),
\]
endowed with the supremum metric
\[
\|(\mu_m)_m-(\nu_m)_m\|_\infty:=\sup_{m\ge1}\|\mu_m-\nu_m\|_{\mathrm{TV}}.
\]

With this metric, $\mathcal{M}$ is a complete metric space.

Our principal object of interest is the closed subset
\begin{equation}\label{eq:Upsilon}
	\Upsilon:=\Bigl\{(\mu_m)_m\in\mathcal{M}:\ 
	\lim_{m\to\infty}\mu_m(A)=0 \text{ for every finite } A\subset\mathbb{N}\Bigr\}.
\end{equation}

Consequently, $(\Upsilon,\|\cdot\|_\infty)$ is itself complete.

The first result of this section shows that dense \((\mu_{m})\)-distributional chaos exhibits a rigid dichotomy at the level of observation schemes: it is either present for all schemes or confined to a subset of first category in \(\Upsilon\).

\begin{theorem}\label{dicho-dist}
	Let \(X\) be a separable Fréchet space and let \(T \in \mathcal{L}(X)\). Then the set
	\[
	\left\{(\mu_{m})\in \Upsilon:\; T \text{ is densely }(\mu_{m})\text{-distributionally chaotic}\right\}
	\]
	is either equal to \(\Upsilon\) or a set of first category in \(\Upsilon\). Moreover, if \(X\) is a Banach space, then the latter alternative is closed and nowhere dense in \(\Upsilon\).
\end{theorem}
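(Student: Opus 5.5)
The plan is to reduce dense $(\mu_m)$-distributional chaos to two \emph{scheme-independent} orbit properties of $T$, and then see how these interact with the topology of $\Upsilon$. By Theorem~\ref{dense-mu_i-distri} and Proposition~\ref{dist-G-delta}, $T$ is densely $(\mu_m)$-distributionally chaotic if and only if, for some $\beta$, both $\bigcap_k S_k$ and $\bigcap_k R_{k,\beta}$ are residual in $X$. Here $S_k,R_{k,\beta}$ are the open sets from the proof of Proposition~\ref{dist-G-delta}, now depending on the scheme $(\mu_m)$.

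Consider the two conditions
\begin{itemize}
\item[(A)] the set $\{x: T^n x\to 0\}$ is dense in $X$;
\item[(B)] for some $\beta$ there is $y$ with $\|T^n y\|_\beta\to\infty$.
\end{itemize}
Every $(\mu_m)\in\Upsilon$ satisfies (M1), so $\mu_m(\{1,\dots,N\})\to 0$ for each $N$. Hence (A) gives a dense set of $(\mu_m)$-distributionally near-to-zero vectors, and (B) gives a $\beta$-distributionally $(\mu_m)$-unbounded vector, for every scheme. Proposition~\ref{dist-unbounded} and the argument of Theorem~\ref{dense-disti} (with $C_i=\{1,\dots,i\}$) then give dense $(\mu_m)$-distributional chaos for \emph{every} $(\mu_m)\in\Upsilon$. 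So under (A) and (B) the set in question is all of $\Upsilon$.

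The first step is a small lemma in the converse direction: if the set $\mathcal D\subset\Upsilon$ of good schemes is nonempty, I expect (A) and (B) to be close to necessary, in a \emph{uniform} sense. I would not try to prove them exactly. Instead I would fix countable data: a dense sequence $(y_\ell)$ in $X$, thresholds $1/k$, and seminorm indices $\beta$. For fixed $\ell,k$, the set
\[
\{(\mu_m)\in\Upsilon:\ \exists m\ge k,\ \mu_m(\{t:\rho(T^t y,0)<1/k\})>1-1/k \text{ for some } y \text{ near } y_\ell\}
\]
is open in $\Upsilon$, because only finitely many coordinates $\mu_m$ are involved and $\|\cdot\|_\infty$ dominates each $\|\mu_m-\nu_m\|_{\mathrm{TV}}$. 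Consequently $\mathcal D$ is a countable union, over $\beta$, of countable intersections of such open sets. The target is then to show that if $\mathcal D\neq\Upsilon$, each of the relevant sets is contained in a closed set with empty interior.

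The second step, which I expect to be the main obstacle, is the perturbation argument. Suppose $(\nu_m)\notin\mathcal D$ and $(\mu_m)$ is arbitrary. I would try to show that every neighbourhood of $(\mu_m)$ contains a scheme outside the relevant open set. To build it, put $\mu'_m:=(1-\eta)\mu_m+\eta\,\lambda_m$, where $\lambda_m$ is a probability measure carried by the times at which the orbits witnessing the failure for $(\nu_m)$ are large, respectively small. Then $\|\mu'-\mu\|_\infty\le\eta$, and $\mu'_m(A)\le 1-\eta\lambda_m(A^c)$ keeps $\limsup\mu'_m(A)$ bounded away from $1$ uniformly over $x$ in an open set. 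The difficulty is that failure of dense chaos for $(\nu_m)$ only says that \emph{some} open set of vectors misbehaves. To make $\lambda_m$ independent of $x$, I would first try equicontinuity. In the Banach case this is available through Theorem~\ref{bounded-beta}, Corollary~\ref{not-abs-bounded} and Proposition~\ref{beta-desi}: the orbit bounds on balls are uniform, so the bad times can be chosen uniformly on a ball. This is also why I expect the Banach case to upgrade ``first category'' to ``closed nowhere dense''. With uniform bad times the exceptional set is cut out by finitely many coordinate conditions, hence is closed, and the perturbation shows it has empty interior.

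For the Fréchet case I would settle for the weaker conclusion. I would write $\mathcal D\subset\bigcup_\beta\mathcal D_\beta$ and prove, for each fixed $\beta$, that $\mathcal D_\beta$ is nowhere dense whenever $\mathcal D_\beta\neq\Upsilon$, running the same mixing construction with $\rho$ and $\|\cdot\|_\beta$. Taking the union over $\beta\in\mathbb N$ gives first category. If this fails for some $\beta$, then $\mathcal D_\beta$ is somewhere dense. I would then use the convexity of $\Upsilon$ and the mixing trick in reverse, to propagate the uniform behaviour from an open set of schemes to all schemes and conclude $\mathcal D=\Upsilon$.
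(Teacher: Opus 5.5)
Your reduction to the sets $\mathcal D_\beta$ and your use of convex combinations in $\Upsilon$ point in the right direction. However, there is a genuine gap exactly at the step you flag as the main obstacle: making the failure at a bad scheme $(\nu_m)$ robust, both under small perturbations (closedness) and under mixing (empty interior).

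The tools you propose for this do not apply. Theorem~\ref{bounded-beta}, Corollary~\ref{not-abs-bounded} and Proposition~\ref{beta-desi} concern absolute $(\mu_m)$-boundedness, and failure of dense distributional chaos does not give it. For $T=2I$, no scheme is good, yet $T$ is absolutely bounded for none. So ``uniform bad times on a ball'' cannot be obtained this way, in the Banach case or otherwise.

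Likewise, closedness cannot come from ``finitely many coordinate conditions''. A failure is a statement about \emph{all} $m\ge k_0$, and what makes it stable is a uniform margin in the metric $\|\cdot\|_\infty$. Your $x$-dependent choice of $\lambda_m$ is a detour. The conditions (A), (B) and the final ``reverse mixing'' play no role.

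The missing idea is to split $\mathcal D=\mathcal N\cap\bigcup_\beta J_\beta$ and apply the Baire theorem \emph{in $X$} at the failing scheme. Here $\mathcal N$ encodes density of distributionally near-to-zero vectors, and $J_\beta$ the existence of a $\beta$-distributionally unbounded vector.

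If $(\nu_m)\notin\mathcal N$, the $G_\delta$ set $\bigcap_k S_k((\nu_m))$ is not dense. By Baire, some $S_{k_0}((\nu_m))$ is not dense, so there is a nonempty open $U\subset X$ with $\nu_m(\{t:\rho(T^tx,0)<1/k_0\})\le 1-1/k_0$ for all $x\in U$ and all $m\ge k_0$. If $(\nu_m)\notin J_\beta$, then $\bigcap_k R_{k,\beta}((\nu_m))=\emptyset$, and the same argument gives this inequality for $\{t:\|T^tx\|_\beta>k_0\}$. This single $k_0$, uniform in $x\in U$ and in $m\ge k_0$, is the uniformity you were after.

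Since the inequality holds for all $m\ge k_0$ with a margin, it survives every perturbation with $\|(\nu'_m)-(\nu_m)\|_\infty<1/(4k_0)$, with $2k_0$ in place of $k_0$. Then $U$ misses $S_{2k_0}((\nu'_m))$, resp.\ $R_{2k_0,\beta}((\nu'_m))$, so $(\nu'_m)\notin\mathcal N$, resp.\ $(\nu'_m)\notin J_\beta$. For $J_\beta$ this step needs Proposition~\ref{dist-unbounded}: one unbounded vector would make them residual, hence meet $U$. Hence $\mathcal N$ and each $J_\beta$ are closed.

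For density of the complement, mix with $(\nu_m)$ itself. Two facts do the work:
\begin{itemize}
\item $\nu_m(A)\le 1-1/k_0$ gives $(s\nu_m+(1-s)\mu_m)(A)\le 1-s/k_0$, so $U$ still witnesses failure of $\mathcal N$ for the mixture.
\item A set of $(s\nu_m+(1-s)\mu_m)$-mass $>1-\varepsilon$ has $\nu_m$-mass $>1-\varepsilon/s$, so a $\beta$-unbounded vector for the mixture would be one for $(\nu_m)$.
\end{itemize}
Thus each of $\mathcal N$ and $J_\beta$ is either $\Upsilon$ or closed nowhere dense, already for Fr\'echet $X$. The Banach improvement comes only from there being a single norm, so that $\bigcup_\beta J_\beta$ is one closed set rather than a countable union.
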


To prove the theorem, we require the following two propositions.

\begin{proposition}\label{closed-J}
	Let \(X\) be a Fréchet space, \(T \in \mathcal{L}(X)\), and \(\beta \in \mathbb{N}\). Then the set
	\[
	J_{\beta} := \{ (\mu_{m})_{m} \in \Upsilon : T \text{ admits a } (\mu_{m})\text{-distributionally } \beta\text{-unbounded vector} \}
	\]
	is either \(\Upsilon\) or closed and nowhere dense in \(\Upsilon\).
\end{proposition}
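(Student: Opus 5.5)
The plan is to work entirely through the sequential characterization in Proposition~\ref{dist-unbounded}. For \((\mu_m)\in\Upsilon\), the family \((T^t)_t\) admits a \(\beta\)-distributionally \((\mu_m)\)-unbounded vector if and only if there are \(\varepsilon>0\), vectors \(x_k\to 0\) and indices \(m_k\to\infty\) with
\[
\mu_{m_k}\bigl(\{t:\|T^tx_k\|_\beta>\varepsilon\}\bigr)\ge 1-\tfrac1k .
\]
The first step is to remove the dependence on \(\varepsilon\). Replacing \(x_k\) by \(x_k/\varepsilon\) still gives a null sequence, so we may take \(\varepsilon=1\). Passing to a tail of the sequence then shows the following: \((\mu_m)\in J_\beta\) if and only if for every \(k\in\mathbb N\) there exist \(x\in X\) with \(\rho(x,0)<1/k\) and \(m\ge k\) such that \(\mu_m(\{t:\|T^tx\|_\beta>1\})>1-1/k\). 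Everything else reduces to manipulating this ``one-witness-per-\(k\)'' condition.

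\textbf{Closedness.} Let \(\nu\in\overline{J_\beta}\) and fix \(k\). Choose \((\mu^{(n)})\in J_\beta\) with \(\|\mu^{(n)}-\nu\|_\infty<1/(2k)\). Applying the reformulated condition to \(\mu^{(n)}\) with \(2k\) in place of \(k\) gives \(x\) with \(\rho(x,0)<1/(2k)\) and \(m\ge 2k\) such that \(\mu^{(n)}_m(B)>1-1/(2k)\), where \(B:=\{t:\|T^tx\|_\beta>1\}\). Since the total variation distance bounds \(|\mu^{(n)}_m(B)-\nu_m(B)|\), we get \(\nu_m(B)>1-1/k\). Hence \(\nu\) satisfies the condition at level \(k\). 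As \(k\) is arbitrary, \(\nu\in J_\beta\), so \(J_\beta\) is closed.

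\textbf{Dichotomy.} Suppose \(J_\beta\ne\Upsilon\), and pick \(\mu^0\in\Upsilon\setminus J_\beta\). For an arbitrary \(\mu\in\Upsilon\) and \(\eta\in(0,1)\), set \(\nu^\eta:=(1-\eta)\mu+\eta\mu^0\), defined coordinatewise. This lies in \(\Upsilon\), since convex combinations preserve both the simplices and the vanishing on finite sets. It also satisfies \(\|\nu^\eta-\mu\|_\infty\le\eta\). The claim is that \(\nu^\eta\notin J_\beta\). Indeed, if \(\nu^\eta_{m_k}(B_k)\ge 1-1/k\), then
\[
\eta\bigl(1-\mu^0_{m_k}(B_k)\bigr)\le 1-\nu^\eta_{m_k}(B_k)\le \tfrac1k,
\]
so \(\mu^0_{m_k}(B_k)\ge 1-1/(\eta k)\to1\). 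Thus any witness \((x_k,m_k)\) for \(\nu^\eta\) yields, after discarding finitely many \(k\) and reindexing, a witness for \(\mu^0\), a contradiction. Therefore \(\Upsilon\setminus J_\beta\) is dense. Together with closedness, this shows that \(J_\beta\) is closed with empty interior, i.e.\ closed and nowhere dense.

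\textbf{Main obstacle.} I expect the delicate step to be the closedness argument. The thresholds \(\varepsilon\) supplied by Proposition~\ref{dist-unbounded} for the approximating schemes \(\mu^{(n)}\) need not be uniform in \(n\). This is why I first normalize to \(\varepsilon=1\) by rescaling the null sequence, and then work with a single witness per level \(k\) rather than with whole sequences, so that a diagonal choice across the \(\mu^{(n)}\) becomes possible. I also need to check that the reformulation yields \(m_k\to\infty\) (guaranteed by requiring \(m\ge k\)) and \(x_k\to0\) in \(X\), so that Proposition~\ref{dist-unbounded} applies in the converse direction.
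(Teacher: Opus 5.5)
Your proof is correct, but the central step takes a different route from the paper's. The paper shows that $\Upsilon\setminus J_\beta$ is open by a Baire argument in $X$. For $(\mu_m)\notin J_\beta$ one has $\bigcap_k R_{k,\beta}((\mu_m))=\emptyset$, so some $X\setminus R_{k_0,\beta}((\mu_m))$ contains a nonempty open set $U$. The same $U$ stays inside $X\setminus R_{2k_0,\beta}((\nu_m))$ for every $(\nu_m)$ within $1/(4k_0)$. This contradicts the residuality of $\bigcap_k R_{k,\beta}((\nu_m))$, which Proposition~\ref{dist-unbounded}(3) would give if $(\nu_m)\in J_\beta$.

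You instead use the homogeneity of the unboundedness condition to normalize the threshold in Proposition~\ref{dist-unbounded}(1) to $1$. This turns membership in $J_\beta$ into a countable family of finite-stage existential conditions. You then note that a level-$2k$ witness for one scheme is a level-$k$ witness for every scheme within $1/(2k)$. That gives closedness of $J_\beta$ directly, with Baire entering only through Proposition~\ref{dist-unbounded} itself.

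Your argument is more elementary and quantitative, and it handles the non-uniformity of the thresholds explicitly. It also carries out the convex-combination verification that the paper leaves as ``one readily verifies''. The paper's argument, by contrast, works only with the fixed open sets $R_{k,\beta}$ of Proposition~\ref{dist-G-delta} and the residuality statement. It is the same template the paper reuses for Proposition~\ref{closed-N}, whereas your threshold normalization is specific to the unboundedness condition.

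One small wording fix in your density step. From $\mu^0_{m_k}(B_k)\ge 1-1/(\eta k)$ you cannot simply discard finitely many indices and reindex, since a shift $k\mapsto k+N$ cannot absorb the factor $1/\eta$ for all $k$. Pass instead to the subsequence $k_j:=\lceil j/\eta\rceil$. Equivalently, use your level-$\lceil k/\eta\rceil$ witness for $\nu^\eta$ as a level-$k$ witness for $\mu^0$.
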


\begin{proof}
	We distinguish three cases. The first two correspond to \(J_{\beta}=\Upsilon\) or \(J_{\beta}=\emptyset\). It remains to analyze the case where \(\emptyset \neq J_{\beta} \subsetneq \Upsilon\).
	
	We claim that \(J_{\beta}\) is closed and nowhere dense in \(\Upsilon\). First, we show that \(\Upsilon \setminus J_{\beta}\) is open in \(\Upsilon\). Let \((\mu_{m})_{m} \in \Upsilon \setminus J_{\beta}\). Adopting the notation from the proof of Proposition \ref{dist-G-delta}, we have
	\[
	\bigcap_{k \in \mathbb{N}} R_{k,\beta}((\mu_{m})_{m}) = \emptyset.
	\]
	Thus,
	\[
	\bigcup_{k \in \mathbb{N}} \left( X \setminus R_{k,\beta}((\mu_{m})_{m}) \right) = X.
	\]
	By the Baire Category Theorem, there exist \(k_{0} \in \mathbb{N}\) and a non-empty open subset \(U \subset X\) such that
	\begin{align*}
		U &\subset X \setminus R_{k_{0}, \beta}((\mu_{m})_{m}) \\
		&= \left\{ x \in X : \mu_{m}(\{ t \in \mathbb{N} : \| T^{t}x \|_{\beta} > k_{0} \}) \leq 1 - \frac{1}{k_{0}}, \forall m \geq k_{0} \right\}.
	\end{align*}
	Let \((\nu_{m})_{m} \in \Upsilon\) with \(\| (\nu_{m})_{m} - (\mu_{m})_{m} \|_{\infty} < \frac{1}{4k_{0}}\). Observe that for \(x \in U\) and \(m \geq k_{0}\),
	\begin{align*}
		\nu_{m}(\{ t \in \mathbb{N} : \| T^{t}x \|_{\beta} > k_{0} \}) &\leq \frac{1}{2k_{0}} + \mu_{m}(\{ t \in \mathbb{N} : \| T^{t}x \|_{\beta} > k_{0} \}) \\
		&\leq 1 - \frac{1}{2k_{0}}.
	\end{align*}
	Since \(\{ t \in \mathbb{N} : \| T^{t}x \|_{\beta} > 2k_{0} \} \subset \{ t \in \mathbb{N} : \| T^{t}x \|_{\beta} > k_{0} \}\), it follows that
	\[
	\nu_{m}(\{ t \in \mathbb{N} : \| T^{t}x \|_{\beta} > 2k_{0} \}) \leq  1 - \frac{1}{2k_{0}}, \quad \forall x \in U, \forall m \geq 2k_{0}.
	\]
	Therefore, \(U \subset X \setminus R_{2k_{0}, \beta}((\nu_{m})_{m})\). If \((\nu_{m})_{m} \in J_{\beta}\), then by Proposition \ref{dist-unbounded} the set
	\[
	\bigcap_{k \in \mathbb{N}} R_{k,\beta}((\nu_{m})_{m})
	\]
	is residual. In particular, \(U \cap R_{2k_{0}, \beta}((\nu_{m})_{m}) \neq \emptyset\), which yields a contradiction. Hence, \((\nu_{m})_{m} \notin J_{\beta}\). This shows that \(\Upsilon \setminus J_{\beta}\) is open in \(\Upsilon\).
	
	It remains to show that \(\Upsilon \setminus J_{\beta}\) is dense in \(\Upsilon\). Fix \((\eta_{m})_{m} \in J_{\beta}\) and choose \((\nu_{m})_{m} \in \Upsilon \setminus J_{\beta}\). For each \(0 < s < 1\), one readily verifies that \((s\nu_{m} + (1-s)\eta_{m})_{m} \in \Upsilon \setminus J_{\beta}\) and
	\[
	\| (s\nu_{m} + (1-s)\eta_{m})_{m} - (\eta_{m})_{m} \|_{\infty} \leq s.
	\]
	Letting \(s \to 0\), the density of \(\Upsilon \setminus J_{\beta}\) follows. Therefore, \(J_{\beta}\) is closed and nowhere dense in \(\Upsilon\). This concludes the proof.
\end{proof}

\begin{proposition}\label{closed-N}
	Let \(X\) be a Fréchet space and let \(T \in \mathcal{L}(X)\). Define
	\[
	\mathcal{N}:=\left\{(\mu_{m})\in \Upsilon:\; T \text{ is dense } (\mu_{m})\text{-distributionally near to zero } \right\}.
	\]
	Then \(\mathcal{N}\) is either equal to \(\Upsilon\) or it is closed and nowhere dense in \(\Upsilon\).
\end{proposition}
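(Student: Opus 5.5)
We follow the pattern of the proof of Proposition~\ref{closed-J}, with the sets $R_{k,\beta}$ replaced by the sets $S_k$ from the proof of Proposition~\ref{dist-G-delta}. We write $S_k((\mu_m)_m)$ to record the dependence on the scheme. We interpret ``$T$ is densely $(\mu_m)$-distributionally near to zero'' as the requirement that $\bigcap_k S_k((\mu_m)_m)$ be dense in $X$. This set is a $G_\delta$ by Proposition~\ref{dist-G-delta}, so density is the same as residuality. If $\mathcal N=\Upsilon$ or $\mathcal N=\emptyset$ there is nothing to prove. So assume $\emptyset\neq\mathcal N\subsetneq\Upsilon$; we show that $\Upsilon\setminus\mathcal N$ is open and dense.

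\emph{Key step (a uniform witness of non-membership).} Let $(\mu_m)\notin\mathcal N$. Then $\bigcap_k S_k((\mu_m))$ is not dense, so there is a nonempty open $U\subset X$ disjoint from it. Hence $U=\bigcup_k \bigl(U\setminus S_k((\mu_m))\bigr)$. Each $S_k$ is open, so each piece is relatively closed in $U$. By the Baire Category Theorem there are $k_0$ and a nonempty open $V\subset U$ with $V\cap S_{k_0}((\mu_m))=\emptyset$. Concretely,
\[
\mu_m\bigl(\{t:\rho(T^tx,0)<1/k_0\}\bigr)\le 1-\tfrac1{k_0}\qquad\text{for all }x\in V,\ m\ge k_0 .
\]

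\emph{Openness.} Let $\|(\nu_m)-(\mu_m)\|_\infty<\frac1{4k_0}$. For $x\in V$ and $m\ge 2k_0$ we use the inclusion $\{\rho<\tfrac1{2k_0}\}\subset\{\rho<\tfrac1{k_0}\}$ and the total-variation bound. This gives
\[
\nu_m\bigl(\{t:\rho(T^tx,0)<\tfrac1{2k_0}\}\bigr)\le \tfrac1{2k_0}+1-\tfrac1{k_0}=1-\tfrac1{2k_0}.
\]
Thus $V\cap S_{2k_0}((\nu_m))=\emptyset$, so $\bigcap_kS_k((\nu_m))$ misses $V$, is not dense, and $(\nu_m)\notin\mathcal N$.

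\emph{Density.} Fix $(\eta_m)\in\mathcal N$ and any $(\nu_m)\notin\mathcal N$, and take $V,k_0$ as above for $(\nu_m)$. For $0<s<1$ the convex combination $\lambda^s_m:=s\nu_m+(1-s)\eta_m$ again lies in $\Upsilon$, and $\|(\lambda^s_m)-(\eta_m)\|_\infty\le s$. For $x\in V$ and $m\ge k$ with $k\ge k_0/s$ (so also $k\ge k_0$), monotonicity in the threshold gives
\[
\lambda^s_m\bigl(\{\rho(T^tx,0)<\tfrac1k\}\bigr)\le s\Bigl(1-\tfrac1{k_0}\Bigr)+(1-s)=1-\tfrac{s}{k_0}\le 1-\tfrac1k .
\]
Hence $V\cap S_k((\lambda^s_m))=\emptyset$ and $(\lambda^s_m)\notin\mathcal N$. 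Letting $s\to0$ shows that $\Upsilon\setminus\mathcal N$ is dense. The main point requiring care is the key step: obtaining a single $k_0$ and open set $V$ from the Baire argument. This uniform witness is what makes the estimate survive both small perturbations and convex mixing.
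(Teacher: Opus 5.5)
Your proof is correct and follows essentially the same route as the paper. Both use a Baire argument to produce a single $k_0$ and a nonempty open set missing $S_{k_0}((\mu_m))$, then show this witness survives perturbations of size $\frac{1}{4k_0}$ (passing to $S_{2k_0}$), and use the convex combinations $s\nu_m+(1-s)\eta_m$ for density. Your explicit estimate $\lambda^s_m(\{t:\rho(T^tx,0)<1/k\})\le 1-s/k_0\le 1-1/k$ for $k\ge k_0/s$ supplies the verification that the paper only asserts in the density step.
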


\begin{proof}
	The argument follows the same scheme as in the proof of Proposition~\ref{closed-J}. We distinguish three cases: either \(\mathcal{N}=\Upsilon\), or \(\mathcal{N}=\emptyset\), or \(\emptyset \neq \mathcal{N} \subsetneq \Upsilon\). We focus on the latter.
	
	We claim that \(\mathcal{N}\) is closed and nowhere dense in \(\Upsilon\). To this end, it suffices to show that \(\Upsilon\setminus \mathcal{N}\) is open and dense.
	
	Let \((\mu_{m})\in \Upsilon\setminus \mathcal{N}\). Then
	\[
	\bigcap_{k \in \mathbb{N}} S_{k}((\mu_{m}))
	\]
	is a \(G_{\delta}\)-subset of \(X\) which is not dense. Hence, by the Baire category theorem, there exist \(k_{0}\in \mathbb{N}\) and a non-empty open set \(U\subset X\) such that
	\begin{align*}
		U &\subset X\setminus S_{k_{0}}((\mu_{m}))\\
		&= \left\{ x\in X : 
		\mu_{m}\big(\{t\in \mathbb{N} : \rho(T^{t}x,0)<\tfrac{1}{k_{0}}\}\big)\leq 1-\tfrac{1}{k_{0}}, \ \forall m\geq k_{0} \right\}.
	\end{align*}
	Notice that for every \((\nu_{m})\in B_{\infty}((\mu_{m}),\frac{1}{4k_{0}})\),
	\[
	U\subset X\setminus S_{2k_{0}}((\nu_{m})).
	\]
	In particular,
	\[
	B_{\infty}((\mu_{m}),\frac{1}{4k_{0}})\subset \Upsilon\setminus \mathcal{N},
	\]
	which shows that \(\Upsilon\setminus \mathcal{N}\) is open.
	
	To prove density, let \((\eta_{m})\in \mathcal{N}\) and choose \((\nu_{m})\in \Upsilon\setminus \mathcal{N}\). For \(0<s<1\), define
	\[
	\mu^{(s)}_{m}:=s\nu_{m}+(1-s)\eta_{m}.
	\]
	Then \((\mu^{(s)}_{m})\in \Upsilon\setminus \mathcal{N}\) and \((\mu^{(s)}_{m})\to (\eta_{m})\) as \(s\to 0^{+}\). Hence, \(\Upsilon\setminus \mathcal{N}\) is dense.
	
	Therefore, \(\mathcal{N}\) is closed and nowhere dense in \(\Upsilon\), which completes the proof.
\end{proof}

\begin{proof}[Proof of Theorem \ref{dicho-dist}]
	We begin by observing that the following assertions are equivalent by Theorem \ref{dense-mu_i-distri}:
	\begin{itemize}
		\item \(T\) is densely \((\mu_{m})\)-distributionally chaotic;
		\item \((\mu_{m})\in \mathcal{N}\cap\Bigl(\bigcup_{\beta\in\mathbb{N}} J_{\beta}\Bigr)\).
	\end{itemize}
	
	By Propositions~\ref{closed-N} and \ref{closed-J}, the sets \(\mathcal{N}\) and \(J_{\beta}\) satisfy a dichotomy: each of them is either equal to \(\Upsilon\) or meagre in \(\Upsilon\); moreover, in the Banach setting, they are closed and nowhere dense whenever they are not all of \(\Upsilon\).
	
	It follows that \(\mathcal{N}\cap\bigl(\bigcup_{\beta} J_{\beta}\bigr)\) is either equal to \(\Upsilon\) or a set of first category in \(\Upsilon\). In the Banach case, this set is in addition closed and nowhere dense.
	
	This completes the proof.
\end{proof}

\begin{theorem}\label{G_delta-dense-LY}
	Let \(X\) be a separable Banach space and let \(T \in \mathcal{L}(X)\). Then the set
	\[
	\left\{(\mu_{m})\in \Upsilon:\; T \text{ is densely } (\mu_{m})\text{--Li--Yorke chaotic}\right\}
	\]
	is a \(G_{\delta}\)-subset of \(\Upsilon\).
\end{theorem}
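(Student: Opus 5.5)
Write $\mathcal{L}$ for the set in the statement. The plan is to turn membership in $\mathcal{L}$ into countably many conditions on $(\mu_m)$, each of which is open in $\Upsilon$.

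\emph{Step 1: reduction.} For a fixed $(\mu_m)\in\Upsilon$, consider the two sets
\[
A(\mu):=\Bigl\{x:\liminf_{m}\int\|T^tx\|\,d\mu_m=0\Bigr\},\qquad B(\mu):=\Bigl\{x:\limsup_{m}\int\|T^tx\|\,d\mu_m=\infty\Bigr\}.
\]
Since $X$ is a Banach space, there is a single norm, so the parameter $\beta$ plays no role. By Theorem~\ref{equiv-dense}, $(\mu_m)\in\mathcal{L}$ if and only if $A(\mu)\cap B(\mu)$ is dense in $X$. Both sets are $G_\delta$, as observed after Definition~\ref{def-irre}.
\begin{itemize}
\item If both sets are dense, they are residual, so their intersection is residual and hence dense.
\item Conversely, if the intersection is dense, then each set is dense.
\end{itemize}
Therefore
\[
\mathcal{L}=\{\mu: A(\mu)\text{ dense}\}\cap\{\mu: B(\mu)\text{ dense}\},
\]
and it suffices to show that each of these two sets is a $G_\delta$ subset of $\Upsilon$.

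\emph{Step 2: the basic continuity estimate.} Fix $m$ and $x$. Because $\operatorname{supp}(\mu_m)\subset\{1,\dots,m\}$, we have
\[
\Bigl|\int\|T^tx\|\,d\mu_m-\int\|T^tx\|\,d\nu_m\Bigr|\le 2\max_{t\le m}\|T^tx\|\,\|\mu_m-\nu_m\|_{\mathrm{TV}}\le 2\max_{t\le m}\|T^tx\|\,\|(\mu_m)-(\nu_m)\|_\infty .
\]
Hence the map $(\mu_m)\mapsto\int\|T^tx\|\,d\mu_m$ is continuous on $\Upsilon$. Consequently, any set of the form $\{\mu:\int\|T^tx\|\,d\mu_m<c\}$ or $\{\mu:\int\|T^tx\|\,d\mu_m>c\}$ is open in $\Upsilon$.

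\emph{Step 3: the $B$-part.} By Corollary~\ref{not-abs-bounded}, $B(\mu)$ is dense if and only if it is nonempty, if and only if
\[
\sup_m\int\|T^tx\|\,d\mu_m=\infty\quad\text{for some }x .
\]
Here $\sup$ and $\limsup$ agree, because for each $m$ the finitely many indices $m'\le m$ contribute a quantity bounded by $\max_{t\le m}\|T^tx\|$. Using homogeneity, this condition is equivalent to
\[
\mu\in\bigcap_{N}\ \bigcup_{m}\ \bigcup_{\|x\|\le1}\Bigl\{\mu:\int\|T^tx\|\,d\mu_m>N\Bigr\}.
\]
By Step 2 this is a countable intersection of unions of open sets, hence a $G_\delta$.

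\emph{Step 4: the $A$-part.} This is where density has to be encoded explicitly, and it is the main point of the argument. Write
\[
A(\mu)=\bigcap_k U_k(\mu),\qquad U_k(\mu):=\bigcup_{m>k}\Bigl\{x:\int\|T^tx\|\,d\mu_m<1/k\Bigr\}.
\]
Each $U_k(\mu)$ is open in $X$ by local equicontinuity and finite support. So $A(\mu)$ is dense if and only if every $U_k(\mu)$ is dense.

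Fix a countable dense set $\{y_j\}$ in $X$. Then $U_k(\mu)$ is dense if and only if, for every $j$ and $r$, it meets the ball $B(y_j,1/r)$. Thus
\[
\{\mu:A(\mu)\text{ dense}\}=\bigcap_{k,j,r}\ \bigcup_{m>k}\ \bigcup_{x\in B(y_j,1/r)}\Bigl\{\mu:\int\|T^tx\|\,d\mu_m<1/k\Bigr\}.
\]
By Step 2 each inner set is open in $\Upsilon$, so the right-hand side is a $G_\delta$.

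\emph{Conclusion.} Intersecting the $G_\delta$ sets from Steps 3 and 4 gives that $\mathcal{L}$ is a $G_\delta$ subset of $\Upsilon$.

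The only delicate point is Step 1: replacing density of $A\cap B$ by density of each factor separately. This relies on both factors being $G_\delta$, which in turn uses the finite support of each $\mu_m$.
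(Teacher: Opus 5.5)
Your proof is correct and follows essentially the paper's route: Theorem~\ref{equiv-dense} reduces membership to ``$(\mu_m)\in\mathcal{A}$ and $T$ not absolutely $(\mu_m)$-bounded'', and each condition is then shown to be $G_\delta$ using continuity of $(\mu_m)\mapsto\int\|T^tx\|\,d\mu_m$. Your explicit open sets are exactly the complements of the paper's closed sets $M_{\ell,k}$ and $B_{k,\ell}$ from Propositions~\ref{abs-near to zero} and~\ref{set-abs-bounded}. One small correction in Step 3: the reverse passage, from the unit-ball condition back to a single $x$ with infinite supremum, is not mere homogeneity; it needs the uniform-boundedness direction of Theorem~\ref{bounded-beta}, i.e.\ (1)$\Leftrightarrow$(2) of Corollary~\ref{not-abs-bounded}, which you already cite.
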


The proof is based on the following two propositions.

\begin{proposition}\label{abs-near to zero}
	Let \(X\) be a separable Fréchet space and let \(T\in \mathcal{L}(X)\). Then the set
	\[
	\mathcal{A}=\left\{(\mu_{m})\in \Upsilon:\;
	\left\{x\in X:\; \liminf_{m\to\infty}\int\rho(T^{t}x,0)\,d\mu_{m}(t)=0\right\}
	\text{ is dense in } X\right\}
	\]
	is a \(G_{\delta}\)-subset of \(\Upsilon\). Moreover, either \(\mathcal{A}=\Upsilon\) or \(\Upsilon\setminus \mathcal{A}\) is dense in \(\Upsilon\).
\end{proposition}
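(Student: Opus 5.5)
The approach is to write $\mathcal{A}$ as a countable intersection of open subsets of $\Upsilon$, using a countable base of $X$. For each $(\mu_m)\in\Upsilon$ and $k\in\mathbb{N}$ we set
\[
U_k((\mu_m)):=\bigcup_{m>k}\Bigl\{x\in X:\ \int\rho(T^{t}x,0)\,d\mu_m(t)<\tfrac1k\Bigr\}.
\]
Since $\operatorname{supp}(\mu_m)\subset\{1,\dots,m\}$, each map $x\mapsto\int\rho(T^t x,0)\,d\mu_m(t)$ is a finite sum of continuous functions. Hence $U_k((\mu_m))$ is open in $X$, and
\[
L((\mu_m)):=\Bigl\{x:\liminf_{m\to\infty}\int\rho(T^tx,0)\,d\mu_m=0\Bigr\}=\bigcap_k U_k((\mu_m)).
\]
By the Baire category theorem, $L((\mu_m))$ is dense if and only if every $U_k((\mu_m))$ is dense; the forward direction is trivial because $L\subset U_k$. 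Now fix a countable base $(V_j)_j$ of nonempty open sets of $X$, which exists since $X$ is separable. Then
\[
\mathcal{A}=\bigcap_{k,j\in\mathbb{N}}\mathcal{A}_{k,j},\qquad
\mathcal{A}_{k,j}:=\Bigl\{(\mu_m)\in\Upsilon:\ \exists\,x\in V_j,\ \exists\, m>k,\ \int\rho(T^tx,0)\,d\mu_m(t)<\tfrac1k\Bigr\}.
\]

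The key estimate is that $0\le\rho\le 1$, so for every $x$ and $m$
\[
\Bigl|\int\rho(T^tx,0)\,d\mu_m-\int\rho(T^tx,0)\,d\nu_m\Bigr|\le 2\|\mu_m-\nu_m\|_{\mathrm{TV}}\le 2\|(\mu_m)-(\nu_m)\|_\infty .
\]
Consequently, if a witness pair $(x,m)$ gives a strict inequality $\int\rho(T^t x,0)\,d\mu_m<1/k$, the same pair still works for every $(\nu_m)$ in a small $\|\cdot\|_\infty$-ball around $(\mu_m)$. So each $\mathcal{A}_{k,j}$ is open, and $\mathcal{A}$ is a $G_\delta$-subset of $\Upsilon$.

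For the dichotomy, suppose $\mathcal{A}\neq\Upsilon$ and fix $(\nu_m)\notin\mathcal{A}$. Then some $U_{k_0}((\nu_m))$ is not dense, so there is a nonempty open set $U\subset X$ with
\[
\int\rho(T^tx,0)\,d\nu_m\ge \tfrac1{k_0}\qquad\text{for all } x\in U \text{ and all } m>k_0 .
\]
Given any $(\eta_m)\in\Upsilon$ and $0<s<1$, put $\mu^{(s)}_m:=s\nu_m+(1-s)\eta_m$. This sequence lies in $\Upsilon$, since both the simplices and the vanishing condition on finite sets are preserved by convex combinations. It also satisfies $\|(\mu^{(s)}_m)-(\eta_m)\|_\infty\le s$. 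For $x\in U$ and $m>k_0$, the $(1-s)\eta_m$ part contributes a nonnegative amount, so
\[
\int\rho(T^tx,0)\,d\mu^{(s)}_m\ge \tfrac{s}{k_0}.
\]
Hence $L((\mu^{(s)}_m))\cap U=\emptyset$, so $(\mu^{(s)}_m)\notin\mathcal{A}$. Letting $s\to0$ shows that $\Upsilon\setminus\mathcal{A}$ is dense.

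The only delicate point is the reduction of ``$L((\mu_m))$ is dense'' to the countable family of conditions ``$V_j\cap U_k((\mu_m))\neq\emptyset$''. This needs the sets $U_k((\mu_m))$ to be open, which is where the finite-support normalization of this section is used, and it needs Baire's theorem in the Fréchet space $X$. Once that reduction is in place, the uniform continuity coming from $\rho\le1$ gives both the openness of the $\mathcal{A}_{k,j}$ and the convex-combination argument.
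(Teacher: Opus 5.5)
Your proof is correct and follows essentially the same argument as the paper. Your sets $\mathcal{A}_{k,j}$ are exactly the complements of the paper's closed sets $M_{\ell,k}$, built from the same Baire reduction over a countable base, and your convex-combination argument with $s\nu_m+(1-s)\eta_m$ is the paper's density argument; the only cosmetic difference is that you check openness of $\mathcal{A}_{k,j}$ directly through the total-variation estimate, while the paper checks closedness of $M_{\ell,k}$ through convergence of the integrals.
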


\begin{proof}
	We begin by observing that, for a given \((\mu_m)_m \in \Upsilon\), the set
	\[
	\left\{x\in X:\; \liminf_{m\to\infty}\int \rho(T^{t}x,0)\,d\mu_{m}(t)=0\right\}
	\]
	can be written as
	\[
	\bigcap_{k\in \mathbb{N}} B_k((\mu_m)_m),
	\; \text{where} \;
	B_k((\mu_m)_m)
	=
	\bigcup_{m\ge k}
	\left\{x\in X: \int \rho(T^{t}x,0)\,d\mu_{m}(t)<\frac{1}{k}\right\}.
	\]
	Each set \(B_k((\mu_m)_m)\) is open in \(X\), and
	\[
	X\setminus B_k((\mu_m)_m)
	=
	\left\{x\in X: \int \rho(T^{t}x,0)\,d\mu_{m}(t)\ge \frac{1}{k},\ \forall m\ge k\right\}.
	\]
	
	Let \(\{U_\ell\}_{\ell\in\mathbb{N}}\) be a countable basis of nonempty open subsets of \(X\). For \(\ell,k\in\mathbb{N}\), define
	\[
	M_{\ell,k}
	:=
	\left\{(\mu_m)_m \in \Upsilon:\; U_\ell \subset X\setminus B_k((\mu_m)_m)\right\}.
	\]
	
	\medskip
	\noindent
	\textbf{Claim 1.} Each \(M_{\ell,k}\) is closed in \(\Upsilon\).
	
	\medskip
	Let \((\mu_{m,n})_m \to (\mu_m)_m\) in \(\Upsilon\), with \((\mu_{m,n})_m \in M_{\ell,k}\).  
	Fix \(x\in U_\ell\) and \(m\ge k\). Then
	\[
	\int \rho(T^{t}x,0)\,d\mu_{m,n}(t)
	\longrightarrow
	\int \rho(T^{t}x,0)\,d\mu_m(t),
	\]
	and since each \((\mu_{m,n})_m \in M_{\ell,k}\),
	\[
	\int \rho(T^{t}x,0)\,d\mu_{m,n}(t)\ge \frac{1}{k}.
	\]
	Passing to the limit yields
	\[
	\int \rho(T^{t}x,0)\,d\mu_m(t)\ge \frac{1}{k}.
	\]
	Hence \((\mu_m)_m \in M_{\ell,k}\), and \(M_{\ell,k}\) is closed.
	
	\medskip
	\noindent
	\textbf{Claim 2.} 
	\[
	\Upsilon\setminus \mathcal{A}
	=
	\bigcup_{\ell,k\in\mathbb{N}} M_{\ell,k}.
	\]
	
	\medskip
	If \((\mu_m)_m \notin \mathcal{A}\), then
	\[
	\bigcap_{k\in\mathbb{N}} B_k((\mu_m)_m)
	\]
	is not dense in \(X\). By the Baire category theorem, there exist a nonempty open set \(W\subset X\) and \(k\in\mathbb{N}\) such that
	\[
	W \subset X\setminus B_k((\mu_m)_m).
	\]
	Choosing \(U_\ell \subset W\), we obtain \((\mu_m)_m \in M_{\ell,k}\).  
	The converse inclusion is immediate.
	
	\medskip
	It follows that \(\Upsilon\setminus \mathcal{A}\) is an \(F_\sigma\)-set, and therefore \(\mathcal{A}\) is a \(G_\delta\)-subset of \(\Upsilon\).
	
	\medskip
	Finally, assume that \(\mathcal{A}\neq \Upsilon\). Let \((\mu_m)_m \in \mathcal{A}\) and \((\nu_m)_m \in \Upsilon\setminus \mathcal{A}\). Then there exist \(\ell,k\in\mathbb{N}\) such that
	\[
	U_\ell \subset
	\left\{x\in X:\; \int \rho(T^{t}x,0)\,d\nu_m(t)\ge \frac{1}{k},\ \forall m\ge k\right\}.
	\]
	
	For \(0<s<1\), define
	\[
	(\eta_m^{(s)})_m := \bigl(s\nu_m + (1-s)\mu_m\bigr)_m.
	\]
	Then \((\eta_m^{(s)})_m \to (\mu_m)_m\) as \(s\to 0^+\). Moreover, for \(m\ge k\) and \(x\in U_\ell\),
	\[
	\int \rho(T^{t}x,0)\,d\eta_m^{(s)}(t)
	=
	s \int \rho(T^{t}x,0)\,d\nu_m(t)
	+
	(1-s)\int \rho(T^{t}x,0)\,d\mu_m(t)
	\ge \frac{s}{k}.
	\]
	Thus, choosing \(k_1 > k/s\), we obtain
	\[
	U_\ell \subset X\setminus B_{k_1}((\eta_m^{(s)})_m),
	\]
	so that \((\eta_m^{(s)})_m \in \Upsilon\setminus \mathcal{A}\).
	
	This shows that \(\Upsilon\setminus \mathcal{A}\) is dense in \(\Upsilon\), completing the proof.
\end{proof}

\begin{proposition}\label{set-abs-bounded}
	Let \(X\) be a Fréchet space and \(T\in \mathcal{L}(X)\). Then the set
	\[
	\left\{(\mu_{m})_{m}\in \Upsilon:\; T \text{ is not absolutely }(\mu_{m})\text{-bounded}\right\}
	\]
	is either empty or residual in \(\Upsilon\). Moreover, if \(X\) is a Banach space, then the latter alternative is a dense \(G_{\delta}\)-subset of \(\Upsilon\).
\end{proposition}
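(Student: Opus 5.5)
The plan is to describe the set
\[
\mathcal{U}:=\{(\mu_m)_m\in\Upsilon:\ T \text{ is not absolutely }(\mu_m)\text{-bounded}\}
\]
as a union over $\beta\in\mathbb{N}$ of sets $\mathcal{U}_\beta$, where $\mathcal{U}_\beta$ is the set of schemes for which $T$ is not $\beta$-absolutely $(\mu_m)$-bounded. I will then show that each nonempty $\mathcal{U}_\beta$ is a dense $G_\delta$-subset of $\Upsilon$. This gives both assertions at once. If $\mathcal{U}\neq\emptyset$, some $\mathcal{U}_\beta$ is a nonempty dense $G_\delta$, so $\mathcal{U}\supset\mathcal{U}_\beta$ is residual. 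In the Banach case there is only one norm, so $\mathcal{U}=\mathcal{U}_1$ is itself a dense $G_\delta$.

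\emph{Step 1 ($G_\delta$ structure).} By Definition~\ref{def:abs-bdd}, $T$ fails to be $\beta$-absolutely $(\mu_m)$-bounded exactly when, for every $\delta>0$, the supremum in \eqref{abs-measure} is $+\infty$. Because of the monotonicity in $\delta$, it suffices to test $\delta=1/j$ for $j\in\mathbb{N}$. Hence
\[
\mathcal{U}_\beta=\bigcap_{j\in\mathbb{N}}\bigcap_{k\in\mathbb{N}}\ \bigcup_{m\in\mathbb{N}}\ \bigcup_{\substack{x\in X\\ \mathrm{D}(x,0)<1/j}}\Bigl\{(\mu_m)_m\in\Upsilon:\ \int_{\mathbb{N}}\|T^t x\|_\beta\,d\mu_m(t)>k\Bigr\}.
\]
Each inner set is open in $\Upsilon$. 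Indeed, since $\operatorname{supp}(\mu_m)\subset\{1,\dots,m\}$, we have
\[
\Bigl|\int\|T^tx\|_\beta\,d\mu_m-\int\|T^tx\|_\beta\,d\nu_m\Bigr|\le 2\max_{1\le t\le m}\|T^tx\|_\beta\,\|\mu_m-\nu_m\|_{\mathrm{TV}}\le 2\max_{1\le t\le m}\|T^tx\|_\beta\,\|(\mu_m)-(\nu_m)\|_\infty .
\]
So the map $(\mu_n)_n\mapsto\int\|T^tx\|_\beta\,d\mu_m$ is continuous on $\Upsilon$ for each fixed $x$ and $m$. A union of open sets is open, so $\mathcal{U}_\beta$ is a $G_\delta$. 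In the Banach case, homogeneity reduces this to the single condition $\sup_{\|x\|<1}\sup_m\int\|T^tx\|\,d\mu_m=\infty$, which has the same form.

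\emph{Step 2 (density).} This uses the convex-combination trick already used in Propositions~\ref{closed-J} and~\ref{abs-near to zero}. Suppose $(\eta_m)\in\mathcal{U}_\beta$ and let $(\mu_m)\in\Upsilon$ be arbitrary. For $0<s<1$, the combination $(s\eta_m+(1-s)\mu_m)_m$ again lies in $\Upsilon$, and it is within distance $s$ of $(\mu_m)$ in $\|\cdot\|_\infty$. By Corollary~\ref{not-abs-bounded}, there is $x\in X$ with $\sup_m\int\|T^tx\|_\beta\,d\eta_m=\infty$. Since the integrand is nonnegative,
\[
\int\|T^tx\|_\beta\,d\bigl(s\eta_m+(1-s)\mu_m\bigr)\ \ge\ s\int\|T^tx\|_\beta\,d\eta_m ,
\]
so the supremum over $m$ is still infinite. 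By Corollary~\ref{not-abs-bounded} again, the combination belongs to $\mathcal{U}_\beta$. Letting $s\to0$ shows that $\mathcal{U}_\beta$ is dense.

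The step needing the most care is Step 1: one must use the form of non-boundedness that quantifies over all $\delta=1/j$, not the pointwise form (2) of Theorem~\ref{bounded-beta}. The pointwise form is a ``there exists $x$'' statement and does not directly yield a $G_\delta$ description. One must also check that the finite-support normalization makes the integrals continuous in the scheme. Once this is in place, the rest follows from the Baire category theorem in the complete space $\Upsilon$.
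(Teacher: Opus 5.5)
Your proof is correct and is essentially the paper's argument: your set $\mathcal{U}_\beta$ coincides with the paper's $\mathfrak{U}(\beta)=\bigcap_k(\Upsilon\setminus B_{k,\beta})$, and you obtain density by the same convex-combination trick. The only difference is that the paper first splits on whether $T$ is power bounded, which characterizes the empty case exactly, and then anchors the convex combination at the specific scheme $(\delta_m)_m$; you anchor it at an arbitrary element of a nonempty $\mathcal{U}_\beta$, which works equally well.
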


\begin{proof}
	We distinguish two cases.
	
	If \(T\) is power bounded, then it is absolutely \((\mu_{m})\)-bounded for every \((\mu_{m})\in \Upsilon\), and hence the set under consideration is empty.
	
	Assume now that \(T\) is not power bounded. Then there exists \(\beta_{0}\in \mathbb{N}\) such that \(T\) is not \(\beta_{0}\)-absolutely \((\delta_{m})\)-bounded.
	
	For each \(k,\ell \in \mathbb{N}\), define
	\[
	B_{k,\ell}
	:=
	\left\{
	(\mu_m)_m \in \Upsilon :
	\sup_{\substack{x\in X \\ \rho(x,0)<1/k}}
	\ \sup_{m \in \mathbb{N}}
	\int \|T^{t}x\|_{\ell} \, d\mu_m(t)
	\leq 1
	\right\}.
	\]
	Each \(B_{k,\ell}\) is closed in \(\Upsilon\), and
	\[
	T \text{ is absolutely } (\mu_m)\text{-bounded}
	\quad \Longleftrightarrow \quad
	(\mu_m)_m \in \bigcap_{\ell\in \mathbb{N}} \bigcup_{k\in \mathbb{N}} B_{k,\ell}.
	\]
	
	Fix \(\beta_{0}\) as above and set
	\[
	\mathfrak{U}(\beta_{0})
	:=
	\bigcap_{k\in \mathbb{N}} \bigl(\Upsilon \setminus B_{k,\beta_{0}}\bigr).
	\]
	Then \(\mathfrak{U}(\beta_{0})\) is a \(G_\delta\)-subset of \(\Upsilon\).
	
	We claim that \(\mathfrak{U}(\beta_{0})\) is dense in \(\Upsilon\). Let \((\nu_m)_m \in \Upsilon\) and \(k\in\mathbb{N}\). For \(0<s\le 1\), define
	\[
	(\mu_m^{(s)})_m := \bigl(s\delta_m + (1-s)\nu_m\bigr)_m.
	\]
	Since \(T\) is not \(\beta_{0}\)-absolutely \((\delta_m)\)-bounded, it follows that \((\mu_m^{(s)})_m \notin B_{k,\beta_{0}}\) for every \(s>0\). Hence \((\mu_m^{(s)})_m \in \mathfrak{U}(\beta_{0})\).
	
	Moreover,
	\[
	\| (\mu_m^{(s)})_m - (\nu_m)_m \|_\infty
	\le s \sup_{m} \|\delta_m - \nu_m\|_{\mathrm{TV}}
	\longrightarrow 0 \quad \text{as } s\to 0^{+},
	\]
	which shows that \((\nu_m)_m \in \overline{\mathfrak{U}(\beta_{0})}\). Therefore, \(\mathfrak{U}(\beta_{0})\) is dense in \(\Upsilon\).
	
	Finally, by construction, every \((\mu_m)_m \in \mathfrak{U}(\beta_{0})\) satisfies that \(T\) is not absolutely \((\mu_m)\)-bounded. Hence the set under consideration contains the residual set \(\mathfrak{U}(\beta_{0})\), and the proof is complete.
\end{proof}

\begin{proof}[Proof of Theorem \ref{G_delta-dense-LY}]
	By Theorem~\ref{equiv-dense}, \(T\) is densely \((\mu_{m})\)--Li--Yorke chaotic if and only if \((\mu_{m})\in \mathcal{A}\) and \(T\) is not absolutely \((\mu_{m})\)-bounded. In other words, the set under consideration is
	\[
	\mathcal{A}\cap \left\{(\mu_{m})_{m}\in \Upsilon:\; T \text{ is not absolutely }(\mu_{m})\text{-bounded}\right\}.
	\]
	By Propositions~\ref{abs-near to zero} and~\ref{set-abs-bounded}, this set is a \(G_{\delta}\)-subset of \(\Upsilon\).
\end{proof}

\begin{theorem}\label{gip}
	Let \(X\) be a separable infinite-dimensional real or complex Fréchet space and let \(T \in \mathcal{L}(X)\). Assume that the set \[ \{ x \in X : T^{n}x \xrightarrow[n\to\infty]{} 0 \} \] is dense in \(X\). Then exactly one of the following alternatives holds: 
	\begin{enumerate} 
		\item \(T^{n}x \xrightarrow[n\to\infty]{} 0\) for every \(x \in X\); 
		\item the set \[ \bigl\{ (\mu_m)_m \in \Upsilon : T \text{ admits a dense absolutely } (\mu_m)_m\text{-irregular manifold} \bigr\} \] is residual in \(\Upsilon\). \end{enumerate}
\end{theorem}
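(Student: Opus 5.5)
We apply Theorem~\ref{dense-lineable} to each scheme and use Proposition~\ref{set-abs-bounded} for genericity. First, if $T^nx\to0$ for every $x$, then alternative~(1) holds. It excludes~(2): by the Banach--Steinhaus theorem (in the Fréchet setting, every orbit is bounded, hence $\{T^n\}$ is equicontinuous), $T$ is power bounded. Hence $T$ is absolutely $(\mu_m)$-bounded for every $(\mu_m)\in\Upsilon$, and no absolutely $(\mu_m)$-irregular vector exists, since Theorem~\ref{weak:li-yorke}'s argument, or directly the $\limsup=\infty$ requirement, contradicts boundedness. So the alternatives are mutually exclusive, and it remains to show that if~(1) fails then~(2) holds.

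Assume some $x_0$ satisfies $T^nx_0\not\to0$. Fix $(\mu_m)\in\Upsilon$. The hypothesis of Theorem~\ref{dense-lineable} is that
\[
\bigcap_{\alpha}\Bigl\{x:\lim_m\int\|T^tx\|_\alpha\,d\mu_m=0\Bigr\}
\]
is dense. This holds because each $x$ with $T^nx\to0$ has $\|T^tx\|_\alpha\to0$, so by (M1) (mass escapes every finite set and $\sup_t\|T^tx\|_\alpha<\infty$) the averages tend to $0$. Thus for each $(\mu_m)\in\Upsilon$, either $\int\|T^tx\|_\beta\,d\mu_m\to0$ for all $x,\beta$, or $T$ admits a dense absolutely $(\mu_m)$-irregular manifold. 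Moreover, the proof of Theorem~\ref{dense-lineable} shows the second alternative occurs exactly when $T$ is not absolutely $(\mu_m)$-bounded; the bounded case falls into the first alternative via Proposition~\ref{closed-sub}. Hence the set in~(2) contains
\[
\{(\mu_m)\in\Upsilon: T\text{ not absolutely }(\mu_m)\text{-bounded}\},
\]
and by Proposition~\ref{set-abs-bounded} this set is residual as soon as it is nonempty, i.e.\ as soon as $T$ is not power bounded.

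The main obstacle is the case where $T$ is power bounded but~(1) fails; there the set above is empty, so a different argument is needed. I would show this case cannot occur. By Proposition~\ref{closed-sub} applied with $\mu_m=\delta_m$ (absolute $(\delta_m)$-boundedness is power boundedness, by Corollary~\ref{cor:power-bounded}), the set $\{x:\|T^nx\|_\beta\to0\}$ is a closed subspace for each $\beta$. Intersecting over $\beta$ gives a closed subspace containing the dense set $\{x:T^nx\to0\}$, hence equal to $X$. So $T^nx\to0$ for all $x$, contradicting the failure of~(1). Therefore, whenever~(1) fails, $T$ is not power bounded, and the reduction of the previous paragraph yields~(2).

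The remaining detail to check is that the conclusion of Theorem~\ref{dense-lineable} matches the definition of a dense absolutely $(\mu_m)$-irregular manifold with a single $\beta$ for all nonzero vectors; that proof fixes one $\beta$ throughout, so this holds.
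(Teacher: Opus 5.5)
Your proposal is correct and follows the paper's proof. In the power-bounded case you get (1) from density and closedness; otherwise you combine the residual set $\mathfrak{U}(\beta_0)$ from Proposition~\ref{set-abs-bounded} with Theorem~\ref{dense-lineable}, applied to each scheme in it. Your extra checks are sound details that the paper leaves implicit: mutual exclusivity of the alternatives, the density hypothesis of Theorem~\ref{dense-lineable} via (M1), and Proposition~\ref{closed-sub} with $\mu_m=\delta_m$ to justify the power-bounded case.
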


\begin{proof}
	If \(T\) is power bounded, then, since the set
	\[
	\{ x \in X : T^{n}x \to 0 \}
	\]
	is dense in \(X\), it follows that \(T^{n}x \to 0\) for every \(x \in X\).
	
	Assume now that \(T\) is not power bounded. By Proposition~\ref{set-abs-bounded}, there exists \(\beta_{0}\in \mathbb{N}\) such that the set \(	\mathfrak{U}(\beta_{0})\)
	is a dense \(G_{\delta}\)-subset of \(\Upsilon\), and for every \((\nu_m)_m \in \mathfrak{U}(\beta_{0})\), the operator \(T\) is not absolutely \((\nu_m)\)-bounded.
	
	By Theorem~\ref{dense-lineable}, it follows that for every \((\nu_m)_m \in \mathfrak{U}(\beta_{0})\), the operator \(T\) admits a dense absolutely \((\nu_m)\)-irregular manifold. Hence,
	\[
	\mathfrak{U}(\beta_{0}) \subset 
	\bigl\{ (\mu_m)_m \in \Upsilon : T \text{ admits a dense absolutely } (\mu_m)\text{-irregular manifold} \bigr\},
	\]
	which shows that the latter set is residual in \(\Upsilon\).
\end{proof}

\begin{theorem}[Trichotomy for Observation Schemes]\label{TOS}
	Let \(X\) be a separable real or complex Fréchet space and let \(T \in \mathcal{L}(X)\). Assume that the set
	\[
	\{x \in X : T^{n}x \to 0\}
	\]
	is dense in \(X\). Then exactly one of the following alternatives holds:
	\begin{enumerate}
		\item[(I)] \(T^{n}x \to 0\) for every \(x \in X\);
		\item[(II)] For every sequence \((\mu_m)_m \in \Upsilon\), \(T\) admits both a dense absolutely \((\mu_m)\)-irregular manifold and a dense \((\mu_m)\)-distributionally irregular manifold;
		\item[(III)] There exists a residual subset \(\Upsilon_0 \subset \Upsilon\) such that for every \((\mu_m)_m \in \Upsilon_0\), \(T\) admits a dense absolutely \((\mu_m)\)-irregular manifold while admitting no \(\beta\)-distributionally \((\mu_{m})\)-irregular vector for any \(\beta \in \mathbb{N}\).
	\end{enumerate}
\end{theorem}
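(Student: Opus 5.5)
Let $X_{\infty}:=\{x\in X: T^n x\to 0\}$, which is dense by hypothesis. The plan is to combine Theorem~\ref{gip} with the dichotomy of Theorem~\ref{dicho-dist} (through Propositions~\ref{closed-J} and~\ref{closed-N}) and with the dense-lineability criterion Theorem~\ref{dense-disti}.

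\emph{Case (I).} If $T^n x\to 0$ for every $x$, then alternative (I) holds. Alternatives (II) and (III) then both fail. Indeed, if $T^nx\to0$ for all $x$, then $T$ is power bounded by Banach--Steinhaus. It is then absolutely $(\mu_m)$-bounded for every $(\mu_m)\in\Upsilon$, so no absolutely $(\mu_m)$-irregular vector exists. Moreover, for every $\beta$, the sets $\{t:\|T^tx\|_\beta>k\}$ are finite, so $\mu_m$ of them tends to $0$ by the definition of $\Upsilon$, and no distributionally unbounded vector exists. This also takes care of the ``exactly one'' part.

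\emph{Other case.} Assume $T^nx\not\to0$ for some $x$. By the argument in Theorem~\ref{gip}, $T$ is not power bounded (power boundedness together with density of $X_\infty$ forces $X_\infty=X$). Hence there is a residual set $\mathfrak U(\beta_0)\subset\Upsilon$ on which $T$ admits a dense absolutely $(\mu_m)$-irregular manifold.

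Next I show that distributional near-zero density holds for \emph{every} scheme. Fix $(\mu_m)\in\Upsilon$. For $x\in X_\infty$ and any finite $K\subset\mathbb N$, take $C_m:=\{1,\dots,m\}\setminus\{1,\dots,N_m\}$, where $N_m\to\infty$ is chosen slowly enough that $\mu_m(\{1,\dots,N_m\})\to0$. This is possible by (M1) and a diagonal argument. Then $\mu_m(C_m)\to1$ and $\max_{t\in C_m}\|T^tx\|_\alpha\to0$ for every $x\in X_\infty$ and every $\alpha$. So the hypothesis \eqref{subspace-distri} of Theorem~\ref{dense-disti} holds for every $(\mu_m)\in\Upsilon$, with the dense set $X_\infty$. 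Consequently $\mathcal N=\Upsilon$, and for each $(\mu_m)$:
\[
T\text{ has a dense distributionally }(\mu_m)\text{-irregular manifold}\iff (\mu_m)\in\textstyle\bigcup_\beta J_\beta .
\]

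Now apply Proposition~\ref{closed-J} to each $J_\beta$: it is either $\Upsilon$ or closed nowhere dense.

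\emph{Subcase $J_{\beta}=\Upsilon$ for some $\beta$.} Then every scheme carries a dense distributionally irregular manifold. It remains to get a dense absolutely irregular manifold for \emph{every} scheme, not just residually many. For this, use Proposition~\ref{beta-desi} (Markov): a $\beta$-distributionally unbounded vector makes $\int\|T^tx\|_\beta\,d\mu_m$ unbounded along a subsequence, so $T$ is not absolutely $(\mu_m)$-bounded. The set \eqref{dense-semin-condi} contains $X_\infty$, because $\mu_m$ kills finite sets and $\|T^tx\|_\alpha$ is bounded and tends to $0$, which gives convergence of the integrals by splitting at $N$. Theorem~\ref{dense-lineable} then applies, and its first alternative is excluded. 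This gives (II).

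\emph{Subcase $J_\beta\ne\Upsilon$ for all $\beta$.} Then $\bigcup_\beta J_\beta$ is of first category, and $\Upsilon_0:=\mathfrak U(\beta_0)\setminus\bigcup_\beta J_\beta$ is residual. For $(\mu_m)\in\Upsilon_0$, $T$ admits a dense absolutely irregular manifold. It admits no $\beta$-distributionally irregular vector, since such a vector would in particular be $\beta$-distributionally unbounded. This gives (III).

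Exclusivity of (II) and (III) holds because $\Upsilon_0\neq\emptyset$ by Baire, as $\Upsilon$ is complete.

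The main obstacle is the every-scheme statement in (II). It requires deriving non-absolute-boundedness from distributional unboundedness for each individual scheme, and checking carefully that the choice of $C_m$ and the integral convergence on $X_\infty$ use only (M1) and $\operatorname{supp}\mu_m\subset\{1,\dots,m\}$.
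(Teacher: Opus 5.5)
Your proposal is correct and follows the paper's proof. If $T$ is power bounded you get (I); otherwise you split via Proposition~\ref{closed-J}. If $J_\beta=\Upsilon$ for some $\beta$, Markov's inequality together with Theorems~\ref{dense-lineable} and~\ref{dense-disti} gives (II); if every $J_\beta$ is nowhere dense, $\Upsilon_0=\mathfrak{U}(\beta_0)\setminus\bigcup_\beta J_\beta$ gives (III). You also supply steps the paper leaves implicit: the construction of the sets $C_m$ verifying \eqref{subspace-distri} for every scheme, the check that $\{x:T^nx\to0\}$ lies in \eqref{dense-semin-condi}, and the Baire argument showing that the alternatives are mutually exclusive.
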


\begin{proof}
	If \(T\) is power bounded, then \(T^{n}x \to 0\) for every \(x \in X\). Otherwise, there exists \(\beta_{0} \in \mathbb{N}\) such that \(T\) is not \(\beta_{0}\)-absolutely \((\delta_{m})\)-bounded. By Proposition \ref{closed-J}, we distinguish two cases.
	
	\textbf{Case 1.} There exists \(\beta \in \mathbb{N}\) such that \(J_{\beta} = \Upsilon\). Let \((\mu_{m})_m \in \Upsilon\). Then \(T\) admits a \((\mu_{m})\)-distributionally irregular vector with respect to \(\|\cdot\|_{\beta}\). By Proposition \ref{beta-desi}, \(T\) is not \(\beta\)-absolutely \((\mu_{m})\)-bounded. Therefore, by Theorem \ref{dense-lineable} and Theorem \ref{dense-disti}, \(T\) admits both a dense absolutely \((\mu_{m})\)-irregular manifold and a dense \((\mu_{m})\)-distributionally irregular manifold.
	
	\textbf{Case 2.} \(J_{\beta}\) is closed and nowhere dense in \(\Upsilon\) for every \(\beta \in \mathbb{N}\). Then
	\[
	\mathcal{J} := \bigcup_{\beta \in \mathbb{N}} J_{\beta}
	\]
	is a meager subset of \(\Upsilon\). Consider the residual set \(\mathfrak{U}(T, \beta_{0})\) defined in the proof of Theorem \ref{gip}. Let
	\[
	\Upsilon_{0} := \mathfrak{U}(T, \beta_{0}) \setminus \mathcal{J}.
	\]
	Then \(\Upsilon_{0}\) is residual in \(\Upsilon\). By Theorem \ref{dense-lineable}, for every \((\mu_{m})_m \in \Upsilon_{0}\), \(T\) admits a dense absolutely \((\mu_{m})\)-irregular manifold. On the other hand, for each \(\beta \in \mathbb{N}\), \(T\) admits no \(\beta\)-distributionally \((\mu_{m})\)-irregular vector. This concludes the proof.
\end{proof}

\begin{lemma}\label{distrib-irr-manifolds}
	Let \(X\) be a separable Fréchet space and \(T \in \mathcal{L}(X)\). Assume that \(\{x \in X : T^{n}x \to 0\}\) is dense in \(X\), and there exists \(x \in X\) and \(\beta \in \mathbb{N}\) such that 
	\[
	\lim_{n \to \infty} \|T^{n}x\|_{\beta} = \infty.
	\]
	Then, for every \((\mu_m)_m \in \Upsilon\), \(T\) admits both a dense absolutely \((\mu_m)\)-irregular manifold and a dense \((\mu_m)\)-distributionally irregular manifold.
\end{lemma}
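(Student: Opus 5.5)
The plan is to reduce the lemma to the two dense-lineability criteria already established, namely Theorem~\ref{dense-lineable} and Theorem~\ref{dense-disti}, applied with $\Lambda=I=\mathbb{N}$, $T_t=T^t$ and an arbitrary fixed $(\mu_m)_m\in\Upsilon$. In effect we show that the existence of a vector $x$ with $\|T^nx\|_\beta\to\infty$ forces $J_\beta=\Upsilon$, so we land in Case~1 of the proof of Theorem~\ref{TOS} for every observation scheme.

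\textbf{Distributional part.} Fix $(\mu_m)\in\Upsilon$. For the density hypothesis \eqref{subspace-distri} of Theorem~\ref{dense-disti}, take $C_m:=\{1,\dots,m\}$. Then $C_m$ is compact and $\mu_m(C_m)=1$, by the standing convention $\operatorname{supp}(\mu_m)\subset\{1,\dots,m\}$. The obvious choice $\max_{t\le m}\|T^tz\|_\alpha$ does \emph{not} tend to $0$, so we need better sets. Instead choose $C_m:=\{s_m,\dots,m\}$ with $s_m\to\infty$ slowly, so that $\mu_m(\{1,\dots,s_m-1\})\to0$. This is possible by (M1): for each $s$ we have $\mu_m(\{1,\dots,s\})\to0$, and a diagonal choice of $s_m$ works. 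Then for $z$ with $T^nz\to0$ we get $\max_{t\in C_m}\|T^tz\|_\alpha\le\sup_{t\ge s_m}\|T^tz\|_\alpha\to0$ for every $\alpha$. So \eqref{subspace-distri} contains the dense set $\{z:T^nz\to0\}$. Next, the given $x$ is $\beta$-distributionally $(\mu_m)$-unbounded. Take $B_m:=C_m$; then $\mu_m(B_m)\to1$ and $\min_{t\in B_m}\|T^tx\|_\beta\ge\inf_{t\ge s_m}\|T^tx\|_\beta\to\infty$, which is exactly the characterization in Remark~\ref{form distri-irre}. Theorem~\ref{dense-disti} then yields a dense distributionally $(\mu_m)$-irregular manifold.

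\textbf{Absolute part.} Hypothesis \eqref{dense-semin-condi} of Theorem~\ref{dense-lineable} also holds. For $z$ with $T^nz\to0$ and any $\alpha$, split the integral over $\{1,\dots,s\}$ and $\{t>s\}$. This gives
\[
\int\|T^tz\|_\alpha\,d\mu_m\le \max_{t\le s}\|T^tz\|_\alpha\,\mu_m(\{1,\dots,s\})+\sup_{t>s}\|T^tz\|_\alpha ,
\]
which tends to $0$ by first choosing $s$ large and then using (M1). Theorem~\ref{dense-lineable} therefore gives a dichotomy, and we must exclude alternative~(1). By the same splitting with reversed inequalities,
\[
\int\|T^tx\|_\beta\,d\mu_m\ge \inf_{t>s}\|T^tx\|_\beta\,\bigl(1-\mu_m(\{1,\dots,s\})\bigr)\to\infty ,
\]
so alternative~(1) fails. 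Alternatively, one can invoke Proposition~\ref{beta-desi} together with the distributional unboundedness of $x$. Hence alternative~(2) holds: there is a dense absolutely $(\mu_m)$-irregular manifold.

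The only delicate point is the construction of the compact sets $C_m$. They must simultaneously carry asymptotically full $\mu_m$-mass, which is what Theorem~\ref{dense-disti} requires, and avoid an initial segment whose length tends to infinity, so that the convergence of $T^nz\to0$ and of $\|T^nx\|_\beta\to\infty$ is uniform on them. This is resolved by the diagonal choice of $s_m$ from (M1), i.e.\ the defining property of $\Upsilon$. Everything else is a direct application of the earlier criteria.
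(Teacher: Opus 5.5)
Your proposal is correct and follows essentially the paper's route. The paper observes that $x$ makes $T$ distributionally $(\mu_m)$-unbounded and not absolutely $(\mu_m)$-bounded for every $(\mu_m)\in\Upsilon$, and then invokes Case~1 of Theorem~\ref{TOS}, which is exactly the combination of Theorems~\ref{dense-lineable} and~\ref{dense-disti} that you apply directly. Your explicit choice $C_m=\{s_m,\dots,m\}$, obtained by a diagonal use of (M1), usefully supplies the compact sets required in \eqref{subspace-distri}, a hypothesis the paper leaves implicit. One small wording fix: the lower bound that excludes alternative~(1) should be stated as $\liminf_m\int\|T^tx\|_\beta\,d\mu_m\ge\inf_{t>s}\|T^tx\|_\beta$ for every $s$, or taken along $s=s_m-1$, rather than as a limit with $s$ fixed.
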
	

\begin{proof}
	For each \((\mu_{m})_{m}\in \Upsilon\). It is clear that $T$ is $(\mu_{m})$-distributionally irregular and it is not absolutely $(\mu_{m})$-bounded. By Theorem~\ref{TOS}, the required conclusion follows.
\end{proof}	

\begin{theorem}
	Let \(X\) be a separable space and let \(T \in \mathcal{L}(X)\). Assume that the set
	\[
	\{x \in X : T^{n}x \to 0\}
	\]
	is dense in \(X\). Suppose that one of the following conditions holds:
	\begin{enumerate}
		\item[(a)] \(X\) is a Fréchet space and \(T\) admits an eigenvalue \(\lambda\) with \(|\lambda|>1\);
		\item[(b)] \(X\) is a Banach space and \(\sum_{n=1}^\infty \frac{1}{\|T^{n}\|} < \infty\), or \(X\) is a Hilbert space and \(\sum_{n=1}^\infty \frac{1}{\|T^{n}\|^{2}} < \infty\).
	\end{enumerate}
	Then, for every \((\mu_m)_m \in \Upsilon\), the operator \(T\) admits both a dense absolutely \((\mu_m)\)-irregular manifold and a dense \((\mu_m)\)-distributionally irregular manifold.
\end{theorem}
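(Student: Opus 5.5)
The plan is to reduce everything to Lemma~\ref{distrib-irr-manifolds}. That lemma already gives, for every $(\mu_m)_m\in\Upsilon$, both a dense absolutely $(\mu_m)$-irregular manifold and a dense $(\mu_m)$-distributionally irregular manifold. Its only extra hypothesis, beyond density of $\{x: T^nx\to0\}$, is the existence of a vector $x$ and a seminorm index $\beta$ with $\|T^nx\|_\beta\to\infty$. So in each case (a), (b) it suffices to produce such a vector. Once it exists, its orbit escapes to infinity along all times, so for any $(\mu_m)\in\Upsilon$ and any finite set $K$, the mass $\mu_m(\{t:\|T^tx\|_\beta>s\})\ge 1-\mu_m(K_s)\to 1$. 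Hence $x$ is $\beta$-distributionally $(\mu_m)$-unbounded, and also not absolutely bounded. This is the content the lemma uses.

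Case (a). Let $Tv=\lambda v$ with $v\ne0$ and $|\lambda|>1$. Since the seminorms separate points, I will choose $\beta$ with $\|v\|_\beta>0$. Then $\|T^nv\|_\beta=|\lambda|^n\|v\|_\beta\to\infty$, and Lemma~\ref{distrib-irr-manifolds} applies directly.

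Case (b). Here $X$ is Banach, with a single norm. I will invoke the known result of Müller and Vršovský on orbits tending to infinity. If $\sum_n 1/\|T^n\|<\infty$ on a Banach space, or $\sum_n 1/\|T^n\|^2<\infty$ on a Hilbert space, then there is $x\in X$ with $\|T^nx\|\to\infty$; in fact the set of such $x$ is dense. This is an external theorem, not proved earlier in the paper, so I would cite it. With this vector, Lemma~\ref{distrib-irr-manifolds} again yields both manifolds for every $(\mu_m)\in\Upsilon$.

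The main obstacle is only the appeal to the orbit-to-infinity theorem in (b). If a self-contained argument were wanted, I would adapt a Müller-type construction: build $x=\sum_k c_k x_k$ with $x_k$ nearly norming for $T^{n_k}$ and rapidly decaying coefficients, with summability controlling the tails. I expect, however, that the paper simply cites it. One remaining check is that Lemma~\ref{distrib-irr-manifolds} is stated for Fréchet spaces and therefore covers the Banach and Hilbert cases as well, which it does.
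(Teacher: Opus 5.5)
Your proposal is correct and is essentially the paper's proof. Both reduce to Lemma~\ref{distrib-irr-manifolds}. In case (a) its hypothesis holds via $\|T^n v\|_\beta=|\lambda|^n\|v\|_\beta\to\infty$ for an eigenvector $v$ and a seminorm with $\|v\|_\beta>0$; in case (b) it holds by citing the M\"uller--Vr\v{s}ovsk\'y result on orbits tending to infinity \cite{muller2009orbits}.
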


\begin{proof}
	It suffices to observe that, if condition (a) holds, then the hypotheses of Lemma~\ref{distrib-irr-manifolds} are satisfied. If condition (b) holds, then, by \cite{muller2009orbits}, the hypotheses of Lemma~\ref{distrib-irr-manifolds} are also satisfied.
\end{proof}

A relevant class of operators satisfying the density hypothesis together with condition (a) of the previous theorem is given by those fulfilling the Godefroy--Shapiro Criterion. We say that an operator \(T \in \mathcal{L}(X)\), acting on a separable space \(X\), satisfies the Godefroy--Shapiro Criterion if both subspaces
\[
\mathrm{span}\Big(\bigcup_{|\lambda|>1} \ker(T-\lambda)\Big)
\quad \text{and} \quad
\mathrm{span}\Big(\bigcup_{|\lambda|<1} \ker(T-\lambda)\Big)
\]
are dense in \(X\). It is shown in \cite{BaMa, GrPe} that every such operator is hypercyclic.

\begin{corollary}
	If \(T\) satisfies the Godefroy--Shapiro Criterion, then for every \((\mu_m)_m \in \Upsilon\), the operator \(T\) admits both a dense absolutely \((\mu_m)\)-irregular manifold and a dense \((\mu_m)\)-distributionally irregular manifold.
\end{corollary}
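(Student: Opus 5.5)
The corollary follows by verifying the hypotheses of the preceding theorem under alternative (a). After that, the conclusion for every $(\mu_m)_m\in\Upsilon$ is exactly what that theorem gives, via Lemma~\ref{distrib-irr-manifolds}. Let $T$ satisfy the Godefroy--Shapiro Criterion on the separable space $X$. Two facts must be checked: that $\{x\in X: T^n x\to 0\}$ is dense in $X$, and that $T$ has an eigenvalue $\lambda$ with $|\lambda|>1$.

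For density, take $x=\sum_{j=1}^{r} x_j$ with $Tx_j=\lambda_j x_j$ and $|\lambda_j|<1$. Then
\[
T^n x=\sum_{j=1}^{r}\lambda_j^{\,n}x_j .
\]
Each term tends to $0$ in $X$, since $\lambda_j^{\,n}\to 0$ and scalar multiplication is continuous. Hence
\[
\operatorname{span}\Big(\bigcup_{|\lambda|<1}\ker(T-\lambda)\Big)\subset\{x\in X: T^n x\to 0\}.
\]
The span on the left is dense by the criterion, so the set on the right is dense as well.

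For the eigenvalue, the second subspace $\operatorname{span}\bigl(\bigcup_{|\lambda|>1}\ker(T-\lambda)\bigr)$ is dense in $X$. As long as $X\neq\{0\}$, this union cannot reduce to $\{0\}$, so some $\ker(T-\lambda)$ with $|\lambda|>1$ is nontrivial. This is the only slightly delicate point: we must exclude the trivial space. That is harmless, because the intended setting is infinite-dimensional (otherwise the criterion would be vacuous and $T$ could not be hypercyclic).

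If $X$ is Fréchet, condition (a) is now satisfied. If instead one is in the Banach case, the same eigenvector $x$ with $Tx=\lambda x$, $|\lambda|>1$ gives $\|T^n x\|_\beta=|\lambda|^n\|x\|_\beta\to\infty$ for a seminorm with $\|x\|_\beta\neq 0$. So Lemma~\ref{distrib-irr-manifolds} applies directly in either setting, and yields both dense manifolds for every $(\mu_m)_m\in\Upsilon$.
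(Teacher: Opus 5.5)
Your proposal is correct and follows the paper's route. The paper gives no separate proof; the sentence before the corollary says that Godefroy--Shapiro operators satisfy the density hypothesis and condition (a) of the preceding theorem, which then yields the conclusion through Lemma~\ref{distrib-irr-manifolds}. Your separate Banach-case paragraph is harmless but unnecessary, since a Banach space is already a Fréchet space.
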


\begin{theorem}
	Let \(X\) be a separable Fr\'echet space and \(T \in \mathcal{L}(X)\). Assume that:
	\begin{enumerate}
		\item[(a)] The set \(\{x \in X : T^{n}x \to 0\}\) is dense in \(X\).
		\item[(b)] There exists a subset \(Y \subset X\), a map \(S \colon Y \to Y\) satisfying \(TSy = y\) for all \(y \in Y\), and a vector \(z \in Y \setminus \{0\}\) such that the series \(\sum_{n=1}^\infty T^n z\) and \(\sum_{n=1}^\infty S^n z\) converge unconditionally in \(X\).
	\end{enumerate}
	Then, for every \((\mu_m)_m \in \Upsilon\), \(T\) admits both a dense absolutely \((\mu_m)\)-irregular manifold and a dense \((\mu_m)\)-distributionally irregular manifold.
\end{theorem}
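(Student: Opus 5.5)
The plan is to reduce everything to Lemma~\ref{distrib-irr-manifolds}. Its density hypothesis is exactly (a), so the whole task is to produce a vector $x\in X$ and a seminorm index $\beta\in\mathbb{N}$ with $\|T^n x\|_\beta\to\infty$. The natural candidate is built from $z$ and the right inverse $S$, in the spirit of the Frequent Hypercyclicity Criterion and the Bernardes et al.\ construction of distributionally irregular vectors.

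First step: since $\sum_n T^n z$ converges, we have $T^n z\to 0$. Since $\sum_n S^n z$ converges unconditionally, $S^n z\to 0$ as well. Because $TSy=y$ on $Y$, we also get $T^k S^n z=S^{n-k}z$ for $k\le n$, and in particular $T^nS^nz=z$. Fix $\beta$ with $\|z\|_\beta>0$, which is possible since $z\neq0$.

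Second step, which I expect to be the main obstacle: we need a single vector whose whole orbit blows up, not just a vector with large values along a subsequence. I would take
\[
x=\sum_{k\ge1} c_k S^{n_k}z,
\]
with $n_k$ increasing very fast and scalars $c_k\to\infty$ chosen slowly enough that the series converges. Unconditional convergence of $\sum S^n z$ ensures that $\sum_k S^{n_k}z$ converges. With $S^n z\to0$ we can even pick $n_k$ so that $\rho(c_kS^{n_k}z,0)<2^{-k}$, which gives convergence in $X$.

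For $n_{j-1}<n\le n_j$ we then expand
\[
T^n x=\sum_{k<j}c_kT^{n-n_k}z+\sum_{k\ge j}c_kS^{n_k-n}z,
\]
using that $T^nS^{n_k}z=T^{n-n_k}z$ when $n\ge n_k$ (this requires $S^m z\in Y$, which holds since $S\colon Y\to Y$). The difficulty is that this is an intermediate-time quantity, and it is not clear that its $\beta$-seminorm tends to infinity for every $n$. Only at the times $n=n_j$ does the term $c_jz$ dominate.

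If a direct lower bound fails, the fallback is to avoid needing $\lim_n\|T^nx\|_\beta=\infty$ at all and to rerun the proof of Theorem~\ref{TOS}, Case~1, directly. That proof only requires $J_\beta=\Upsilon$, that is, a $(\mu_m)$-distributionally $\beta$-unbounded vector for each $(\mu_m)\in\Upsilon$. By Proposition~\ref{dist-unbounded} it suffices, for each $(\mu_m)$, to find vectors $y_m\to0$ and indices $i_m\to\infty$ such that $\mu_{i_m}\bigl(\{t:\|T^ty_m\|_\beta>\varepsilon\}\bigr)\ge 1-1/m$.

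To get these vectors, use the unconditional convergence of $\sum_n S^nz$ to choose $N$ large and $\varepsilon$-small blocks $y=\sum_{n\in F}S^nz$, with $F\subset[N,\infty)$ finite and containing the bulk of the support of $\mu_{i}$ for a suitable large $i$. This is possible by (M1), since the mass of $\mu_i$ escapes every finite set. For $t\in F$, write
\[
T^ty=z+\sum_{n\in F,\,n>t}S^{n-t}z+\sum_{n\in F,\,n<t}T^{t-n}z.
\]
The two tails are small in $\|\cdot\|_\beta$ uniformly in $t$ once the gaps in $F$ are large, by unconditional convergence of both series. Hence $\|T^ty\|_\beta>\|z\|_\beta/2$ on $F$. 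This gives the measure estimate.

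Concluding: after verifying $J_\beta=\Upsilon$, Proposition~\ref{beta-desi} shows that $T$ is not $\beta$-absolutely $(\mu_m)$-bounded. Theorem~\ref{dense-lineable} together with Theorem~\ref{dense-disti} then yields both dense manifolds. Note that hypothesis (a) supplies the dense sets required in \eqref{dense-semin-condi} and \eqref{subspace-distri}, with $C_m=\{1,\dots,m\}$.

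The main point to check carefully is the tail estimate. The gaps must be uniform: we need to control $\sum_{n\in F, n\neq t}$ by tails $\sum_{k\ge g}$ of the unconditionally convergent series. This works when the minimal gap $g$ of $F$ is large, but sparse $F$ may carry little $\mu_i$-mass. So I would instead take $F$ to be a long interval and replace $S^nz$ by the block-sums trick of the Frequent Hypercyclicity Criterion. Arranging this is where I expect the real work to lie.
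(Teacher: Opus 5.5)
Your fallback reduction is the right one, and it is the paper's. It suffices to find $\beta$, $\varepsilon>0$, vectors $y_k\to0$ and indices $m_k\to\infty$ with $\mu_{m_k}(\{t:\|T^ty_k\|_\beta>\varepsilon\})\to1$. Proposition~\ref{dist-unbounded}, Proposition~\ref{beta-desi} and Case~1 of Theorem~\ref{TOS} then finish. The route through Lemma~\ref{distrib-irr-manifolds} is unnecessary, and you do not establish it. The gap is that you never construct the $y_k$, which is the only nontrivial step. Bounding $\|T^ty\|_\beta$ only for $t\in F$ forces the conflict you noticed between large gaps in $F$ and large $\mu_i$-mass on $F$. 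The remedy you suggest, a long interval $F$, fails: deep inside an interval, $T^ty$ is close to $z+\sum_{n\ge1}T^nz+\sum_{n\ge1}S^nz$, and this vector may vanish. For example, take $Te_1=0$ and $Te_n=2e_{n-1}$ on $\ell^2$, $Se_n=\tfrac12e_{n+1}$ on $Y=c_{00}$, and $z=e_1-\tfrac12e_2$. Then $\sum_{n=N}^{M}S^nz=2^{-N}e_{N+1}-2^{-M-1}e_{M+2}$, whose $t$-th iterate has norm $2^{t-M-1}$ for $N<t\le M+1$, so it is tiny on most of $F$.

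The missing idea is to get a lower bound at \emph{every} large time by shadowing a periodic orbit.
\begin{itemize}
\item By unconditional convergence, $w_k:=z+\sum_{n\ge1}T^{nk}z+\sum_{n\ge1}S^{nk}z\to z\neq0$ as $k\to\infty$, so fix $k$ with $w_k\neq0$.
\item Since $T^kS^{nk}z=S^{(n-1)k}z$, one has $T^kw_k=w_k$. Hence $T^\ell w_k\neq0$ for $0\le\ell<k$, and you can fix $\beta$ with $2\varepsilon:=\min_{0\le\ell<k}\|T^\ell w_k\|_\beta>0$.
\item Put $x_N:=\sum_{n\ge N}S^{nk}z$, so $x_N\to0$. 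For $j\ge N$,
\[
T^{jk}x_N=w_k-\sum_{m>j-N}T^{mk}z\longrightarrow w_k \qquad (j\to\infty).
\]
\item Thus $T^{jk+\ell}x_N\to T^\ell w_k$ for each $\ell<k$, so $\{t:\|T^tx_N\|_\beta>\varepsilon\}$ is cofinite.
\item By (M1), the $\mu_m$-mass of this set tends to $1$, simultaneously for every $(\mu_m)\in\Upsilon$. Choosing $m_N$ increasing gives exactly the input for Proposition~\ref{dist-unbounded}.
\end{itemize}
This is the construction the paper takes from Bernardes et al. Your gapped sets are the right objects. What makes them work is that the orbit of $\sum_{n\ge N}S^{nk}z$ stays near the nonzero periodic orbit of $w_k$ also at the times outside $\{nk\}$.

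A smaller slip: $C_m=\{1,\dots,m\}$ does not satisfy \eqref{subspace-distri}. Indeed, $\max_{1\le t\le m}\|T^tx\|_\alpha\ge\|Tx\|_\alpha$, so that set would lie in $\ker T$. Use instead $C_m=\{N_m,\dots,m\}$ with $N_m\to\infty$ slowly enough that $\mu_m(\{1,\dots,N_m-1\})\to0$, which (M1) permits.
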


\begin{proof}
	We follow the approach used in the proof of \cite[Theorem~19]{BeBoMuPe13}. The authors show that for each sufficiently large \(k_{0} \in \mathbb{N}\), there exists a nonzero vector \(w_{k_{0}}\) such that \(T^{k_{0}}w_{k_{0}} = w_{k_{0}}\), and furthermore, there exists a sequence \((x_{k})\) converging to \(0\) such that for each \(0 \leq \ell < k_{0}\),
	\[
	T^{\ell}w_{k_{0}} = \lim_{j \to \infty} T^{\ell + k_{0}j}x_{k}.
	\]
	Choose \(\beta \in \mathbb{N}\) such that
	\[
	\varepsilon := \frac{1}{2} \min \left\{ \|T^{\ell}w_{k_{0}}\|_{\beta} : 0 \leq \ell < k_{0} \right\} > 0.
	\]
	Fix any \((\mu_{m})_{m} \in \Upsilon\). Since the orbit of each \(x_{k}\) accumulates at \(\{T^{\ell}w_{k_{0}} : 0 \leq \ell < k_{0}\}\), there exists a strictly increasing sequence of positive integers \((m_{k})_{k}\) such that
	\[
	\lim_{k \to \infty} \mu_{m_{k}} \left( \{t \in \mathbb{N} : \|T^{t}x_{k}\|_{\beta} > \varepsilon\} \right) = 1.
	\]
	By Proposition \ref{dist-unbounded}, there exists a \((\mu_m)\)-distributionally \(\beta\)-unbounded vector for \(T\). Hence, by Theorem \ref{TOS}, the required conclusion follows.
\end{proof}

Let \(X\) be a separable Fréchet space and let \(T \in \mathcal{L}(X)\). We say that \(T\) satisfies the \emph{Frequently Hypercyclicity Criterion} if there exists a dense set \(\mathcal{D} \subset X\) and a map \(S \colon \mathcal{D} \to \mathcal{D}\) such that
\begin{enumerate}
	\item \(\sum_{n=0}^\infty T^{n}x\) and \(\sum_{n=0}^\infty S^{n}x\) are unconditionally convergent for each \(x \in \mathcal{D}\);
	\item \(TS = I\) on \(\mathcal{D}\).
\end{enumerate}

\begin{remark}
	We note that if an operator satisfies the Frequently Hypercyclicity Criterion, then it is frequently hypercyclic, mixing, and Devaney chaotic \cite{bonilla2007frequently}.
\end{remark}

\begin{corollary}\label{fhc-irre}
	Let \(T \in \mathcal{L}(X)\) satisfy the Frequently Hypercyclicity Criterion. Then, for every \((\mu_m)_m \in \Upsilon\), \(T\) admits both a dense absolutely \((\mu_m)\)-irregular manifold and a dense \((\mu_m)\)-distributionally irregular manifold.
\end{corollary}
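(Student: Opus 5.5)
The plan is to deduce Corollary~\ref{fhc-irre} from the preceding theorem, the one built on the approach of \cite[Theorem~19]{BeBoMuPe13}, by checking its hypotheses (a) and (b) for an operator satisfying the Frequently Hypercyclicity Criterion. We may assume $\mathcal{D}\neq\{0\}$: otherwise density of $\mathcal{D}$ forces $X=\{0\}$, which is excluded in this setting.

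\emph{Hypothesis (a).} For every $x\in\mathcal{D}$, the series $\sum_{n\ge0}T^{n}x$ converges unconditionally. In particular its terms tend to $0$, so $T^{n}x\to0$. Hence $\{x\in X: T^{n}x\to0\}$ contains the dense set $\mathcal{D}$, and is therefore dense.

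\emph{Hypothesis (b).} Take $Y:=\mathcal{D}$ and let $S\colon\mathcal{D}\to\mathcal{D}$ be the map given by the criterion. Then $TSy=y$ for every $y\in Y$. Choose any $z\in\mathcal{D}\setminus\{0\}$. Unconditional convergence of $\sum_{n\ge0}T^{n}z$ and $\sum_{n\ge0}S^{n}z$ implies unconditional convergence of the tails starting at $n=1$, since removing a single term preserves unconditional convergence. So $\sum_{n\ge1}T^{n}z$ and $\sum_{n\ge1}S^{n}z$ converge unconditionally in $X$.

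With (a) and (b) in place, the preceding theorem gives, for every $(\mu_m)_m\in\Upsilon$, both a dense absolutely $(\mu_m)$-irregular manifold and a dense $(\mu_m)$-distributionally irregular manifold. That is exactly the conclusion of the corollary.

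The only point that needs care is the choice of a nonzero $z$ in $Y$. This is exactly where the trivial case $\mathcal{D}=\{0\}$ has to be ruled out, which we did above. Everything else is a direct transfer of hypotheses, so no step is a real obstacle.
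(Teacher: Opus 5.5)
Your proposal is correct and follows the paper's intended route. The corollary is an immediate consequence of the preceding theorem (the one modeled on \cite[Theorem~19]{BeBoMuPe13}). Hypothesis (a) holds because $T^{n}x\to 0$ on the dense set $\mathcal{D}$, and hypothesis (b) holds with $Y=\mathcal{D}$ and any nonzero $z\in\mathcal{D}$. The shift of the series index from $n=0$ to $n=1$ is harmless, as you note.
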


\begin{example}
	Consider \(\mathrm{H}(\mathbb{C})\), the space of entire functions endowed with the topology of uniform convergence on compact sets. It is well known that \(\mathrm{H}(\mathbb{C})\) is a Fréchet space whose canonical defining family of seminorms \((\|\cdot\|_{\ell})_{\ell\in\mathbb{N}}\) is given by
	\[
	\|f\|_{\ell} := \sup_{|z| \leq \ell} |f(z)|, \qquad \ell \in \mathbb{N}.
	\]
	Let \(T \colon \mathrm{H}(\mathbb{C}) \to \mathrm{H}(\mathbb{C})\) be the differentiation operator, that is, \(T(f) := f'\). In \cite[Example 2.4]{Bayart06}, it is shown that \(T\) satisfies the Frequently Hypercyclicity Criterion. By Corollary \ref{fhc-irre}, for every \((\mu_m)_m \in \Upsilon\), \(T\) admits both a dense absolutely \((\mu_m)\)-irregular manifold and a dense \((\mu_m)\)-distributionally irregular manifold.
\end{example}

\begin{proposition}
	Let \(\mathrm{H}(\mathbb{D})\) denote the space of holomorphic functions on \(\mathbb{D}\), endowed with the topology of uniform convergence on compact sets. Let \(\psi\) be an analytic self-map of \(\mathbb{D}\). Then \(\psi\) has no fixed point in \(\mathbb{D}\) if and only if the composition operator
	\[
	C_{\psi}:\mathrm{H}(\mathbb{D})\to \mathrm{H}(\mathbb{D})
	\]
	admits a dense absolutely \((\mu_m)\)-irregular manifold and a dense distributionally \((\mu_m)\)-irregular manifold for every \((\mu_m)_m\in\Upsilon\).
\end{proposition}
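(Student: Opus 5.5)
The forward implication will go through Lemma~\ref{distrib-irr-manifolds}, and the converse by contraposition through the trichotomy. Suppose first that $\psi$ has no fixed point in $\mathbb{D}$. By the Denjoy--Wolff theorem, if $\psi$ is not an automorphism without fixed points, the iterates $\psi_n:=\psi\circ\cdots\circ\psi$ converge locally uniformly to a point $w\in\partial\mathbb{D}$. If $\psi$ is a parabolic or hyperbolic automorphism, the iterates also escape every compact set. I then need to check the two hypotheses of Lemma~\ref{distrib-irr-manifolds} for $T:=C_\psi$ on the Fréchet space $\mathrm{H}(\mathbb{D})$, with seminorms $\|f\|_\ell:=\sup_{|z|\le 1-1/(\ell+1)}|f(z)|$.

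For the density hypothesis I take the functions vanishing at the Denjoy--Wolff point, for instance $f(z)=(z-w)g(z)$ with $g$ a polynomial. Then $C_\psi^n f=f\circ\psi_n\to f(w)=0$ uniformly on compacta. These functions are dense, since $\{(z-w)p(z): p \text{ polynomial}\}$ is dense in $\mathrm{H}(\mathbb{D})$ (because $w\notin\mathbb{D}$, $z-w$ is invertible in $\mathrm{H}(\mathbb{D})$). In the automorphism case I use the boundary attracting point in the same way, and here I expect the argument to need some care.

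For the second hypothesis, that some orbit blows up, I take $f(z)=1/(z-w)^{k}$ or $f(z)=\exp\!\big(\tfrac{w+z}{w-z}\big)$. Since $\psi_n(0)\to w$, we get $|f(\psi_n(0))|\to\infty$, so $\|C_\psi^n f\|_\ell\ge|f(\psi_n(0))|\to\infty$ for every $\ell$. This is the main obstacle: the full limit (not merely the limsup) must go to $\infty$. Evaluating at the single point $0$ gives exactly this, because the whole sequence $\psi_n(0)$ tends to $w$. Lemma~\ref{distrib-irr-manifolds} then yields both dense manifolds for every $(\mu_m)\in\Upsilon$.

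Conversely, suppose $\psi(a)=a$ for some $a\in\mathbb{D}$. Then every $f$ satisfies $(C_\psi^n f)(a)=f(a)$. Take any $(\mu_m)\in\Upsilon$ and any nonzero $x$ in a dense absolutely irregular manifold $Z$. Since $Z$ is dense, it contains a vector $x$ with $x(a)\neq 0$. The orbit satisfies $\rho(C_\psi^t x,0)\ge c>0$ for all $t$, where $c$ depends only on $|x(a)|$ and the seminorms of index $\ell$ with $|a|<1-1/(\ell+1)$. Hence $\int\rho(C_\psi^tx,0)\,d\mu_m\ge c$ for all $m$, contradicting $\liminf_m\int\rho(C_\psi^t x,0)\,d\mu_m=0$. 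Alternatively, the density hypothesis forces $f(a)=0$ for a dense set of $f$, which is impossible. So no dense absolutely $(\mu_m)$-irregular manifold exists for any scheme, which proves the converse.
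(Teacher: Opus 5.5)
Your proof is correct. The forward direction uses the same ingredients as the paper, and the converse takes a genuinely different and more elementary route.

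\textbf{Forward direction.} Like the paper, you use the Denjoy--Wolff point $w$, functions vanishing at $w$ for the density hypothesis, and a pole at $w$ for blow-up. You package these more economically. Since $f_w(z):=(z-w)^{-1}$ satisfies $\|C_\psi^n f_w\|_\ell\ge|\psi_n(0)-w|^{-1}\to\infty$, you can invoke Lemma~\ref{distrib-irr-manifolds} directly. The paper instead takes $g_k=1/(k(p-z))$, produces a distributionally unbounded vector via Proposition~\ref{dist-unbounded}, and then passes through Theorem~\ref{dense-disti} and Theorem~\ref{TOS}.

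\textbf{Converse.} The paper specializes to the Ces\`aro scheme and appeals to \cite[Theorem 34]{BeBoMuPe13}: distributional chaos of $C_\psi$ rules out fixed points. Your argument is self-contained. At a fixed point $a$, the evaluation $\delta_a$ is $C_\psi$-invariant, so $\rho(C_\psi^t x,0)$ is bounded below uniformly in $t$ whenever $x(a)\neq0$. Hence every vector with $\liminf_m\int\rho(C_\psi^t x,0)\,d\mu_m=0$ lies in the proper closed hyperplane $\ker\delta_a$.

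This gives a stronger conclusion without any external citation. For each single scheme $(\mu_m)\in\Upsilon$ there is no dense absolutely $(\mu_m)$-irregular manifold. By the same lower bound there is also no dense distributionally irregular one, since $\{t:\rho(C_\psi^t x,0)<c\}$ is empty. Your opening sentence says the converse goes by ``contraposition through the trichotomy,'' but the trichotomy plays no role in what you actually do.

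\textbf{Two small corrections.}
\begin{itemize}
\item The case split for automorphisms is unnecessary. The Denjoy--Wolff theorem applies to every fixed-point-free analytic self-map of $\mathbb{D}$, including parabolic and hyperbolic automorphisms. So $\psi_n\to w\in\partial\mathbb{D}$ locally uniformly in all cases, exactly as the paper uses it, and no extra care is needed.
\item Drop the alternative $f(z)=\exp\bigl(\frac{w+z}{w-z}\bigr)$. Its modulus is $\exp\bigl(\frac{1-|z|^2}{|w-z|^2}\bigr)$, which is constant on horocycles at $w$. For a parabolic automorphism fixing $w$, which is a real translation in the half-plane model, $\sup_{|z|\le r}|C_\psi^n f(z)|$ stays bounded in $n$. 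The choice $(z-w)^{-k}$ is the one that works.
\end{itemize}
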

\begin{proof}
	We follow the argument of \cite[Theorem 34]{BeBoMuPe13}. Assume that \(\psi\) has no fixed point in \(\mathbb{D}\). By the Denjoy--Wolff Theorem, there exists \(p\in \partial\mathbb{D}\) such that \((\psi^{n})\) converges uniformly on compact sets to \(p\).
	
	Let \(X_{0}\) denote the set of all functions that are continuous on \(\overline{\mathbb{D}}\), analytic on \(\mathbb{D}\), and vanish at \(p\). Then \(X_{0}\) is dense in \(\mathrm{H}(\mathbb{D})\) and
	\[
	\lim_{n\to\infty} C_{\psi}^{n}f=0
	\]
	for every \(f\in X_{0}\).
	
	Fix \((\mu_{m})_{m}\in \Upsilon\). For each \(k\in\mathbb{N}\), define
	\[
	g_{k}(z)=\frac{1}{k(p-z)}.
	\]
	Then \(g_{k}\in \mathrm{H}(\mathbb{D})\) and \(g_{k}\to 0\) in \(\mathrm{H}(\mathbb{D})\).
	
	There exist \(\beta\in\mathbb{N}\) and \(n_{k}\in\mathbb{N}\) such that
	\[
	\|C_{\psi}^{j}g_{k}\|_{\beta}>\frac12,
	\qquad j>n_{k}.
	\]
	We can choose a strictly increasing sequence \((m_{k})_{k}\) of positive integers such that
	\[
	\lim_{k\to\infty}
	\mu_{m_{k}}
	\Bigl(
	\bigl\{
	j\in\mathbb{N}:
	\|C_{\psi}^{j}g_{k}\|_{\beta}>\tfrac12
	\bigr\}
	\Bigr)
	=1.
	\]
	
	By Proposition~\ref{dist-unbounded}, \(C_{\psi}\) admits a \(\beta\)-distributionally \((\mu_m)\)-unbounded vector. Therefore, Theorem~\ref{dense-disti} implies that \(C_{\psi}\) admits a dense distributionally \((\mu_m)\)-irregular manifold.
	
	Since \((\mu_m)_m\in\Upsilon\) was arbitrary, Theorem~\ref{TOS} yields that \(C_{\psi}\) admits both a dense absolutely \((\mu_m)\)-irregular manifold and a dense distributionally \((\mu_m)\)-irregular manifold for every \((\mu_m)_m\in\Upsilon\).
	
	Conversely, consider the observation scheme
	\[
	\left(
	\mu_{m}:=\frac1m\sum_{k=1}^{m}\delta_{k}
	\right)_{m}\in\Upsilon.
	\]
	By assumption, \(C_{\psi}\) admits a dense distributionally \((\mu_m)\)-irregular manifold and hence is distributionally chaotic. It follows from \cite[Theorem 34]{BeBoMuPe13} that \(\psi\) has no fixed point in \(\mathbb{D}\).
\end{proof}

\begin{example}\label{ejemplo}
	Let \(T\) be the unilateral weighted backward shift on \(\ell^{p}(\mathbb{N})\) with
	\(1 \leq p < \infty\), defined by \(T e_{1} := 0\) and \(T e_{k} := w_{k}e_{k-1}\) for \(k > 1\). If \(w_{k} := (\frac{k}{k-1})^{\alpha}\) with \(0 < \alpha < \frac{1}{p}\), then \(T\) is mixing and absolutely Cesàro bounded \cite[Theorem 2.1]{BeBoMuPe20}. By Theorem \ref{TOS}, there exists a residual subset \(\Upsilon_0 \subset \Upsilon\) such that
	for every \((\mu_m)_m \in \Upsilon_0\), \(T\) admits a dense absolutely \((\mu_m)\)-irregular manifold, while \(T\) admits no \((\mu_m)\)-distributionally irregular vector.
\end{example}

\begin{theorem}
	Let \(X\) be a separable real or complex Banach space and let 
	\(T\in \mathcal{L}(X)\) be an operator which is not power bounded. 
	Assume that
	\begin{itemize}
		\item the set \(\{x\in X : T^{n}x \to 0\}\) is dense in \(X\), and
		\item there exists a decreasing sequence \((E_n)_{n\in\mathbb{N}}\) of infinite-dimensional closed subspaces of \(X\) such that
		\[
		\sup_{n\in\mathbb N}\|T^{n}|_{E_{n}}\|<\infty.
		\]
	\end{itemize}
	Then there exists a residual subset \(\Upsilon_{0}\subset \Upsilon\) such that, for every \((\mu_{m})\in \Upsilon_{0}\), the operator \(T\) admits both
	\begin{itemize}
		\item a dense absolutely \((\mu_{m})\)-irregular manifold, and
		\item a closed infinite-dimensional absolutely \((\mu_{m})\)-irregular manifold.
	\end{itemize}
\end{theorem}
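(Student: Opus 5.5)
Since $T$ is not power bounded and the set $\{x : T^nx\to 0\}$ is dense, Proposition~\ref{set-abs-bounded} and the proof of Theorem~\ref{gip} give a $\beta_0$-type dense $G_\delta$ set $\mathfrak{U}\subset\Upsilon$. In the Banach case there is a single norm, and $\mathfrak{U}$ consists of schemes $(\mu_m)$ for which $T$ is not absolutely $(\mu_m)$-bounded. By Theorem~\ref{gip} (via Theorem~\ref{dense-lineable}), every $(\mu_m)\in\mathfrak{U}$ already yields a dense absolutely $(\mu_m)$-irregular manifold. So the whole task is to produce a residual $\Upsilon_1\subset\Upsilon$ on which the closed manifold also exists, and then to take $\Upsilon_0:=\mathfrak{U}\cap\Upsilon_1$.

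For the closed manifold I will invoke Theorem~\ref{condi-suf-space-irre}. Its density hypothesis holds for \emph{every} $(\mu_m)\in\Upsilon$. Indeed, if $T^nx\to0$, then for each $\varepsilon>0$ we have $\|T^tx\|<\varepsilon$ for $t>N$. Also $\mu_m(\{1,\dots,N\})\to0$ by the defining property of $\Upsilon$. Hence $\int\|T^tx\|\,d\mu_m\le \varepsilon+\max_{t\le N}\|T^tx\|\,\mu_m(\{1,\dots,N\})\to\varepsilon$.

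The remaining hypothesis is
\[
\sup_n\int\|T^t|_{E_n}\|\,d\mu_n(t)<\infty .
\]
Here I would note that, as in the proof of Theorem~\ref{condi-suf-spa-dis-irre}, Lemma~\ref{space-cri} only needs $g(x,n)\le M\|x\|$ for $x\in E_n$. Since $\operatorname{supp}\mu_n\subset\{1,\dots,n\}$ and the $E_t$ decrease, for $x\in E_n$ and $t\le n$ we get $x\in E_t$. This gives $\|T^tx\|\le \sup_s\|T^s|_{E_s}\|\,\|x\|=:M\|x\|$, and therefore $\int\|T^tx\|\,d\mu_n(t)\le M\|x\|$.

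Thus the hypothesis of Theorem~\ref{condi-suf-space-irre} is met in the form actually used, for every scheme, after replacing $\|T^t|_{E_n}\|$ by its bound on $E_t\supset E_n$. I would restate this step as a short remark rather than modify the theorem. Condition~(\ref{condi J-g}) is immediate, since $g(\cdot,n)$ is bounded by $\max_{t\le n}\|T^t\|\,\|\cdot\|$.

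Consequently, for every $(\mu_m)\in\mathfrak{U}$, Theorem~\ref{condi-suf-space-irre} provides an infinite-dimensional closed $F$ and $(\theta_n)$ such that every $x\in F\setminus\{0\}$ satisfies $\int\|T^tx\|\,d\mu_{\theta_n}\to0$ and $\limsup_m\int\|T^tx\|\,d\mu_m=\infty$. This is absolute $(\mu_m)$-irregularity, since $\rho=\|\cdot\|$ in the Banach case. So one can simply take $\Upsilon_0:=\mathfrak{U}$, which is residual.

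The main obstacle I anticipate is the mismatch between the stated hypothesis of Theorem~\ref{condi-suf-space-irre} (norms $\|T^t|_{E_n}\|$ integrated against $\mu_n$) and the available assumption $\sup_n\|T^n|_{E_n}\|<\infty$. I plan to bridge it through the monotonicity $E_n\subset E_t$ for $t\le n$ together with the support normalization $\operatorname{supp}\mu_n\subset\{1,\dots,n\}$. If that normalization is deemed unavailable, I would fall back on density in $\Upsilon$ of such truncated schemes and intersect with $\mathfrak{U}$.
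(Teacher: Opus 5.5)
Your proposal is correct and follows the same route as the paper. You take $\Upsilon_0$ to be the residual set $\mathfrak{U}(\beta_0)$ from Theorem~\ref{gip}, which already gives the dense manifold, and apply Theorem~\ref{condi-suf-space-irre} to each scheme in it. You also check that theorem's hypotheses explicitly: density of the $(\mu_m)$-null vectors from $T^nx\to0$ and (M1), and $\int\|T^t|_{E_n}\|\,d\mu_n(t)\le\sup_s\|T^s|_{E_s}\|$ from $\operatorname{supp}\mu_n\subset\{1,\dots,n\}$ and $E_n\subset E_t$ for $t\le n$. The paper's two-line proof leaves exactly these checks implicit. Since the support condition is built into the definition of $\mathcal{M}$, the hypothesis holds verbatim, so neither your ``remark'' nor the truncation fallback is needed.
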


\begin{proof}
	Let \(\Upsilon_{0}\subset \Upsilon\) be the residual subset provided by Theorem~\ref{gip}. Then, for every \((\mu_{m})\in \Upsilon_{0}\), the operator \(T\) admits a dense absolutely \((\mu_{m})\)-irregular manifold.
	
	On the other hand, by Theorem~\ref{condi-suf-space-irre}, for every \((\mu_{m})\in \Upsilon_{0}\), the operator \(T\) also admits a closed infinite-dimensional absolutely \((\mu_{m})\)-irregular manifold. This completes the proof.
\end{proof}

\begin{theorem}\label{gen-(mu_i)}
	Let \(X\) be an infinite-dimensional separable Banach space. Then the set of operators \(T \in \mathcal{L}(X)\) such that for each \((\mu_m)_m \in \Upsilon\), \(T\) admits a dense absolutely \((\mu_m)_m\)-irregular manifold, is SOT-dense in \(\mathcal{L}(X)\).
\end{theorem}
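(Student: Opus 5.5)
We will approximate an arbitrary operator in the strong operator topology by an operator that satisfies the hypotheses of Lemma~\ref{distrib-irr-manifolds}. Concretely, we aim to show that every operator satisfying the Godefroy--Shapiro Criterion, or more simply every operator with a dense set of vectors whose orbits tend to $0$ together with one vector whose orbit tends to $\infty$, admits a dense absolutely $(\mu_m)$-irregular manifold for every $(\mu_m)\in\Upsilon$. The task then reduces to showing that such operators are SOT-dense in $\mathcal{L}(X)$.

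Fix $A\in\mathcal{L}(X)$, finitely many vectors $x_1,\dots,x_n\in X$ and $\varepsilon>0$; a basic SOT-neighbourhood of $A$ is determined by such data. The plan proceeds as follows.

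\begin{enumerate}
\item Choose a finite-dimensional subspace $F\supset\mathrm{span}\{x_1,\dots,x_n,Ax_1,\dots,Ax_n\}$.
\item Choose a bounded projection $P$ of $X$ onto $F$, with complement $W:=\ker P$, which is infinite-dimensional, closed and separable.
\item On $W$, place an operator $S\in\mathcal{L}(W)$ satisfying the hypotheses of Lemma~\ref{distrib-irr-manifolds}. Every separable infinite-dimensional Banach space supports such operators; for instance, one can use the mixing operators of Ansari--Bernal type, and more concretely operators $S=\lambda(I+K)$ built from a nuclear backward shift on a biorthogonal system, which satisfy the Godefroy--Shapiro Criterion. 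We need two properties: a dense set of vectors of $W$ with $S^n w\to0$, and an eigenvector of $S$ with eigenvalue $|\lambda|>1$. Both hold for operators satisfying the Godefroy--Shapiro Criterion.
\item On $F$, replace $A|_F$ composed with $P$ by a finite-dimensional operator $B\colon F\to F$ chosen as close to $PA|_F$ as desired, with all eigenvalues of modulus strictly less than $1$. This makes $B$ uniformly stable: $B^n\to0$. Scale if needed: take $B=PA|_F$ perturbed, or better $B=r\,PA|_F$ with $r$ small. Closeness on the $x_i$ is not required for $B$ itself, since we only need $Tx_i\approx Ax_i$.
\end{enumerate}

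The definition of $T$ has to be adjusted so that $Tx_i=Ax_i$ exactly. To achieve this, set
\[
T:=BP+S(I-P)+R,
\]
where $R$ is a finite-rank correction vanishing on $W$ that forces $Tx_i=Ax_i$. Because $x_i\in F$, we have $Tx_i=Bx_i+Rx_i$, and we require $Bx_i+Rx_i=Ax_i$. The subtlety is that $R$ might destroy stability on $F$. To avoid this, we instead let the approximation on $F$ be block upper triangular. We enlarge $F$ so that $A$ maps $\mathrm{span}\{x_i\}=:F_0$ into $F$. We then define $T$ on $F_0$ as $A$ and on a complement of $F_0$ inside $F$ arbitrarily, and send part of the mass into $W$. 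Stability of $T$ restricted to $F$ fails in general, so a cleaner alternative is to allow $T$ to map $F$ into $F\oplus W$ and to use the Godefroy--Shapiro structure of $S$ globally.

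The key steps, in order, are therefore:
\begin{enumerate}
\item Show that $T$ is not power bounded. This follows from the eigenvector of $S$ with $|\lambda|>1$ when $W$ is $T$-invariant; we will arrange $T(W)\subset W$ by defining $T|_W=S$.
\item Show that $\{x:T^nx\to0\}$ is dense. Vectors $f+w$ with $T^n f\to0$ work, provided the $F$-block of $T$ is stable modulo $W$, since the quotient operator on $X/W\cong F$ is then $B$. Here $T$ takes the triangular form $T(f+w)=Bf+Cf+Sw$ with $C\colon F\to W$ finite rank. We then compute
\[
T^n f=B^nf+\sum_{k}S^{n-1-k}CB^kf.
\]
This term need not tend to $0$ unless $CB^kf$ lies in the $S$-stable part, so we choose $C$ with range in the span of eigenvectors of $S$ for eigenvalues $|\mu|<1$, a set that is dense in $W$.
\item Choose $B$ and $C$ with $Bx_i+Cx_i\approx Ax_i$. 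Write $Ax_i=f_i+w_i$ with $f_i\in F$ and $w_i\in W$, approximate $w_i$ by stable eigenvector combinations to define $C$, and take $B$ with $\|Bx_i-f_i\|$ small and spectral radius less than $1$.
\end{enumerate}

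The main obstacle is the last point. Since $x_i$ may be eigen-directions of $PA$ with large eigenvalues, we avoid the constraint by enlarging $F$. We choose $F$ so that the $x_i$ are linearly independent, extend them to a basis of $F$ of dimension $N\gg n$, and define $B$ as a nilpotent-like shift that sends $x_i$ approximately to $f_i$. Since $\dim F>n$ can be taken large, we can pick a nilpotent $B$ with prescribed values on $n$ independent vectors, for instance by routing through extra basis vectors. We expect this finite-dimensional interpolation by a nilpotent map to work, because nilpotent operators with prescribed images on an $n$-dimensional subspace exist once $\dim F\ge 2n$ is large enough. With step 3 in place, Lemma~\ref{distrib-irr-manifolds} yields the conclusion for every $(\mu_m)\in\Upsilon$.
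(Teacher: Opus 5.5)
Your argument breaks at step 3. On the complement $W$ you need an operator $S$ with a dense set of vectors satisfying $S^nw\to0$ together with an eigenvalue $\mu$, $|\mu|>1$. You assert such operators exist on every separable infinite-dimensional Banach space, citing $S=\lambda(I+K)$ with $K$ a nuclear backward shift as a Godefroy--Shapiro operator. This is false for every operator of the form $S=\lambda I+K$ with $K$ compact.

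\emph{Case $\mu\neq\lambda$.} Then $\mu$ is isolated in $\sigma(S)$. Its Riesz projection $Q_\mu$ is a nonzero finite-rank projection commuting with $S$, and $\sigma(S|_{\operatorname{Ran}Q_\mu})=\{\mu\}$. Hence $S^nw\to0$ forces $Q_\mu w=0$, so $Q_\mu$ vanishes on a dense set, a contradiction. The same projection kills every eigenvector whose eigenvalue is $\neq\mu$, which is why $\lambda(I+K)$ never satisfies the Godefroy--Shapiro Criterion.

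\emph{Case $\mu=\lambda$.} Then $\sigma(S)\cap\overline{\mathbb D}$ is finite, and every vector with $S^nw\to0$ lies in the finite-dimensional range of the corresponding Riesz projection, so the stable set is not dense.

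On the Argyros--Haydon space every operator is a scalar plus a compact, and this passes to complemented finite-codimensional subspaces $W$. So no admissible $S$ exists there at all.

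A second, smaller problem is your justification for the nilpotent $B$ (``prescribed images on an $n$-dimensional subspace once $\dim F\ge 2n$''). It is wrong: if $Ax_1=x_1$, no $B$ with $B^n\to0$ can satisfy $Bx_1=x_1$. Only an approximate interpolation, with $\dim F$ depending on $\varepsilon$, is possible, and you still have to construct it.

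The missing idea is that absolute irregularity needs no orbit tending to infinity. In Theorem~\ref{dense-lineable} one only has to exclude the first alternative, and a nonzero fixed point $q$ does this, because $\int\|T^tq\|\,d\mu_m(t)=\|q\|$ for every $m$. This is the paper's route:
\begin{enumerate}
\item Shkarin's operator satisfies the Kitai criterion, so $\{x:T^nx\to0\}$ is dense, and by (M1) these vectors have vanishing $(\mu_m)$-averages.
\item It also has a nonzero fixed point, so it admits a dense absolutely $(\mu_m)$-irregular manifold for every $(\mu_m)\in\Upsilon$.
\item SOT-density then follows from the SOT-density of the similarity orbit $\{A^{-1}TA\}$, since conjugation transports the manifold.
\end{enumerate}
Your block construction $T=BP+S(I-P)$ can be salvaged in the same spirit. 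Take for $S$ a Shkarin operator on $W$ and invoke Theorem~\ref{dense-lineable} instead of Lemma~\ref{distrib-irr-manifolds}, provided you carry out the finite-dimensional approximation on $F$.

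Note finally that your route through Lemma~\ref{distrib-irr-manifolds} would also give dense distributionally irregular manifolds for every scheme. That would answer the question the paper leaves open right after this theorem, and the operator $S$ you assumed to exist is exactly where that difficulty sits.
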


In \cite{Shk08}, Shkarin proved that on every separable infinite-dimensional Banach space \(X\), there exists a bounded operator \(T \in \mathcal{L}(X)\) satisfying the Kitai criterion and admitting a nonzero fixed point (see \cite[p.~1668]{Shk08}). Recall that an operator \(T \in \mathcal{L}(X)\) satisfies the \emph{Kitai criterion} if there exist two dense subsets \(E, F \subset X\) and a map \(S \colon F \to F\) such that \(TSy = y\), \(S^k y \to 0\), and \(T^k x \to 0\) as \(k \to \infty\) for all \(x \in E\) and \(y \in F\).

\begin{proof}[Proof of Theorem \ref{gen-(mu_i)}]
	Fix a bounded operator \(T\) on \(X\) satisfying the Kitai criterion and admitting a nonzero vector \(q \in X\) such that \(Tq = q\). Let \((\mu_m)_{m \in \mathbb{N}} \in \Upsilon\). By the Kitai criterion, the set
	\[
	Z_0 := \left\{ x \in X : \lim_{n \to \infty} T^n x = 0 \right\}
	\]
	is dense in \(X\). Since \((\mu_m)_m \in \Upsilon\), it follows that
	\[
	Z_0 \subset \left\{ x \in X : \lim_{m \to \infty} \int \|T^{t}x\|\,d\mu_m(t) = 0 \right\}.
	\]
	Consequently, by Theorem~\ref{dense-lineable}, the operator \(T\) admits a dense \((\mu_m)_m\)-irregular manifold \(E \subset X\). Indeed, for every \(m \in \mathbb{N}\),
	\[
	\int \|T^{t}q\|\,d\mu_m(t) = \|q\|.
	\]
	Consider the set
	\[
	D := \{ A^{-1}TA : A \in \mathcal{L}(X)\ \text{invertible} \},
	\]
	which is SOT-dense in \(\mathcal{L}(X)\). Finally, for every operator
	\[
	S := A^{-1}TA \in D,
	\]
	it is straightforward to verify that \(A^{-1}(E)\) is a dense absolutely \((\mu_m)_m\)-irregular manifold for \(S\). This completes the proof.
\end{proof}

\begin{question}
	Let \(X\) be an infinite-dimensional separable Banach space. Is the set of operators \(T \in \mathcal{L}(X)\) such that, for every \((\mu_m)_m \in \Upsilon\), \(T\) admits a dense distributionally \((\mu_m)\)-irregular manifold, dense in \(\mathcal{L}(X)\) with respect to the strong operator topology?
\end{question}

Item~(III) in Theorem~\ref{TOS} together with Example~\ref{ejemplo} establishes a distinction between two types of chaotic behavior on a generic subset of observation schemes. Motivated further by \cite[Theorem 25]{BeBoPeWu18}, we propose the following question.

\begin{question}
	Does there exist a bounded operator \(T\) on a separable Banach space \(X\) and a dense subset \(\Upsilon_{0}\subset \Upsilon\) such that, for every \((\mu_{m})_{m}\in \Upsilon_{0}\), the operator \(T\) is densely \((\mu_{m})\)-distributionally chaotic but not densely \((\mu_{m})\)-Li--Yorke chaotic?
\end{question}


\bibliographystyle{abbrv}
\bibliography{md_references.bib}

\end{document}